\newtheorem{thm}{\bf Theorem\bf}[section]
\newtheorem{prop}[thm]{\bf Proposition\bf}
\newtheorem{cor}[thm]{\bf Corollary\bf}
\newtheorem{lem}[thm]{\bf Lemma\bf}
\newtheorem{rem}[thm]{\bf Remark\bf}
\newtheorem{Conj}[thm]{\bf Conjecture\bf}
\def\proof{{\noindent\bf Proof. }}
\def\ve {\varepsilon}
\def\d {{\rm d}}
\def\half {{\frac 12}}
\newcommand{\captionfonts}{\scriptsize}
\long\def\@makecaption#1{%
  \vskip\abovecaptionskip
  \sbox\@tempboxa{{\captionfonts {\bf  #1}}}%
  \ifdim \wd\@tempboxa >\hsize
    {\captionfonts #1\par}
  \else
    \hbox to\hsize{\hfil\box\@tempboxa\hfil}%
  \fi
  \vskip\belowcaptionskip}
\newcommand{\GL}{\operatorname{GL}}
\newcommand{\SL}{\operatorname{SL}}
\newcommand{\PSL}{\operatorname{PSL}}
\begin{document}
\title{ Nodal domains of Maass forms I}
\author{Amit Ghosh, Andre Reznikov and Peter Sarnak}
\date{}
\setcounter{section}{0}
\setcounter{subsection}{0}
\setcounter{footnote}{0}
\setcounter{page}{1}
%%%%%%%%%%%%%%%%%%%%%%%%%% SECTION 1 %%%%%%%%%%%%%%%%%%%%%%%%%%%%%%% 

\dedicatory{Dedicated belatedly to Dennis Hejhal on the occasion of his sixtieth birthday.}

\begin{abstract}
This paper deals with some questions that have received a lot of attention since they were raised by Hejhal and Rackner in their 1992 numerical computations of Maass forms. We establish sharp upper and lower bounds for the $L^2$-restrictions of these forms to certain curves on the modular surface. These results, together with the Lindelof Hypothesis and known subconvex $L^\infty$-bounds are applied to prove that locally the number of nodal domains of such a form goes to infinity with its eigenvalue.
\end{abstract}

\maketitle
\thispagestyle{empty}

\pagestyle{fancy}
\fancyhead{}
\fancyhead[LE]{\thepage}
\fancyhead[RO]{\thepage}
\fancyhead[CO]{\small Nodal domains of Maass forms I}
\fancyhead[CE]{\small  Amit Ghosh, Andre Reznikov and Peter Sarnak}
\renewcommand{\headrulewidth}{0pt}
\headsep = 1.0cm
\fancyfoot[C]{}
\ \vskip 1cm
\renewcommand{\thefootnote}{ } %% no footnote number
\renewcommand{\footnoterule}{{\hrule}\vspace{3.5pt}} %% long footnote line

\footnote{{\bf Mathematics Subject Classification (2010).} Primary: 11F12, 11F30. Secondary: 34F05, 35P20, 81Q50.}

%%%%%%%%%%%%%%%%%%%%%%%%%%%%%%%%%%%%%%%%%%%%%%%%%%%%%%%%%%%%%%%%%%%%%%%%%%%
%%%%%%%%%%%%%%%%%%%%%%%%%%%%%%%%%%%%%%%%%%%%%%%%%%%%%%%%%%%%%%%%%%%%%%%%%%%
\renewcommand{\thefootnote}{\arabic{footnote}\quad } %% restore the footnote number
\setcounter{footnote}{0} %% start of counting of footnotes

\setcounter{section}{0}\setcounter{equation}{0}
%%%%%%%%%%%%%%%%%%%%%%%%%%%%%%%%%%%%%%%%%%%%%%%%%%%%%%%%%%%%%%%%%%%%%%%%%%%
%%%%%%%%%%%%%%%%%%%%%%%%%%%%%%%%%%%%%%%%%%%%%%%%%%%%%%%%%%%%%%%%%%%%%%%%%%%
%%%%%%%%%%%%%%%%%%%%%%%%%%%%%%%%%%%%%%%%%%%%%%%%%%%%%%%%%%%%%%%%%%%%%%%%%%%%%%%%
%%%%%%%%%%%%%%%%%%%%%%%%%%%%%%%%%%%%%%%%%%%%%%%%%%%%%%%%%%%%%%%%%%%%%%%%%%%%%%%%
%pg1%
\section{Introduction.}
Cusp forms are the building blocks of the modern theory of automorphic forms. They are elusive objects and even the existence of the everywhere unramified forms on $\GL_{2}(\mathbb{Q})\backslash$ $\GL_{2}(\mathbb{A})$ that we study in this paper, is known only by indirect means (Selberg 1956 using the trace formula). The first reliable numerical computations of such forms was carried out by Hejhal and Rackner \cite{HR92}. They computed some high frequency Maass cusp forms (both even and odd) for $\Gamma = \SL_{2}(\mathbb{Z})$. Let $\mathbb{X} = \Gamma\backslash\mathbb{H}$ be the modular surface. It has an orientation reversing isometry $\sigma$ induced from the reflection of $\mathbb{H}$, given by $\sigma(x+iy) = -x + iy$. 

The even Maass cusp forms (which will be the central objects in this paper) are functions $\phi : \mathbb{H} \rightarrow \mathbb{R}$ satisfying
%\begin{equation}\label{eq:Oh1}
\begin{itemize}
\item[(i)]\quad $\Delta \phi + \lambda \phi = 0$ ,\quad $\lambda = \lambda_{\phi} > 0$ ,
\item[(ii)]\quad $\phi(\gamma z) = \phi(z) ,\quad \gamma \in \Gamma$,
\item[(iii)]\quad $\phi(\sigma z) = \phi(z)$, 
\item[(iv)]\quad $\phi \in L^{2}(\mathbb{X})$, normalized so that $\int_{\mathbb{X}}\phi^{2}(z)\ \d\text{A}(z)=1$, 
\end{itemize}
where $\d\text{A}(z) = \frac{\d x\d y}{y^{2}}$ is the hyperbolic area measure.
%\end{equation}

Such a $\phi$ is automatically a cusp form \cite{Hej76} and we will assume that it is also an eigenfunction of all the Hecke operators $T_{n}$, $n\geq 1$. The corresponding automorphic cusp forms on $\GL_{2}(\mathbb{Q})\backslash \GL_{2}(\mathbb{A})$ are the everywhere unramified forms and their $L$-functions $L(s,\phi)$ are the $\GL_{2}$ analogues of the Riemann zeta function. We will use the parameter $t_{\phi}$ to parametrize the eigenvalue by $\lambda_{\phi} = t_{\phi}^{2} + \frac{1}{4}$ and we let $n_{\phi}$ be the numbering of such $\phi$'s when ordered by $\lambda_{\phi}$. Selberg \cite{Se89} proved a Weyl law for these $\phi$'s (thus demonstrating their existence);
\begin{equation}\label{eq:Oh2}
\lambda_{\phi} \sim 24n_{\phi} ,\quad n\rightarrow \infty.
\end{equation}

%pg2%

Let $Z_{\phi} \subset \mathbb{X}$ denote the nodal line of $\phi$, that is the set $\{ z\in \mathbb{X}: \phi(z)=0\}$. It consists of a finite union of real analytic curves. The connected components of $\mathbb{X}\setminus Z_{\phi}$ are the nodal domains of $\phi$ and we denote their number by ${\rm N}(\phi)$. More generally, for any subset $\mathbb{K} \subset \mathbb{X}$ we denote by ${\rm N}^{\mathbb{K}}(\phi)$ the number of such domains that meet $\mathbb{K}$. From \cite{HR92} we have in Figure 1, a picture of the nodal lines for four such $\phi$'s with $t_{\phi} = 47.926..., 125.313..., 125.347...$ and $125.523...$ (note that $t_{\phi}$ is denoted by $R$ in \cite{HR92}).

\begin{figure}[ht]
  \begin{center}
    \includegraphics[width=4.99in]{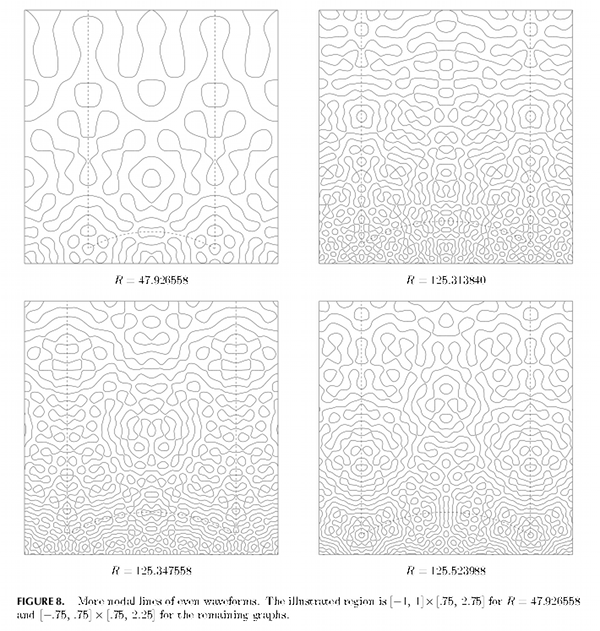}
  \end{center}
  \caption{}
\end{figure}
This paper is concerned with answering some questions that have been raised in connection with such pictures. The most basic is to determine the number of nodal domains, either in totality or in subsets of a fundamental domain. Another question, which appears particularly hard to address analytically, is whether $Z_{\phi}$ is nonsingular (and hence is self-avoiding). Using heuristic arguments relating the zero sets of random waves to an exactly solvable bond percolation model, as well as further heuristics relating eigenfunctions of quantizations of chaotic Hamiltonians to random waves, Bogomolny and Schmit \cite{BS02} suggest that in such settings ${\rm N}(\phi)$ has an asymptotic law! In our setting of even Maass cusp forms for $\mathbb{X}$, it reads \\
%pg3%
\noindent{\bf Bogomolny-Schmit Conjecture}
\[
{\rm N}(\phi)\sim \frac{2}{\pi}(3\sqrt{3} -5)n_{\phi},\quad n_{\phi}\rightarrow \infty.
\]
\par
There is also a localized version for ${\rm N}^{\mathbb{K}}(\phi)$ for $\mathbb{K}$ a subset of $\mathbb{X}$ like the windows in Figure 1 and the prediction is remarkably accurate for these four forms. Based on this and other experimentation in different settings (\cite{BS07}, \cite{NS09},\cite{Nas11},\cite{Ko12}) it seems that this prediction is probably correct, with either the above constant or one very close to it.

According to Courant's general nodal domain theorem \cite{CH53} and \eqref{eq:Oh2}, one has asymptotically
\begin{equation}\label{eq:Oh3}
{\rm N}(\phi) \leq n_{\phi} = \frac{1}{24}t_{\phi}^{2}\big{(}1 +o(1)\big{)}.
\end{equation}
However, even showing that ${\rm N}(\phi)$ (or more generally ${\rm N}^{\mathbb{K}}(\phi)$ with $\mathbb{K}$ a compact piece of $\mathbb{X}$) goes to infinity with $n_{\phi}$ is problematic. At issue is the fact that ${\rm N}(\phi)$ is a global topological invariant and hence is difficult to study from local considerations. In particular, it is not true for a general Riemannian surface that ${\rm N}(\phi)$ must increase with $n_{\phi}$, so that the problem is truly a global one (see  \cite{St25}, \cite{CH53} and \cite{Le77} where it is shown that there are infinite sequences of eigenfunctions each with few nodal domains). Our main result is a conditional proof (assuming something much weaker then a generalized Riemann Hypothesis) that ${\rm N}^{\mathbb{K}}$ goes to infinity with $n_{\phi}$.

We turn to a precise formulation of our results and methods. The isometry $\sigma$ of $\mathbb{X}$ maps the nodal domains of $\phi$ bijectively to themselves. In this way, the nodal domains are partitioned into those fixed by $\sigma$, which we call {\it inert}, and those paired off by $\sigma$ which we call {\it split} (see Figure 2 from \cite{HR92}, where the black and red ones are inert and the green ones are split).
Correspondingly we write
\begin{equation}\label{eq:Oh4}
{\rm N}(\phi) = {\rm N}_{{\rm in}}(\phi) + {\rm N}_{{\rm sp}}(\phi),
\end{equation}
where ${\rm N}_{{\rm in}}(\phi)$ is the number of inert domains and ${\rm N}_{{\rm sp}}(\phi)$, which is even, is the number of split domains. The nodal domains for $\phi$ that we will produce are all inert. According to the estimates given below, if the Bogomolny-Schmit Conjecture is true, then most of the nodal domains for $\phi$ are split (when $n_{\phi} \rightarrow \infty$); however we have no proof that there are any split nodal domains! Our analysis is based on intersecting $Z_{\phi}$ with various analytic arcs in $\mathbb{X}$. If $\beta$ is such a simple arc then clearly
\begin{equation}\label{eq:Oh5}
{\rm N}^{\beta}(\phi) \leq |Z_{\phi}\cap \beta|+1 .
\end{equation}
%pg4%
\begin{figure}[!ht]
  \begin{center}
    \includegraphics[width=3.5in]{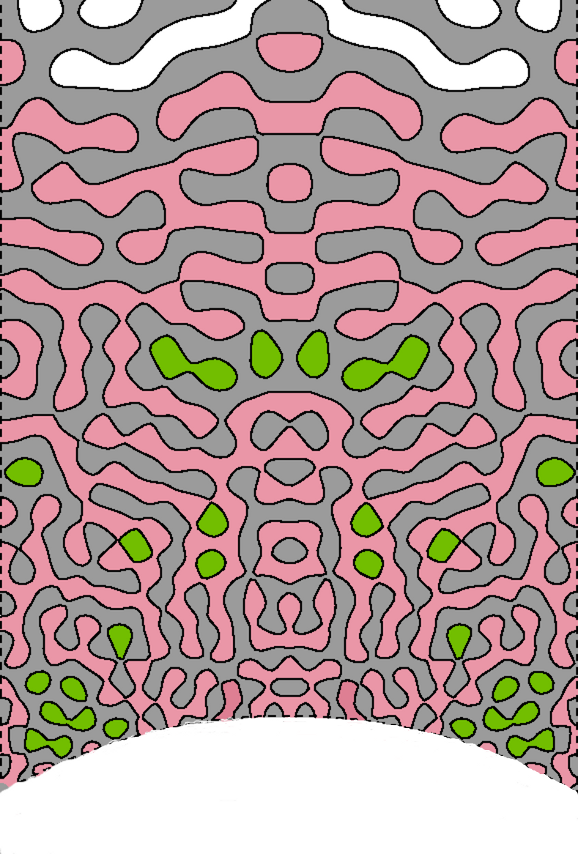}
  \end{center}
  \caption{}
\end{figure}

Denote by ${\rm K}^{\beta}(\phi)$ the number of sign changes of $\phi$ when traversing $\beta$. A key role is played by the arc $\delta = \{ z\in \mathbb{X}: \sigma(z)=z \}$ which is fixed by $\sigma$. This arc is composed of three piecewise analytic geodesics, denoted by $\delta_{1}$, $\delta_{2}$ and $\delta_{3}$ in Figure 3. A simple topological analysis (see Section 2.1) shows that
\begin{equation}\label{eq:Oh6}
1 + \frac{1}{2}{\rm K}^{\delta}(\phi) \leq {\rm N}_{{\rm in}}(\phi) \leq |Z_{\phi}\cap \delta| ,
\end{equation}
and more generally for $\beta$ a nonempty segment of $\delta$
\begin{equation}\label{eq:Oh7}
1 + \frac{1}{2}{\rm K}^{\beta}(\phi) \leq {\rm N}_{{\rm in}}^{\beta}(\phi) \leq |Z_{\phi}\cap \beta| + 1,
\end{equation}
The complexification technique of Toth and Zelditch (using a form of Jensen's Lemma) may be applied to our $\phi$'s whose normal derivative vanishes along $\delta$, to yield
\begin{equation}\label{eq:Oh8}
|Z_{\phi}\cap \delta| \ll t_{\phi}\log t_{\phi}.
\end{equation}
Thus, the number of inert nodal domains is much smaller than what is predicted by the Bogomolny-Schmit Conjecture. To give a lower bound for ${\rm N}_{{\rm in}}(\phi)$ we will use \eqref{eq:Oh6} and \eqref{eq:Oh7} and hence we seek sign changes in $\phi $ along a given curve. To this end, we examine the behavior of $\phi $ when restricted to certain curves. These restriction questions were first examined in \cite{R} and \cite{BGT07}, and sharp versions are discussed in \cite{Sa08}. Let $\beta $ be a nonempty simple analytic segment. We seek sharp upper and lower bounds for the $L^{2}$-restrictions of $\phi $. It is plausible that for any such fixed $\beta$ and large $t_{\phi}$ (recall that $\phi$ will always be $L^{2}$ normalized over $\mathbb{X}$)
\begin{equation}\label{eq:Oh9}
t_{\phi}^{-\ve} \ll_{_{\ve}} \int_{\beta}\phi^{2}(z)\ \d^{\times} z \ll_{_{\ve}} t_{\phi}^{\ve},
\end{equation}
where throughout the paper $\d^{\times} z$ denotes the hyperbolic arclength measure $\frac{\sqrt{\d x^{2} + \d y^{2}}}{y}$ for $z=x+iy$.
%pg5%
Clearly, this implies that $\beta$ is not part of the nodal line $Z_{\phi}$ for $t_{\phi}$ large enough, which itself is a very strong statement. If $\beta$ is a geodesic or horocycle segment, then one can show without difficulty that $\beta$ is not a part of $Z_{\phi}$ (see Section 2.2).

If $\beta$ is a closed horocycle or a long enough (compact) subsegment of $\delta$, we establish the sharp bounds in \eqref{eq:Oh9}\ . One of the key ingredients in the proof of the lower bound is the Quantum Unique Ergodicity (QUE) Theorem of Lindenstrauss \cite{Lin06} and Soundararajan \cite{So10}. It asserts that the restriction of $\phi^2$ to a nice $\Omega \subset \mathbb{X}$ (say a geodesic ball or more generally micro local lifts to $\Gamma\backslash\SL_{2}(\mathbb{R})$)  satisfies
\begin{equation}\label{i101}
\int_{\Omega}\phi^{2}(z)\ \d\text{A}(z) \rightarrow \frac{\text{Area}(\Omega)}{\text{Area}(\mathbb{X})} \quad \text{as}\quad t_{\phi}\rightarrow\infty\ .
\end{equation}

%Theorem 1.1
\begin{thm}\label{TheoremI1}
Let ${\mathcal C}$ be a closed horocycle in $\mathbb{X}$. Then, for $\ve >0$
\[
t_{\phi}^{-\ve} \ll_{_{\ve}} \int_{{\mathcal C}}\phi^{2}(z)\ \d^{\times} z \ll_{_{\ve}} t_{\phi}^{\ve}.
\]
\end{thm}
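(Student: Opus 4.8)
The plan is to attack the horocycle period from two dual viewpoints: the Fourier expansion of $\phi$ transverse to $\mathcal{C}$, which is adapted to the upper bound, and the spectral expansion of $\phi^2$ on $\mathbb{X}$, which is adapted to the lower bound and is where the QUE theorem \eqref{i101} enters. Realize $\mathcal{C}$ as the closed horocycle at height $y$ parametrized by $x\in[0,1]$, so that $\d^\times z=\d x/y$. Writing the even Maass form as $\phi(x+iy)=\sqrt{y}\sum_{n\neq 0}a_{n}K_{it_\phi}(2\pi|n|y)e(nx)$ with $a_{-n}=a_{n}$ real, Parseval gives
\[
\int_{\mathcal{C}}\phi^{2}(z)\,\d^\times z=\frac{1}{y}\int_{0}^{1}\phi^{2}(x+iy)\,\d x=\sum_{n\neq 0}|a_{n}|^{2}K_{it_\phi}(2\pi|n|y)^{2}.
\]
Dually, $\int_{0}^{1}\phi^{2}(x+iy)\,\d x$ is the zeroth Fourier coefficient of $\phi^{2}\in L^{2}(\mathbb{X})$; since every cusp form has vanishing zeroth coefficient, only the constant function and the Eisenstein spectrum survive the spectral decomposition of $\phi^{2}$, yielding the exact identity
\[
\int_{\mathcal{C}}\phi^{2}\,\d^\times z=\frac{1}{\text{Area}(\mathbb{X})\,y}+\frac{1}{4\pi}\int_{-\infty}^{\infty}\langle\phi^{2},E_{\frac12+ir}\rangle\left(y^{-\frac12+ir}+\varphi(\tfrac12+ir)\,y^{-\frac12-ir}\right)\d r.
\]
Here the constant term is exactly the length $1/y$ of $\mathcal{C}$ times the mean value $1/\text{Area}(\mathbb{X})$ of $\phi^{2}$, which is the heuristic source of a positive limit.

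For the upper bound I would work from the Bessel sum. Since $K_{it_\phi}(u)$ decays exponentially once $u>t_\phi$, the sum is truncated at $|n|\le(1+\ve)t_\phi/2\pi y$. Inserting the Hecke normalization $a_{n}=\rho(1)\lambda(n)$ with $\lambda(1)=1$, the standard evaluation $|\rho(1)|^{2}\asymp\cosh(\pi t_\phi)/L(1,\mathrm{sym}^{2}\phi)$, and the bounds $t_\phi^{-\ve}\ll L(1,\mathrm{sym}^{2}\phi)\ll t_\phi^{\ve}$, the exponential factors cancel. Using the uniform estimates for the Bessel function (of order $e^{-\pi t_\phi/2}(t_\phi^{2}-u^{2})^{-1/4}$ below the turning point $u=t_\phi$, and $e^{-\pi t_\phi/2}t_\phi^{-1/3}$ across it) together with the Rankin--Selberg bound $\sum_{n\le x}\lambda(n)^{2}\ll_{\ve}x\,t_\phi^{\ve}$, partial summation reduces the truncated sum to $\ll_{\ve}t_\phi^{\ve}\int_{0}^{t_\phi}(t_\phi^{2}-u^{2})^{-1/2}\,\d u\ll_{\ve}t_\phi^{\ve}$, the turning-point window contributing a negligible $t_\phi^{-1/3+\ve}$. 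This gives the stated upper bound.

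For the lower bound it suffices, by the exact spectral identity, to show that the Eisenstein integral is $o(1)$, since the constant main term is fixed and positive; this is precisely a quantitative form of QUE. By Rankin--Selberg unfolding, $\langle\phi^{2},E_{\frac12+ir}\rangle$ is a product of archimedean factors, $|\rho(1)|^{2}$, and the value $\zeta(\tfrac12+ir)L(\tfrac12+ir,\mathrm{sym}^{2}\phi)$, and the theorem of Lindenstrauss and Soundararajan asserts exactly that these Eisenstein periods tend to $0$. I would split the $r$-integral at a fixed height $|r|\le R$: the bounded part is controlled by applying QUE against a smooth incomplete Eisenstein series $E_{g}(z)=\sum_{\gamma\in\Gamma_{\infty}\backslash\Gamma}g(\mathrm{Im}\,\gamma z)$, a fixed admissible test function for which $\langle\phi^{2},E_{g}\rangle\to\text{Area}(\mathbb{X})^{-1}\int_{\mathbb{X}}E_{g}$, while the tail $|r|>R$ is estimated using the archimedean decay together with the convexity and Rankin--Selberg bounds of the previous paragraph.

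The main obstacle is the uniformity in the spectral parameter $r$. QUE is naturally a statement about one fixed test function, whereas the horocycle period couples all $r$ at once, and the $r$-integral exhibits no absolute convergence to $0$ without exploiting either a quantitative QUE rate (subconvexity for $\zeta(\tfrac12+ir)L(\tfrac12+ir,\mathrm{sym}^{2}\phi)$) or the oscillation of the factors $y^{\pm ir}$. The payoff of using the exact spectral identity, rather than applying \eqref{i101} to a two-dimensional tube around $\mathcal{C}$, is that the conclusion is genuinely pointwise in $y$, so no separate passage from an averaged lower bound to a pointwise one is required once this uniformity is secured.
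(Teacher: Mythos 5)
Your decomposition (Fourier/Bessel for the upper bound, spectral expansion of $\phi^2$ for the lower bound) is sound as a framework, but both of the key estimates fail as stated, and in each case the missing ingredient is precisely what the paper's proof is built around. For the upper bound, the partial-summation step is where the argument breaks. With only the sharp-cutoff second-moment bound $\sum_{n\leq x}|\lambda_{\phi}(n)|^{2}\ll_{\ve}x\,t_{\phi}^{\ve}$, partial summation against the increasing weight $y\,(t_{\phi}^{2}-(2\pi ny)^{2})^{-1/2}$ leaves a boundary term at the truncation point $2\pi ny=t_{\phi}-\Delta$, $\Delta=t_{\phi}^{1/3}\log t_{\phi}$, of size $\asymp\sqrt{t_{\phi}/\Delta}\asymp t_{\phi}^{1/3}(\log t_{\phi})^{-1/2}$, not $t_{\phi}^{\ve}$. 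Equivalently, nothing in a sharp-cutoff second-moment bound prevents the coefficient mass $\asymp t_{\phi}/y$ from concentrating in the window $|2\pi ny-t_{\phi}|\ll\Delta$, where each term carries weight $\asymp y(t_{\phi}\Delta)^{-1/2}$; your parenthetical claim that the turning-point window contributes $t_{\phi}^{-1/3+\ve}$ tacitly bounds a short sum of $|\lambda_{\phi}(n)|^{2}$ of length $\sim t_{\phi}^{1/3}$ by its length, which is not known unconditionally (it would follow from Ramanujan; even inserting Kim--Sarnak pointwise bounds leaves a positive power of $t_{\phi}$). The paper's proof of Theorem \ref{TheoremOne} gets around exactly this by Cauchy--Schwarz against the \emph{fourth} moment $\sum_{n\leq x}|\lambda_{\phi}(n)|^{4}\ll x\,t_{\phi}^{\ve}$ (Lemma \ref{LemmaTwo} with $k=2$, resting on Kim's $\mathrm{sym}^{4}$ functoriality and Molteni's bounds): the singular weight then enters squared, and $\sum_{n}(t_{\phi}^{2}-(2\pi ny)^{2})^{-1}\ll(\log t_{\phi})/(t_{\phi}y)$ is summable right up to the cutoff. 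This deeper input is absent from your plan and cannot be dispensed with by rearranging the summation.

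For the lower bound, the exact spectral identity is correct, but the statement you need --- that the Eisenstein integral is $o(1)$ for fixed $y$ --- is not a consequence of Lindenstrauss--Soundararajan QUE: it is the Hejhal--Rackner asymptotic $\int_{0}^{1}\phi^{2}(x+iy)\,\d x\to 3/\pi$, which the paper explicitly singles out (Remark following Theorem \ref{TheoremI1}) as following only from the \emph{conjectural} flexible form of QUE, equation \eqref{aa2}. Concretely: QUE gives $\langle\phi^{2},E_{1/2+ir}\rangle\to 0$ only for each fixed $r$, with no rate and no uniformity; the archimedean factors give no decay until $|r|\gg 2t_{\phi}$, so the $r$-integral effectively runs over the growing range $|r|\ll t_{\phi}$; and on that whole range convexity for $\zeta(\tfrac12+ir)L(\tfrac12+ir,\mathrm{sym}^{2}\phi)$ gives only $\langle\phi^{2},E_{1/2+ir}\rangle\ll t_{\phi}^{\ve}$, so the tail beyond any fixed $R$ is of size $t_{\phi}^{1+\ve}$ in absolute value against a main term of size $1$. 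Closing this gap requires either $t_{\phi}$-aspect subconvexity for $\zeta\times\mathrm{sym}^{2}$ (open; this is exactly what would yield the paper's QQUE) or opening the $L$-values and exploiting oscillation by stationary phase, which amounts to redoing the Fourier-side analysis you set aside. The paper avoids the issue by staying on the Fourier side: QUE tested against incomplete Eisenstein series, combined with a positivity (Abel-transform) argument, yields the sharp-cutoff coefficient lower bound $\sum_{|n|\leq X}|\tilde{\rho}_{\phi}(n)|^{2}\gg X$ (Prop.~\ref{shortsum}); the oscillatory factor $\sin^{2}(\tfrac{\pi}{4}+t_{\phi}\mathrm{H})$, which your exact identity conceals, is then controlled by a Van der Corput lattice-point count near a curved arc, as in \eqref{eq:d5}, with the fourth moment once more handling the exceptional set. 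The obstacle you flag in your last paragraph is therefore not a technical loose end but the entire content of the theorem, and your proposal does not resolve it.
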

\begin{rem}
Using probabilistic arguments, Hejhal and Rackner (\cite{HR92}  (6.12)) conjecture that the restricted mean square of $\phi$ to a closed horocycle is asymptotic to $\frac{1}{\text{Area}(\mathbb{X})} = \frac{3}{\pi}$. This follows from the conjectured equation \eqref{aa2} in Appendix A.
\end{rem}
%Theorem 1.2
\begin{thm}\label{TheoremI2}   If $\beta$ is a long enough but fixed compact subsegment in $\delta_{1}$ or $\delta_{2}$, then for $\ve > 0$
\[
1 \ll_{_{\beta}} \int_{\beta}\phi^{2}(z)\ \d^{\times} z \leq \int_{\delta}\phi^{2}(z)\ \d^{\times} z  \ll_{_{\ve}} t_{\phi}^{\ve} .
\]
\end{thm}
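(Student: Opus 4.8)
The middle inequality is trivial monotonicity: as $\beta\subset\delta$ and $\phi^2\ge0$, one has $\int_\beta\phi^2\,\d^{\times}z\le\int_\delta\phi^2\,\d^{\times}z$. The two real assertions are the arithmetic upper bound $\int_\delta\phi^2\,\d^{\times}z\ll_{\ve}t_\phi^{\ve}$ and the geometric lower bound $\int_\beta\phi^2\,\d^{\times}z\gg_\beta1$, and I would prove them by entirely different means: the former by the Fourier--Bessel expansion together with Rankin--Selberg theory, the latter by the QUE theorem \eqref{i101}.

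For the upper bound I would use that $\delta_1$ and $\delta_2$ lie on the vertical lines $\mathrm{Re}(z)=0$ and $\mathrm{Re}(z)=\tfrac12$, where the Whittaker expansion of the even form collapses to
\[
\phi(x+iy)=\sqrt y\sum_{n\ge1}\rho_\phi(n)\,K_{it_\phi}(2\pi ny)\cos(2\pi nx),\qquad \rho_\phi(n)=\rho_\phi(1)\lambda_\phi(n),
\]
so that the restriction is a function of $y$ alone with $\d^{\times}z=\d y/y$. Squaring and integrating in $y$ splits the period into a diagonal $\sum_n\rho_\phi(n)^2\int K_{it_\phi}(2\pi ny)^2\,\d y$ and an off-diagonal. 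The Bessel factor is negligible beyond its turning point $2\pi ny\approx t_\phi$, which localizes the sum to $n\lesssim t_\phi$ and yields $\int_1^\infty K_{it_\phi}(2\pi ny)^2\,\d y\asymp e^{-\pi t_\phi}/n$. Feeding in the Rankin--Selberg asymptotic $\sum_{n\le N}\lambda_\phi(n)^2\sim\frac{L(1,\mathrm{sym}^2\phi)}{\zeta(2)}N$ and the $L^2$-normalization, which by unfolding ties $|\rho_\phi(1)|^2$ to $\big(\cosh(\pi t_\phi)\,L(1,\mathrm{sym}^2\phi)\big)^{-1}$ up to a fixed power of $t_\phi$, the factor $L(1,\mathrm{sym}^2\phi)$ cancels and the diagonal reduces to $\asymp\sum_{n\lesssim t_\phi}\lambda_\phi(n)^2/n\asymp\log t_\phi\ll_{\ve}t_\phi^{\ve}$. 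The compact arc $\delta_3$ on $|z|=1$ I would move to a vertical geodesic using the $\Gamma$- and $\sigma$-symmetries and bound by the same analysis.

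The real labor in the upper bound is the off-diagonal $\sum_{n\ne m}\rho_\phi(n)\rho_\phi(m)\int K_{it_\phi}(2\pi ny)K_{it_\phi}(2\pi my)\,\d y$: a crude Cauchy--Schwarz here loses a full power of $t_\phi$, so genuine cancellation must be extracted. I would insert the uniform oscillatory asymptotics of $K_{it_\phi}$ below the turning point, note that for $n\ne m$ the two phases share no stationary point on the range of integration, and estimate the resulting integral by stationary phase and repeated integration by parts, obtaining a saving that makes the total off-diagonal contribution $\ll_{\ve}t_\phi^{\ve}$.

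For the lower bound, QUE gives, for a fixed tubular neighborhood $\Omega$ of $\beta$, that $\int_\Omega\phi^2\,\d\text{A}\to\mathrm{Area}(\Omega)/\mathrm{Area}(\mathbb X)\gg_\beta1$, so the two-dimensional mass near $\beta$ is bounded below. To convert this to the one-dimensional restriction I would pass to Fermi coordinates $(s,r)$ along $\delta$, in which $\sigma$ acts by $r\mapsto-r$; since $\phi\circ\sigma=\phi$ the normal derivative $\partial_r\phi$ vanishes on $\delta$, and the eigenvalue equation then fixes $\partial_r^2\phi(s,0)=-\lambda_\phi\phi(s,0)-\partial_s^2\phi(s,0)$. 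This makes $\delta$ an approximate transverse maximum of the profile $r\mapsto\int_{\beta_r}\phi^2\,\d^{\times}z$ on the wavelength scale $1/t_\phi$ (the tangential energy of a long segment being $\le(1+o(1))\lambda_\phi\int_\beta\phi^2$), which I would leverage, together with the hypothesis that $\beta$ be long enough so that endpoint and tangential corrections are negligible, to bound the tube mass by a constant multiple of $\int_\beta\phi^2\,\d^{\times}z$ and so conclude $\int_\beta\phi^2\,\d^{\times}z\gg_\beta1$. The main obstacle is exactly this transfer: QUE controls averages over regions of fixed size, whereas the vanishing normal derivative constrains the transverse behavior of $\phi$ only on the scale $1/t_\phi$, and reconciling these two scales---ensuring no mass escapes to parallels at distance $\gg1/t_\phi$ while still extracting a lower bound for the value precisely on $\delta$---is the heart of the matter.
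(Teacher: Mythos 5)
Your middle inequality is indeed trivial, and your two diagonal computations (Rankin--Selberg for the upper bound, QUE for the tube mass) are sound. But in both substantive halves the step you defer is exactly where the proof lives, and in both cases the mechanism you describe does not close the gap; your routes are also genuinely different from the paper's, which is worth recording.

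\textbf{Upper bound.} The assertion that the off-diagonal can be finished because ``the two phases share no stationary point'' breaks down precisely where the oscillatory asymptotics you invoke are not valid: the transitional (Airy) ranges. On the range $y\ge 1$ relevant for $\delta_1$ (you must restrict: over $(0,\infty)$ the off-diagonal double sum is not even absolutely convergent), the sum runs over $n,m\lesssim t_\phi$ and for each pair the $y$-integral crosses the turning point $2\pi\max(m,n)y\approx t_\phi$. On that window, of width $\asymp t_\phi^{1/3}/m$, the factor $K_{it_\phi}(2\pi my)$ is Airy-like and does not oscillate, so it contributes nothing to integrate by parts against; the other factor supplies a phase with derivative $\asymp\sqrt{hm}$, $h=|m-n|$, and the first-derivative test then gives a per-pair bound of order $t_\phi^{-5/6}h^{-3/4}m^{-1/4}$, which after summing over $h\le m\lesssim t_\phi$ with coefficients $|\lambda_\phi(m)\lambda_\phi(m\pm h)|$ leaves a contribution of size about $t_\phi^{1/6}$, not $t_\phi^{\ve}$. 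Improving on this with such rough coefficients is a shifted-convolution problem, not an exercise in repeated integration by parts. The paper's proof of Theorem \ref{Theorem5.1} sidesteps all of this: by Mellin--Parseval \eqref{eq:e21} the period becomes a second moment of $L(\half+it,\phi)$ against Gamma factors, which encode the Bessel asymptotics exactly, turning points included; after the approximate functional equation \eqref{eq:e23} the off-diagonal saving comes from $\int_A^{2A}(m/n)^{it}\,\d t\ll\min\big(A,|\log\tfrac mn|^{-1}\big)$ over the long $t$-range, after which absolute values suffice.

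\textbf{Lower bound.} Here the gap is the scale mismatch that you yourself call ``the heart of the matter,'' and it is not a technicality to be absorbed by taking $\beta$ long. QUE \eqref{i101} controls $\int_\Omega\phi^2\,\d{\rm A}$ only for fixed $\Omega$, while the Neumann condition $\partial_r\phi|_\delta=0$ together with the eigenvalue equation controls the transverse profile $u(r)=\int_{\beta_r}\phi^2\,\d^{\times}z$ only at wavelength scale: from $u'(0)=0$ and $|u''(0)|\le 2(1+o(1))\lambda_\phi u(0)$ (granting your tangential-energy bound) you control $u(r)$ for $|r|\ll t_\phi^{-1}$ and nothing more. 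Nothing in your argument prevents the tube mass from sitting at distances $t_\phi^{-1}\ll |r|\le\ve_0$ from $\delta$ while $u(0)$ is tiny; ruling this out amounts to equidistribution at scale $t_\phi^{-1}$, far beyond QUE. Note also that your argument uses no input beyond QUE, symmetry and the eigenvalue equation, so if it worked it would prove a uniform restriction lower bound for even eigenfunctions on every surface with a reflection symmetry by soft local means --- which is not known and is essentially the theorem being proved. The paper never localizes QUE transversally. It takes the auxiliary function $F$ to be the high-frequency tail $2\sqrt y\sum_{m\ge t_\phi/\eta}\tilde{\rho}_{\phi}(m)e^{\pi t_\phi/2}K_{it_\phi}(2\pi my)$ of the restricted Fourier expansion and uses the pointwise inequality $\phi^2\ge 2F\phi-F^2$ integrated against the cutoff \eqref{eq:e1}; the diagonal of the resulting bilinear form completes to $2\int_{\mathbb X}\phi^2P_{0,g}\,\d{\rm A}$, an inner product against an incomplete Eisenstein series --- a fixed-scale automorphic observable to which QUE applies directly --- giving the main term $2\log 2$ in \eqref{eq:ea5}; near-diagonal terms are handled by QUE against incomplete Poincar\'e series \eqref{eq:e14}, and the remaining off-diagonal terms via the exact Bessel-product formula and uniform Legendre asymptotics of Lemma \ref{Lemma-e1} together with Prop.~\ref{shortsum}. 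The hypothesis that $\beta$ be long is used there, to force the off-diagonal below $2\log 2$ --- not, as in your sketch, to tame endpoint and tangential corrections. This transfer of QUE through automorphic observables built from the Fourier expansion, rather than through shrinking tubes, is the missing idea.
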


In Section 6 (see Theorem \ref{Theorem5.1}) we first prove the lower bound in Theorem \ref{TheoremI2} in non-localised form, namely the inequality between the first and third terms in this Theorem. For this, little of the arithmetic features of $\mathbb{X}$ (other than QUE) are used. The localized version is technically much more involved, requiring a delicate asymptotic analysis of the off-diagonal terms, with both arithmetic and analytic input being crucial. It is in this analysis that we are forced to make $\beta$ long (see Section 6.2 and Appendix A).

The  sharp lower bounds in the restriction theorems above can be combined with strong upper bounds for \ $\int_{\mathcal{I}} \phi (z)\ \d^{\times} z$ \ for ${\mathcal I}$ an arbitrary segment of ${\mathcal C}$ or $\beta$,  and  subconvexity bounds  for $\parallel\phi\parallel _{\infty}$ on $\mathbb{X}$ (see \cite{IS95}) to give lower bounds for $|Z_{\phi}\cap {\mathcal C}|$ and $|Z_{\phi}\cap \beta|$ (see the end of Section 5 and Section 6.3).

%Theorem 1.4
\begin{thm}\label{TheoremI4} Let $\mathcal{C}$ be a fixed closed horocycle in $\mathbb{X}$. Then for $\ve >0$
\[
t_{\phi}^{\frac{1}{12} - \ve} \ll_{\ve} |Z_{\phi}\cap\mathcal{C}| \ll t_{\phi}.
\]
\end{thm}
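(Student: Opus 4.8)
The plan is to prove the two bounds separately; the upper bound is essentially elementary and the lower bound carries all the content.

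\textbf{The upper bound.} Parametrize $\mathcal{C}$ as $\{x+iy_{0}:0\le x<1\}$ for a fixed height $y_{0}$. Since $\phi$ is an even cusp form, its restriction has the Fourier expansion $\phi(x+iy_{0})=2\sqrt{y_{0}}\sum_{n\ge 1}\rho(n)K_{it_{\phi}}(2\pi n y_{0})\cos(2\pi n x)$ with \emph{no} constant term. The Bessel factor $K_{it_{\phi}}(2\pi n y_{0})$ is oscillatory of amplitude $\asymp (t_{\phi}^{2}-(2\pi n y_{0})^{2})^{-1/4}e^{-\pi t_{\phi}/2}$ for $2\pi n y_{0}<t_{\phi}$ and decays exponentially in $n$ once $2\pi n y_{0}$ exceeds $t_{\phi}$. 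Hence the coefficients $\rho(n)K_{it_{\phi}}(2\pi n y_{0})$ are super-polynomially small for $n\gg t_{\phi}^{1+\ve}$, so through $w=e^{2\pi i x}$ the restriction $\phi|_{\mathcal{C}}$ extends to a function holomorphic in a fixed annulus about $|w|=1$ whose Fourier mass is concentrated on $|n|\ll t_{\phi}$. A Jensen-type count of the zeros of this function on the unit circle then gives $|Z_{\phi}\cap\mathcal{C}|\ll t_{\phi}$.

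\textbf{The lower bound.} The starting point is the standard sign-change inequality. The points of $Z_{\phi}\cap\mathcal{C}$ cut $\mathcal{C}$ into arcs on which $\phi$ keeps a constant sign, and their number is at most $|Z_{\phi}\cap\mathcal{C}|$; on each such arc $\mathcal{I}$ one has $\int_{\mathcal{I}}|\phi|\,\d^{\times}z=|\int_{\mathcal{I}}\phi\,\d^{\times}z|$. Summing over the arcs,
\[
\int_{\mathcal{C}}|\phi|\,\d^{\times}z\ \le\ |Z_{\phi}\cap\mathcal{C}|\cdot\sup_{\mathcal{I}\subset\mathcal{C}}\Bigl|\int_{\mathcal{I}}\phi\,\d^{\times}z\Bigr|,
\]
the supremum being over all subsegments $\mathcal{I}$ of $\mathcal{C}$ (Theorem \ref{TheoremI1} forces a sign change, since $\int_{\mathcal{C}}\phi\,\d^{\times}z=0$). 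It therefore suffices to bound $\int_{\mathcal{C}}|\phi|$ from below and the supremum from above. For the former I would use $\int_{\mathcal{C}}\phi^{2}\,\d^{\times}z\le\|\phi\|_{\infty}\int_{\mathcal{C}}|\phi|\,\d^{\times}z$ together with Theorem \ref{TheoremI1} (which gives $\int_{\mathcal{C}}\phi^{2}\,\d^{\times}z\gg_{\ve}t_{\phi}^{-\ve}$ and rests on QUE) and the Iwaniec--Sarnak subconvex sup-norm bound $\|\phi\|_{\infty}\ll_{\ve}t_{\phi}^{5/12+\ve}$ from \cite{IS95}, obtaining $\int_{\mathcal{C}}|\phi|\,\d^{\times}z\gg_{\ve}t_{\phi}^{-5/12-\ve}$.

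For the second ingredient I would integrate the Fourier expansion term by term over $\mathcal{I}=[a,b]$; the vanishing of the constant term removes the main contribution and leaves $|\int_{\mathcal{I}}\phi\,\d^{\times}z|\ll\sum_{n\ge 1}\frac{|\rho(n)|}{n}\,|K_{it_{\phi}}(2\pi n y_{0})|$. Inserting the Hecke relation $\rho(n)=\rho(1)\lambda(n)$, the standard normalization $|\rho(1)|\asymp e^{\pi t_{\phi}/2}t_{\phi}^{o(1)}$ (equivalently $L(1,\mathrm{sym}^{2}\phi)\asymp t_{\phi}^{o(1)}$), and the Bessel amplitude above, one checks that the sum is dominated by the bulk range $2\pi n y_{0}\ll t_{\phi}$, where the amplitude is $\asymp t_{\phi}^{-1/2}e^{-\pi t_{\phi}/2}$; the exponentials cancel and $\sum_{n\le t_{\phi}}|\lambda(n)|/n\ll t_{\phi}^{\ve}$, giving
\[
\sup_{\mathcal{I}\subset\mathcal{C}}\Bigl|\int_{\mathcal{I}}\phi\,\d^{\times}z\Bigr|\ \ll_{\ve}\ t_{\phi}^{-1/2+\ve}.
\]
Combining the three estimates yields $|Z_{\phi}\cap\mathcal{C}|\gg_{\ve}t_{\phi}^{-5/12-\ve}/t_{\phi}^{-1/2+\ve}=t_{\phi}^{1/12-\ve}$, as claimed.

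\textbf{The main obstacle.} The sign-change inequality is free; the genuine work is the sharp bound $\sup_{\mathcal{I}}|\int_{\mathcal{I}}\phi\,\d^{\times}z|\ll_{\ve}t_{\phi}^{-1/2+\ve}$, which hinges on the uniform Bessel asymptotics (in particular verifying that the turning-point region $2\pi n y_{0}\approx t_{\phi}$, of Airy type, contributes negligibly) and on the precise size of $\rho(1)$. The final exponent is then capped entirely by the available sup-norm subconvexity: the gain $1/12=1/2-5/12$ is exactly the deficit in the Iwaniec--Sarnak exponent, and any progress toward $\|\phi\|_{\infty}\ll t_{\phi}^{\ve}$ (Lindelof on the diagonal) would push the lower bound toward $t_{\phi}^{1/2-\ve}$.
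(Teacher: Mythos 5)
Your lower-bound argument is, step for step, the paper's own proof: the paper partitions $\mathcal{C}_Y$ at the sign changes $\alpha_j$, writes $\int_{\mathcal{C}_Y}|\phi|\,\d x=\sum_j\bigl|\int_{\alpha_j}^{\alpha_{j+1}}\phi\,\d x\bigr|$, bounds $\int|\phi|$ from below via $t_{\phi}^{-\ve}\ll\int\phi^{2}\le\|\phi\|_{\infty}\int|\phi|$ with the Iwaniec--Sarnak exponent $\frac{5}{12}$ from \cite{IS95}, and bounds each segment integral by $t_{\phi}^{-\frac12+\ve}$ by integrating the Fourier expansion term by term (Proposition \ref{PropOne}(2)) and applying Cauchy--Schwarz with Lemmas \ref{LemmaZero} and \ref{LemmaTwo}; the exponent $\frac{1}{12}=\frac12-\frac{5}{12}$ arises exactly as you say. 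Your only imprecision here is the claim that the Bessel amplitude is $\asymp t_{\phi}^{-1/2}e^{-\pi t_{\phi}/2}$ throughout the bulk: near the turning point $2\pi ny_{0}\approx t_{\phi}$ it degrades to about $t_{\phi}^{-1/3}e^{-\pi t_{\phi}/2}$, but the factor $\frac1n$ there makes that range negligible (this is what Lemma \ref{LemmaZero} delivers in the paper), so this is cosmetic and you flagged it yourself.

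The upper bound is where your sketch has a genuine gap. A ``Jensen-type count'' cannot run on holomorphy and boundedness alone: a function holomorphic in a fixed annulus and bounded by $e^{Ct_{\phi}}$ there can have arbitrarily many zeros on $|w|=1$ (take a tiny constant times a polynomial of huge degree with all roots on the circle). Note also that in a fixed annulus your function is genuinely of size $e^{ct_{\phi}}$, not polynomially bounded, since its coefficients are of size $t_{\phi}^{o(1)}$ out to $|n|\asymp t_{\phi}$. So any Jensen argument needs, besides the exponential upper bound, a lower bound anchoring $\log|f|$, \emph{and} a covering or propagation-of-smallness step to transport that anchor around the whole circle: the unit circle is not contained in any disc of holomorphy, so one good point does not control all zeros. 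This is precisely what the ``good curve'' condition of Toth--Zelditch packages, and the paper's entire proof of the upper bound is a citation of Theorem 6 of \cite{TZ09}, with the $L^{2}$-restriction lower bound of Theorem \ref{TheoremOne} (your Theorem \ref{TheoremI1}) serving as the certificate of goodness. You have that ingredient in hand --- you use it for the lower bound --- but your upper-bound paragraph never invokes it, and without it the bound $\ll t_{\phi}$ does not follow. The repair is either to cite \cite{TZ09} as the paper does, or to argue directly: choose $x_{0}$ with $|\phi(x_{0}+iy_{0})|\gg t_{\phi}^{-\ve}$, propagate this by the two-constants theorem to sup-lower bounds of size $e^{-O(t_{\phi})}$ on each of the $O(1)$ fixed discs covering the circle inside the annulus, and then apply Jensen in each disc to get $O(t_{\phi})$ zeros per disc.
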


%Remark 1.5
\begin{rem} The upper bound above can be established without recourse to arithmetic, that is it holds for any hyperbolic surface $\mathbb{Y}$ and any closed horocycle on $\mathbb{Y}$, as was shown recently by Jung \cite{Ju11}. The lower bound can be improved to $t_{\phi}^{\frac{1}{2} - \ve}$ if one were to assume that the $L^{\infty}$-norm of $\phi$ restricted to compact sets is bounded by $t_{\phi}^{\ve}$.
\end{rem}

\noindent The random wave model for the $\phi$'s predicts that \ $|Z_{\phi}\cap\mathcal{C}| \sim \frac{{\rm length}(\mathcal{C})}{\pi}t_{\phi}$ \ as \ $t_{\phi}\rightarrow \infty$ \ (see Appendix B).

Our primary interest is in sign changes of $\phi$ along $\delta$. The lower bound in the next Theorem is conditional on the Lindelof Hypothesis for the $L$-functions $L(s,\phi)$ associated to the form $\phi$. This conjecture asserts that $L(\half + it,\phi) \ll_{\ve} \big{(}(1 + |t|)t_{\phi}\big{)}^{\ve}$ and its truth follows from the corresponding Riemann Hypothesis. For our purpose, what we need is a bound of the form $L(\half + it,\phi) \ll_{\ve} (1 + |t|)^{A}t_{\phi}^{B}$ for some finite $A$ but with $B<\frac{1}{12}$. The best subconvex bounds for $L(s,\phi)$ known give $B=\frac{1}{3}$ (see \cite{Iv01},\cite{JM05}) which is far short of $\frac{1}{12}$. For this reason we simply assume the full Lindelof Hypothesis.
%pg6%
%Theorem 1.6
\begin{thm}\label{TheoremI5} Fix $\beta \subset \delta$ a sufficiently long  compact geodesic segment on $\delta_{1}$ or $\delta_{2}$ and assume the Lindelof Hypothesis for the $L$-functions $L(s,\phi)$. Then
\[
 |Z_{\phi}\cap\beta| \gg_{_{\ve}}\  t_{\phi}^{\frac{1}{12}-\ve}. 
\]
\end{thm}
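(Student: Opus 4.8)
The plan is to deduce sign changes from the contrast between a lower bound on the $L^2$-mass of $\phi$ along $\beta$ and an upper bound on its $L^1$-mass over arbitrary subsegments. Concretely, I would partition $\beta$ into a collection of consecutive subarcs $\mathcal{I}_1,\dots,\mathcal{I}_M$ of equal hyperbolic length, and on each subarc compare $\int_{\mathcal{I}_j}\phi^2\,\d^\times z$ with $\left(\int_{\mathcal{I}_j}|\phi|\,\d^\times z\right)^2$ via Cauchy--Schwarz. If $\phi$ does not change sign on $\mathcal{I}_j$, then $\int_{\mathcal{I}_j}|\phi|\,\d^\times z = \left|\int_{\mathcal{I}_j}\phi\,\d^\times z\right|$, so a nontrivial $L^2$-mass on $\mathcal{I}_j$ forces either a large signed integral or a sign change. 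The number of sign changes $\mathrm{K}^\beta(\phi)$ then bounds $|Z_\phi\cap\beta|$ from below by \eqref{eq:Oh5} applied subarc-by-subarc, and the subintervals with no sign change must each carry a large signed integral.

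The three inputs I would assemble are: first, the lower bound $\int_\beta \phi^2\,\d^\times z \gg_\beta 1$ from Theorem \ref{TheoremI2}, which guarantees the total $L^2$-mass does not decay; second, the subconvex $L^\infty$-bound $\|\phi\|_\infty \ll t_\phi^{\alpha}$ on compact sets from \cite{IS95}, used to control how concentrated the mass can be; and third, a strong upper bound on the signed integral $\int_{\mathcal{I}}\phi\,\d^\times z$ over an arbitrary segment $\mathcal{I}\subset\beta$. This last bound is where the Lindelof Hypothesis enters: expanding $\phi$ in its Fourier--Whittaker expansion and integrating along the geodesic arc $\beta\subset\delta_1$ or $\delta_2$ produces a sum of Hecke eigenvalues weighted by oscillatory integrals of Bessel functions, which I would recognize as (a shifted or twisted version of) the $L$-function $L(s,\phi)$ evaluated near the critical line. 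Lindelof then yields $\int_{\mathcal{I}}\phi\,\d^\times z \ll_\ve t_\phi^{\ve}\cdot(\text{length factor})$, an essentially square-root cancellation statement reflecting that the signed mass is much smaller than the unsigned $L^2$-mass.

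With these in hand the counting is a pigeonhole argument. Choosing $M \asymp t_\phi^{1/12-\ve}$ subintervals, if $\phi$ had fewer than $M$ sign changes then at least a positive proportion of the subarcs $\mathcal{I}_j$ would carry no sign change, and on each such subarc Cauchy--Schwarz together with the signed-integral bound would force $\int_{\mathcal{I}_j}\phi^2\,\d^\times z$ to be small; summing over $j$ would contradict the $\Omega(1)$ lower mass from Theorem \ref{TheoremI2}. Balancing the exponents --- the length factor in the Lindelof bound against the total mass and the $L^\infty$-bound --- produces precisely the threshold $t_\phi^{1/12-\ve}$, which is why the required subconvexity exponent is $B<\tfrac{1}{12}$ as noted before the statement. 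Finally, since $\beta$ lies on the fixed arc $\delta$, the sign changes along $\beta$ correspond via \eqref{eq:Oh7} to inert nodal domains, so this also feeds the nodal-domain count.

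The main obstacle will be the second step: obtaining the sharp upper bound on $\int_{\mathcal{I}}\phi\,\d^\times z$ for an \emph{arbitrary} subsegment $\mathcal{I}$, uniformly in its position and endpoints. The difficulty is that the geodesic $\delta_1,\delta_2$ is not a closed orbit, so the period integral does not factor cleanly as a single special $L$-value; instead one gets an incomplete integral whose oscillatory truncation must be handled carefully, and the endpoints of $\mathcal{I}$ contribute boundary terms that one must show do not overwhelm the cancellation. Extracting genuine square-root savings here --- rather than merely a convexity-bound saving --- is exactly the place where the full strength of Lindelof (as opposed to the known exponent $B=\tfrac13$) is unavoidable, and making the estimate uniform across all $\mathcal{I}$ while retaining the $t_\phi^{\ve}$ quality is the technical heart of the argument.
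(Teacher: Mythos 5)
Your three ingredients are exactly the paper's (Theorem \ref{TheoremI2}, the Iwaniec--Sarnak bound $\|\phi\|_\infty\ll t_\phi^{5/12+\ve}$ on compacta, and a Lindelof-conditional bound for signed integrals over subsegments), and your exponent arithmetic $\tfrac12-\tfrac{5}{12}=\tfrac1{12}$ is the right one. Note, though, that the signed-integral bound must be $\bigl|\int_{\mathcal I}\phi\,\d^{\times}z\bigr|\ll_\ve t_\phi^{-\frac12+\ve}$ \emph{uniformly} in $\mathcal I\subset\beta$, with no length factor: the decay comes from Stirling applied to the gamma factors in the Mellin--Parseval representation \eqref{600}, where truncation to $[\alpha_j,\alpha_{j+1}]$ only inserts the harmless factor $(\alpha_{j+1}^{it}-\alpha_j^{it})/t\ll\min(1,1/|t|)$ --- so the endpoint-uniformity you flag as the main obstacle is automatic. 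The genuine gap is in your counting step. The pigeonhole over $M\asymp t_\phi^{1/12-\ve}$ \emph{equal-length} subarcs cannot produce a contradiction: your estimates say nothing about the subarcs that do contain sign changes, and a single such subarc, of length $\asymp t_\phi^{-1/12}$, may a priori carry $L^2$-mass as large as $\|\phi\|_\infty^2|\mathcal I_j|\asymp t_\phi^{3/4}$, i.e.\ all of the mass guaranteed by Theorem \ref{TheoremI2} and more; so ``most subarcs are sign-definite'' contradicts nothing. (Worse, with your choice of $M$, even if \emph{every} subarc were sign-definite your bound on the total mass would be $M\cdot t_\phi^{5/12+\ve}\cdot t_\phi^{-1/2+\ve}\asymp t_\phi^{\ve}$, still no contradiction; shrinking $M$ fixes that but then the argument only yields a single sign change.) A second, smaller slip: Cauchy--Schwarz gives $(\int_{\mathcal I_j}|\phi|\,\d^{\times}z)^2\le|\mathcal I_j|\int_{\mathcal I_j}\phi^2\,\d^{\times}z$, which goes the wrong way; to convert smallness of the $L^1$-mass into smallness of the $L^2$-mass one must use the $L^\infty$ bound, $\int\phi^2\le\|\phi\|_\infty\int|\phi|$ --- you list this ingredient but assign it the wrong role.

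The repair, which is exactly what the paper does (Section 6.3, running the argument of Theorem \ref{TheoremTwo} with \eqref{eq:e24} in place of the horocycle bound), is to partition $\beta$ at the sign-change points themselves rather than into equal pieces. If $\alpha_1<\dots<\alpha_K$ are the sign changes along $\beta$, with $\alpha_0,\alpha_{K+1}$ the endpoints, then $\phi$ has constant sign on every $[\alpha_j,\alpha_{j+1}]$, so there are no exceptional subarcs at all and the absolute value passes inside each integral:
\[
t_\phi^{-\frac{5}{12}-\ve}\ \ll\ \frac{1}{\|\phi\|_\infty}\int_\beta\phi^2\,\d^{\times}z\ \le\ \int_\beta|\phi|\,\d^{\times}z\ =\ \sum_{j=0}^{K}\Bigl|\int_{\alpha_j}^{\alpha_{j+1}}\phi\,\d^{\times}z\Bigr|\ \ll\ (K+1)\,t_\phi^{-\frac12+\ve},
\]
whence $K\gg_\ve t_\phi^{\frac1{12}-2\ve}$, and since every sign change is a zero this proves the theorem. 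With this one change of decomposition your outline becomes precisely the paper's proof.
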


%Remark 1.7
\begin{rem} As noted in \eqref{eq:Oh8}\ , general complexification techniques \cite{TZ09} yield $|Z_{\phi}\cap\beta|\ll t_{\phi}$.
\end{rem}
Theorem \ref{TheoremI5} together with \eqref{eq:Oh6} leads to our main result.

%Theorem 1.8
\begin{thm}\label{TheoremI6} With the same assumptions as in Theorem \ref{TheoremI5}
\[
{\rm N}_{{\rm in}}^{\beta}(\phi) \gg_{_{\ve}} t_{\phi}^{\frac{1}{12}-\ve} ,
\]
and in particular that ${\rm N}^{\beta}(\phi)$ goes to infinity as $n_{\phi}$ goes to infinity.
\end{thm}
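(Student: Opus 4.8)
The plan is to derive Theorem \ref{TheoremI6} directly from Theorem \ref{TheoremI5} together with the lower bound in the topological inequality \eqref{eq:Oh7}. The key observation is that a sign change of $\phi$ along $\beta$ forces a transversal zero, so sign changes contribute to nodal structure. First I would relate the number of sign changes ${\rm K}^{\beta}(\phi)$ to the number of intersections $|Z_{\phi}\cap\beta|$. Since $\phi$ restricted to $\beta$ is a real analytic function of one variable, its zeros are isolated, and every sign change corresponds to at least one such zero (a zero of odd order). Hence ${\rm K}^{\beta}(\phi)\leq |Z_{\phi}\cap\beta|$. The reverse comparison, which is what we actually need, requires more care: a priori $\phi$ could have many zeros on $\beta$ without changing sign (zeros of even order, i.e. tangencies). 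The resolution is that these tangential zeros are exactly the points where $\beta$ meets $Z_{\phi}$ without crossing it, and the lower bound from Theorem \ref{TheoremI5} is in fact established by producing sign changes, so I would arrange the argument so that the conclusion of Theorem \ref{TheoremI5} is read as a lower bound on ${\rm K}^{\beta}(\phi)$ rather than merely on $|Z_{\phi}\cap\beta|$.

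Granting that the method of Theorem \ref{TheoremI5} yields
\[
{\rm K}^{\beta}(\phi) \gg_{_{\ve}} t_{\phi}^{\frac{1}{12}-\ve},
\]
the proof is then immediate: applying the left-hand inequality of \eqref{eq:Oh7},
\[
{\rm N}_{{\rm in}}^{\beta}(\phi) \geq 1 + \frac{1}{2}{\rm K}^{\beta}(\phi) \gg_{_{\ve}} t_{\phi}^{\frac{1}{12}-\ve}.
\]
Since every inert nodal domain meeting $\beta$ is in particular a nodal domain meeting $\beta$, we have ${\rm N}^{\beta}(\phi)\geq {\rm N}_{{\rm in}}^{\beta}(\phi)$, so ${\rm N}^{\beta}(\phi)\gg_{_{\ve}} t_{\phi}^{\frac{1}{12}-\ve}$ as well. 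By the Weyl law \eqref{eq:Oh2}, $t_{\phi}\to\infty$ as $n_{\phi}\to\infty$, so the right side tends to infinity, giving that ${\rm N}^{\beta}(\phi)\to\infty$ with $n_{\phi}$.

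The main obstacle, and the step deserving the most attention, is verifying that the lower bound truly counts sign changes and not merely intersection points. This is precisely why the formulation of Theorem \ref{TheoremI5} matters: the sign-change count ${\rm K}^{\beta}(\phi)$ is what feeds the topological inequality \eqref{eq:Oh7}, whereas $|Z_{\phi}\cap\beta|$ alone is insufficient because tangential intersections do not create new nodal domains. I would therefore appeal to the mechanism underlying Theorem \ref{TheoremI5}: the lower bound there is obtained by combining the sharp $L^2$-restriction lower bound of Theorem \ref{TheoremI2} with upper bounds on $\int_{\mathcal{I}}\phi\,\d^{\times}z$ over subsegments $\mathcal{I}\subset\beta$ and the subconvex $L^{\infty}$-bound. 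Such a configuration—large $L^2$-mass but small signed integrals over short pieces—forces $\phi$ to oscillate in sign, so the quantity being bounded below is genuinely the number of sign changes. With this reading, ${\rm K}^{\beta}(\phi)$ inherits the bound $t_{\phi}^{\frac{1}{12}-\ve}$, and the topological inequality completes the argument. The remaining verifications are routine, so I would keep the exposition brief and direct the reader to the proof of Theorem \ref{TheoremI5} for the production of sign changes.
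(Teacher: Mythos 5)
Your proposal is correct and follows essentially the same route as the paper: the paper deduces Theorem \ref{TheoremI6} from Theorem \ref{TheoremI5} via the topological inequality \eqref{eq:Oh7}, and your key observation--that the proof of Theorem \ref{TheoremI5} (modeled on that of Theorem \ref{TheoremTwo}) genuinely produces sign changes ${\rm K}^{\beta}(\phi)$ rather than mere intersection points--is exactly the implicit content of the paper's one-line deduction. You have in fact made explicit a subtlety the paper glosses over, but the underlying argument is identical.
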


Finally, we examine the behavior of $\phi$ in the shrinking regions near the cusp of $\mathbb{X}$. For $z$ in the standard fundamental domain for $\Gamma$, and when $\Im(z) > \frac{1}{2\pi}t_{\phi}$, \ $\phi$ is approximated very well by one term in its Fourier expansion. From this, one can execute a much finer analysis of $Z_{\phi}$ even into a slightly larger region away from the cusp. It appears that all the nodal domains in these regions are inert domains, see Section 4 (this being the analogue of the suggestion in \cite{GS12} that all the zeros of a holomorphic Hecke eigenform of weight $k$ in the region of the standard fundamental domain with $y \geq \sqrt{k}$ are real, that is they lie on the boundary $\delta$). In any case, we show that there are at least $\frac{1}{1000}t_{\phi}$ inert domains in the region $y > \frac{1}{100}t_{\phi}$. In particular,
\begin{equation}\label{eq:Oh10}
{\rm N}({\phi}) \geq {\rm N}_{{\rm in}}(\phi) \geq \frac{1}{1000}t_{\phi} .
\end{equation}
Other finer aspects of the geometry and topology of $Z_{\phi}$ near the cusp (here $y> \ve_{0}t_{\phi}$ with $\ve_{0}$ any positive constant) are discussed in Section 4.

Our results concerning these even Maass cusp forms can be extended to the Eisenstein series $E(z,\half +it)$ and to odd cusp forms on $\mathbb{X}$. The Eisenstein series are even with respect to the symmetry $\sigma$ and other than renormalizing the $L^{2}$-norms by restricting to a compact subset of $\mathbb{X}$ and invoking (\cite{LS95}, \cite{Ja94}) the proofs go through in the same way. The odd forms vanish identically on $\delta$ and the analysis of the nodal domains proceeds by working with the normal derivative of $\phi$ restricted to $\delta$. The analogue $L^2$-restriction lower bounds and $L^{\infty}$ upper bounds can be established with small modifications as indicated in Appendix C. In particular, one concludes that Theorem \ref{TheoremI6} is valid for the odd cusp forms as well. 

One can also extend these results on nodal domains to congruence subgroups as long as they carry reflection symmetries. For example if $\mathbb{Y}=\Gamma_{0}(q)\backslash\mathbb{H}$ is a surface of genus $g$, where $\Gamma_{0}(q)$ is the usual `Hecke' congruence group, then our reflection $\sigma(z)=-\overline{z}$ induces an isometry $\tilde{\sigma}$ of $\mathbb{Y}$. The fixed set of $\tilde{\sigma}$ consists of a finite union of geodesics running between cusps. We can diagonalize the space of newforms (the oldforms being taken care of at a lower level) with respect to $\tilde{\sigma}$, yielding $\tilde{\sigma}$-even and $\tilde{\sigma}$-odd forms $\phi$. Correspondingly, $\phi$ has $\tilde{\sigma}$ inert and split nodal domains. The main result, Theorem \ref{TheoremI6}, extends to this setting, namely if $\mathbb{Y}$ is fixed (as is $g$) and $\phi$ is as above, then the lower bound in Theorem \ref{TheoremI6} for the number of inert nodal domains for $\phi$ holds as $t_{\phi}\rightarrow \infty$ (see the remark after Theorem \ref{TheoremNodal}). In particular, the number of nodal domains for $\phi$ goes to infinity with $t_{\phi}$.

The techniques used to prove our main results rely heavily on the Fourier expansion of $\phi$ and especially the arithmetic interpretation of its Fourier coefficients, so that it is not clear if the results extend to compact arithmetic surfaces $\mathbb{Y}$. In a follow-up paper \cite{GRS} we show that indeed similar results hold for nodal domains of eigenfunctions on certain such $\mathbb{Y}$'s. Again, the key is that $\mathbb{Y}$ carry a reflection symmetry. To produce such arithmetic surfaces with a symmetry we take $\mathbb{Y}$ to be $\Gamma\backslash\mathbb{H}$, where $\Gamma$ corresponds to the unit group of an anisotropic rational quadratic form $f(x_{1},x_{2},x_{3})$ of signature $(2,1)$. To ensure that $\mathbb{Y}$ has a reflective symmetry $\tilde{\sigma}$, one requires that there be an integral root vector of $\underline{v}=(x_{1},x_{2},x_{3})$ of length $2$ (the sign is such that this gives a hyperbolic reflection). In this compact setting the proof of the critical Theorem \ref{TheoremI2} for $\tilde{\sigma}$ even eigenfunctions as well as various of the other ingredients, are deduced using more representation theoretical tools. We leave the precise statements and details to Part II.
%pg7%
\vskip 0.2in
\noindent{\small {\bf Acknowledgments.}} \\
AR and PS thank the BSF grant. All three authors have been partially supported by grant NSFDMS-0554345.\\
AR thanks the ISF Center of Excellency grant and the Veblen Fund at IAS. He was also supported by the ERC grant 291612.\\
AG thanks the Institute for Advanced Study and Princeton University for making possible visits during part of the years 2010-2012  when much of this work took place. He also acknowledges support from the Ellentuck Fund at IAS and the Vaughn Fund at OSU.\\
The software Mathematica$^{\copyright}$ was used on a dual-core PC to generate some of the images, using data from \cite{Th05}.
\vskip 0.2in

%%%%%%%%%%%%%%%%%%%%%%%%%%%%%%%%%%%%%%%%%%%%%%%%%%%%%%%%%%%%%%%%%%%%%%%%%%%%%%%
%%%%%%%%%%%%%%%%%%%%%%%%% SECTION 2                 %%%%%%%%%%%%%%%%%%%%%%%%%%%
%%%%%%%%%%%%%%%%%%%%%%%%%%%%%%%%%%%%%%%%%%%%%%%%%%%%%%%%%%%%%%%%%%%%%%%%%%%%%%%
%%%%%%%%%%%%%%%%%%%%%%%%%%%%%%%%%%%%%%%%%%%%%%%%%%%%%%%%%%%%%%%%%%%%%%%%%%%%%%%
\section{Nodal domains and crossings.} We give in this Section a discussion related to equations \eqref{eq:Oh4} to \eqref{eq:Oh7} in the introduction and a proof that segments of horocycles and geodesics cannot be contained in nodal lines. 
%%%%%%%%%%%%%%%%%%%%%%%%%%%%%%%%%%%%%%%%%%%%%%%%%%%%%%%%%%%%%%%%%%%%%%%%%%%%%%%
%%%%%%%%%%%%%%%%%%%%%%%%%%%%%%%%%%%%%%%%%%%%%%%%%%%%%%%%%%%%%%%%%%%%%%%%%%%%%%%
\subsection{Inert nodal domains and zeros.\\}\quad Let $\mathbb{X} = \Gamma\backslash\mathbb{H}$ be the modular surface and $\sigma : \mathbb{X} \rightarrow \mathbb{X}$ be the isometry of $\mathbb{X}$ (as a hyperbolic surface) induced by the reflection of $\mathbb{H}$ given by 
\[
\sigma(x+iy) = -x + iy.
\]
In what follows, $\phi(z)$ will be an even Hecke-Maass cusp form, so that 
\begin{equation}\label{eq:a1}
\phi(\sigma z)= \phi(z).
\end{equation}
Let $\delta$ be the set of points on $\mathbb{X}$ fixed by $\sigma$ so that $\delta = \delta_{1}\cup \delta_{2} \cup \delta_{3}$ as shown below (here we take the standard fundamental domain $\mathcal{F} = \mathcal{F}^{-} \cup \mathcal{F}^{+}$).
\begin{figure}[ht]
  \begin{center}
    \includegraphics[width=3.5in]{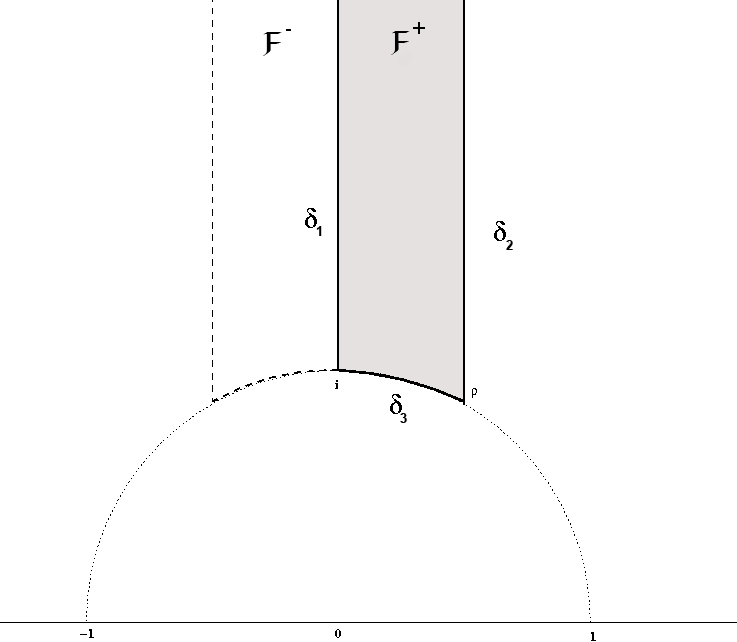}
  \end{center}
  \caption{}
\end{figure}
The nodal line of $\phi$ is the set
\[
Z_{\phi} = \{z\in \mathbb{X}: \phi(z)=0\},
\]
while the nodal domains of $\phi$ are the (finitely many) connected components of $\mathbb{X}\setminus Z_{\phi}$. We denote them by $\Omega_{1}, \Omega_{2}, ..., \Omega_{N}$ where $N=N(\phi)$ is their number. From \eqref{eq:a1}, it is clear that if $\Omega$ is a nodal domain (of $\phi$) then so is $\sigma(\Omega)$. Moreover, since the $\Omega_{j}$'s are disjoint, they fall into two types:
\begin{itemize}
\item[(1)] Inert domains $\mathcal{V}_1, ..., \mathcal{V}_R$ where $\mathcal{V}_j = \Omega_{j'}$ for some $j'$ and $\sigma(\mathcal{V}_j) = \mathcal{V}_j$;
\item[(2)] Split domains $\mathcal{W}_1, ..., \mathcal{W}_S$, $\sigma(\mathcal{W}_1), ..., \sigma(\mathcal{W}_S)$ where $\mathcal{W}_j = \Omega_{j'}$ for some $j'$ \\and $\sigma(\mathcal{W}_j)\cap \mathcal{W}_j = \emptyset$.
\end{itemize}

Thus the list of $\Omega$'s is $\mathcal{V}_1, ..., \mathcal{V}_R, \mathcal{W}_1, ..., \mathcal{W}_S, \sigma(\mathcal{W}_1), ..., \sigma(\mathcal{W}_S)$ so that
\begin{equation}\label{eq:a2}
N(\phi) = R_{\phi} + 2S_{\phi}.
\end{equation}

\noindent Geometrically, we can recognize the two types of nodal domains according as $\Omega_{j}\cap\delta$ is empty or not. If $\Omega_{j}\cap\delta$ is nonempty, then it is an open nonempty subset of $\delta$, so that the invariance of $\delta$ under $\sigma$ ensures that $\Omega_{j}\cap\sigma(\Omega_{j})$ is nonempty and so $\Omega_{j}=\sigma(\Omega_{j})$. If on the other hand $\Omega_{j}\cap\delta$ is empty, then being open and connected, $\Omega_{j}$ lies entirely in $\mathcal{F}^{-}$ or $\mathcal{F}^{+}$ as indicated in Figure 1, so that $\Omega_{j}$ does not meet $\sigma(\Omega_{j})$ since $\sigma(\mathcal{F}^{+}) = \mathcal{F}^{-}$. Thus, inert domains are exactly the domains meeting $\delta$ and can be thought of as boundary nodal domains for $\mathcal{F}^{+}$. They are the analogues of what are called real zeros for holomorphic forms in \cite{GS12}.
%pg8%
Topologically, $\delta$ is a circle when compactified at the cusp $i\infty$. The vanishing of $\phi$ at the cusp and the normalization of the first Fourier coefficient (see Section 3) ensures that $\phi$ (restricted to $\delta$) is positive in a neighborhood of $i\infty$. Let 
\[
\tilde{\mathcal{S}} = \big{\{} i\infty = \tilde{K}_{1}, \tilde{K}_{2}, ..., \tilde{K}_{m} \big{\}}
\]
with $m = m_{\phi} \geq 1$, denote the zeros of $\phi$ arranged cyclically on $\delta$. Denote by $\mathcal{S}$ the subset $\{K_{1}, ..., K_{n}\}$ of the $\tilde{K}_{j}$'s at which $\phi$ changes sign. Clearly $n = n_{\phi}$ is even  and $n<m$ (since $\tilde{K}_{1} \notin \mathcal{S}$). We call $m_{\phi}$ the number of zeros of $\phi$ on $\delta$ and $n_{\phi}$ the number of sign changes.

Each inert nodal domain $\mathcal{V}_{j}$ meets $\delta \setminus \tilde{\mathcal{S}}$ and the intersection consists of a disjoint set of segments of the form $(\tilde{K}_{j},\tilde{K}_{j+1})$. Moreover, different inert domains give rise to different disjoint sets of such segments. It follows that the number of inert $\mathcal{V}_{j}$'s is at most the number of intervals $(\tilde{K}_{j},\tilde{K}_{j+1})$ so that we have the upper bound $R_{\phi} \leq m_{\phi}$.

To obtain a lower bound for $R_{\phi}$, we first assume that the nodal line $Z_{\phi}$ is nonsingular (that is $\phi$ and $\triangledown\phi$ never vanish together) so that $m_{\phi} = n_{\phi} + 1$. In this case, $Z_{\phi}$ has no self-crossings and consists of say $\nu$ simple closed curves $\ell_{1}$, ..., $\ell_{\nu}$ (each an embedded circle in $\mathbb{X})$. If $K\in \mathcal{S}$, then $\ell_{i}$ passes through $K$ for some $i$ and being an embedded circle must cross $\delta$ at some other point $K' \in \mathcal{S}\cup\{i\infty\}$. The resulting reflected curve (by $\sigma$) yields a closed curve in $\mathbb{X}$ which is one of the components of $Z_{\phi}$ and which we denote by $\ell_{K,K'}$. These curves meet $\delta$ at exactly two distinct points and every point of $\mathcal{S}$ appears once. Thus we get exactly $\frac{1}{2}n$ simple closed curves in $\mathbb{X}$ corresponding to the sign changes of $\phi$ on $\delta$. The remaining $\nu - \frac{1}{2}n$ closed curves come in pairs and do not meet $\delta$. It follows from this, by induction on $\nu$ and $n$, that there are exactly $\frac{1}{2}n +1$ nodal domains whose boundary contains one of the $\ell_{K,K'}$'s. In other words, under the nonsingular assumption, there are $\frac{1}{2}n +1$ inert domains and $\nu - (\frac{1}{2}n +1)$ split domains.
 
%pg9%
In general, $Z_{\phi}$ consists of piecewise analytic arcs with singularities formed by crossings (cusps) and self-intersections. However these may be removed locally one at a time at the cost of possibly decreasing the number of inert nodal domains but not changing $\mathcal{S}$ or at worst not increasing the quantity $R_{\phi} - (\frac{1}{2}n_{\phi} +1)$. In more detail, suppose that locally (away from $\delta$), $Z_{\phi}$ looks like the figure on the left below with $\pm$ denoting the sign of $\phi$ near the singularity and $\Omega_{i}$ the corresponding pieces of the complement. 

\begin{figure}[ht]
  \begin{center}
    \includegraphics[width=3.5in]{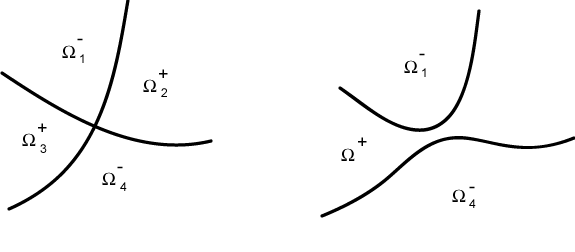}
  \end{center}
  \caption{}
\end{figure}

Deforming the curves near the singularity (as shown in the figure on the right above) would possibly decrease the number of connected components by one and keeps everything away from the singularity the same.
If the singularity looks like that below, with $p$ and $q$ joined as shown, then we remove the arc joining them which may again decrease the number of domains without changing anything else.

\begin{figure}[ht]
  \begin{center}
    \includegraphics[width=3.5in]{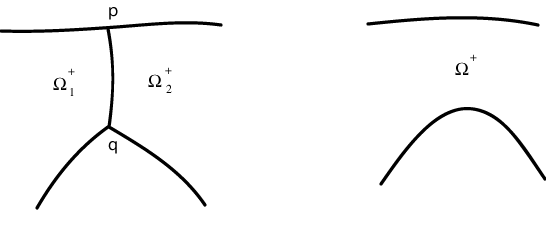}
  \end{center}
  \caption{}
\end{figure}

Next, consider the figure on the left below where $Z_{\phi}$ meets the boundary $\delta$. Here, $p$ and $q$ are sign changes of $\phi$ on $\delta$. To get the figure on the right, we deform the arcs containing $p$ and $q$ into each other so that $p$ and $q$ are replaced by the point $t$ which is not a sign change. Here we reduce the value of $n_{\phi}$ by two and lose exactly one inert nodal domain, so that $R_{\phi} - (\frac{1}{2}n +1)$ is unchanged and the singularity $s$ is removed.

\begin{figure}[ht]
  \begin{center}
    \includegraphics[width=3.5in]{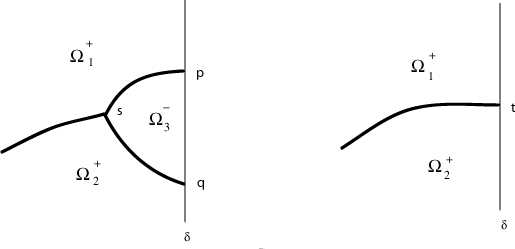}
  \end{center}
  \caption{}
\end{figure}

Last, we consider the case when one encounters a possible cusp singularity on the boundary as shown below. Here, there are two possible deformations. Suppose that $\phi$ is positive on $\Omega_{1}$ and $\Omega_{3}$, and negative on $\Omega_{2}$. Then, we may remove the singularity $s$ which is not a sign change, as shown in the middle figure. In this way, the number of inert nodal domains may have possibly decreased while the number of sign changes remains unchanged. The same conclusion holds if  $\phi$ is positive on $\Omega_{1}$ and $\Omega_{2}$, and negative on $\Omega_{3}$. Now we may remove the arc between $\Omega_{1}$ and $\Omega_{2}$ as shown on the right.
%pg10%
\begin{figure}[!ht]
  \begin{center}
    \includegraphics[width=3.5in]{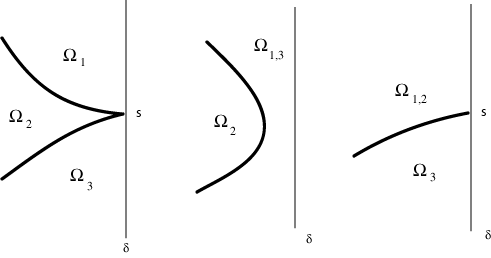}
  \end{center}
  \caption{}
\end{figure}

In this way, we remove all crossings and singularities so that when completed, $Z_{\phi}$ will be a nonsingular curve and the quantity $R_{\phi} - (\frac{1}{2}n_{\phi} +1)$ will not have increased. We then have the following

\begin{thm}\label{TheoremNodal} Let $R_{\phi}$ denote the number of inert nodal domains of $\phi$. Suppose $m_{\phi}$ denotes the number of zeros of $\phi$ on $\delta$, and $n_{\phi}$ the number of sign changes. Then
\[
\frac{1}{2}n_{\phi} + 1 \leq R_{\phi} \leq m_{\phi}.
\]
If $Z_{\phi}$ is nonsingular, then there are exactly $\frac{1}{2}n_{\phi} + 1$ inert nodal domains of $\phi$.
\end{thm}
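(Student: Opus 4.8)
The plan is to prove the two inequalities separately and to treat the nonsingular statement as the equality case, reducing the general lower bound to it by a desingularization argument that tracks a single integer invariant.

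For the upper bound $R_{\phi}\leq m_{\phi}$ I would argue entirely on the compactified circle $\delta$. The $m_{\phi}$ zeros $\tilde{K}_{1},\dots,\tilde{K}_{m}$ cut $\delta$ into $m_{\phi}$ open arcs $(\tilde{K}_{j},\tilde{K}_{j+1})$ (indices taken cyclically), and on each such arc $\phi$ is nonzero of constant sign, so each arc lies in a single nodal domain; since $\delta$ is $\sigma$-fixed, that domain meets $\delta$ and is therefore inert. As every inert $\mathcal{V}_{j}$ meets $\delta\setminus\tilde{\mathcal{S}}$ it contains at least one arc, and distinct inert domains contain disjoint collections of arcs. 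Hence $R_{\phi}$ is at most the number of arcs, namely $m_{\phi}$.

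For the nonsingular statement I would first note that when $\phi$ and $\nabla\phi$ never vanish together, every zero of $\phi$ on $\delta$ except the cusp is a transversal crossing, hence a sign change, so $m_{\phi}=n_{\phi}+1$. Here $Z_{\phi}$ is a disjoint union of embedded circles $\ell_{1},\dots,\ell_{\nu}$. Using the $\sigma$-symmetry I would pair up the crossings: a circle through a sign-change point $K\in\mathcal{S}$ must leave $\mathcal{F}^{+}$ through a second point $K'\in\mathcal{S}\cup\{i\infty\}$, and reflecting gives a well-defined $\sigma$-invariant component $\ell_{K,K'}$ meeting $\delta$ in exactly $K$ and $K'$. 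This uses up $\tfrac{1}{2}n_{\phi}$ of the circles and matches the $n_{\phi}$ points of $\mathcal{S}$ in a non-crossing way; the remaining circles avoid $\delta$ and come in $\sigma$-conjugate pairs, so they create regions that do not touch $\delta$ and thus no new inert domains. I would then induct on $\nu$, peeling off an innermost $\ell_{K,K'}$ (one whose arc in $\mathcal{F}^{+}$ cuts off a region of $\delta$ containing no other endpoints): removing it merges two adjacent inert regions and deletes two points of $\mathcal{S}$, lowering both $R_{\phi}$ by one and $\tfrac{1}{2}n_{\phi}$ by one. The base case $n_{\phi}=0$ gives the single inert domain containing all of $\delta\setminus\{i\infty\}$, yielding the exact count $R_{\phi}=\tfrac{1}{2}n_{\phi}+1$.

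For the lower bound in general I would set $Q=R_{\phi}-\big(\tfrac{1}{2}n_{\phi}+1\big)$ and run the local surgeries indicated before the theorem—smoothing an interior crossing, deleting an interior connecting arc, merging two boundary sign changes into a non-sign-change, and the two admissible resolutions of a boundary cusp—checking in each case that $Q$ never increases: interior moves may drop a component of the complement without altering $\mathcal{S}$, while the boundary merge removes two points of $\mathcal{S}$ and one inert domain at once, leaving $Q$ fixed. After finitely many moves $Z_{\phi}$ becomes nonsingular, where $Q=0$ by the previous paragraph; since $Q$ did not increase along the way, the original $\phi$ has $Q\geq 0$, i.e. $R_{\phi}\geq\tfrac{1}{2}n_{\phi}+1$. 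The main obstacle is exactly this bookkeeping: one must verify that the listed local models exhaust the singularities of a real-analytic nodal set meeting the symmetry locus $\delta$ and that the chosen resolution is always $Q$-nonincreasing. The delicate point is the boundary-cusp case, where the sign pattern on the three adjacent regions dictates which of the two resolutions is permissible, and one must confirm that the permissible one removes the cusp without turning a non-sign-change into a sign change.
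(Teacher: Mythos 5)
Your proposal is correct and follows essentially the same route as the paper: the arc-counting argument on $\delta$ for the upper bound, the $\sigma$-invariant pairing of embedded circles $\ell_{K,K'}$ with induction for the nonsingular case, and the local desingularization surgeries tracking the invariant $R_{\phi}-\big(\tfrac{1}{2}n_{\phi}+1\big)$ for the general lower bound. The delicate bookkeeping points you flag (exhaustiveness of the local models and the sign-dependent resolution of boundary cusps) are exactly the ones the paper also treats at the level of case-by-case verification, so there is no substantive gap beyond what the paper itself leaves to the reader.
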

\begin{rem} An analog of this lower bound also holds for other arithmetic surfaces as mentioned in the Introduction. However, one must take into account the genus $g$ of the surface. Then, the lower bound takes the shape
\[
R_{\phi} \geq \frac{1}{2}n_{\phi} + 1 -g,
\]
so that for a fixed surface, $R_{\phi} \gg n_{\phi}$.
\end{rem}

%%%%%%%%%%%%%%%%%%%%%%%%%%%%%%%%%%%%%%%%%%%%%%%%%%%%%%%%%%%%%%%%%%%%%%%%%%%%%%%
% pg 11  %%%%%%%%%%%%%%%%%%%%%%%%%%%%%%%%%%%%%%%%%%%%%%%%%%%%%%%%%%%%%%%%%%%%%
\subsection{Segments not in $Z_{\phi}$.\\}\quad
We end this Section by showing that the restriction of $\phi$ to a geodesic segment or a horocycle segment cannot be identically zero. That is to say that $Z_{\phi}$ cannot contain a geodesic segment or a horocycle segment.

We begin with a horocycle segment $\mathcal{C}\subset \mathbb{X}$. If $\phi \big{|}_{\mathcal{C}}\equiv 0$  then by analytic continuation $\phi \big{|}_{\mathcal{C}^{*}} \equiv 0$, where $\mathcal{C}^{*} \subset \mathbb{H}$ is the complete horocycle extending $\mathcal{C}$. It is well known \cite{Hed36} from the theory of the horocycle flow, that the projection of $\mathcal{C}^{*}$ in $\mathbb{X}$ is either a closed horocycle or it is dense in $\mathbb{X}$. In case of the latter, clearly $\phi \equiv 0$ in $\mathbb{X}$, which is a contradiction. On the other hand, if the projection of $\mathcal{C}^{*}$ in $\mathbb{X}$ is a closed horocycle $\bar{\mathcal{C}}$, then Theorem \ref{TheoremI1} is applicable (see Section 5 and Theorem 5.1) so that we have an essentially sharp, and in particular a non-zero, lower bound for the $L^{2}$-restriction of $\phi$ to $\bar{\mathcal{C}}$, which contradicts that $\phi \big{|}_{\mathcal{C}^{*}} \equiv 0$. One can also use the (exponentially small) lower bound of \cite{Ju11} which has the advantage of not making any arithmetic assumptions. However, we give an simpler argument as follows: suppose $\bar{\mathcal{C}}$ is the set $\{x+iY: -\half< x \leq \half\}$ with a fixed $Y>0$. Then for any integer $n \geq 1$,  
\[
\lVert \phi\Big{|}_{\bar{\mathcal{C}}}\rVert ^{2}  = \int_{-\half}^{\half}|\phi(x+iY)|^{2}\ \d x \gg |\lambda_{\phi}(n)K_{it_{\phi}}(2\pi nY)|^{2},
\]
where $\lambda_{\phi}(n)$ are the Hecke Fourier coefficients and $K$ is the Bessel-MacDonald function (see Sections 3 and 5 for details and notation). It is known that the Bessel function above does not vanish if we choose $n > \frac{t_{\phi}}{Y}$. Moreover, the Hecke relations ensures that $|\lambda_{\phi}(n)|>\half $ for all $n$ chosen to be either a prime or its square. Thus the $L^{2}$-norm above does not vanish. This completes the proof of the claim for horocyclic segments.

For geodesic segments $\beta$, assume that $\phi \big{|}_{\beta} \equiv 0$. First, if $ \beta \subset \delta$ then along $\beta$ we have both $\phi \big{|}_{\beta} \equiv 0$ and $\partial_{n}\phi \big{|}_{\beta} \equiv 0$ so that by the reflection principle (or Carleman estimates)  $\phi$ vanishes identically on $\mathbb{X}$. Hence we may assume that $\beta$ is disjoint from $\delta$. Let $\tilde{\beta}$ be the lifted complete geodesic in $\mathbb{H}$, so that again from analytic continuation $\phi \big{|}_{\tilde{\beta}} \equiv 0$. Hence by the reflection principle, as an eigenfunction on $\mathbb{H}$, $\phi$ satisfies $\phi(R_ {\tilde{\beta}}z) = -\phi(z)$, where $R_ {\tilde{\beta}}$ is the reflection of $\mathbb{H}$ in $\tilde{\beta}$. Moreover, we also have $\phi(R_ {j}z) = \phi(z)$ where $R_{j}$ is the reflection in $\tilde{\delta_{j}}$, for $j=1,2$ and $3$. Thus, $\phi^{2}(z)$ is invariant under the subgroup $\Lambda$ of the isometries of $\mathbb{H}$ generated by $R_{1},R_{2}, R_{3}$ and $R_{\tilde{\beta}}$. The reflection group $<R_{1},R_{2}, R_{3}>$ is known to be a maximal discrete subgroup of the isometry group of $\mathbb{H}$. Since $R_{\tilde{\beta}} \notin <R_{1},R_{2}, R_{3}>$ (since we are assuming here that $\tilde{\beta}({\rm mod} \Gamma)$ is not contained in $\delta$), we conclude that $\Lambda$ is not a discrete subgroup  of the isometry group of $\mathbb{H}$. Hence the same is true of the subgroup of words of even length in $R_{1},R_{2}, R_{3}$ and $R_{\tilde{\beta}}$, as a subgroup of $\SL_{2}(\mathbb{R})$. In particular, the closure of this last group contains a 1-parameter connected subgroup of $\SL_{2}(\mathbb{R})$ which we denote by $B$. So $\phi^{2}$ is invariant under the action of $B$ on $\mathbb{H}$. By connectedness, $\phi$ is also invariant under $B$ so that $\phi$ is an eigenfunction that lives on $B\backslash\mathbb{H}$. Such an eigenfunction is a function of one variable (satisfying an ODE) and unless it is identically zero cannot be invariant under $\Gamma$. Thus $\phi \equiv 0$ on $\mathbb{H}$, a contradiction, proving that $\phi \big{|}_{\beta} \equiv 0$ is impossible.

%%%%%%%%%%%%%%%%%%%%%%%%%%%%%%%%%%%%%%%%%%%%%%%%%%%%%%%%%%%%%%%%%%%%%%%%%%%%%%%%%%%%%%%%%%%%%%%%%%%%%%%%%%%%%%%%%%%%%%%%%%%%%%%%%%%%%%%%%%          SECTION 3        %%%%%%%%%%%%%%%%%%%%%%%%%%%%%%%%%%%%%%%%%%%%%%%%%%%%%%%%%%%%%%%%%%%%%%%%%%%%%%%%%%%%%%%%%%%%%%%%%%%% pg 12 %%%%%%%%%%%%%%%%%%%%%%%%%%%%%%%%%%%%%%%%%%%
\section{Preliminaries, approximation theorems and QUE.}\label{Preliminaries} %S1

If $\phi(z)$ is a Maass eigenform associated with the eigenvalue $\lambda = \frac{1}{4} + t_{\phi}^{2}$ for the Laplace operator on the modular surface $\mathbb{X} = \SL_{2}(\mathbb{Z})\backslash\mathbb{H}$,  then recall that it has a Fourier expansion of the type
\begin{equation}\label{eq:b1}
\phi(z) = \sum_{n \neq 0} \rho_{\phi}(n) y^{\frac{1}{2}}{\rm K}_{it_{\phi}}(2\pi |n|y)e(nx),
\end{equation}
where $z = x + iy$, ${\rm K}_{\nu}(y)$ the MacDonald-Bessel function and $e(x)= \exp(2\pi ix)$. In what follows, we will always assume that our eigenform is $L^{2}$-normalized, that it is a Hecke-eigenform and that $|t_{\phi}| \rightarrow \infty$. It is then known that (see Iwaniec \cite{Iw90} and Hoffstein-Lockhart \cite{HL94}) that for $\ve >0$
\begin{equation}\label{eq:b2}
t_{\phi}^{-\ve} \ll |\rho_{\phi}(1)|^{2}e^{-\pi t} \ll t_{\phi}^{\ve}.
\end{equation}
It will be convenient to use the notation 
\begin{equation}\label{eq:b300}
\tilde{\rho}_{\phi}(n) = \rho_{\phi}(n)e^{-\frac{\pi}{2} t_{\phi}}.
\end{equation}
One has
\begin{equation}\label{eq:b3}
\rho_{\phi}(n)= \lambda_{\phi}(n)\rho_{\phi}(1)\quad  \text{and} \quad \tilde{\rho}_{\phi}(n)= \lambda_{\phi}(n)\tilde{\rho}_{\phi}(1)
\end{equation}
where $\lambda_{\phi}(n)$, with $\lambda_{\phi}(1)=1$, are the Hecke eigenvalues which by \cite{KS03} are known to satisfy
\begin{equation}\label{eq:b4}
\lambda_{\phi}(n) \ll_{\ve} n^{\theta +\ve},
\end{equation}
with $\theta=\frac{7}{64}$.
and are expected to satisfy the Ramanujan-Selberg Conjecture
\begin{equation}\label{eq:b5}
 \lambda_{\phi}(n) \ll_{\ve} n^{\ve}.
\end{equation}\\

\subsection{Approximation theorems}\quad 

We recall here the asymptotic behavior of ${\rm K}_{\nu}(u)$ for $\nu=ir$ with large $r>0$ and all $u>0$. 

%%%%%%%%%%%%%%%%%%%%%%%%%%%%%%%%%%%%%% Lemma 1 %%%%%%%%%%%%%%%%%%%%%%%%%%%%%%%%%%%%%%%%%%%%%%
%%%%%%%%%%%%%%%%%%%%%%%%%%%%%%%%%%%%%%%%%%%%%%%%%%%%%%%%%%%%%%%%%%%%%%%%%%%%%%%%%%%%%%%%%%%%%
\begin{lem}\label{LemmaOne} Let $C$ be a sufficiently large positive constant and $r>0$.
\begin{itemize}
\item[(1)]  If $0 < u < r - Cr^{\frac{1}{3}}$, then
\[
e^{\frac{\pi}{2}r}{\rm K}_{ir}(u) = \frac{\sqrt{2\pi}}{(r^{2} - u^{2})^{\frac{1}{4}}}\sin \left(\frac{\pi}{4} +r {\rm H}(\frac{u}{r})\right) \left(1 + O(\frac{1}{r {\rm H}(\frac{u}{r})})\right),
\]
\item[(2)]  If $u > r + Cr^{\frac{1}{3}}$, then
\[
e^{\frac{\pi}{2}r}{\rm K}_{ir}(u) = \sqrt{\frac{\pi}{2}}\frac{1}{(u^{2} - r^{2})^{\frac{1}{4}}}e^{-r{\rm H}(\frac{u}{r})} \left(1 + O(\frac{1}{r {\rm H}(\frac{u}{r})})\right),
\]
\item[(3)] If $|u - r| \leq Cr^{\frac{1}{3}}$, then
\[
e^{\frac{\pi}{2}r}{\rm K}_{ir}(u) = \pi \left(\frac{2}{r}\right)^{\frac{1}{3}}{\rm Ai}\left((u-r)(\frac{u}{2})^{-\frac{1}{3}}\right) + O(r^{-\frac{2}{3}}),
\]
\end{itemize}
where ${\rm Ai}(x)$ is the Airy function. The implied constants are absolute.
\end{lem}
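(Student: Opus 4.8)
The plan is to derive all three asymptotics from a single integral representation by the method of steepest descent, tracking the saddle points as $u$ crosses the turning point $u=r$. Starting from
\[
{\rm K}_{ir}(u) = \tfrac12\int_{-\infty}^{\infty} e^{\,g(t)}\,\d t, \qquad g(t) = irt - u\cosh t,
\]
(valid since $e^{-u\cosh t}$ is even, so the imaginary part drops out), the saddle points solve $g'(t) = ir - u\sinh t = 0$, i.e. $\sinh t = ir/u$. The function ${\rm H}$ is precisely the action collected at these saddles: for $0<x<1$ one has ${\rm H}(x) = \sqrt{1-x^2} - \log\big((1+\sqrt{1-x^2})/x\big)$, while for $x>1$, ${\rm H}(x) = \sqrt{x^2-1} - \arccos(1/x)$; both vanish at $x=1$, with ${\rm H}(x)\asymp|x-1|^{3/2}$ there.

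For the oscillatory range $0<u<r - Cr^{1/3}$ (part (1)) we have $r/u>1$ and two saddles $t_{\pm} = \pm\operatorname{arccosh}(r/u) + \tfrac{i\pi}{2}$. Deforming the real line into a steepest-descent contour through both and computing $e^{\frac{\pi}{2}r}e^{g(t_\pm)} = e^{\mp i r {\rm H}(u/r)}$ together with $g''(t_\pm) = \mp i\sqrt{r^2-u^2}$ (so the Gaussian factor is $\sqrt{2\pi}\,(r^2-u^2)^{-1/4}$ and the steepest-descent direction supplies the phase $\pm\pi/4$), the two contributions combine into the stated $\sin(\pi/4 + r{\rm H}(u/r))$. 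For the decaying range $u > r + Cr^{1/3}$ (part (2)), now $r/u<1$ and there is a single relevant saddle $t_0 = i\arcsin(r/u)$ on the imaginary axis, where $g''(t_0) = -\sqrt{u^2-r^2}<0$ and $e^{\frac{\pi}{2}r}e^{g(t_0)} = e^{-r{\rm H}(u/r)}$ (the factor $e^{\pi r/2}$ cancelling exactly against the $\pi r/2$ hidden in $r\arccos(r/u)$); a single pass gives $\sqrt{\pi/2}\,(u^2-r^2)^{-1/4}e^{-r{\rm H}(u/r)}$, the extra $\tfrac12$ coming from the prefactor of the integral representation.

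Part (3), the turning-point range $|u-r|\le Cr^{1/3}$, is where $t_{\pm}$ coalesce and the Gaussian approximation fails. Here I would pass to the cubic normal form via the Chester--Friedman--Ursell / Olver change of variable $g(t) = \tfrac13 s^3 - \zeta s + \text{const}$, under which the integral becomes an Airy integral; the coalescence parameter $\zeta$ is a smooth function of $u/r$ with $\zeta \sim (u-r)(u/2)^{-1/3}$ to leading order near $u=r$, yielding $\pi(2/r)^{1/3}{\rm Ai}\big((u-r)(u/2)^{-1/3}\big)$ with absolute error $O(r^{-2/3})$.

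The main obstacle is the uniform control of the error terms, above all at the transition. Away from $u=r$ the relative error is governed by the next term of the steepest-descent expansion, which is suppressed by the action and hence is $O\big(1/(r{\rm H}(u/r))\big)$; this degrades exactly as $u\to r$, where $r{\rm H}(u/r)\asymp C^{3/2}$ on the boundary $u = r\mp Cr^{1/3}$. Choosing $C$ large makes the three regimes overlap with compatible error bounds ($O(1/(r{\rm H}))$ small on the boundary, $O(r^{-2/3})$ valid in the Airy window), and verifying this matching -- together with uniformity as $u\to 0^+$ and $u\to\infty$, and the analyticity of $\zeta$ across the turning point -- is the delicate part. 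As an alternative one may quote the uniform asymptotic expansions for Bessel functions of purely imaginary order due to Olver and to Dunster and specialize them; the argument above simply extracts the leading terms they supply.
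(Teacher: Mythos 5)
Your proposal is correct in substance but takes a genuinely different route from the paper. The paper disposes of this lemma in a single sentence: it quotes the uniform asymptotic expansions of ${\rm K}_{ir}(u)$ in terms of Airy functions due to Balogh \cite{Ba66}, \cite{Ba67} (which rest on Olver's turning-point theory for the Bessel differential equation, not on the integral), and then reads off the three regimes from the standard large-argument asymptotics of ${\rm Ai}$; all uniformity questions are thereby outsourced to the cited references. You instead work directly from the representation ${\rm K}_{ir}(u)=\tfrac12\int e^{irt-u\cosh t}\,\d t$ by steepest descent: the two conjugate saddles $\pm\operatorname{arccosh}(r/u)+\tfrac{i\pi}{2}$, whose actions and descent directions $\mp\tfrac{\pi}{4}$ recombine into $\sin(\tfrac{\pi}{4}+r{\rm H}(\tfrac{u}{r}))$, the single saddle $i\arcsin(r/u)$ in the decay range (your bookkeeping of $e^{\pi r/2}$ and of the prefactor $\tfrac12$ yielding $\sqrt{\pi/2}$ is right), and the Chester--Friedman--Ursell cubic normal form producing the Airy term in the transition zone, which indeed reproduces $\pi(2/r)^{1/3}{\rm Ai}\bigl((u-r)(u/2)^{-1/3}\bigr)$. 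What the paper's route buys is brevity and rigor by citation; what yours buys is a self-contained argument that explains where ${\rm H}$ and the $\pm\tfrac{\pi}{4}$ phases come from, at the cost of the uniform error analysis near $u=r\mp Cr^{1/3}$ (where $r{\rm H}\asymp C^{3/2}$), which you correctly flag as the real labor and which is precisely what Balogh's expansions already encode. One slip to fix: your explicit formula for ${\rm H}$ on $0<x<1$ is the negative of the paper's non-negative ${\rm H}(\xi)=\operatorname{arccosh}(1/\xi)-\sqrt{1-\xi^{2}}$, and your assignment $e^{\frac{\pi}{2}r}e^{g(t_{\pm})}=e^{\mp ir{\rm H}}$ inherits this flip (with the paper's convention the saddle $t_{+}$ carries $e^{+ir{\rm H}}$); the two sign errors cancel, so the final sine agrees with the lemma, but as written the intermediate claims are inconsistent with the statement being proved.
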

Here the non-negative function $H(\xi)$ (shown in Fig.8) is given by
\begin{equation}\label{eq:b6}
{\rm H}(\xi) = 
\begin{cases}
\hfill \text{arccosh}(\frac{1}{\xi}) - \sqrt{1-\xi^{2}} & :\ \  0<\xi \leq 1,\\
\hfill \sqrt{\xi^{2}-1} - \text{arcsec}(\xi) & :\ \  \xi > 1 .\\
\end{cases}
\end{equation}

\begin{figure}[!ht]

  \begin{center}

    \includegraphics[width=3.0in]{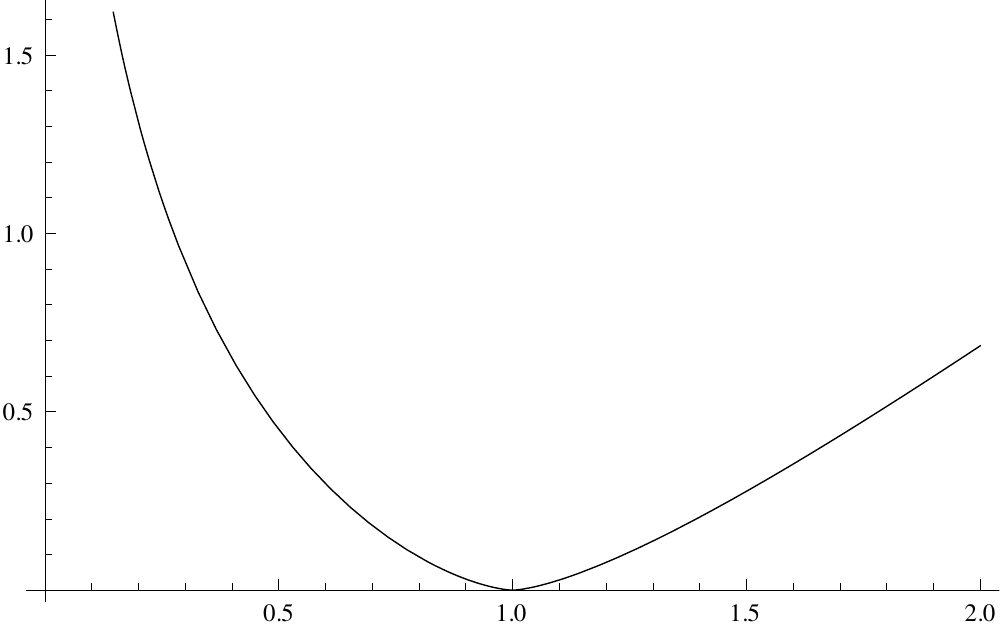}

  \end{center}
 \caption{} 
\end{figure}
%pg13%
For $0<\xi\leq 1$, it is a decreasing function, while for $\xi > 1$ it is increasing. Near $\xi =1$, it has order of magnitude $ |\xi^{2} - 1|^{\frac{3}{2}}$ and near $\xi =0$ it has a logarithmic singularity.

The lemma follows readily from asymptotic expansions for the Airy function, and from uniform asymptotic expansions for the K-Bessel function by Balogh \cite{Ba66}, \cite{Ba67} (based on earlier work of Olver).
%%%%%%%%%%%%%%%%%%%%%%%%%%%%%%%%%%%%% corollary %%%%%%%%%%%%%%%%%%%%%%%%%%%%%%%%%%%%%%%%%%%%%%%%%%%%%%
\begin{cor}\label{CorlOne} Uniformly in $u>0$ and $r$ sufficiently large, one has
\[
{\rm K}_{ir}(u) \ll
\begin{cases}
\hfill (r^{2} - u^{2})^{-\frac{1}{4}}e^{-\frac{\pi}{2}r},\\
 u^{-\frac{1}{2}}e^{-u},\\
 r^{-\frac{1}{3}}e^{-\frac{\pi}{2}t}.\\
\end{cases}
\]
\end{cor}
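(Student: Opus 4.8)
The plan is to derive each of the three displayed bounds from the correspondingly numbered case of Lemma~\ref{LemmaOne}, after partitioning $(0,\infty)$ into the three ranges $0<u\le r-Cr^{1/3}$, $\ |u-r|\le Cr^{1/3}$, and $u\ge r+Cr^{1/3}$ on which cases (1), (3), (2) respectively apply (these exhaust $u>0$, so the three estimates together give a uniform bound). The first and third bounds are essentially immediate. In the oscillatory range I take absolute values in case (1): since $|\sin|\le1$, and since $H(u/r)$ is bounded below throughout this range (near $\xi=u/r=1$ one has $H(\xi)\asymp|1-\xi^2|^{3/2}$, so already at $\xi=1-Cr^{-2/3}$ one gets $rH\gg1$, and $H$ only grows as $\xi$ decreases), the error factor $1+O(1/(rH))$ is $O(1)$ and one reads off $e^{\pi r/2}{\rm K}_{ir}(u)\ll(r^2-u^2)^{-1/4}$. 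In the transition range, the Airy function is bounded on all of $\mathbb{R}$ and its argument $(u-r)(u/2)^{-1/3}$ stays bounded, so case (3) gives $e^{\pi r/2}{\rm K}_{ir}(u)=\pi(2/r)^{1/3}{\rm Ai}(\cdots)+O(r^{-2/3})\ll r^{-1/3}$.

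The real work is the second bound. From case (2), $\ {\rm K}_{ir}(u)\ll(u^2-r^2)^{-1/4}\exp(-\tfrac{\pi}{2}r-rH(u/r))$. Writing $\xi=u/r\ge1$ and using ${\rm arcsec}\,\xi=\arccos(1/\xi)$, after factoring out $e^{-u}=e^{-r\xi}$ the claim ${\rm K}_{ir}(u)\ll u^{-1/2}e^{-u}$ reduces to the elementary inequality
\[
e^{rg(\xi)}\ll(1-\xi^{-2})^{1/4},\qquad g(\xi):=\xi-\frac{\pi}{2}-\sqrt{\xi^2-1}+\arccos(1/\xi).
\]
A direct computation gives $g'(\xi)=1-\sqrt{\xi^2-1}/\xi>0$, while $g(\xi)\to0^-$ as $\xi\to\infty$; hence $g$ increases to $0$ and is strictly negative on $[1,\infty)$, with $g(1)=1-\pi/2<0$. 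For $\xi\ge\sqrt2$ this already yields $e^{rg}\le1\ll(1-\xi^{-2})^{1/4}$, while for $1+Cr^{-2/3}\le\xi\le\sqrt2$ the bound $g(\xi)\le g(\sqrt2)<0$ makes $e^{rg}$ exponentially small in $r$, which dominates the factor $(1-\xi^{-2})^{-1/4}\ll r^{1/6}$ that the constraint $u\ge r+Cr^{1/3}$ permits. This establishes the inequality uniformly for $r$ sufficiently large.

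I expect the main obstacle to be exactly this last comparison near the turning point $\xi=1$: there the prefactor $(u^2-r^2)^{-1/4}$ coming from case (2) is only $\ll r^{-1/3}$, which exceeds the target size $u^{-1/2}\asymp r^{-1/2}$ by a factor $\asymp r^{1/6}$, so a naive comparison of prefactors fails. What rescues the estimate is that $g(1)=1-\pi/2$ is a strictly negative \emph{constant}, producing an $e^{-cr}$ gain that overwhelms the polynomial loss. Once the monotonicity and strict negativity of $g$ are in hand, the remainder is bookkeeping, and the three cases assemble into the stated uniform bound.
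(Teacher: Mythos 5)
Your proof is correct and follows exactly the route the paper intends: the corollary is stated with no written proof as an immediate consequence of Lemma \ref{LemmaOne}, and your case-by-case derivation — bounded error factors since $rH \gg 1$ at the edges of the oscillatory and decay ranges, boundedness of the Airy function in the transition range, and the elementary monotonicity argument for $g(\xi)=\xi-\frac{\pi}{2}-\sqrt{\xi^2-1}+\arccos(1/\xi)$ (increasing to $0$, hence strictly negative) to handle the only non-obvious bound ${\rm K}_{ir}(u)\ll u^{-1/2}e^{-u}$ near the turning point — supplies precisely the details the paper leaves implicit. Note also that the exponent $e^{-\frac{\pi}{2}t}$ in the third displayed bound is a typo for $e^{-\frac{\pi}{2}r}$, which is how you correctly read it.
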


Define 
\begin{equation}\label{eq:b7}
\Phi(z) = \frac{\phi(z)}{\rho_{\phi}(1)e^{-\frac{\pi}{2}t_{\phi}}\sqrt{y}} = \sum_{n\neq 0}\lambda_{\phi}(n)e^{\frac{\pi}{2}t_{\phi}}{\rm K}_{it_{\phi}}(2\pi |n|y)e(nx).
\end{equation}

%%%%%%%%%%%%%%%%%%%%%%%%%%%%%%%%%%%%%%%%%%% Proposition 1 %%%%%%%%%%%%%%%%%%%%%%%%%%%%%%%%%%%%%%%%%%%
%%%%%%%%%%%%%%%%%%%%%%%%%%%%%%%%%%%%%%%%%%%%%%%%%%%%%%%%%%%%%%%%%%%%%%%%%%%%%%%%%%%%%%%%%%%%%%%%%%%%
\begin{prop}\label{PropOne} Let $\Delta = t_{\phi}^{\frac{1}{3}}\log t_{\phi}$. Then for any $c < y \leq \frac{t_{\phi}}{2\pi}$ with any fixed $c>0$, we have 
\begin{itemize}
\item[(1)]
\begin{multline*}
\quad\quad\Phi(z) =  \sqrt{2\pi}\sum_{|n|\leq \frac{t_{\phi}-\Delta}{2\pi y}} \lambda_{\phi}(n) \frac{e(nx)}{(t_{\phi}^{2}-(2\pi ny)^{2})^{\frac{1}{4}}} \sin \left(\frac{\pi}{4} +t_{\phi} {\rm H}(\frac{2\pi |n|y}{t_{\phi}})\right) \cr 
+ O\big{(}t_{\phi}^{\theta -\frac{1}{3} +\ve}(\frac{\Delta}{y}+1)\big{)},\quad\quad
\end{multline*}
\item[(2)] For any $-\half < \alpha \leq \beta \leq \half$
\begin{multline*}
\quad\quad\int_{\alpha}^{\beta} \Phi(x+iy) \ \d x =\cr
\sqrt{2\pi}\sum_{0<|n|\leq \frac{t_{\phi}-\Delta}{2\pi y}} \frac{\lambda_{\phi}(n)}{2\pi in} \frac{e(n\beta)-e(n\alpha)}{(t_{\phi}^{2}-(2\pi ny)^{2})^{\frac{1}{4}}} \sin \left(\frac{\pi}{4} +t_{\phi} {\rm H}(\frac{2\pi |n|y}{t_{\phi}})\right)\cr
+ O\big{(}t_{\phi}^{\theta -\frac{4}{3}+\ve}(\frac{\Delta}{y}+1)\big{)}.\quad\quad
\end{multline*}
\item[(3)] $\int_{-\half}^{\half}|\Phi(x+iy)|^{2} \ \d x =$
\begin{align*}
2\pi\sum_{|n|\leq \frac{t_{\phi}-\Delta}{2\pi y}} |\lambda_{\phi}(n)|^{2} \frac{1}{\sqrt{t_{\phi}^{2} - (2\pi ny)^{2}}} \sin ^{2} \left(\frac{\pi}{4} +t_{\phi} {\rm H}(\frac{2\pi |n|y}{t_{\phi}})\right) \cr 
 + O\big{(}t_{\phi}^{2\theta - \frac{2}{3}+\ve}(\frac{\Delta}{y}+1)\big{)},
\end{align*}
\noindent where the implied constants depend at most on $\theta$ and $\ve$ (and not on $\alpha$, $\beta$ and $y$).
\end{itemize}
\end{prop}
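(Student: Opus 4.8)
The plan is to insert the Fourier expansion \eqref{eq:b7} of $\Phi$ and to estimate the resulting Bessel sums by splitting the summation index $n$ into the three ranges dictated by Lemma \ref{LemmaOne}, according to the position of $u_n = 2\pi|n|y$ relative to $t_\phi$: the oscillatory range $u_n < t_\phi - Ct_\phi^{1/3}$, the transition (Airy) range $|u_n - t_\phi| \leq Ct_\phi^{1/3}$, and the exponentially decaying range $u_n > t_\phi + Ct_\phi^{1/3}$. In all three parts the main term comes from the oscillatory range via Lemma \ref{LemmaOne}(1), while the transition and exponential ranges, together with the short ``tail'' of the oscillatory range lying between $u_n = t_\phi - \Delta$ and $u_n = t_\phi - Ct_\phi^{1/3}$, are absorbed into the error. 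The cutoff is placed at $\Delta = t_\phi^{1/3}\log t_\phi$ rather than at $Ct_\phi^{1/3}$ precisely so that at the truncation point one has $t_\phi {\rm H}(u_n/t_\phi) \gg (\log t_\phi)^{3/2}$, making the relative error $O(1/(t_\phi {\rm H}))$ in Lemma \ref{LemmaOne}(1) as small as a negative power of $\log t_\phi$.

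For part (1) I would apply Lemma \ref{LemmaOne}(1) term by term in the oscillatory range $0 < |n| \leq (t_\phi-\Delta)/(2\pi y)$; the leading factors reproduce the displayed main sum, and the multiplicative errors $O(1/(t_\phi {\rm H}))$ contribute, after inserting $|\lambda_\phi(n)| \ll_\ve n^{\theta+\ve}$ from \eqref{eq:b4} and bounding by absolute values, a quantity dominated by the terms nearest the transition. Converting that sum to an integral in $u$ (spacing $2\pi y$) and using ${\rm H}(u/t_\phi) \asymp ((t_\phi-u)/t_\phi)^{3/2}$ near $u=t_\phi$, this error is $\ll t_\phi^{\theta+\ve}/y$. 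The transition range supplies $O(t_\phi^{1/3}/y+1)$ terms each of size $\ll t_\phi^{\theta+\ve}t_\phi^{-1/3}$ by Corollary \ref{CorlOne}, and the exponential range is summed via Lemma \ref{LemmaOne}(2), where $e^{-t_\phi {\rm H}}$ forces rapid convergence; both are $\ll t_\phi^{\theta-1/3+\ve}(\Delta/y + 1)$. Collecting these gives the stated error.

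Part (3) is cleanest: by Parseval (orthogonality of $e(nx)$ on $[-\half,\half]$) the integral equals $\sum_{n\neq0}|\lambda_\phi(n)|^2 e^{\pi t_\phi}{\rm K}_{it_\phi}(2\pi|n|y)^2$, and applying Lemma \ref{LemmaOne} to ${\rm K}^2$ in each range yields the main sum together with errors controlled exactly as above, now against $|\lambda_\phi(n)|^2 \ll n^{2\theta+\ve}$ and with the sharper weight $(t_\phi^2 - u_n^2)^{-1/2}$; here the correction integral $\int \eta^{-2}\,\d\eta$ near the transition produces the factor $\Delta^{-1}$ and hence the error $t_\phi^{2\theta-2/3+\ve}(\Delta/y+1)$. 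For part (2) I would integrate the exact Fourier expansion term by term, using $\int_\alpha^\beta e(nx)\,\d x = (e(n\beta)-e(n\alpha))/(2\pi i n)$, which inserts an extra factor $1/n$ into every term. The leading factors reproduce the displayed main sum; since the decisive contributions sit near $|n| \asymp t_\phi/(2\pi y)$, each carries a gain $\asymp y/t_\phi$, so that for bounded $y$ the error of part (1) improves by $t_\phi^{-1}$ to the stated $t_\phi^{\theta-4/3+\ve}(\Delta/y+1)$.

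The main obstacle throughout is the transition region near $u_n = t_\phi$, where the MacDonald--Bessel function is governed by the Airy asymptotic and the oscillatory amplitude $(t_\phi^2-u_n^2)^{-1/4}$ is largest; the whole scheme works only because this region has width $O(t_\phi^{1/3})$ in $u$ and the cutoff at $\Delta = t_\phi^{1/3}\log t_\phi$ keeps $t_\phi{\rm H}$ bounded below by a power of $\log t_\phi$. I expect the most delicate point to be the sharp error in part (2): integrating part (1) directly only reproduces its error, so one must genuinely exploit the $1/n$ decay and, for $y$ not bounded, the oscillation of the factors $e(n\beta)-e(n\alpha)$ and $\sin(\frac{\pi}{4} + t_\phi{\rm H})$, in order to secure the claimed saving uniformly in the stated range of $y$.
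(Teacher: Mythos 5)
Your decomposition is exactly the paper's: its (very brief) proof splits the Fourier expansion at $2\pi|n|y = t_\phi \pm \Delta$, applies Lemma \ref{LemmaOne} in each range together with \eqref{eq:b2}--\eqref{eq:b4} and the Airy bound ${\rm Ai}(x)\ll |x|^{-1/4}$, attributes the stated error to the transitional range, and disposes of the infinite range by exponential decay. Your execution of parts (1) and (3) -- in particular the observation that the cutoff $\Delta = t_\phi^{1/3}\log t_\phi$ forces $t_\phi {\rm H} \gg (\log t_\phi)^{3/2}$ at the truncation point, and the $\int \eta^{-7/4}\,\d\eta$, $\int \eta^{-2}\,\d\eta$ bookkeeping for the multiplicative errors -- is this argument carried out correctly.

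Your unease about part (2) is justified, but the fix is not the oscillation you propose: as stated, with uniformity in $y$ up to $t_\phi/(2\pi)$, the error term in (2) is false, so no argument can recover it. Take $2\pi y = t_\phi$, $\alpha = -\tfrac14$, $\beta = \tfrac14$. The main sum in (2) is then empty (since $(t_\phi-\Delta)/(2\pi y) < 1$), while the $n=\pm1$ terms give
\[
\int_{-1/4}^{1/4}\Phi(x+iy)\,\d x \;=\; \frac{2}{\pi}\,e^{\frac{\pi}{2}t_\phi}{\rm K}_{it_\phi}(t_\phi) \;+\; O(e^{-ct_\phi}) \;\asymp\; t_\phi^{-1/3}
\]
by Lemma \ref{LemmaOne}(3) (note ${\rm Ai}(0)>0$), whereas the claimed error is $O(t_\phi^{\theta-\frac43+\ve})$ and $\theta - \tfrac43 < -\tfrac13$. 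What your argument (and the paper's) actually yields uniformly is part (1)'s error multiplied by $1/(2\pi|n|) \asymp y/t_\phi$ on the decisive range $|n| \asymp t_\phi/(2\pi y)$, i.e. an error $O\bigl(t_\phi^{\theta-\frac43+\ve}(\Delta + y)\bigr)$, which agrees with the stated $O\bigl(t_\phi^{\theta-\frac43+\ve}(\frac{\Delta}{y}+1)\bigr)$ exactly when $y$ is bounded. Since the paper only invokes part (2) for fixed $Y$ (in the proof of Theorem \ref{TheoremTwo}), your proof covers every use made of the proposition; the defect lies in the stated uniformity in $y$, which the paper's own one-paragraph proof glosses over, not in your method.
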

%pg14%
\begin{proof} This follows by decomposing the sum in \eqref{eq:b5} into three sums with $n$'s in the range $ 2\pi |n|y < t_{\phi} - \Delta$, $ \big{|}2\pi |n|y -t_{\phi}\big{|}\leq \Delta$  and $2\pi |n|y > t_{\phi} + \Delta$ so that we may apply Lemma \ref{LemmaOne}. The analysis is straightforward using \eqref{eq:b2}, \eqref{eq:b3}, \eqref{eq:b4} together with the bound ${\rm Ai}(x) \ll |x|^{-\frac{1}{4}}$  as $|x| \rightarrow \infty$ for the Airy function. The middle {\em transitional range} is responsible for the error terms indicated together with the error terms given by Lemma \ref{LemmaOne}, while the infinite range gives a term with exponential decay. 
\end{proof}

We next provide a good approximation to $\Phi(x+iy)$ for a sequence of large values of $y$ (this being the analog of Theorem 3.1 in \cite{GS12}, where a similar result was shown for holomorphic Hecke cusp forms). Define, for integers $l$ 
\begin{equation}\label{eq:b8}
y_{l} = \frac{t_{\phi}}{2\pi l}, \quad 1 \leq l \ll t_{\phi}.
\end{equation}

%%%%%%%%%%%%%%%%%%%%%%%%%%%%%%%%%%%%%%%%%% Proposition 2 %%%%%%%%%%%%%%%%%%%%%%%%%%%%%%%%%%%%%%%%%%%
%%%%%%%%%%%%%%%%%%%%%%%%%%%%%%%%%%%%%%%%%%%%%%%%%%%%%%%%%%%%%%%%%%%%%%%%%%%%%%%%%%%%%%%%%%%%%%%%%%%%
\begin{prop}\label{PropTwo} For any small $\ve > 0$ let $l\in \mathbb{Z}$ with $1 \leq l \leq \ve \frac{t_{\phi}}{\Delta}$. Assume that $\phi$ is an even Maass cusp form. Then, for  any $-\half < x\leq \half$, writing $z_{l} = x +iy_{l} $ we have
\begin{multline*}
\Phi(z_{l}) =  \pi \left(\frac{16}{t_{\phi}}\right)^{\frac{1}{3}}\lambda_{\phi}(l){\rm Ai}(0)\cos(2\pi lx) \cr
+ \sqrt{\frac{2\pi l}{t_{\phi}} }\sum_{|n|\leq l-1} \lambda_{\phi}(n) \frac{e(nx)}{(l^{2}- n^{2})^{\frac{1}{4}}} \sin \left(\frac{\pi}{4} +t_{\phi} {\rm H}(\frac{|n|}{l})\right)\left(1+O(\frac{1}{t_{\phi}{\rm H}(\frac{|n|}{l})})\right) \cr
+ O(t_{\phi}^{\theta - \frac{2}{3}}).\quad\quad
\end{multline*}
Moreover, if $t_{\phi}^{\frac{1}{3} + \ve} \ll y_{l} \ll \ve t_{\phi}$, then for any $y$ satisfying $|y - y_{l}| \leq \ve y_{l} t_{\phi}^{-\frac{2}{3}}$
\begin{multline*}
\Phi(x+iy)=  \pi \left(\frac{16}{t_{\phi}}\right)^{\frac{1}{3}}\lambda_{\phi}(l){\rm Ai}\left((2\pi l(y-y_{l}))(\pi ly)^{-\frac{1}{3}}\right)\cos(2\pi lx) \cr
+\sqrt{2\pi}\sum_{|n|\leq l-1} \lambda_{\phi}(n) \frac{e(nx)}{(t_{\phi}^{2}-(2\pi ny)^{2})^{\frac{1}{4}}} \sin \left(\frac{\pi}{4} +t_{\phi} {\rm H}(\frac{2\pi |n|y}{t_{\phi}})\right)\times \cr
\left(1+O(\frac{1}{t_{\phi}{\rm H}(\frac{2\pi |n|y}{t_{\phi}})})\right) + O(t_{\phi}^{\theta - \frac{2}{3}}),
\end{multline*}
where the implied constants depend at most on $\ve$.
\end{prop}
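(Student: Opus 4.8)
The plan is to substitute the uniform Bessel asymptotics of Lemma~\ref{LemmaOne} termwise into the Fourier expansion \eqref{eq:b7}, after splitting the sum over $n$ according to the three ranges in that lemma. Setting $u_n = 2\pi|n|y_l = (|n|/l)\,t_\phi$ and comparing $u_n$ with $r = t_\phi$, the $n$-th frequency is oscillatory (case (1)) when $u_n < t_\phi - Ct_\phi^{1/3}$, exponentially small (case (2)) when $u_n > t_\phi + Ct_\phi^{1/3}$, and transitional (case (3)) when $u_n$ lies within $Ct_\phi^{1/3}$ of $t_\phi$, i.e. when $|n|$ lies within $Clt_\phi^{-2/3}$ of $l$. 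The decisive point is that the hypothesis $l \le \varepsilon t_\phi/\Delta = \varepsilon t_\phi^{2/3}/\log t_\phi$ makes $Clt_\phi^{-2/3} \le C\varepsilon/\log t_\phi < 1$ for large $t_\phi$; since $|n|$ and $l$ are integers, the transitional range then consists of the \emph{single} frequency $|n| = l$, while every $0 < |n| \le l-1$ is oscillatory and every $|n| \ge l+1$ is exponentially small. This clean separation is exactly what isolates one Airy term.

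For $|n| = l$ we have $u_n = t_\phi$ exactly, so case (3) gives $e^{\frac{\pi}{2}t_\phi}{\rm K}_{it_\phi}(t_\phi) = \pi(2/t_\phi)^{1/3}{\rm Ai}(0) + O(t_\phi^{-2/3})$. Pairing $n = l$ with $n = -l$ and using $\lambda_\phi(-l) = \lambda_\phi(l)$ turns $e(lx) + e(-lx)$ into $2\cos(2\pi lx)$, and since $2\pi\cdot 2^{1/3} = \pi\cdot 16^{1/3}$ this produces the first main term $\pi(16/t_\phi)^{1/3}\lambda_\phi(l){\rm Ai}(0)\cos(2\pi lx)$. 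Its error is $\lambda_\phi(l)\,O(t_\phi^{-2/3})$, and as \eqref{eq:b4} with $l \ll t_\phi^{2/3}$ gives $\lambda_\phi(l) \ll t_\phi^{2\theta/3+\varepsilon}$, this is absorbed into $O(t_\phi^{\theta-2/3})$.

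For $0 < |n| \le l-1$ I would insert case (1): writing $t_\phi^2 - u_n^2 = t_\phi^2(l^2-n^2)/l^2$ gives $(t_\phi^2-u_n^2)^{-1/4} = (l/t_\phi)^{1/2}(l^2-n^2)^{-1/4}$ and ${\rm H}(u_n/t_\phi) = {\rm H}(|n|/l)$, which reproduces the oscillatory sum with prefactor $\sqrt{2\pi l/t_\phi}$ and the factors $(1 + O(1/(t_\phi{\rm H}(|n|/l))))$ retained inside. The frequencies $|n| \ge l+1$ are controlled by case (2) (or Corollary~\ref{CorlOne}): there $u_n \ge t_\phi + t_\phi/l$ and the exponential decay $e^{-t_\phi{\rm H}(u_n/t_\phi)}$, summed against $\lambda_\phi(n) \ll n^{\theta+\varepsilon}$, is far smaller than $t_\phi^{\theta-2/3}$. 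Combining the three ranges yields the first formula.

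The second formula is the same computation with $y$ in place of $y_l$, and the only additional task is to check that the decomposition is stable under the shift $|y - y_l| \le \varepsilon y_l t_\phi^{-2/3}$. For $|n| = l$ this moves the argument to $2\pi l y - t_\phi = 2\pi l(y - y_l)$, of size at most $\varepsilon t_\phi^{1/3}$, so $|n| = l$ stays transitional and case (3) now returns ${\rm Ai}\big((2\pi l(y-y_l))(\pi l y)^{-1/3}\big)$ because $(u/2)^{-1/3} = (\pi l y)^{-1/3}$. For the remaining frequencies the hypotheses $t_\phi^{1/3+\varepsilon} \ll y_l \ll \varepsilon t_\phi$ (that is $1/\varepsilon \ll l \ll t_\phi^{2/3-\varepsilon}$) force the gap $|u_n - t_\phi| \approx t_\phi/l \gg t_\phi^{1/3+\varepsilon}$ to dominate the perturbation $\varepsilon t_\phi^{1/3}$, so $|n| \le l-1$ remains oscillatory and $|n| \ge l+1$ remains exponential. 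I expect the main technical nuisance to be precisely this uniform bookkeeping near the oscillatory boundary $|n| = l-1$: there ${\rm H}(|n|/l) \asymp l^{-3/2}$, so the relative error $1/(t_\phi{\rm H}(|n|/l)) \asymp l^{3/2}/t_\phi$, and it is exactly the logarithmic factor in $\Delta = t_\phi^{1/3}\log t_\phi$ (rather than a bare $t_\phi^{1/3}$) that forces this to be $\ll (\log t_\phi)^{-3/2} = o(1)$ throughout the admissible range, keeping case (1) genuinely applicable.
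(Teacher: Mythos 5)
Your proposal is correct and takes essentially the same route as the paper's proof: both insert the three-range asymptotics of Lemma \ref{LemmaOne} into the expansion \eqref{eq:b7}, use the hypothesis $l \leq \varepsilon t_{\phi}/\Delta$ to conclude that the transitional range contains only $n = \pm l$ (which, by evenness of $\phi$, combine into the Airy--cosine main term), read off the oscillatory sum for $|n| \leq l-1$, discard the exponentially small tail $|n| \geq l+1$, and absorb the remaining errors into $O(t_{\phi}^{\theta - \frac{2}{3}})$. Your extra bookkeeping at the edge $|n| = l-1$ and the stability check for $|y - y_{l}| \leq \varepsilon y_{l} t_{\phi}^{-\frac{2}{3}}$ simply make explicit what the paper's brief sketch leaves implicit.
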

\begin{proof} The main terms above come from the {\em transitional range}, where the $n$'s are restricted to satisfy 
\[
 \big{|}|n| -\frac{t_{\phi}}{2\pi y_{l}}\big{|} = \big{|}|n|-l\big{|}\leq \frac{\Delta}{2\pi y_{l}} \leq \ve,
\]
 so that the contribution here comes only from $n = \pm l$ in the first case, with a similar analysis for the second. One uses Lemma \ref{LemmaOne}, \eqref{eq:b7} and the fact that the cusp form is an even function. The error term comes from the rapid exponential decay of the tail end together with the error from the main term. 
\end{proof}
%pg15%
We end this subsection with the following elementary lemma that will be used often.

%%%%%%%%%%%%%%%%%%%%%%%%%%%%%%%%%%%%%  Lemma 0 %%%%%%%%%%%%%%%%%%%%%%%%%%%%%%%%%%%%%%%%%%%%%%%%%%%
%%%%%%%%%%%%%%%%%%%%%%%%%%%%%%%%%%%%%%%%%%%%%%%%%%%%%%%%%%%%%%%%%%%%%%%%%%%%%%%%%%%%%%%%%%%%%%%%%%
\begin{lem}\label{LemmaZero} Let $\ve > 0$ be fixed. Then for all  $0 < u < U-\ve$ and $0 < \alpha \leq 1$
\[
\sum _{n=1}^{u} \frac{1}{(U^{2} - n^{2})^{\alpha}} \ll_{\ve}  
\begin{cases}
\hfill \frac{1}{U}\log U & :\ \  {\rm if} \quad \alpha=1\\
\hfill U^{1-2\alpha} & :\ \  {\rm if} \quad \alpha < 1 .\\

%%%%%
\end{cases}
\]
\end{lem}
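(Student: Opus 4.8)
The sum is entirely elementary, so I would attack it by factoring and integral comparison. The plan is to write $U^{2}-n^{2}=(U-n)(U+n)$ and use the crude bound $U+n\geq U$ for every $n\geq 1$; this peels off a clean factor and reduces the problem to
\[
\sum_{n=1}^{\lfloor u\rfloor}\frac{1}{(U^{2}-n^{2})^{\alpha}}\ \leq\ \frac{1}{U^{\alpha}}\sum_{n=1}^{\lfloor u\rfloor}\frac{1}{(U-n)^{\alpha}}.
\]
Everything then hinges on estimating $S:=\sum_{n\le u}(U-n)^{-\alpha}$, a one-variable sum whose only delicate feature is the behaviour of the terms with $n$ close to $U$.

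\textbf{Controlling the edge.} The hypothesis $0<u<U-\ve$ is exactly what tames that edge: for every $n\leq u$ one has $U-n>\ve$, so no denominator is smaller than $\ve$. I would isolate the single largest term (namely $n=\lfloor u\rfloor$), bounding it by $(U-\lfloor u\rfloor)^{-\alpha}<\ve^{-\alpha}\ll_{\ve}1$, and then handle the rest by monotonicity. Since $g(t)=(U-t)^{-\alpha}$ is increasing on $t<U$, the comparison $\sum_{n=1}^{\lfloor u\rfloor-1}g(n)\leq\int_{1}^{\lfloor u\rfloor}g(t)\,dt$ holds, giving
\[
S\ \ll_{\ve}\ 1+\int_{1}^{u}\frac{dt}{(U-t)^{\alpha}}.
\]
This is where the constraint $u<U-\ve$ is essential; without it the integral (and the last term) would blow up as $n\to U$.

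\textbf{Evaluating the integral.} A substitution $v=U-t$ turns the integral into $\int_{U-u}^{U-1}v^{-\alpha}\,dv$, with lower limit $U-u>\ve$. For $\alpha=1$ this is $\log(U-1)-\log(U-u)\leq\log U+\log(1/\ve)\ll_{\ve}\log U$, and combining with the factor $U^{-1}$ yields the bound $U^{-1}\log U$. For $\alpha<1$ the integral equals $\bigl((U-1)^{1-\alpha}-(U-u)^{1-\alpha}\bigr)/(1-\alpha)\leq U^{1-\alpha}/(1-\alpha)$, and multiplying by $U^{-\alpha}$ gives $\ll_{\ve}(1-\alpha)^{-1}U^{1-2\alpha}$, which is the asserted $U^{1-2\alpha}$.

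\textbf{Remarks on the obstacle.} There is really no hard step here; the one point to be careful about is that the $\alpha<1$ estimate carries an implicit factor $(1-\alpha)^{-1}$, so strictly the implied constant depends on $\alpha$ as well as $\ve$. This is harmless in all the applications, where $\alpha$ takes the fixed values $\tfrac14$ and $\tfrac12$ coming from the exponents in Proposition \ref{PropOne}; moreover $(1-\alpha)^{-1}U^{1-2\alpha}\to U^{-1}\log U$ as $\alpha\to1^{-}$, so the two cases match continuously and confirm the stated form of the bound. For small bounded $U$ all quantities are $O_{\ve}(1)$ and are absorbed into the implied constant, so one may assume $U$ large throughout.
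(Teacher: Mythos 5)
Your proof is correct. The paper states this lemma without proof, dismissing it as elementary, so there is no argument of the authors to compare against; your factorization $U^{2}-n^{2}=(U-n)(U+n)\geq U(U-n)$ followed by integral comparison, with the hypothesis $u<U-\ve$ used precisely to control the edge term, is the standard argument the authors evidently had in mind. Your closing caveat is also a genuine (if minor) sharpening of the statement: the bound $U^{1-2\alpha}$ cannot hold with a constant uniform in $\alpha<1$ (take $\alpha=1-1/\log U$, where the sum is of size $U^{1-2\alpha}\log U$), so the implied constant must depend on $\alpha$ through $(1-\alpha)^{-1}$ --- harmless here, since the paper only applies the lemma with $\alpha\in\{\tfrac14,\tfrac12,1\}$ fixed.
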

\vspace{12pt}
%%%%%%%%%%%%%%%%%%%%%%%%%%%%%%%%%%%%%%%%%%% Section 3.2 %%%%%%%%%%%%%%%%%%%%%%%%%%%%%%%%%%%%%%%%%%%%%%%
%%%%%%%%%%%%%%%%%%%%%%%%%%%%%%%%%%%%%%%%%%%%%%%%%%%%%%%%%%%%%%%%%%%%%%%%%%%%%%%%%%%%%%%%%%%%%%%%%%%

\subsection{Mean-value theorems and QUE}\quad

In this subsection all the statements are valid for Maass forms $\phi$ without the assumption that it is even.

%%%%%%%%%%%%%%%%%%%%%%%%%%%%%%%%%%%%%%%%%%% Lemma 2 %%%%%%%%%%%%%%%%%%%%%%%%%%%%%%%%%%%%%%%%%%%%%%%
%%%%%%%%%%%%%%%%%%%%%%%%%%%%%%%%%%%%%%%%%%%%%%%%%%%%%%%%%%%%%%%%%%%%%%%%%%%%%%%%%%%%%%%%%%%%%%%%%%%
\begin{lem}\label{LemmaTwo} For $X\geq 1$, and $k = 1, 2$
\[
\sum_{|n|\leq X} |\lambda_{\phi}(n)|^{2k} \ll_{\ve} Xt_{\phi}^{\ve}.
\]
\end{lem}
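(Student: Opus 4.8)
The plan is to deduce both cases from the analytic properties of the symmetric-power $L$-functions attached to $\phi$, using a Chebyshev--Rankin majorization rather than a full contour shift, since only an upper bound is needed. Since the eigenvalues are real, $|\lambda_\phi(n)|^{2k}=\lambda_\phi(n)^{2k}\ge 0$ and $\lambda_\phi(-n)=\lambda_\phi(n)$, so it suffices to treat the one-sided sum. Write $D_k(s)=\sum_{n\ge 1}\lambda_\phi(n)^{2k}n^{-s}$. Because the summands are nonnegative, for any $\sigma>1$ one has
\[
\sum_{1\le n\le X}\lambda_\phi(n)^{2k}\le \sum_{n\ge 1}\lambda_\phi(n)^{2k}\Big(\tfrac{X}{n}\Big)^{\sigma}=X^{\sigma}D_k(\sigma).
\]
Taking $\sigma=1+\ve$, the whole problem reduces to showing $D_k(1+\ve)\ll_\ve t_\phi^{\ve}$; this yields $\sum_{n\le X}\lambda_\phi(n)^{2k}\ll_\ve X^{1+\ve}t_\phi^\ve$, which gives the assertion once one notes that in every application $X\ll t_\phi$ (the sums in Proposition \ref{PropOne} run over $|n|\le (t_\phi-\Delta)/2\pi y$ with $y\gg 1$), so the harmless factor $X^\ve$ is absorbed into $t_\phi^\ve$ after relabelling.

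For $k=1$ I would use the Rankin--Selberg identity, valid for $\Re s>1$ by Hecke multiplicativity,
\[
D_1(s)=\sum_{n\ge 1}\frac{\lambda_\phi(n)^2}{n^s}=\frac{\zeta(s)}{\zeta(2s)}\,L(s,\mathrm{sym}^2\phi).
\]
Evaluating at $s=1+\ve$ gives $D_1(1+\ve)\ll_\ve L(1+\ve,\mathrm{sym}^2\phi)$, and the required bound follows from the standard edge estimate $L(1+\ve,\mathrm{sym}^2\phi)\ll_\ve t_\phi^{\ve}$ for the symmetric-square $L$-function (whose analytic conductor is $\asymp t_\phi^2$), the analytic continuation being furnished by the Gelbart--Jacquet lift of $\mathrm{sym}^2\phi$ to $\GL_3$. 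The finitely many small $n$ are controlled trivially by $\lambda_\phi(n)\ll n^{\theta+\ve}$ from \eqref{eq:b4}.

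For $k=2$ the same scheme applies after factoring $D_2(s)$. The Clebsch--Gordan decomposition $\mathrm{sym}^2\otimes\mathrm{sym}^2=\mathrm{sym}^4\oplus\mathrm{sym}^2\oplus\mathbf{1}$ (equivalently, the Hecke relations) gives at each prime $\lambda_\phi(p)^4=2+3\lambda_{\mathrm{sym}^2\phi}(p)+\lambda_{\mathrm{sym}^4\phi}(p)$, whence an Euler-product identity
\[
D_2(s)=\zeta(s)^2\,L(s,\mathrm{sym}^2\phi)^3\,L(s,\mathrm{sym}^4\phi)\,G(s),
\]
where $G(s)=\prod_p G_p(s)$ converges absolutely and is bounded (up to $t_\phi^\ve$) for $\Re s>\tfrac12$. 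Evaluating at $s=1+\ve$, invoking Kim's functoriality for $\mathrm{sym}^4\phi$ and the edge bounds $L(1+\ve,\mathrm{sym}^2\phi),\,L(1+\ve,\mathrm{sym}^4\phi)\ll_\ve t_\phi^\ve$, one gets $D_2(1+\ve)\ll_\ve t_\phi^\ve$, the double pole of $\zeta(s)^2$ contributing only the harmless constant $\ve^{-2}$. An alternative avoiding $\mathrm{sym}^4$-functoriality is to use $\lambda_\phi(n)^2=\sum_{e\mid n}\lambda_\phi(e^2)$ together with Cauchy--Schwarz to reduce the fourth moment to a second moment of the $\GL_3$ Hecke eigenvalues $\lambda_{\mathrm{sym}^2\phi}$, controlled by the unconditional Rankin--Selberg $L(s,\mathrm{sym}^2\phi\times\mathrm{sym}^2\phi)$, which has a simple pole at $s=1$ by self-duality of $\mathrm{sym}^2\phi$.

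The main obstacle is uniformity in $t_\phi$: crude per-prime bounds on $\lambda_\phi(p^j)$, even with $\theta=\tfrac{7}{64}$, make the Euler products for $D_k(1+\ve)$ diverge for small $\ve$, so one genuinely needs the symmetric-power factorization to exploit the cancellation reflecting the fact that $\lambda_\phi(p)^2$ and $\lambda_\phi(p)^4$ have means $1$ and $2$. Thus the argument hinges entirely on the holomorphy and the edge bounds $L(1+\ve,\mathrm{sym}^{2j}\phi)\ll_\ve t_\phi^\ve$ for $j=1,2$, which is where the automorphy of $\mathrm{sym}^2$ and $\mathrm{sym}^4$ enters; everything else is routine.
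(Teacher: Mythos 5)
Your proposal is, at its core, the same proof the paper gives, just unwound: the paper's proof is a two-sentence citation — Iwaniec \cite{Iw90} for $k=1$, and for $k=2$ Kim's functoriality (that $\mathrm{sym}^4\phi$ is automorphic on $\GL_5$) combined with Molteni's \cite{Mo02} extension of Iwaniec's method to $\GL_n$ — and those citations encapsulate exactly the mechanism you describe. The Clebsch--Gordan identity $\lambda_\phi(p)^4=2+3\lambda_{\mathrm{sym}^2\phi}(p)+\lambda_{\mathrm{sym}^4\phi}(p)$ is correct, the factorization of $D_2(s)$ into $\zeta(s)^2L(s,\mathrm{sym}^2\phi)^3L(s,\mathrm{sym}^4\phi)G(s)$ is the standard one, and your closing diagnosis of the crux is accurate: without Ramanujan the edge bounds $L(1+\ve,\mathrm{sym}^{2j}\phi)\ll_\ve t_\phi^\ve$ are genuinely nontrivial, and they are precisely what Iwaniec's method and its $\GL_n$ extensions (Molteni \cite{Mo02}, Li \cite{Li10} — the very references the paper lists) supply. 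So you are not invoking anything beyond what the paper itself relies on; you have simply made the reduction explicit via the Rankin--Chebyshev majorization rather than citing coefficient-sum bounds directly.

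One caveat is worth naming. The Rankin majorization inherently produces $X^{1+\ve}t_\phi^\ve$, and your repair — observing that $X\ll t_\phi$ in every application — proves a statement strictly weaker than the lemma, which is asserted for all $X\ge 1$ with the bound $Xt_\phi^\ve$. This is not how the cited results work: Iwaniec's and Molteni's methods give $\ll Xt_\phi^\ve$ uniformly in $X$. If you want the lemma as stated, supplement your argument in the range where $X$ exceeds a fixed power of $t_\phi$ by the standard Perron/Rankin--Selberg asymptotic
\[
\sum_{n\le X}\lambda_\phi(n)^{2k}=c_{\phi,k}X+O\bigl(X^{1-\delta}t_\phi^{A}\bigr),\qquad c_{\phi,k}\ll_\ve t_\phi^{\ve},
\]
for some fixed $\delta>0$ and $A<\infty$ coming from the functional equations of the $L$-functions in your factorization; once $X\ge t_\phi^{A/\delta}$ the error term is $\ll X$, and in the complementary range $X\le t_\phi^{A/\delta}$ your $X^{\ve}$ loss is absorbed into $t_\phi^{\ve'}$ by relabelling. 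With that patch (and keeping your correct observation that the polynomial range suffices for every use of the lemma in the paper), the argument is complete.
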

\begin{proof} The case $k=1$ is due to Iwaniec \cite{Iw90}. For $k=2$, one follows Kim's proof \cite{Kim} that $sym^{4}(\phi)$ corresponds to an automorphic cusp form on $GL_{5}$, and then applies Molteni's extension \cite{Mo02} to $GL_{n}$ of Iwaniec's method to bound sums of Fourier coefficients of cusp forms (see also \cite{Li10}, and \cite{LY11} where a similar result is proved for $k=4$).
\end{proof}

For the proofs of some of the theorems in later sections, we will need stronger versions of the above Lemma for the case $k=1$, with sharp cutoff functions in suitable intervals. It is clear that one need only prove the case when $\phi$ is either even or odd.

%%%%%%%%%%%%%%%%%%%%%%%%%%%%%%%%%%%%%%%%%%% Lemma 3 %%%%%%%%%%%%%%%%%%%%%%%%%%%%%%%%%%%%%%%%%%%%%%%
%%%%%%%%%%%%%%%%%%%%%%%%%%%%%%%%%%%%%%%%%%%%%%%%%%%%%%%%%%%%%%%%%%%%%%%%%%%%%%%%%%%%%%%%%%%%%%%%%%%
\begin{prop}\label{shortsum}\  For any fixed $\omega > 0$
\[
 \frac{1}{100}\omega t_{\phi} \leq \sum_{|n|\leq \omega t_{\phi}}|\tilde{\rho}_{\phi}(n)|^{2} \leq 100\omega t_{\phi}
\]
and
\[
\sum_{10^{-5}\omega t_{\phi} \leq |n|\leq \omega t_{\phi}}|\tilde{\rho}_{\phi}(n)|^{2} \geq \frac{1}{1000}\omega t_{\phi}\ .
\]
\end{prop}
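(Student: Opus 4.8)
The plan is to pin down the coefficient sum by relating it, via Parseval and the QUE mass \eqref{i101}, to the $L^2$-mass of $\phi$ in a cuspidal band. The clean constants force us to avoid the $t_\phi^{\ve}$-losses present in \eqref{eq:b2} and Lemma \ref{LemmaTwo}: indeed, $|\tilde\rho_\phi(1)|^2$ and $\sum_{|n|\le X}|\lambda_\phi(n)|^2$ are each only pinned to $t_\phi^{o(1)}$ individually, and it is precisely QUE that pins their product. Write $A(X):=\sum_{|n|\le X}|\tilde\rho_\phi(n)|^2$, and for a fixed height $Y>0$ set
\[
M(Y):=\int_{-\half}^{\half}\int_{Y}^{2Y}|\phi(x+iy)|^2\,\frac{\d y\,\d x}{y^2}.
\]
Parseval in $x$ applied to \eqref{eq:b1}, the substitution $u=2\pi|n|y$, and \eqref{eq:b300} give the exact identity $M(Y)=\sum_{n\neq0}|\tilde\rho_\phi(n)|^2\,W_Y(n)$ with $W_Y(n)=e^{\pi t_\phi}\int_{2\pi|n|Y}^{4\pi|n|Y}{\rm K}_{it_\phi}(u)^2\,\frac{\d u}{u}$. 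On the other hand $\phi^2\,\d{\rm A}$ equidistributes, so for any fixed $Y$ (the indicator of the band, pushed to $\mathbb{X}$, is a fixed bounded test function with null boundary) QUE yields $M(Y)\to\frac{3}{2\pi Y}$; thus $M(Y)\asymp 1/Y$ for $t_\phi$ large.

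The heart of the matter is the weight $W_Y(n)$, analyzed through Lemma \ref{LemmaOne} and Corollary \ref{CorlOne} in the three ranges of $u$ relative to the turning point $u=t_\phi$. In the oscillatory range $e^{\pi t_\phi}{\rm K}_{it_\phi}(u)^2=\frac{2\pi}{\sqrt{t_\phi^2-u^2}}\sin^2(\frac{\pi}{4}+t_\phi{\rm H}(\frac{u}{t_\phi}))(1+o(1))$, and across each band the phase $t_\phi{\rm H}(u/t_\phi)$ sweeps an amount $\asymp t_\phi$ (even for $n=\pm1$, since ${\rm H}'$ is large near $0$), so the $\sin^2$ averages to $\half$. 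This gives, with absolute constants: $W_Y(n)\ge c_1/t_\phi$ whenever the whole band lies below $t_\phi/2$ (where $\sqrt{t_\phi^2-u^2}\asymp t_\phi$ and $\int\frac{\d u}{u}=\log2$); and a uniform bound $W_Y(n)\le C_2/t_\phi$ for all $n$. Crucially there is no spike at the turning point: the integrable singularity $\int\frac{\d u}{\sqrt{t_\phi^2-u^2}}\le\frac{\pi}{2}$ is cut off at distance $t_\phi^{1/3}$ by the Airy regime of Lemma \ref{LemmaOne}(3), where $e^{\pi t_\phi}{\rm K}_{it_\phi}^2\ll t_\phi^{-2/3}$, so the band's contribution there is $\ll t_\phi^{-4/3}$, and for $u>t_\phi+Ct_\phi^{1/3}$ the weight is exponentially small.

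With these two facts the bounds follow by choosing $Y$ appropriately. For the upper bound take $Y=1/(8\pi\omega)$, so every $|n|\le\omega t_\phi$ has its band below $t_\phi/2$ and $W_Y(n)\ge c_1/t_\phi$; then $\frac{c_1}{t_\phi}A(\omega t_\phi)\le M(Y)\asymp\omega$, giving $A(\omega t_\phi)\le100\,\omega t_\phi$ once $t_\phi$ is large. For the lower bound take $Y=1/(2\pi\omega)$, placing the turning point exactly at $\omega t_\phi$; the tail $\sum_{|n|>\omega t_\phi}|\tilde\rho_\phi(n)|^2 W_Y(n)$ is then $o(1)$, bounding $|\tilde\rho_\phi(n)|^2$ by \eqref{eq:b2}, \eqref{eq:b3}, \eqref{eq:b4} and Lemma \ref{LemmaTwo}, and using $W_Y(n)\ll t_\phi^{-4/3}$ in the Airy window together with exponential decay beyond. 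Hence $\frac{C_2}{t_\phi}A(\omega t_\phi)\ge M(Y)-o(1)\asymp\omega$, giving $A(\omega t_\phi)\ge\frac{1}{100}\omega t_\phi$. The annular estimate is then immediate by subtraction: writing the block sum as $A(\omega t_\phi)-A(10^{-5}\omega t_\phi)$ and inserting the lower bound for the first term and the upper bound (with $\omega$ replaced by $10^{-5}\omega$) for the second leaves at least $(\tfrac{1}{100}-10^{-3})\omega t_\phi>\tfrac{1}{1000}\omega t_\phi$.

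The main obstacle is the uniform weight analysis — proving $W_Y(n)\asymp 1/t_\phi$ in the bulk and, above all, that it does not blow up near the turning point. This is exactly where all three regimes of Lemma \ref{LemmaOne} must be combined, and where the transition width $t_\phi^{1/3}$ controls the otherwise divergent $\int\frac{\d u}{\sqrt{t_\phi^2-u^2}}$; one must also verify that the $\sin^2$ genuinely averages across every band uniformly in $n$. By comparison, the QUE input and the final subtraction are routine.
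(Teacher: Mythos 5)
Your proposal is correct, and its logical skeleton necessarily coincides with the paper's: QUE paired against a sharp-cutoff incomplete Eisenstein series (your band integral $M(Y)$ is exactly the unfolded $\int_{\mathbb{X}}|\phi|^{2}E_{g}\,\d\mathrm{A}$ with $g=\mathbf{1}_{[Y,2Y]}$, i.e.\ the paper's \eqref{eq:d40} with $\alpha=Y$), Parseval to turn this into a weighted coefficient sum, and the duality that a \emph{lower} bound on the Bessel weights plus positivity gives the \emph{upper} bound for $\sum|\tilde\rho_\phi(n)|^{2}$, while an \emph{upper} bound on the weights gives the \emph{lower} bound. Your coefficient lower bound is essentially the paper's part (i): Prop.~\ref{PropOne} is just Lemma~\ref{LemmaOne} plus Parseval with the truncation built in, and your hand-made tail control (Lemma~\ref{LemmaTwo} against the uniform $W_Y(n)\ll t_\phi^{-4/3}$ above the turning point) replaces the paper's built-in error term. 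Where you genuinely diverge is the coefficient upper bound: the paper never touches the oscillatory asymptotics there, but instead computes the weight via the Mellin formula for $\int_0^\infty K_{it_\phi}^{2}(2\pi|n|y)y^{s}\,\d^{\times}y$ and Stirling, landing on the Abel transform \eqref{ea8b}, whose non-negativity and lower bound $W_{g}\geq 4\log 2$ on the needed range come for free from the transform structure. You instead lower-bound the weight directly from Lemma~\ref{LemmaOne}(1), which forces the extra (correct, and correctly flagged) step that the phase $t_\phi\mathrm{H}(u/t_\phi)$ sweeps $\gg t_\phi$ across every dyadic band uniformly in $n$: writing $\sin^{2}\psi=\half-\half\cos(2\psi)$ and noting $|\psi'(u)|=\sqrt{t_\phi^{2}-u^{2}}/u$ is monotone and $\gg t_\phi/u$, the first-derivative test gives $\int_{a}^{2a}\sin^{2}\psi\,\frac{\d u}{u}=\frac{\log 2}{2}+O(t_\phi^{-1})$, hence $W_Y(n)\geq \pi\log 2\,(1+o(1))/t_\phi$ whenever the band lies below $t_\phi/2$. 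The trade-off: the paper's Mellin route avoids all oscillation and turning-point issues and sets up machinery reused in Appendix~A (equation \eqref{aa2}), while your route is self-contained given Lemma~\ref{LemmaOne} and treats both directions with a single weight computation. Two small cautions, neither fatal: above the Airy window the weight is not ``exponentially small'' in $t_\phi$ until $u-t_\phi\gg t_\phi^{1/3}$ --- what is true, and suffices, is that $e^{-2t_\phi\mathrm{H}}$ decays on the scale $u-t_\phi\sim t_\phi^{1/3}$, keeping the integrated contribution $O(t_\phi^{-4/3})$; and the absolute constants $c_{1},C_{2}$ must actually be checked to deliver $1/100$ and $100$, which they do with ample margin ($c_{1}\approx\pi\log 2$, $C_{2}$ of size roughly $20$) --- the paper is equally brisk on this point.
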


\begin{proof}\  As we are assuming $\phi$ is even or odd, there is no loss in assuming $n\geq 1$ when convenient. It is also clear that the second part follows easily from the first. 

We apply QUE with an incomplete Eisenstein series, that is with 
\[
E_{g}(z)=\sum_{\gamma\in\Gamma_{\infty}\backslash\Gamma}g\big{(}y(\gamma z)\big{)}.
\]
with a function $g\in C_{0}^{\infty}(\mathbb{R}^{+})$ to be chosen, applied to the integral
\[
\int_{\mathbb{X}}|\phi(z)|^{2}E_{g}(z)\ \d {\rm A}(z).
\]
Unfolding, this tells us 
\begin{equation}\label{eq:ea8}
\sum_{|n|\neq 0}|\rho_{\phi}(n)|^{2}\int_{0}^{\infty} g(y)K_{it_{\phi}}(2\pi |n|y)^{2}\ \d^{\times} y = \frac{3}{\pi}\int_{0}^{\infty} g(y) \frac{\d y}{y^2} + o(1).
\end{equation}
Let $\alpha > 0$ and set
\[
g(y) = \begin{cases} 1 & \mbox{if} \ \alpha<y\leq 2\alpha, \\ 0 & \mbox{otherwise,}
\end{cases}
\]
%pg16%
(while $g$ is not smooth, it has compact support and is a legitimate choice) so that
\begin{equation}\label{eq:d40}
\sum_{|n|\neq 0}|\rho_{\phi}(n)|^{2}\int_{0}^{\infty} g(y)K_{it_{\phi}}(2\pi |n|y)^{2}\ \d^{\times} y = \frac{3}{2\pi \alpha}  + o(1).
\end{equation}

\vspace{20pt}
\noindent(i). \underline{The lower bound}.

 By Prop. \ref{PropOne} and \eqref{eq:d40} we have
\[
\int_{\alpha}^{2\alpha} \sum_{0<|n|\leq \frac{t_{\phi}-\Delta}{2\pi y}}|\tilde{\rho}_{\phi}(n)|^{2}\frac{1}{\sqrt{t_{\phi}^{2} - (2\pi ny)^{2}}} \frac{\ \d y}{y} \geq \frac{1}{10\alpha}\ ,
\]
the error terms being negligible because $\theta$ is sufficiently small. Interchanging the order of summation and integration and simplifying gives us
\[
\frac{1}{\sqrt{t_{\phi}}}\sum_{0< |n|\leq \frac{t_{\phi}-\Delta}{2\pi \alpha}}|\tilde{\rho}_{\phi}(n)|^{2}\int_{2\pi |n|\alpha}^{ \min{\{ 4\pi |n|\alpha,t-\Delta\} }} \frac{1}{\sqrt{t_{\phi} - u}} \frac{\ \d u}{u} \geq \frac{1}{10\alpha}\ .
\]
The integral does not exceed  $ \frac{10}{\sqrt{t_{\phi}}}$, so that the statement in the proposition follows on putting $\alpha=\frac{1}{2\pi \omega}$.

\vspace{20pt}
\noindent(ii) \underline{The upper bound}.\\
The integral in \eqref{eq:d40} can be investigated asymptotically to sufficient accuracy. Let 
\[
\tilde{g}(s) = \int_{0}^{\infty} g(y)y^{s}\d^{\times} y\ ,
\]
and recall that
\[
\int_{0}^{\infty} K_{it_{\phi}}^{2}(2\pi |n|y)y^{s}\d^{\times} y = \frac{1}{8}(\pi |n|)^{-s}\frac{\Gamma^{2}(\half s)}{\Gamma(s)}\Gamma(\half s +it_{\phi})\Gamma(\half s -it_{\phi})\ ,
\]
so that
\[
\int_{0}^{\infty} g(y)K_{it_{\phi}}^{2}(2\pi |n|y)\ \d^{\times} y = \frac{1}{2\pi i}\int_{\Re(s)=1} \frac{1}{8}(\pi |n|)^{-s}\frac{\Gamma^{2}(\half s)}{\Gamma(s)}\Gamma(\half s +it_{\phi})\Gamma(\half s -it_{\phi})\tilde{g}(-s)\d s\ .
\]
Applying Stirling's formula as $t_{\phi} \rightarrow \infty$ (see Section 6.1 for a similar treatment), we have to good accuracy that
\begin{align}\label{eq:ea7}
\int_{0}^{\infty} g(y)K_{it_{\phi}}^{2}(2\pi |n|y)\ \d^{\times} y &\sim  2\pi \frac{e^{-\pi t_{\phi}}}{2\pi i t_{\phi}}\int_{\Re(s)=1} \frac{1}{8}\big{(}\frac{\pi |n|}{t_{\phi}}\big{)}^{-s}\frac{\Gamma^{2}(\half s)}{\Gamma(s)}\tilde{g}(-s)\d s\ ,\cr
&\sim 2\pi \frac{e^{-\pi t_{\phi}}}{8t_{\phi}}W_{g}\big{(}\frac{\pi |n|}{t_{\phi}}\big{)},
\end{align}
where 
\begin{align*}
W_{g}(x) &= \frac{1}{2\pi i}\int_{\Re(s)=1} \frac{\Gamma^{2}(\half s)}{\Gamma(s)}\tilde{g}(-s)x^{-s}\d s \ ,\cr
&=\frac{1}{2\pi i}\int_{\Re(s)=1} \frac{\Gamma^{2}(\half s)}{\Gamma(s)}\tilde{g_{0}}(s)x^{-s}\d s \ ,
\end{align*}
where $g_{0}(y)=g(y^{-1})$. Then for $x\geq 0$ we have $W_{g}(x) = 4A_{g}(2x)$, where $A_{g}(x)$ is (essentially) the Abel transform
%pg17%
\begin{equation}\label{ea8b}
A_{g}(x) = \int_{x}^{\infty}\frac{g(\frac{1}{y})}{\sqrt{y^{2} - x^{2}}}\d y\ .
\end{equation}
In particular, if $g\geq 0$, then so is $W_{g}$. Moreover, we see that
\[
\int_{0}^{\infty} g(y)\frac{\d y}{y^{2}} = \frac{1}{\pi}\int_{0}^{\infty} W_{g}(y)\d y \ ,
\]
so that \eqref{eq:ea8} now reads
\begin{equation}\label{eq:ea9}
\frac{2\pi}{t_{\phi}}\sum_{n\neq 0}|\tilde{\rho}_{\phi}(n)|^{2}W_{g}(\frac{\pi |n|}{t_{\phi}}) = \frac{24}{\pi^2}\int_{0}^{\infty} W_{g}(y) \d y + o(1)\ .
\end{equation}
Then, with the choice of $g(y)$ as before
\begin{equation}\label{eq:ea10}
\int_{0}^{\infty}g(y)\frac{\d y}{y^2} = \frac{1}{2\alpha}.
\end{equation}
Also, if $0\leq x \leq(8\alpha)^{-1}$, then $W_{g}(x) \geq 4 \log{2} > 1$, say. Hence, using positivity and the above, as $t_{\phi} \rightarrow \infty$
\[
\frac{1}{t_{\phi}}\sum_{n=1}^{\infty}|\tilde{\rho}_{\phi}(n)|^{2}W_{g}(\frac{\pi n}{t_{\phi}}) > \sum_{1\leq n\leq \frac{t_{\phi}}{8\pi \alpha}}|\tilde{\rho}_{\phi}(n)|^{2}\ .
\]
Then, from \eqref{eq:ea10},  \eqref{eq:ea9} and the above, we have
\[
\sum_{1\leq n\leq \frac{t_{\phi}}{8\pi \alpha}}|\tilde{\rho}_{\phi}(n)|^{2} \leq \frac{100}{8\pi \alpha}t_{\phi}
\]
which proves the proposition.
\end{proof}

%Corollary 3.8
\begin{cor}\label{shortsumcorl} Let $\frac{a}{q}\in \mathbb{Q}$ with $(a,q)=1$ and $q\geq 1$. There exist small positive constants $D_{q}$ and $\ve_{q}$ such that if $X=\omega t_{\phi}$, with $0<\omega <\infty$ fixed, then
\[
 D_{q}X \leq \sum_{1\leq |n| \leq X}|\tilde{\rho}_{\phi}(n)|^{2}\cos^{2}\big{(}2\pi \frac{an}{q}\big{)} \leq 100X
\]
and
\[
\sum_{\ve_{q}X \leq |n| \leq X}|\tilde{\rho}_{\phi}(n)|^{2}\cos^{2}\big{(}2\pi \frac{an}{q}\big{)} \geq \frac{D_{q}}{2}X .
\]
(Here $\ve_{q} = \frac{1}{200}D_{q}$ suffices).
\end{cor}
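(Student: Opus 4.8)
The plan is to deduce both inequalities of the stated corollary from Proposition \ref{shortsum} by analysing the weight $\cos^2(2\pi an/q)$ according to the residue of $n$ modulo $q$. The upper bound is immediate: since $\cos^2\le 1$, one has $\sum_{1\le|n|\le X}|\tilde\rho_\phi(n)|^2\cos^2(2\pi an/q)\le\sum_{1\le|n|\le X}|\tilde\rho_\phi(n)|^2\le 100X$ by the upper bound of Proposition \ref{shortsum} applied with $\omega=X/t_\phi$. Moreover, once the full-range lower bound $\ge D_qX$ is in hand, the localized lower bound follows by subtracting the tail: the contribution of $|n|\le\ve_qX$ is at most $\sum_{|n|\le\ve_qX}|\tilde\rho_\phi(n)|^2\le 100\,\ve_qX$, so the choice $\ve_q=\tfrac{1}{200}D_q$ leaves at least $D_qX-\tfrac12 D_qX=\tfrac12 D_qX$, exactly as claimed. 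Thus everything reduces to the full-range lower bound. Now the zeros of the weight are the $n$ with $2an/q\in\tfrac12+\mathbb Z$; a short computation shows these exist if and only if $4\mid q$, and then they are precisely the odd multiples of $q/4$ (equivalently $v_2(n)=v_2(q)-2$ with the odd part of $n$ divisible by the odd part of $q$). Hence if $4\nmid q$ the weight is bounded below by $c(q)=\min_r\cos^2(2\pi ar/q)>0$, and the lower bound is immediate from Proposition \ref{shortsum}: the sum is $\ge c(q)\sum_{1\le|n|\le X}|\tilde\rho_\phi(n)|^2\ge\tfrac{c(q)}{100}X$.

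The only substantial case is $4\mid q$. Here the danger is that the Hecke mass $|\tilde\rho_\phi(n)|^2=|\tilde\rho_\phi(1)|^2|\lambda_\phi(n)|^2$ could concentrate on the bad classes where the weight vanishes, a possibility that Proposition \ref{shortsum}, which sees only the total mass, cannot by itself rule out. The plan is to show, uniformly in $\phi$, that a positive proportion of the total mass lies on the good integers $n$ with $v_2(n)\in\{k-1,k\}$, where $k=v_2(q)$. On these $n$ the weight is bounded below: writing $n=2^{k}l$ or $n=2^{k-1}l$ with $l$ odd gives $\cos^2(2\pi al/m)$ or $\cos^2(\pi al/m)$ with $m$ the odd part of $q$, and since $4\nmid m$ and $4\nmid 2m$ these have no zeros, so $\cos^2(2\pi an/q)\ge c(q)>0$ on both classes.

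Using Hecke multiplicativity $|\lambda_\phi(2^jl)|^2=|\lambda_\phi(2^j)|^2|\lambda_\phi(l)|^2$ for $l$ odd, the mass on these two valuation classes equals $|\tilde\rho_\phi(1)|^2\big(|\lambda_\phi(2^{k-1})|^2+|\lambda_\phi(2^{k})|^2\big)\sum_{l\ \mathrm{odd},\ l\le X/2^{k}}|\lambda_\phi(l)|^2$. Two analytic inputs then finish the reduction. First, the three-term recursion $\lambda_\phi(2^{j+1})=\lambda_\phi(2)\lambda_\phi(2^{j})-\lambda_\phi(2^{j-1})$ together with the Kim--Sarnak bound \eqref{eq:b4} (which makes $|\lambda_\phi(2)|$ absolutely bounded) forces the associated transfer matrix to have uniformly bounded norm and inverse; since it starts from $(\lambda_\phi(1),\lambda_\phi(2))=(1,\lambda_\phi(2))$, this yields $|\lambda_\phi(2^{k-1})|^2+|\lambda_\phi(2^{k})|^2\gg_q 1$ for all $\phi$. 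Second, one needs the uniform odd-part lower bound $\sum_{l\le Y,\ l\ \mathrm{odd}}|\lambda_\phi(l)|^2\gg Y$ for $Y\asymp t_\phi$, obtained from Proposition \ref{shortsum} by stripping the prime $2$ through its local Rankin--Selberg factor.

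The hard part will be precisely this last step: upgrading the statement that a positive proportion of the mass avoids the prime-$2$ bad set from an asymptotic density fact — which would follow cheaply from the pole structure of $\sum_n|\lambda_\phi(n)|^2n^{-s}=\zeta(s)L(s,\mathrm{sym}^2\phi)/\zeta(2s)$ and its prime-$2$-removed analogue — to an inequality valid uniformly in $\phi$ at the single scale $Y\asymp t_\phi$, where a Tauberian passage would introduce $\phi$-dependent error terms and destroy uniformity. The device is to stay entirely within the range governed by Proposition \ref{shortsum}: one decomposes dyadically, uses the exact relation $\sum_{n\le Y}|\lambda_\phi(n)|^2=\sum_{j\ge 0}|\lambda_\phi(2^j)|^2\sum_{l\le Y/2^j,\ l\ \mathrm{odd}}|\lambda_\phi(l)|^2$, and bounds the even part by the same quantity at smaller scales, the absolute summability of the weights $|\lambda_\phi(2^j)|^2 2^{-j}$ (again Kim--Sarnak) ensuring that the odd part retains a definite, $\phi$-independent fraction of the total. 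Combining this with the two displayed inputs produces good mass $\gg_q X$, hence the full-range lower bound $\ge D_qX$.
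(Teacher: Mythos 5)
Most of your outer reductions agree with the paper and are fine: the upper bound, the passage from the full-range lower bound to the localized one via $\ve_{q}=\frac{1}{200}D_{q}$, the case $4\nmid q$, the identification of the zero set of the weight (odd multiples of $q/4$, existing only when $4\mid q$), and the transfer-matrix bound $\lambda_\phi(2^{k-1})^2+\lambda_\phi(2^{k})^2\gg_q 1$. The genuine gap is exactly at the step you yourself call the hard part: the uniform bound $\sum_{l\le Y,\ l\ \mathrm{odd}}|\tilde\rho_\phi(l)|^{2}\gg Y$ at $Y\asymp t_\phi$, on which your whole route rests (your two good classes $v_2(n)\in\{k-1,k\}$ reduce to it after stripping the power of $2$). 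The device you propose cannot deliver it. Write $S(Y)=\sum_{1\le n\le Y}|\tilde\rho_\phi(n)|^{2}$ and $O(Y)$ for its odd part; your dyadic identity reads
\[
O(Y)\;=\;S(Y)-\sum_{j\ge 1}\lambda_\phi(2^{j})^{2}\,O(Y/2^{j}),
\]
and the only upper bound available for $O(Y/2^{j})$ is $S(Y/2^{j})\le 100\,Y2^{-j}$ from Prop.~\ref{shortsum}. So your bound on the even part is $100\,Y\sum_{j\ge1}\lambda_\phi(2^{j})^{2}2^{-j}$, whose constant is not small: the $j=1$ term alone can be of size about $200Y$, since Kim--Sarnak \eqref{eq:b4} allows $\lambda_\phi(2)^{2}$ to be close to $4$, whereas the total mass is only known to satisfy $S(Y)\ge\frac{1}{100}Y$. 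The subtraction therefore yields a negative quantity, and no sharpening of the constants in Prop.~\ref{shortsum} can repair this, because it is genuinely consistent with everything proved in the paper that the even integers carry most of the mass: if $\lambda_\phi(2)^{2}$ is near $4$ and the odd mass has density $\kappa$, the even mass is about $11\kappa Y$. A positive proportion of mass on the odds is an equidistribution statement among $2$-adic classes, which Prop.~\ref{shortsum}, seeing only totals, cannot produce by subtraction. There is also a secondary uniformity problem: Prop.~\ref{shortsum} is asymptotic for each fixed $\omega$, so you may not invoke its upper bound at the unboundedly many scales $\omega t_\phi 2^{-j}$; the tail must then be handled by Lemma~\ref{LemmaTwo}, which costs a factor $t_\phi^{\ve}$ and would at best give $O(Y)\gg Y t_\phi^{-\ve}$, not the $t_\phi$-independent constant $D_q$ that the corollary asserts.

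The paper avoids this claim entirely, and its trick is what your argument is missing. Because the bad set lies in a single $2$-adic valuation class, stripping the factor $2^{J-2}$ (where $q=2^{J}q_{0}$, $q_{0}$ odd, $J\ge 3$) maps it into the odd integers, which are themselves good when $8\mid q$; hence the bad mass $V(X)$ satisfies $V(X)\le C_q^{-1}\lambda_\phi^{2}(2^{J-2})\,T(X/2^{J-2})\le C_q^{-1}\lambda_\phi^{2}(2^{J-2})\,T(X)$, where $T$ is the cosine-weighted sum one wants to bound. Feeding this into $T(X)\ge C_q\big(S(X)-V(X)\big)$ and rearranging gives $T(X)\ge \frac{C_q}{1+\lambda_\phi^{2}(2^{J-2})}S(X)$, and Kim--Sarnak bounds the denominator in terms of $q$ alone; the case $4\,\|\,q$ is handled by a similar two-case argument with $\lambda_\phi(2)$ and $\lambda_\phi(4)$. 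Thus one never needs to locate the mass among residue or valuation classes: one needs only that the bad mass, after removing the power of $2$, is dominated by the good sum itself, which can then be absorbed. Replacing your step (b) by this bootstrap is what is required to make the proof work.
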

\proof \ We will use the following: there is a positive number $C_{q}$ such that 
\begin{itemize}
\item[(i)] if $4 \nmid q$, then $\cos^{2}\big{(}2\pi \frac{an}{q}\big{)} \geq C_{q}$ for all $n\in \mathbb{Z}$;
\item[(ii)] if $4 \mid q$, the same is true for those $n$ not of the form $\frac{q}{4}m$ with $m$ odd. Otherwise, the cosine vanishes. Note that this implies that if $8\mid q$, then $\cos^{2}\big{(}2\pi \frac{an}{q}\big{)} \geq C_{q}$ for all odd numbers $n$.
\end{itemize}
Thus the lower bounds in the corollary follow from Prop. \ref{shortsum} with $D_{q} = \frac{1}{100}C_{q}$ when $4 \nmid q$. 

For the remaining cases we consider first the case when $8\mid q$. Write $q=2^{J}q_{0}$ with $J\geq 3$ and $q_{0}$ odd. Put
\[
S(X) = \sum_{1\leq n \leq X}|\tilde{\rho}_{\phi}(n)|^{2} \quad \text{and} \quad T(X) = \sum_{1\leq n \leq X}|\tilde{\rho}_{\phi}(n)|^{2}\cos^{2}\big{(}2\pi \frac{an}{q}\big{)}\ .
\]
%pg18%
Then,
\begin{equation}\label{eq:d80}
T(X) \geq C_{q}\sum_{\substack{n\leq X \\ n \neq \frac{q}{4}m, m \ \text{odd}}} |\tilde{\rho}_{\phi}(n)|^{2} := C_{q}(S(X) - V(X)),
\end{equation}
say, where 
\[
V(X) = \lambda_{\phi}^{2}(2^{J-2})\sideset{}{^*}\sum_{\substack{l\leq \frac{X}{2^{J-2}}\\ q_{0}\mid l}}|\tilde{\rho}_{\phi}(l)|^{2},
\]
where the $\sideset{}{^*}\sum$ denotes a sum over odd numbers. 

If $\lambda_{\phi}^{2}(2^{J-2}) \leq \half$, then $V(X) \leq \half S(\frac{X}{2^{J-2}}) \leq \half S(X)$ so that $T(X) \geq \half C_{q}S(X)$, and we are done. In the other case, we first observe that
\begin{align*}
T\Big{(}\frac{X}{2^{J-2}}\Big{)} &\geq \sideset{}{^*}\sum_{\substack{l\leq \frac{X}{2^{J-2}}\\ q_{0}\mid l}}|\tilde{\rho}_{\phi}(l)|^{2}\cos^{2}(2\pi \frac{a}{q}l)\cr
&\geq C_{q}\lambda_{\phi}^{-2}(2^{J-2})V(X)\ .
\end{align*}
Substituting into \eqref{eq:d80} gives
\[
T(X) \geq \frac{C_{q}}{1 +  \lambda_{\phi}^{2}(2^{J-2})}S(X).
\]
Since $\lambda_{\phi}^{2}(2^{J-2})$ is bounded above independently of $\phi$, the conclusion follows.

Last, we consider the case when $4 ||q$ so that we write $q=4q_{0}$. First we have
\begin{equation}\label{eq:d81}
T(X) \geq C_{q} \sum_{\substack{n\leq X \\ 2\mid n}} |\tilde{\rho}_{\phi}(l)|^{2} := C_{q}W(X),
\end{equation}
say. 

Suppose $\lambda_{\phi}^{2}(2)\geq \half$. Then it follows that $W(X) \geq \half (S(\half X) - W(\half X))$, so that $W(X) \geq \frac{1}{3}S(\half X)$. Substituting this into \eqref{eq:d81} gives the result. Next, if $\lambda_{\phi}^{2}(2)\leq \half$, then the Hecke relations imply $\lambda_{\phi}^{2}(4)\geq \frac{1}{4}$.Then we rewrite \eqref{eq:d81} as
\begin{align*}
T(X) &\geq C_{q}\sum_{\substack{n\leq X\\4||n}}|\tilde{\rho}_{\phi}(n)|^{2} \geq C_{q}\lambda_{\phi}^{2}(4)\sideset{}{^*}\sum_{n\leq \frac{X}{4}} |\tilde{\rho}_{\phi}(n)|^{2}\cr
&\geq \frac{1}{4}C_{q}\big{(}S(\frac{1}{4}X) - \sum_{\substack{n\leq \frac{1}{4}X\\n \ \text{even}}} |\tilde{\rho}_{\phi}(n)|^{2}\big{)}\cr
&=\frac{1}{4}C_{q}S(\frac{1}{4}X) - \frac{1}{4}C_{q}W(\frac{1}{4}X).
\end{align*}
From \eqref{eq:d81}, we also have $C_{q}W(X) \leq T(X)$. Combining with the above we conclude that $T(X) \geq \frac{1}{5}C_{q}S(\frac{1}{4}X)$ completing the proof. The second conclusion follows from the first.

%%%%%%%%%%%%%%%%%%%%%%%%%%%%%%%%%%%%%%%%%%%%%%%%%%%%%%%%%%%%%%%%%%%%%%%%%%%%%%%%%%%%%%%%%%%%%%%%%%%%%%%%%%%%%%%%%%%%%%%%%%%%%%%%%%%%%%%%%%%%%%%%%%%%%%%%%%%%%%%%%%%%%%%%%%%%%%%%%%%%%%%%%%%%%%%%%%%%%%%%%%%%%%%%%%%%%%%%%%%%%%%%%%%%%%%%%%%%%%%%%%%%%%%%%%%%%%%%%%%%%%%%%%%%%%%%%%%%%%%%%%%%%%%%%%%%%%%%%%%%%%%%%%%%%%%%%%%%%%%%%%%%%%%%%%%%%%%%%%%%%%%%%%%%%%%%%%%%%%%%%%%%%%%%%%%%%%%%%%%%%%%%%%%%%%%%%%%%%%%%%%%%%%%%%%%%%%%%%%%%%%%%
\section{Nodal domains near the cusp.}%S3
In this section, we investigate the structure of the nodal domains as we move away from the cusp. As is expected, the nodal domains are quite predictable very near the cusp but soon lose this feature as we move into the fundamental domain. Let $R_{\phi}$ denote the number of inert nodal domains of $\phi$ in $\mathbb{X}$, and let $Z_{\phi}$ denote the nodal line. More generally, let $\alpha$ denote an analytic arc segment in $\mathbb{X}$ and let $N^{\alpha}(\phi)$ be the number of nodal domains of $\phi$ whose boundary meets $\alpha$, so that $R_{\phi} = N^{\delta}(\phi)$. 

%%%% pg19 %%%%%%%%%%%%%%%%%%%%%%%%%%%%%%%%%%%%%%%%%%%%%%%%%%%%%%%%%%%%%%%%%%%%%%%%%%%%%%%%%%%%%%%%%%%%%%%%%%%%%%%%%%%%%%%%%%%%%%%%%%%%%%%%%%%%%%%%%%%%%%%%%%%%%%%%%%%%%%%%%%%%%%%%%%%%%%%%%%%%%%%%%%%%%%%%%%%%%%%%%%%%%%%
\subsection{A lower bound.}\ Our first result gives a lower bound for $R_{\phi}$ (and so for $N(\phi)$) by counting the number of sign changes of $\phi$ on the boundary $\delta$, since it follows from Theorem \ref{TheoremNodal} that the number of nodal domains must exceed half the number of such sign changes. 

%%%%%%%%%%%%%%%%%%%%%%%%%%%%%%%%%%%%%%%%%%%%%%%%%%%%%%%%%%%%%%%%%%%%%%%%%%%%%%%%%%%%%%%%%%%%%%%%%%%
%%%%%%%%%%%%%%%%%%%%%%%%%%%%%%%%%%%%%%%% Theorem 3.1%%%%%%%%%%%%%%%%%%%%%%%%%%%%%%%%%%%%%%%%%%%%%%%%

\begin{thm}\label{Theorem-3.1} If $\phi$ is an even Hecke-Maass cusp form, then
\[
N(\phi) \geq R_{\phi} \gg t_{\phi}.
\]
\end{thm}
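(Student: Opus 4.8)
The goal is to show that for an even Hecke-Maass cusp form $\phi$, the number of inert nodal domains satisfies $R_\phi \gg t_\phi$. By Theorem \ref{TheoremNodal}, it suffices to exhibit $\gg t_\phi$ sign changes of $\phi$ along the boundary $\delta$. The natural place to do this is near the cusp, where $\phi$ is well-approximated by a single term of its Fourier expansion. The plan is to use the approximation in Proposition \ref{PropTwo}, specialized to the heights $y_l = t_\phi/(2\pi l)$, and to track sign changes of the restriction of $\phi$ to the geodesic $\delta_1 = \{x=0\}$ (or $\delta_2=\{x=1/2\}$).

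Let me sketch the restriction to $\delta_1$. Setting $x=0$ in Proposition \ref{PropTwo}, the main term at height $y_l$ becomes $\pi(16/t_\phi)^{1/3}\lambda_\phi(l)\mathrm{Ai}(0)$, with the oscillatory sum over $|n|\le l-1$ contributing a remainder that we must control. The key point is that along the vertical line $x=0$, the factor $\cos(2\pi lx)$ equals $1$, so the ``transitional'' term is of size $\gg t_\phi^{-1/3}|\lambda_\phi(l)|$. First I would show that for a positive proportion of the values $l$ in a range like $\ve_0 t_\phi \le 2\pi l y \le t_\phi$ (equivalently a dyadic range of heights $y$ well inside the cusp region where $y_l \gg t_\phi^{1/3+\ve}$), the sign of $\phi$ at $z_l=iy_l$ is controlled by the sign of $\lambda_\phi(l)$, provided $|\lambda_\phi(l)|$ is not too small compared to the oscillatory tail. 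Then, whenever consecutive relevant $l$'s produce opposite signs, we gain a sign change of $\phi$ on $\delta_1$ between the corresponding heights.

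The mechanism for producing many sign changes is that the Hecke eigenvalues $\lambda_\phi(l)$ change sign frequently: one can invoke the mean-square lower bound from Proposition \ref{shortsum}, namely $\sum_{|n|\le \omega t_\phi}|\tilde\rho_\phi(n)|^2 \asymp \omega t_\phi$ together with the first-moment cancellation (via Lemma \ref{LemmaTwo} bounding $\sum|\lambda_\phi(n)|^2$), to guarantee that $\lambda_\phi(l)$ takes both signs on a positive proportion of $l\ll t_\phi$. More precisely, I would argue that the number of $l$ in a suitable range with $|\lambda_\phi(l)|\gg 1$ is $\gg t_\phi$, and that among these the sign pattern forces $\gg t_\phi$ alternations; each alternation, once the oscillatory error in Proposition \ref{PropTwo} is shown to be negligible against the main term $t_\phi^{-1/3}|\lambda_\phi(l)|$, yields an honest sign change of $\phi|_{\delta_1}$. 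Summing over a dyadic family of heights $y_l$ deep in the cusp (say $t_\phi^{1/3+\ve}\ll y_l \ll \ve t_\phi$) and counting the resulting crossings gives $\gg t_\phi$ sign changes, and hence $n_\phi \gg t_\phi$, whence $R_\phi \ge \frac12 n_\phi + 1 \gg t_\phi$ by Theorem \ref{TheoremNodal}.

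The main obstacle is controlling the oscillatory sum $\sqrt{2\pi l/t_\phi}\sum_{|n|\le l-1}\lambda_\phi(n)(l^2-n^2)^{-1/4}\sin(\pi/4+t_\phi \mathrm{H}(|n|/l))$ at $x=0$ and showing it does not overwhelm the transitional main term $\asymp t_\phi^{-1/3}|\lambda_\phi(l)|$ for a positive proportion of the $l$'s. Its typical size, using square-root cancellation from the oscillating Bessel phases and the mean-square bound of Lemma \ref{LemmaTwo} together with the decay $(l^2-n^2)^{-1/4}$ and Lemma \ref{LemmaZero}, should be of comparable or smaller order, but establishing that it is genuinely subordinate for \emph{many} $l$ (rather than on average) is delicate; one expects to combine a second-moment estimate over $l$ in a dyadic block with the pointwise lower bound $|\lambda_\phi(l)|\gg 1$ on a positive-density subset, and to discard the few exceptional $l$ where the tail is large. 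This is precisely the kind of interplay between the arithmetic of the $\lambda_\phi(n)$ and the analytic behavior of the Bessel transition region that the authors flag as the crux of the local analysis.
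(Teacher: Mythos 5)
Your overall strategy --- produce sign changes of $\phi$ along $\delta_{1}$ and invoke Theorem \ref{TheoremNodal} --- matches the paper's, but the mechanism you choose for producing them has a structural gap: it cannot reach $\gg t_{\phi}$. Proposition \ref{PropTwo} is only valid for $1 \leq l \leq \ve t_{\phi}/\Delta \ll t_{\phi}^{2/3}/\log t_{\phi}$, so there are at most $O(t_{\phi}^{2/3})$ admissible heights $y_{l}$; since you extract at most one sign change per alternation of $\lambda_{\phi}(l)$ along this sequence, your count is capped at $O(t_{\phi}^{2/3})$ no matter how the arithmetic works out. Worse, with the bounds actually available for the oscillatory tail at $x=0$ (Cauchy--Schwarz with Lemma \ref{LemmaTwo} and Lemma \ref{LemmaZero}, exactly as in the proof of Theorem \ref{Theorem-3.2}), the transitional main term $\asymp t_{\phi}^{-1/3}|\lambda_{\phi}(l)|$ is known to dominate only for $l \ll t_{\phi}^{1/6-\ve}$, so without the unproven ``square-root cancellation for most $l$'' that you flag, the cap drops to $O(t_{\phi}^{1/6})$. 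There is also a second, independent gap: Proposition \ref{shortsum} and Lemma \ref{LemmaTwo} are mean-square statements and carry no information whatsoever about the signs of the $\lambda_{\phi}(l)$; deducing that $\lambda_{\phi}(l)$ alternates in sign on a positive proportion of $l$, uniformly in $\phi$, would require at least first-moment cancellation uniform in $t_{\phi}$, which is established nowhere in the paper and is a genuinely deep additional input.

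The paper's proof (Proposition \ref{Prop-3.2} with $x_{0}=0$) avoids both difficulties by exploiting the oscillation in $y$ of a single Bessel factor rather than the arithmetic of Hecke eigenvalues. For $(\frac{1}{2}+\ve)t_{\phi} < 2\pi y < (1-\ve)t_{\phi}$, every term $n\geq 2$ in \eqref{eq:c1} has $2\pi n y > t_{\phi}+\Delta$ and is exponentially small by Lemma \ref{LemmaOne}, so $\check{\Phi}(iy)$ reduces to $\sqrt{2\pi}\,(t_{\phi}^{2}-(2\pi y)^{2})^{-1/4}\sin\big(\frac{\pi}{4}+t_{\phi}{\rm H}(\frac{2\pi y}{t_{\phi}})\big)$ up to negligible errors. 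As $y$ sweeps this range the phase $t_{\phi}{\rm H}(2\pi y/t_{\phi})$ varies over an interval of length $\asymp t_{\phi}$, so evaluating at the points $y_{m}$ with ${\rm H}(2\pi y_{m}/t_{\phi})=\pi m/t_{\phi}$ gives alternating signs $(-1)^{m}$ for $1\leq m\ll t_{\phi}$, i.e. $\gg t_{\phi}$ sign changes; the only arithmetic needed is $\lambda_{\phi}(1)=1$, Hecke relations entering solely for the exceptional lines $x_{0}=\pm\frac{1}{4}$, which are irrelevant for $\delta_{1}$. Note that this whole range of $y$ sits essentially between $y_{2}$ and $y_{1}$: all $\gg t_{\phi}$ sign changes occur between the first two points of your sampling sequence $\{y_{l}\}$, oscillation to which pointwise sampling at the $y_{l}$ is completely blind. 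This confirms that the region you propose to work in ($y_{l}\ll \ve t_{\phi}$, $l$ large) is the wrong place to look for the bulk of the sign changes.
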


This follows from the following Proposition with $x_{0}=0$;
%%%%%%%%%%%%%%%%%%%%%%%%%%%%%%%%%%%%%%%%%%%%%%%%%%%%%%%%%%%%%%%%%%%%%%%%%%%%%%%%%%%%%%%%%%%%%%%%%%%%%%%
%%%%%%%%%%%%%%%%%%%%%%%%%%%%%%%%%%%%%%% Prop 3.2%%%%%%%%%%%%%%%%%%%%%%%%%%%%%%%%%%%%%%%%%%%%%%%%%%%%%
\begin{prop}\label{Prop-3.2}  For any fixed $-\frac{1}{2} <  x_{0} \leq \frac{1}{2}$, there are at least $ct_{\phi}$ sign changes  $z=x_{0}+iy$ of $\phi(z)$ in the segment $\frac{1}{100}t_{\phi} < y < t_{\phi}$, with a positive constant $c$.
\end{prop}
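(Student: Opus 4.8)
The plan is to work along the vertical line $x=x_0$ and show that $\Phi(x_0+iy)$ is governed there by a single Fourier mode on a suitable subinterval of $(\frac{1}{100}t_\phi,t_\phi)$, and that this mode alone produces $\gg t_\phi$ oscillations. Since $\phi(x_0+iy)$ and $\Phi(x_0+iy)$ differ by the factor $\rho_\phi(1)e^{-\frac\pi2 t_\phi}\sqrt y$ of constant sign, they have exactly the same sign changes, so it suffices to treat $\Phi$. Using that $\phi$ is even, I write
\[
\Phi(x_0+iy)=2\sum_{n\ge 1}\lambda_\phi(n)\cos(2\pi n x_0)\,e^{\frac\pi2 t_\phi}{\rm K}_{it_\phi}(2\pi n y).
\]
For $y\asymp t_\phi$ only the modes with $2\pi ny\lesssim t_\phi$, i.e. $n=O(1)$, are not exponentially small: by Lemma \ref{LemmaOne}(2) and Corollary \ref{CorlOne}, every mode with $2\pi ny>t_\phi+Ct_\phi^{1/3}$ contributes $\ll e^{-ct_\phi}$. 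The transition height of the $n$-th mode is $y_n=\frac{t_\phi}{2\pi n}$, and these lie in $(\frac{1}{100}t_\phi,t_\phi)$ for $1\le n\le 15$.

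Next I select the dominant mode. Let $n_0$ be the least $n\ge 1$ with $\lambda_\phi(n)\cos(2\pi n x_0)\neq0$. For $n<n_0$ the corresponding term vanishes identically on the line $x=x_0$, so on the interval $(y_{n_0+1},y_{n_0})$ — where the modes $n\le n_0$ are oscillatory while the modes $n\ge n_0+1$ are past transition and hence exponentially small — the value $\Phi(x_0+iy)$ is carried entirely by the mode $n_0$. Crucially, $n_0$ is bounded: if $\cos(2\pi x_0)\neq0$ then $n_0=1$; the only exceptions are $x_0=\pm\frac14$, where $\cos(2\pi x_0)=0$ but $\cos(4\pi x_0)=-1\neq0$, so $n_0=2$ unless $\lambda_\phi(2)=0$, in which case the Hecke relation $\lambda_\phi(4)=\lambda_\phi(2)^2-1=-1$ together with $\cos(6\pi x_0)=0$ and $\cos(8\pi x_0)=1$ forces $n_0=4$. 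Thus in all cases $n_0\in\{1,2,4\}$ and $(y_{n_0+1},y_{n_0})\subset(\frac{1}{100}t_\phi,t_\phi)$.

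Restricting to the subinterval $I=\{y:\ \xi:=\tfrac{2\pi n_0 y}{t_\phi}\in[\tfrac{n_0}{n_0+1}+\delta,\,1-\delta]\}$ for a small fixed $\delta>0$, I apply Lemma \ref{LemmaOne}(1) to the single surviving mode, obtaining
\[
\Phi(x_0+iy)=\frac{2\sqrt{2\pi}\,\lambda_\phi(n_0)\cos(2\pi n_0x_0)}{t_\phi^{1/2}(1-\xi^2)^{1/4}}\sin\!\Big(\tfrac\pi4+t_\phi{\rm H}(\xi)\Big)\big(1+O(t_\phi^{-1})\big)+O(e^{-ct_\phi}).
\]
On $I$ the amplitude is $\asymp t_\phi^{-1/2}$ and the relative error $O(1/(t_\phi{\rm H}(\xi)))$ is $O(t_\phi^{-1})$ because $\delta$ is fixed and ${\rm H}$ is bounded below there. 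Since ${\rm H}$ is smooth and strictly decreasing with $|{\rm H}'(\xi)|=\sqrt{1-\xi^2}/\xi$ bounded away from $0$ on $I$, the phase $\frac\pi4+t_\phi{\rm H}(\xi)$ is monotonic and sweeps a range of length $t_\phi\big({\rm H}(\tfrac{n_0}{n_0+1}+\delta)-{\rm H}(1-\delta)\big)\gg t_\phi$. Hence the sine attains $\gg t_\phi$ successive peaks of alternating sign with $|\sin|\ge\frac12$, and at each such peak the main term (of size $\gg t_\phi^{-1/2}$) dominates the total error (of size $O(t_\phi^{-3/2})$). Therefore $\Phi(x_0+iy)$ inherits the sign of the peak, and changes sign between each consecutive pair, yielding $\ge ct_\phi$ sign changes on $I\subset(\frac{1}{100}t_\phi,t_\phi)$.

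The main obstacle is precisely this error control. One cannot simply invoke the global approximation of Proposition \ref{PropOne} on the vertical line, since its error term $t_\phi^{\theta-\frac13}$ (coming from the transitional range) swamps the $t_\phi^{-1/2}$-sized oscillation. The resolution is to work mode-by-mode on an interval strictly between two consecutive transition heights $y_{n_0+1}$ and $y_{n_0}$, where Lemma \ref{LemmaOne}(1) carries only a multiplicative $1+O(t_\phi^{-1})$ error and all neighbouring modes are genuinely exponentially suppressed. The only other delicate point, namely exhibiting a bounded dominant frequency $n_0$ at the exceptional values $x_0=\pm\frac14$, is handled by the Hecke relation, which rules out the simultaneous vanishing of $\lambda_\phi(2)$ and $\lambda_\phi(4)$.
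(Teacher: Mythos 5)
Your overall strategy coincides with the paper's: between consecutive Bessel transition heights exactly one Fourier mode survives, Lemma \ref{LemmaOne}(1) turns it into a pure oscillation of amplitude $\asymp t_{\phi}^{-1/2}$ whose phase $t_{\phi}{\rm H}(\xi)$ sweeps a range $\gg t_{\phi}$, and the alternating peaks produce $\gg t_{\phi}$ sign changes. For a fixed $x_{0}$ with $\cos(2\pi x_{0})\neq 0$ your argument is complete. The gap is at $x_{0}=\pm\frac{1}{4}$. There you split according to whether $\lambda_{\phi}(2)\neq 0$ or $\lambda_{\phi}(2)=0$, and in the first case you take $n_{0}=2$ and assert that the main term has size $\gg t_{\phi}^{-1/2}$. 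It does not: its amplitude is $\asymp |\lambda_{\phi}(2)|\,t_{\phi}^{-1/2}$, and the constant $c$ in the Proposition must be uniform in $\phi$ (otherwise the statement is empty), while no lower bound on a \emph{nonzero} Hecke eigenvalue $\lambda_{\phi}(2)$ is known. Nothing rules out a sequence of forms with $0<|\lambda_{\phi}(2)|<t_{\phi}^{1/2}e^{-c't_{\phi}}$, in which case your ``main term'' is swamped by the exponentially small contribution of the higher modes and no sign change can be extracted. Exact vanishing is the wrong dichotomy; what is needed is a quantitative threshold.

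That is precisely how the paper proceeds: it distinguishes $|\lambda_{\phi}(2)|>\frac{1}{\log t_{\phi}}$ from $|\lambda_{\phi}(2)|\leq\frac{1}{\log t_{\phi}}$. In the first case the mode $n=2$ dominates, as in your argument. In the second, the Hecke relation $\lambda_{\phi}(4)=\lambda_{\phi}(2)^{2}-1$ gives $|\lambda_{\phi}(4)|\geq\frac{1}{2}$; one then works in a range where the $n=4$ mode oscillates, namely $(\frac{1}{3}+\ve)t_{\phi}<4\pi y<(\frac{1}{2}-\ve)t_{\phi}$, and the $n=2$ term --- which in that range is oscillatory, \emph{not} exponentially small --- is absorbed into the error using only the smallness of its coefficient: it is $O(t_{\phi}^{-1/2}/\log t_{\phi})$ against a main term $\gg t_{\phi}^{-1/2}$. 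Your case $\lambda_{\phi}(2)=0$ is just the degenerate instance of this. Replacing your dichotomy by the threshold version repairs the proof; the rest of your argument (the choice of interval between transition heights, and the error control via Lemma \ref{LemmaOne}) then goes through unchanged.
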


\noindent{\bf Proof.} For large $y$, we expect that only the terms $n = \pm 1$ would contribute significantly in the sum \eqref{eq:b7}. Since $\phi$ is even, the zeros of $\phi$ are the same as those of
\begin{equation}\label{eq:c1}
\check{\Phi}(z) := e^{\frac{\pi}{2}t_{\phi}}{\rm K}_{it_{\phi}}(2\pi y)\cos(2\pi x_{0}) + \sum_{n\geq 2}\lambda_{\phi}(n)e^{\frac{\pi}{2}t_{\phi}}{\rm K}_{it_{\phi}}(2\pi ny)\cos(2\pi nx_{0}).
\end{equation}
Using Lemma \ref{LemmaOne}, we see that the sum above is exponentially small and that the first term is highly oscillatory due to the behavior of the Bessel function. More precisely, let $\ve >0$ be a sufficiently small fixed number and choose $y$ so that $(\frac{1}{2} + \ve)t_{\phi} < 2\pi y < (1 - \ve)t_{\phi}$. Then, we have
\[
  \check{\Phi}(z) = \frac{\sqrt{2\pi}}{(t_{\phi}^{2} - (2\pi y)^{2})^{\frac{1}{4}}}\sin \left(\frac{\pi}{4} +t_{\phi} {\rm H}(\frac{2\pi y}{t_{\phi}})\right)\cos(2\pi x_{0}) \left(1 + O(\frac{1}{t_{\phi}})\right) + O(e^{-\ve t_{\phi}}).
\]
For any positive integer $m$, let $y_{m}$ be the unique solution to ${\rm H}(\frac{2\pi y}{t_{\phi}}) = \frac{\pi m}{t_{\phi}}$ so that if $x_{0} \neq \pm \frac{1}{4}$ (or not close to it), we have 
\[
\frac{(t_{\phi}^{2} - (2\pi y_{m})^{2})^{\frac{1}{4}}}{\sqrt{\pi}} \sec(2\pi x_{0})\check{\Phi}(x_{0}+iy_{m}) = (-1)^{m} + O(\frac{1}{t_{\phi}}).
\]
Due to the restriction imposed on $y$, it follows that one may take $1 \leq m \ll t_{\phi}$, so that we get at least $ct_{\phi}$ sign changes of $\phi$.

It remains to consider the lines $x_{0} = \pm \frac{1}{4}$ (and those $x_{0}$ close by). The term $n=2$ is now the main term unless $|\lambda_{\phi}(2)|$ is very small. So first suppose that $|\lambda_{\phi}(2)| > \frac{1}{\log t_{\phi}}$, so that
\[
\check{\Phi}(z) =  -\lambda_{\phi}(2)e^{\frac{\pi}{2}t_{\phi}}{\rm K}_{it_{\phi}}(4\pi y) + \sum_{n\geq 2}\lambda_{\phi}(2n)e^{\frac{\pi}{2}t_{\phi}}{\rm K}_{it_{\phi}}(4\pi ny)(-1)^{n} + O(e^{-\ve t_{\phi}}).
\]
The analysis is now the same as that given above for the case $x_{0}\neq \frac{1}{4}$, with $y$ replaced with $2y$, so that we assume $(\frac{1}{2} + \ve)t_{\phi} < 4\pi y < (1 - \ve)t_{\phi}$ and the same conclusion follows.

Next, if $|\lambda_{\phi}(2)| \leq \frac{1}{\log t_{\phi}}$, the Hecke relations imply that $|\lambda_{\phi}(4)| \geq \frac{1}{2}$, so that this time the term involving $\lambda_{\phi}(2)$ can be absorbed into the error term so that we obtain
\begin{multline}
 \quad \check{\Phi}(z) =  \lambda_{\phi}(4)e^{\frac{\pi}{2}t_{\phi}}{\rm K}_{it_{\phi}}(8\pi y)(1 + O(\frac{1}{\log t_{\phi}})) 
+ \cr
\sum_{n\geq 3}\lambda_{\phi}(2n)e^{\frac{\pi}{2}t_{\phi}}{\rm K}_{it_{\phi}}(4\pi ny)(-1)^{n} + O(e^{-\ve t_{\phi}})\ .\quad\quad
\end{multline}
The conclusion then follows if we restrict $y$ so that $(\frac{1}{3} + \ve)t_{\phi} < 4\pi y < (\frac{1}{2} - \ve)t_{\phi}$.
%%%%%%%%%%%%%%%%%%%%%%%%%%%%%%%%%%%%%%%%%%%%%%%%%%%%%%%%%%%%%%%%%%%%%%%%%%%%%%%%%%%%%%%%%%%%%%%%%%%%%%%%%%%%%%%%%%%%%%%%%%%%%%%%%%%%%%%%%%%%%%%%%%%%%%%%%%%%%%%%%%%%%%%%%%%%%%%% pg20 %%%%%%%%%%%%%%%%%%%%%%%%%%%%%%%%%%%
\subsection{Analysis away from the cusp.}\ To describe the nodal domains further, we will need some results regarding the distribution of zeros of the functions ${\rm K}_{ir}(u)$. It is known that (see \cite{Du90}, \cite{Ol74}) ${\rm K}_{ir}(u)$ has infinitely many positive simple zeros in the interval $0<u<r$, and has no zeros for $r\leq u < \infty$. We denote the zeros by $k_{r,j}$ with $ j=1,2, ...$ such that $r> k_{r,1}>k_{r,2} > ... > 0$. For $r$ fixed and $j \rightarrow \infty$, we have $k_{r,j} \rightarrow 0$. On the other hand for any fixed $j$, with $r$ growing, one has 
\begin{equation}\label{eq:c2}
k_{r,j} = r + a_{j}\left(\frac{r}{2}\right)^{\frac{1}{3}} + \frac{3}{20}a_{j}^{2}\left(\frac{r}{2}\right)^{-\frac{1}{3}} + O\left(r^{-\frac{2}{3}}\right),
\end{equation}
where $a_{j}$ denotes the (negative) zeros of the Airy function ${\rm Ai}(x)$, with $0>a_{1}$$>a_{2}> ...$ . It is known that \cite{Ol74}
\begin{equation}\label{eq:c3}
a_{j} = - \left(\frac{3\pi}{8}(4j-1)\right)^{\frac{2}{3}} + O\left(j^{-\frac{4}{3}}\right),
\end{equation}
as $j\rightarrow \infty$. The first few zeros of the Airy function are approximately $a_{1}= -2.33811$, $a_{2}=-4.08795$, $a_{3}= -5.52056$ etc. .

We recall the definition of $y_{l}$ from \eqref{eq:b8}, so that if $y > y_{1}$, ${\rm K}_{it_{\phi}}(2\pi y)$ is nonzero (the first zero occurs in the transitional region). Then by Lemma \ref{LemmaOne}, Corollary \ref{CorlOne} and \eqref{eq:c1}, we have 
\begin{equation}\label{eq:c4}
\check{\Phi}(z) = e^{\frac{\pi}{2}t_{\phi}}{\rm K}_{it_{\phi}}(2\pi y)\left(\cos(2\pi x) + O(e^{-\ve t_{\phi}})\right),
\end{equation}
for some small $\ve >0$. It thus follows that the nodal line $Z_{\phi}$ must lie in an exponentially narrow neighborhood of the lines $x = \pm \frac{1}{4}$. If, as $y$ varies in this range away from $y_{1}$,  $Z_{\phi}$ does not intersect these vertical lines, then there are only two possible configurations for the nodal domains here, illustrated by Figure 9. The question of which of the two occurs is completely determined by the sign of the second Fourier coefficient $\lambda_{\phi}(2)$, assuming it is not zero (because the Bessel functions are of constant sign). Thus, if $\lambda_{\phi}(2)>0$, we have Figure 9(a) and Figure 9(b) if $\lambda_{\phi}(2)<0$ (the black dots in the figures are the points $\frac{1}{2\pi}k_{t_{\phi},j}$ with $j = 1$ and $2$). Under these conditions the nodal domains here would be inert. It is conceivable that $Z_{\phi}$ may intersect the line $x = \pm \frac{1}{4}$ but that would occur only if $\lambda_{\phi}(2)$ is non-zero but exponentially small as $t_{\phi}$ grows, which is probably unlikely (at any event, it is easily shown that there cannot be more than one such intersection in this region).

\begin{figure}[ht]
  \begin{center}
    \includegraphics[width=4.5in]{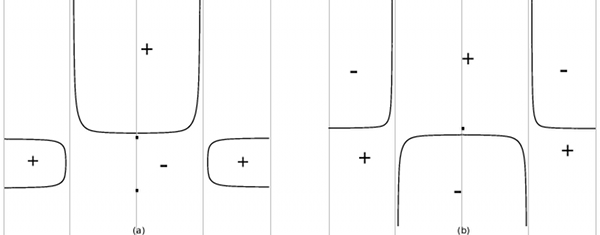}
  \end{center}
  \caption{}
\end{figure}

\begin{figure}[ht]
  \begin{center}
    \includegraphics[width=4.5in]{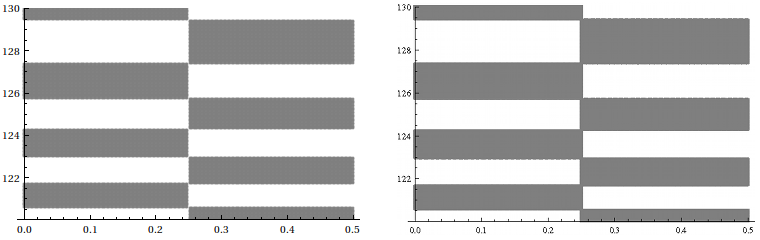}
  \end{center}
  \caption{}
\end{figure}

The analysis above can be extended to the region $y_{2} + O(t^{\frac{1}{3}}) < y \leq y_{1}$ except for neighborhoods of those $y$'s close to the points $\frac{1}{2\pi}k_{t_{\phi},j}$ (because in this range, the terms in the sum in \eqref{eq:c1} have exponential decay). In other words, $Z_{\phi}$ is exponentially close to the vertical lines $x = \pm \frac{1}{4}$ or the appropriate horizontal lines $y = \frac{1}{2\pi}k_{t_{\phi},j}$.  Now suppose there is a split nodal domain within this region. Then necessarily this domain must be contained in a band of exponentially small thickness that weaves about the aforementioned lines. Since the length of $Z_{\phi}$ is $O(t_{\phi})$ (see \cite{DF88}) the area $A$ of such a band is exponentially small. However, by the Faber-Krahn Theorem (see Prop. \ref{Theorem-3.4} below) it is necessarily the case that $A \gg t_{\phi}^{-2}$ (here we assumed that $Z_{\phi}$ is a smooth curve) so that all nodal domains for this range of $y$ are inert. 
%pg21%

We illustrate this analysis, using data from \cite{Th05},  with $t_{\phi}=830.42904848...$ and $t_{\phi}= 830.43027421...$  in Fig. 10 (showing only half of the fundamental domain), where $\lambda_{\phi}(2) >0$ and $<0$ respectively. Here, $y_{1} \sim 132.167$, $y_{2} \sim 66.08$, $\frac{1}{2\pi}k_{t_{\phi},1} \sim 129.40$ and $\frac{1}{2\pi}k_{t_{\phi},2} \sim 127.36$. The shaded regions represent the points where $\phi(x+iy)$ is positive, so that the boundaries represent $Z_{\phi}$. The topmost regions extend to infinity without any change and the first horizontal cut occurs near $y=129$. 

The situation changes rather dramatically as we move into the region $y_{3}<y<y_{2}$ since now 
\begin{equation}\label{eq:c5}
\check{\Phi}(z) = \left(e^{\frac{\pi}{2}t_{\phi}}{\rm K}_{it_{\phi}}(2\pi y)\cos(2\pi x)+ \lambda_{\phi}(2)e^{\frac{\pi}{2}t_{\phi}}{\rm K}_{it_{\phi}}(4\pi y)\cos(4\pi x)\right)\left(1 + O(e^{-\ve t_{\phi}})\right),
\end{equation}
so that the zeros of the main term above form a curve $Z^{*}_{\phi}$ close to $Z_{\phi}$.  To determine the points on $Z^{*}_{\phi}$, it is not hard to verify that for some values of  $y$ there is a uniquely determined $x$ and for the rest, there are two choices of $x$ not close to each other in general (we assume here, for simplicity that $\lambda_{\phi}(2) \neq 0$) . Moreover, there are no $y$'s which give rise to points in the interior for which $Z^{*}_{\phi}$ has a horizontal tangent except for those $y$'s that satisfy the simultaneous condition that ${\rm K}_{it_{\phi}}(2\pi y)$ and ${\rm K}_{it_{\phi}}(4\pi y)$ are both small. Since split nodal domains must have points (not on the boundary $\delta$) with horizontal tangents, we expect that there are few (if any) such for this range of $y$. In particular, we expect that most nodal domains here are inert. Figure 11(a) illustrates the domains for $t_{\phi}=830.42904848...$ with 11(b) a close-up view, while Figure 12 does the same with $t_{\phi}= 830.43027421...$. Here $y_{2}\sim 66.08$ and $y_{3} \sim 44.05$ and we observe the influence of the vertical lines $x=\frac{1}{8}$ and $x=\frac{3}{8}$. The close-up view also shows the lack of horizontal tangents except on the boundary.

\begin{figure}[ht]
  \begin{center}
    \includegraphics[width=4.9in]{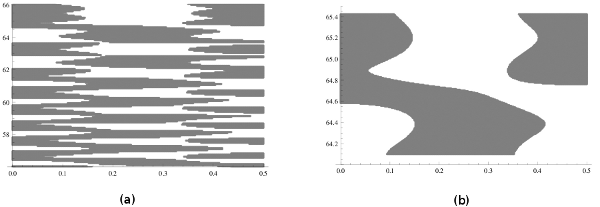}
  \end{center}
  \caption{}
\end{figure}

\begin{figure}[ht]
  \begin{center}
    \includegraphics[width=4.9in]{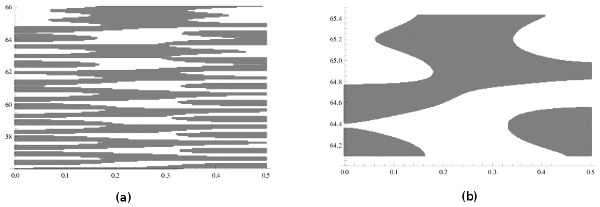}
  \end{center}
  \caption{}
\end{figure}

While we are unable to be more precise when we consider extended ranges of $y$, the following Theorem shows that there is some form of regularity (locally) in the neighborhoods of many special values of $y$:

%%%%%%%%%%%%%%%%%%%%%%%%%%%%%%%%%%%%%%%%%%%% Theorem 3.2 %%%%%%%%%%%%%%%%%%%%%%%%%%%%%%%%%%%%%%%%%
\begin{thm}\label{Theorem-3.2}  Let $\ve > 0$ be sufficiently small and $\phi$ an even Maass cusp form. Let $l$ be an integer satisfying $ 1 \ll l \ll t_{\phi}^{\frac{1}{6} - \ve}$ and assume that $|\lambda_{\phi}(l)| \geq \ve$. Put $y_{l}$ as in \eqref{eq:b8} so that $t_{\phi}^{\frac{5}{6} + \ve} \ll y_{l} \ll \ve t_{\phi}$. Then in the bounded region in the strip $-\frac{1}{2}< x\leq \frac{1}{2}$ with  $|y - y_{l}| \leq \ve y_{l} t_{\phi}^{-\frac{2}{3}}$, the nodal line $Z_{\phi}$ is  located in narrow neighborhoods (going to zero with $t$) of the lines $x = \frac{m}{4l} $ with $|m| \leq 2l$ odd integers.
%pg22%
\end{thm}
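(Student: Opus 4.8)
The plan is to read off the location of the zeros directly from the approximation in Proposition \ref{PropTwo}, which is valid in exactly the window $|y-y_l|\le \ve y_l t_\phi^{-\frac23}$ of the statement. Since by \eqref{eq:b7} the normalized function $\Phi$ and the cusp form $\phi$ differ by the nonvanishing factor $\rho_\phi(1)e^{-\frac{\pi}{2}t_\phi}\sqrt{y}$, it suffices to locate the zeros of $\Phi$. Proposition \ref{PropTwo} writes $\Phi(x+iy)=M(x,y)+R(x,y)$, where
\[
M(x,y)=\pi\left(\tfrac{16}{t_\phi}\right)^{\frac13}\lambda_\phi(l)\,{\rm Ai}\!\left((2\pi l(y-y_l))(\pi l y)^{-\frac13}\right)\cos(2\pi l x)
\]
is the transitional (Airy) term, and $R$ consists of the oscillatory sum over $|n|\le l-1$ together with the error $O(t_\phi^{\theta-\frac23})$. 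The whole strategy is to show that $M$ dominates $R$ except in shrinking neighborhoods of the zeros of $\cos(2\pi lx)$.

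First I would control $M$ from below. In the given window the Airy argument satisfies $|(2\pi l(y-y_l))(\pi l y)^{-\frac13}|\ll\ve$, because $|y-y_l|\le \ve y_l t_\phi^{-\frac23}$ and $\pi l y\asymp t_\phi$; hence for $\ve$ sufficiently small the Airy factor stays within $O(\ve)$ of ${\rm Ai}(0)>0$ and so is bounded below by a positive constant. Combined with the hypothesis $|\lambda_\phi(l)|\ge\ve$, this yields $|M(x,y)|\gg_\ve t_\phi^{-\frac13}|\cos(2\pi l x)|$ uniformly over the window. Next comes the main estimate, the bound on the oscillatory sum. By Cauchy--Schwarz it is at most $\big(\sum_{n\le l-1}|\lambda_\phi(n)|^2\big)^{\frac12}\big(\sum_{n\le l-1}(t_\phi^2-(2\pi n y)^2)^{-\frac12}\big)^{\frac12}$. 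The first factor is $\ll (l\,t_\phi^{\ve})^{\frac12}$ by Lemma \ref{LemmaTwo}. Writing $t_\phi^2-(2\pi n y)^2=(2\pi y)^2(U^2-n^2)$ with $U=t_\phi/(2\pi y)=l+o(1)$ throughout the window, Lemma \ref{LemmaZero} at $\alpha=\tfrac12$ gives $\sum_{n\le l-1}(U^2-n^2)^{-\frac12}\ll 1$, so the second factor is $\ll (l/t_\phi)^{\frac12}$ (the bracketed factors $1+O(1/(t_\phi {\rm H}(\cdot)))$ are $1+o(1)$ here and harmless). Thus the oscillatory sum is $\ll l\,t_\phi^{-\frac12+\ve}$, and since $l\ll t_\phi^{\frac16-\ve}$ this, together with $O(t_\phi^{\theta-\frac23})$ (with $\theta=\tfrac{7}{64}$), is $\ll t_\phi^{-\frac13-\eta}$ for some $\eta=\eta(\ve)>0$. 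This is precisely where the threshold $l\ll t_\phi^{\frac16-\ve}$ is used: it is exactly the range for which the transitional term dominates.

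Finally I would combine the two bounds. If $|\cos(2\pi l x)|\ge t_\phi^{-\eta/2}$ then $|M|\gg t_\phi^{-\frac13-\eta/2}$ strictly dominates $|R|\ll t_\phi^{-\frac13-\eta}$ for large $t_\phi$, so $\Phi(x+iy)\ne 0$. Hence every zero of $\phi$ in the window has $|\cos(2\pi l x)|<t_\phi^{-\eta/2}$, which forces $x$ to lie within $\ll t_\phi^{-\eta/2}/l$ of a zero of $\cos(2\pi l x)$, i.e. of a line $x=\frac{m}{4l}$ with $m$ odd; the constraint $-\tfrac12<x\le\tfrac12$ gives $|m|\le 2l$. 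As these neighborhoods have width tending to $0$ with $t_\phi$, this is the assertion. I expect the main obstacle to be the uniform control of the oscillatory sum across the whole window --- in particular verifying that $U=t_\phi/(2\pi y)$ stays above $(l-1)+\ve$ so that Lemma \ref{LemmaZero} genuinely applies at $\alpha=\tfrac12$, where the terms nearest $n=l-1$ contribute most --- and in keeping the saved power $t_\phi^{-\eta}$ uniform in $x$ and $y$. It is worth emphasizing that both $|\lambda_\phi(l)|\ge\ve$ and the smallness of $\ve$ (to keep the Airy factor away from its first zero $a_1\approx-2.338$) are essential; without them the argument degenerates.
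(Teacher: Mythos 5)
Your proposal is correct and takes essentially the same approach as the paper's proof: apply the second part of Proposition \ref{PropTwo}, bound the oscillatory sum by Cauchy--Schwarz using Lemma \ref{LemmaTwo} and Lemma \ref{LemmaZero} (yielding $\ll l\, t_{\phi}^{-\frac12+\ve} \ll t_{\phi}^{-\frac13-\eta}$ for $l \ll t_{\phi}^{\frac16-\ve}$), and then let the Airy main term, kept of size $\gg_{\ve} t_{\phi}^{-\frac13}|\cos(2\pi lx)|$ by the hypothesis $|\lambda_{\phi}(l)|\ge \ve$, dominate away from the zeros of $\cos(2\pi lx)$. Your write-up is in fact somewhat more explicit than the paper's (checking that the Airy argument is $O(\ve)$, that $U=t_{\phi}/(2\pi y)$ stays above $l-1+\ve$ so Lemma \ref{LemmaZero} applies, and quantifying the widths of the neighborhoods), but the underlying argument is identical.
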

\begin{proof} We use the second part of Proposition \ref{PropTwo}. By the Cauchy-Schwarz inequality, the sum is bounded by
\begin{equation}\label{eq:c6}
\left(\sum_{|n|<l}|\lambda_{\phi}(n)|^{2}\right)^{\frac{1}{2}} \left(\sum_{1}^{l-1} \frac{1}{(t_{\phi}^{2}-(2\pi ny)^{2})^{\frac{1}{2}}}\right)^{\frac{1}{2}} \ll (\frac{l}{y})^{\frac{1}{2}(1+ 100\ve)},
\end{equation}
by Lemma \ref{LemmaTwo} and Lemma \ref{LemmaZero}. The conditions on $y$ ensures that $y \sim y_{l}$ so that
\begin{multline*}
\Phi(x+iy)=  \pi \left(\frac{16}{t_{\phi}}\right)^{\frac{1}{3}}\lambda_{\phi}(l){\rm Ai}\left((2\pi l(y-y_{l}))(\pi ly)^{-\frac{1}{3}}\right)\cos(2\pi lx) \cr
+ O(t_{\phi}^{-\frac{1}{3} -\ve})\ ,\quad\quad\quad
\end{multline*}
provided $l \ll t_{\phi}^{\frac{1}{6} - \ve}$. Moreover, the Airy function is asymptotically close to ${\rm Ai}(0)$ so that the assumption on the size of $\lambda_{\phi}(l)$ ensures that the sign changes of $\phi$ are determined by those of $\cos(2\pi lx)$, from which the Theorem follows.
\end{proof}

%%%%%%%%%%%%%%%%%%%%%%%%%%%%%%%%%%%%%%%%%%%%%%%%%%%%%%%%%%%%%%%%%%%%%%%%%%%%%%%%%%%%%%%%%%%%%%%%%%%%%%%%%%%%%%%%%%%%%%%%%%%%%%%%%%%%%%%%%%%%%%%%%%%%%%%%%%%%%%%%%%%%%%%%%%%%%%%%%%%%%%%%%%%%%%%%%%%%%%%%%%%%%%%%%%%
\subsection{An upper bound.}\ We end this section by giving an extension of the Courant nodal domain theorem for regions close to the cusp. 
\begin{thm}\label{Theorem-3.3} Suppose $Z_{\phi}$ is a smooth curve. Then, for any $\ve_{0} > 0$ sufficiently small, there are at most $O_{\ve_{0}}(t_{\phi})$ nodal domains of $\phi$ in the region $y > \ve_{0} t_{\phi}$.
\end{thm}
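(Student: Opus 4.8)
The plan is to combine the Faber--Krahn lower bound on the area of a nodal domain with a count of how many domains can straddle a horocycle high in the cusp. Fix $\ve_0>0$ small, put $Y_0=\half\ve_0 t_\phi$, let $\mathcal H=\{z\in\mathbb X:\Im(z)=Y_0\}$ be the closed horocycle at height $Y_0$, and let $\mathcal R^{+}=\{z:\Im(z)>Y_0\}\supset\{\Im(z)>\ve_0 t_\phi\}$. A nodal domain $\Omega$ meeting $\{\Im(z)>\ve_0 t_\phi\}$ is connected and contains a point of height $>\ve_0 t_\phi>Y_0$; thus either every point of $\Omega$ has height $>Y_0$, so $\Omega\subset\mathcal R^{+}$, or $\Omega$ also contains a point of height $\le Y_0$ and hence, by the intermediate value theorem along a path in $\Omega$, meets $\mathcal H$. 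I would estimate these two families separately.

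For a domain $\Omega\subset\mathcal R^{+}$, the restriction $\phi|_\Omega$ is of one sign and vanishes on $\partial\Omega$, so it is the ground state and $\lambda_1(\Omega)=\lambda_\phi$; the Faber--Krahn inequality (Prop.~\ref{Theorem-3.4}) then gives $\mathrm{Area}(\Omega)\gg t_\phi^{-2}$. As these domains are pairwise disjoint and contained in $\mathcal R^{+}$, whose hyperbolic area is $\int_{Y_0}^{\infty}y^{-2}\,\d y=2/(\ve_0 t_\phi)$, their number is at most
\[
\frac{\mathrm{Area}(\mathcal R^{+})}{\min_{\Omega}\mathrm{Area}(\Omega)}\ll\frac{2/(\ve_0 t_\phi)}{t_\phi^{-2}}=\frac{2t_\phi}{\ve_0}\ll_{\ve_0}t_\phi .
\]
Here the smoothness hypothesis on $Z_\phi$ is used to guarantee that the components of $\mathbb X\setminus Z_\phi$ are bona fide domains to which Faber--Krahn applies.

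For the domains crossing $\mathcal H$, note that the arcs of $\mathcal H\setminus Z_\phi$ each lie in a single nodal domain, so the number of distinct domains meeting the circle $\mathcal H$ is at most $\max(1,|Z_\phi\cap\mathcal H|)$; it remains to bound the number of zeros of $x\mapsto\Phi(x+iY_0)$ for $|x|\le\half$, where $\Phi$ is as in \eqref{eq:b7}. On $\mathcal H$ one has $2\pi|n|Y_0=\pi\ve_0 t_\phi|n|$, so $2\pi|n|Y_0\ge(1+\eta)t_\phi$ as soon as $|n|\ge(1+\eta)/(\pi\ve_0)$; for such $n$ Lemma~\ref{LemmaOne}(2) gives $e^{\frac{\pi}{2}t_\phi}{\rm K}_{it_\phi}(2\pi|n|Y_0)\ll((2\pi|n|Y_0)^2-t_\phi^2)^{-\frac14}e^{-t_\phi{\rm H}(\pi\ve_0|n|)}$, and since ${\rm H}(\pi\ve_0|n|)\ge{\rm H}(1+\eta)>0$ together with \eqref{eq:b4} this shows that the tail over $|n|\ge(1+\eta)/(\pi\ve_0)$ is $O(e^{-ct_\phi})$ for some $c=c(\ve_0,\eta)>0$. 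Hence
\[
\Phi(x+iY_0)=\sum_{0<|n|<\frac{1+\eta}{\pi\ve_0}}\lambda_\phi(n)\,e^{\frac{\pi}{2}t_\phi}{\rm K}_{it_\phi}(2\pi|n|Y_0)\,e(nx)+O\!\left(e^{-ct_\phi}\right),
\]
whose main term is a real trigonometric polynomial of degree $<(1+\eta)/(\pi\ve_0)\ll_{\ve_0}1$ and therefore has $\ll_{\ve_0}1$ sign changes. Thus $|Z_\phi\cap\mathcal H|\ll_{\ve_0}1$ and the crossing domains number $\ll_{\ve_0}1$; adding the two contributions yields the claimed bound $O_{\ve_0}(t_\phi)$.

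The step I expect to be the main obstacle is the passage from the trigonometric-polynomial approximation to an honest bound on the number of sign changes of $\Phi(\cdot+iY_0)$. The $n=\pm1$ coefficient is only polynomially small in $t_\phi$ (of size $\asymp t_\phi^{-1/2}$ by Lemma~\ref{LemmaOne}(1)) whereas the discarded tail is exponentially small, so the main term dominates except in exponentially short windows around its own zeros, and in principle the tiny remainder could manufacture extra crossings there. I would exclude these by a Jensen/complexification estimate in the spirit of \eqref{eq:Oh8}, applied to the holomorphic continuation of $x\mapsto\Phi(x+iY_0)$ into a thin complex strip, which bounds the number of real zeros by $\ll t_\phi\cdot\mathrm{length}(\mathcal H)\ll_{\ve_0}1$. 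In any event, even the crude bound $|Z_\phi\cap\mathcal H|\ll t_\phi$ suffices to conclude the theorem, so this technical point does not affect the final $O_{\ve_0}(t_\phi)$ estimate.
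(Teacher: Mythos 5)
Your decomposition --- Faber--Krahn for the nodal domains wholly contained in the region, plus a count of zeros on a bounding horocycle for the domains that cross it --- is exactly the skeleton of the paper's proof (Proposition \ref{Theorem-3.4}), and your Faber--Krahn half is correct. The genuine gap is in the other half: the bound on $|Z_{\phi}\cap\mathcal{H}|$ at the height $Y_{0}=\tfrac{1}{2}\ve_{0}t_{\phi}$. The coefficients of your approximating trigonometric polynomial are $\lambda_{\phi}(n)\,e^{\frac{\pi}{2}t_{\phi}}{\rm K}_{it_{\phi}}(\pi\ve_{0}|n|t_{\phi})$, and for $\pi\ve_{0}|n|<1$ Lemma \ref{LemmaOne}(1) gives them the form $c\,\big(t_{\phi}^{2}-(\pi\ve_{0}nt_{\phi})^{2}\big)^{-\frac14}\sin\big(\tfrac{\pi}{4}+t_{\phi}{\rm H}(\pi\ve_{0}|n|)\big)\big(1+O(1/t_{\phi})\big)$. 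The oscillating sine factor admits no lower bound (it can be arbitrarily small or vanish for unlucky $t_{\phi}$), so your claim that the $n=\pm 1$ coefficient is $\asymp t_{\phi}^{-1/2}$ is false, and in the degenerate case where all low coefficients are tiny, "bounded-degree polynomial plus exponentially small error" says nothing about sign changes of $\Phi$: the error can dominate. Your Jensen repair hits the same wall, because Jensen bounds the number of zeros by $\log(M/m)$, where $m$ is a lower bound for the function on the real circle and $M$ its maximum on the complex strip, and at height $Y_{0}\asymp t_{\phi}$ no such lower bound is available: the lower bound of Theorem \ref{TheoremOne} is for \emph{fixed} $Y$, and the Appendix A version is both conditional on QQUE and restricted to $Y\ll t_{\phi}^{1/2-\ve}$. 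For the same reason the fallback "crude bound $|Z_{\phi}\cap\mathcal{H}|\ll t_{\phi}$" is not known at this height either; the upper bound in Theorem \ref{TheoremI4} concerns fixed horocycles and itself rests on a restriction lower bound.

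The paper circumvents precisely this problem by not using an arbitrary horocycle: the bounding horocycle is placed at a special height $y_{l}=t_{\phi}/(2\pi l)$ with $l=p$ or $p^{2}$ for a fixed large prime $p$, so that $|\lambda_{\phi}(l)|\geq\tfrac{1}{2}$ by the Hecke relations, and at this transitional height the dominant term of $\Phi(x+iy_{l})$ is $\pi(16/t_{\phi})^{1/3}\lambda_{\phi}(l){\rm Ai}(0)\cos(2\pi lx)$ (Proposition \ref{PropTwo}). Crucially ${\rm Ai}(0)\neq 0$ is a fixed constant rather than an oscillating factor, so the main term has guaranteed polynomial-size amplitude; this is what legitimizes locating the zeros near $x=m/(4l)$, showing they are simple, and running the Jensen/complexification count to get $O(l\log t_{\phi})$ zeros on that horocycle, whence $O_{\ve_{0}}(t_{\phi})$ domains in total. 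If you insist on keeping your horocycle at $Y_{0}$, the argument can be salvaged by importing the trick of Section 2.2: choose $n_{0}>t_{\phi}/(2\pi Y_{0})$ equal to a prime or its square, so that $|\lambda_{\phi}(n_{0})|\geq\tfrac{1}{2}$ and ${\rm K}_{it_{\phi}}(2\pi n_{0}Y_{0})\neq 0$ with an explicit lower bound $\gg e^{-C(\ve_{0})t_{\phi}}$; this forces $\max_{n}|c_{n}|\gg e^{-C(\ve_{0})t_{\phi}}$, and after pushing your tail cutoff far enough out that the discarded tail is much smaller than this, the Jensen ratio $M/m$ becomes $O_{\ve_{0}}(1)$ and you recover $|Z_{\phi}\cap\mathcal{H}|\ll_{\ve_{0}}1$. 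Without some input of this kind, the crossing-domain count --- and hence the theorem --- is not proved.
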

%pg23%
This Theorem, taken together with Theorem \ref{Theorem-3.1} suggest the possibility that almost all the nodal domains near the cusp are inert. For $l>l'\geq 0$, put
\begin{equation}\label{eq:c7}
\mathbb{D}_{l',l} = \big{\{} -\frac{1}{2}\leq x\leq \frac{1}{2}, \quad y_{l}\leq y \leq y_{l'}\Big{\}},
\end{equation}
where we put $y_{0}=\infty$. The Theorem is a consequence of the following

\begin{prop}\label{Theorem-3.4} Suppose $Z_{\phi}$ is a smooth curve.
\begin{itemize}
\item[(1)] Let $1\leq l'\leq l \ll t_{\phi}^{\frac{1}{6}-\ve}$ be integers such that $|\lambda_{\phi}(l)|$ and  $|\lambda_{\phi}(l')|$ both exceed $\ve $  for some $\ve>0$ sufficiently small. Then, the number of nodal domains of $\phi$ in $\mathbb{D}_{l',l}$ is $O_{\ve}((l-l')t_{\phi})$.
\item[(2)] Suppose $l \ll t_{\phi}^{\frac{1}{6}-\ve}$ and  $|\lambda_{\phi}(l)| \geq \ve$. Then the number of nodal domains of $\phi$ in the region $y>y_{l}$ is $O_{\ve}(\frac{t_{\phi}^{2}}{y_{l}})$.
\end{itemize}
\end{prop}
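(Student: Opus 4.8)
The plan is to combine a purely local area lower bound coming from the \emph{Faber--Krahn inequality} with a packing argument, treating part~(2) as the limiting case $l'=0$ (so that $y_{l'}=y_{0}=\infty$ and the region is $y>y_{l}$) of part~(1). The starting observation is that on each nodal domain $\Omega$ the function $\phi$ has constant sign and vanishes on $\partial\Omega\subset Z_{\phi}$; since $Z_{\phi}$ is assumed smooth, $\Omega$ is a genuine domain with piecewise smooth boundary and $\phi|_{\Omega}$ is a ground state of the Dirichlet Laplacian on $\Omega$, so its first Dirichlet eigenvalue equals $\lambda_{\phi}=t_{\phi}^{2}+\frac14$. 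The Faber--Krahn inequality on $\mathbb{H}$ says that among all domains of a fixed hyperbolic area the geodesic disk minimizes the first Dirichlet eigenvalue; as $\lambda_{\phi}$ is large the extremal disks are small and behave Euclideanly, so that $\lambda_{1}^{\mathrm{disk}}(A)\gg A^{-1}$ for small area $A$. Therefore every nodal domain satisfies
\[
\mathrm{Area}(\Omega)\ \gg\ \lambda_{\phi}^{-1}\ \gg\ t_{\phi}^{-2}.
\]

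Write $\mathbb{D}=\mathbb{D}_{l',l}$ from \eqref{eq:c7}. In $\mathbb{X}$ this is a piece of cusp cylinder whose only boundary is the two closed horocycles $\{y=y_{l}\}$ and $\{y=y_{l'}\}$ (the vertical sides being glued), with the convention that in case~(2) the upper horocycle is absent. A nodal domain meeting $\mathbb{D}$ is either contained in $\mathbb{D}$ or, being connected and reaching outside, crosses one of these bounding horocycles. The domains of the first kind are pairwise disjoint and each has area $\gg t_{\phi}^{-2}$, while
\[
\mathrm{Area}(\mathbb{D})=\int_{y_{l}}^{y_{l'}}\frac{\d y}{y^{2}}=\frac{2\pi(l-l')}{t_{\phi}}
\]
(respectively $1/y_{l}=2\pi l/t_{\phi}$ in case~(2)); dividing gives at most $O_{\ve}\big{(}(l-l')t_{\phi}\big{)}$ such domains (respectively $O_{\ve}(l\,t_{\phi})=O_{\ve}(t_{\phi}^{2}/y_{l})$).

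It remains to bound the number of domains crossing a bounding horocycle $\{y=y_{l_{0}}\}$ with $l_{0}\in\{l',l\}$. Each such domain meets that horocycle in at least one of the arcs of $\{y=y_{l_{0}}\}\setminus Z_{\phi}$, so their number is at most the number of zeros of $x\mapsto\phi(x+iy_{l_{0}})$ on $[-\half,\half]$. Here the hypotheses $l_{0}\ll t_{\phi}^{1/6-\ve}$ and $|\lambda_{\phi}(l_{0})|\ge\ve$ enter decisively: by the first display of Proposition~\ref{PropTwo}, together with the Cauchy--Schwarz estimate \eqref{eq:c6}, the off-diagonal sum over $|n|\le l_{0}-1$ is $O\big{(}(l_{0}/y_{l_{0}})^{\frac12(1+100\ve)}\big{)}=O(t_{\phi}^{-\frac13-\ve})$, which is dominated by the transitional main term $\pi(16/t_{\phi})^{1/3}\lambda_{\phi}(l_{0}){\rm Ai}(0)\cos(2\pi l_{0}x)\asymp t_{\phi}^{-1/3}\cos(2\pi l_{0}x)$. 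Hence $\phi$ inherits the sign-change pattern of $\cos(2\pi l_{0}x)$ and has $O(l_{0})$ zeros on the horocycle. Since $l_{0}\le l\ll t_{\phi}^{1/6}\ll (l-l')t_{\phi}$ (respectively $\ll t_{\phi}^{2}/y_{l}$), these crossing domains are absorbed into the packing bound, which finishes both parts.

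The genuinely delicate point is not the packing step but the two inputs feeding it. First, one must justify that $\phi|_{\Omega}$ is truly a ground state and that the hyperbolic Faber--Krahn constant remains uniform down to areas of order $t_{\phi}^{-2}$; the smoothness hypothesis on $Z_{\phi}$ is precisely what makes the domains regular enough for this to apply. Second, in the boundary count one must have the approximation of Proposition~\ref{PropTwo} valid uniformly along the \emph{entire} horocycle $y=y_{l_{0}}$ (not only in the transitional window of Theorem~\ref{Theorem-3.2}) and with the $n=\pm l_{0}$ term dominating the perturbation for \emph{every} $x$. This is exactly where the arithmetic lower bound $|\lambda_{\phi}(l_{0})|\ge\ve$ is indispensable, since without it the main term could be smaller than the tail and the clean $O(l_{0})$ zero count would break down.
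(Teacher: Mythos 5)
Your overall architecture coincides with the paper's: a Faber--Krahn packing bound for the nodal domains wholly contained in $\mathbb{D}_{l',l}$ (using ${\rm Area}(\mathbb{D}_{l',l})=2\pi(l-l')/t_{\phi}$ and ${\rm Area}(\Omega)\gg t_{\phi}^{-2}$ for each domain), plus a bound on the remaining domains by the number of zeros of $\phi$ on the bounding closed horocycles. The gap is in the final zero-counting step. The domination of the main term $\pi(16/t_{\phi})^{1/3}\lambda_{\phi}(l_{0}){\rm Ai}(0)\cos(2\pi l_{0}x)$ over the $O(t_{\phi}^{-1/3-\ve})$ perturbation cannot hold ``for every $x$'', as your last paragraph requires: the main term vanishes identically at the $2l_{0}$ points $x=n/(4l_{0})$, $n$ odd, and in a window of width roughly $t_{\phi}^{-\ve}/l_{0}$ around each such point the error term is at least as large as the main term, no matter how large $|\lambda_{\phi}(l_{0})|$ is. Domination away from these windows shows only that all zeros of $\Phi(\cdot+iy_{l_{0}})$ are confined to them (and yields a \emph{lower} bound of $2l_{0}$ sign changes, which is how Theorem~\ref{TheoremThree} is proved); it gives no upper bound whatsoever on how many zeros lie inside each window, and that upper bound is exactly what the packing argument needs. ``Inheriting the sign-change pattern of $\cos(2\pi l_{0}x)$'' controls sign changes, not the cardinality of the zero set.

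The paper closes precisely this hole with two further inputs. First, it notes that Proposition~\ref{PropTwo} extends to $\frac{\partial}{\partial x}\Phi(x+iy_{l})$, whose main term is proportional to $l\,t_{\phi}^{-1/3}\lambda_{\phi}(l)\sin(2\pi l x)$ and hence is \emph{not} small near $n/(4l)$ (the derivative's main term is large exactly where the function's main term is small), so any zero there is simple. Second, and this is what the paper actually uses for the count, it complexifies: the coefficient and Bessel bounds show $\Phi(x+iy_{l})$ is holomorphic in $x$ for $|x-\alpha|\leq y_{l}/(4t_{\phi})=1/(8\pi l)$ with at most polynomial growth in $t_{\phi}$, so Jensen's lemma gives $O(\log t_{\phi})$ zeros in each ball of radius $1/(16\pi l)$, hence $O(l\log t_{\phi})$ real zeros on the horocycle --- which suffices, since $l\log t_{\phi}\ll t_{\phi}^{1/6}\log t_{\phi}\ll (l-l')t_{\phi}$. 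Your proof needs one of these mechanisms (monotonicity on each window via the derivative, or Jensen's lemma); as written, the claimed $O(l_{0})$ bound is unsupported. Incidentally, your treatment of part (2) directly as the case $l'=0$, $y_{l'}=\infty$ with a single bounding horocycle is fine, and slightly cleaner than the paper's reduction to $l'=1$.
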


\begin{proof}

 Let $S_{1}, S_{2}, ... S_{N}$ be the nodal domains of $\phi$ wholly contained in $\mathbb{D}_{l',l}$ (so that they do not intersect the horizontal boundaries). Since $\phi$ is an eigenfunction of $\Delta$ with eigenvalue $\lambda$, on each $S_{i}$, $\lambda$ is the least non-zero eigenvalue of the associated Dirichlet boundary value problem. We appeal to the Faber-Krahn Theorem for bounded domains in hyperbolic space (see \cite{Ch84}, Chap. 4.2) which states that if the boundary of $S_{i}$ is smooth (which we will assume), there is a constant $C >0$ such that
\[
{\rm Area}(S_{i})\lambda > C.
\]
Summing over all the $S_{i}$'s gives us
\[
N \leq \frac{\lambda}{C}\sum_{i}{\rm Area}(S_{i}) \leq \frac{\lambda}{C}{\rm Area}(\mathbb{D}_{l',l})\ll (l-l')t_{\phi},
\]
since ${\rm Area}(\mathbb{D}_{l',l})=\frac{2\pi (l-l')}{t_{\phi}}$.

The remaining nodal domains necessarily intersect the horizontal boundaries $y_{l}$ or $y_{l'}$ so that the number of such nodal domains is bounded by the number of zeros of $\phi$ on these closed horocycles. Applying the formula \eqref{eq:c6} with $y=y_{l}$ and with $1\leq l \ll t_{\phi}^{\frac{1}{6}-\delta}$, we have
\[
\Phi(z_{l}) = \frac{C'}{l^{\frac{1}{3}}}\cos(2\pi lx) \lambda_{\phi}(l) + O(l^{-\frac{1}{3} - \delta}),
\]
for some constant $C'>0$ and for any small $\delta>0$. Now suppose that $|\lambda_{\phi}(l)|>\ve$. It follows that if $\alpha$ is a zero of $\Phi(x +iy_{l})$, then $\alpha$ is near $\frac{n}{4l}$ for some odd integer $n$. This zero is necessarily simple since Prop. \ref{PropTwo} can be extended to $\frac{\partial}{\partial x} \Phi(x+iy)$ so that the main term is not small at $\alpha$. We now consider the complexification of $\Phi(x+iy_{l})$, with  $x$ a complex variable. The estimates for the Fourier coefficients from \eqref{eq:b2}, \eqref{eq:b3}, \eqref{eq:b4} and the Bessel function show that $\phi(x+iy_{l})$ is a holomorphic function of $x\in \mathbb{C}$ for $|x-\alpha|\leq \frac{y_{l}}{4t} =\frac{1}{8\pi l}$, for any real number $\alpha$. Applying Jensen's Lemma, in each ball of radius $\frac{1}{16\pi l}$ centered at $\alpha$ there are $O(\log t_{\phi})$ zeros so that $\Phi(x+iy_{l})$ has at most $O(l\log t_{\phi})$ real zeros for $-\frac{1}{2}\leq x \leq \frac{1}{2}$. Part one of the proposition then follows. The second part follows using $l' = 1$ since $\lambda_{\phi}(1)=1$. 

To complete the proof of Theorem \ref{Theorem-3.3}, we have to choose $l$ in Prop. \ref{Theorem-3.4} such that our condition $|\lambda_{\phi}(l)| \geq \ve_{0}$ is satisfied. By the Hecke relations, for any prime number $p$, this condition is satisfied with $l = p$ or $p^{2}$. Choosing $\ve_{0} = \frac{1}{l}$ and $p$ sufficiently large but independent of $t$ gives us our conclusion. 

\end{proof} 
%%%%%%%%%%%%%%%%%%%%%%%%%%%%%%%%%%%%%%%%%%%%%%%%%%%%%%%%%%%%%%%%%%%%%%%%%%%%%%%%%%%%%%%%%%%%%%%%%%%%%%%%%%%%%%%%% pg24 %%%%%%%%%%%%%%%%%%%%%%%%%%%%%%%%%%%%%%%%%%%%%%%%%%%%%%%%%%%%%

\section{Restriction to closed horocycles}%S2

For $0<Y<\infty$, denote by $\mathcal{C}_{Y}$ the closed horocycle on $\mathbb{X}$ given by the points $z = x+iY$, with $-\frac{1}{2} < x\leq \frac{1}{2}$. We are interested in the restriction of $\phi$ to $\mathcal{C}_{Y}$. We first give a sharp bound for the $L^{2}$-restriction in

%%%%%%%%%%%%%%%%%%%%%%%%%%%%%%%%%%%%%%%%%%%%%%%% Theorem 1 %%%%%%%%%%%%%%%%%%%%%%%%%%%%%%%%%%%%%%%%%%%%
%%%%%%%%%%%%%%%%%%%%%%%%%%%%%%%%%%%%%%%%%%%%%%%%%%%%%%%%%%%%%%%%%%%%%%%%%%%%%%%%%%%%%%%%%%%%%%%%%%%%%%
\begin{thm}\label{TheoremOne} \
\begin{itemize}
\item[(1)] For any fixed $Y>0$ and any $\ve > 0$
\[
t_{\phi}^{-\ve} \ll \lVert\phi\big{|}_{_{\mathcal{C}_{Y}}}\rVert_{_{2}}^{2}=\int_{-\frac{1}{2}}^{\frac{1}{2}}|\phi(x+iY)|^{2} \ \d x \ll t_{\phi}^{\ve},
\]
where the implied constants depend on $\ve$ and $Y$.
\item[(2)] For all $c < Y < \frac{t_{\phi}}{2\pi}$ with any fixed $c>0$ we have
\[
\lVert\phi\big{|}_{_{\mathcal{C}_{Y}}}\rVert_{_{2}} \ll_{_{\ve}} t_{\phi}^{\ve}.
\]
\end{itemize}
\end{thm}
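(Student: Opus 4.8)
The plan is to pass to the Fourier expansion \eqref{eq:b1} and integrate term by term. Since the horocycle $\mathcal{C}_Y$ is a level set $y=Y$, Parseval in the $x$-variable is exact and produces no cross terms: writing $\mathcal{K}(Y)=\sum_{n\neq 0}|\rho_\phi(n)|^2{\rm K}_{it_\phi}(2\pi|n|Y)^2$, one has $\int_{-1/2}^{1/2}|\phi(x+iY)|^2\,dx=Y\,\mathcal{K}(Y)$ and $\int_{\mathcal{C}_Y}\phi^2\,\d^\times z=\mathcal{K}(Y)$, so for fixed $Y$ the two agree up to the harmless constant $Y$, while for the growing range of part (2) it is the arclength quantity $\mathcal{K}(Y)$ itself that must be controlled. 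Using \eqref{eq:b3} and \eqref{eq:b2} to normalise, $\mathcal{K}(Y)\asymp t_\phi^{\pm\ve}\sum_{n\geq 1}\lambda_\phi(n)^2\,e^{\pi t_\phi}{\rm K}_{it_\phi}(2\pi nY)^2$, and everything reduces to estimating this weighted sum of Hecke eigenvalues, the weight being controlled by Corollary \ref{CorlOne} (equivalently by the diagonal term of Proposition \ref{PropOne}(3)).

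For the upper bounds (all of (2) and the upper half of (1)) I would split the sum according to the position of $u=2\pi nY$ relative to $t_\phi$. In the \emph{bulk} $u\leq t_\phi-Ct_\phi^{1/3}$ the weight is $\ll(t_\phi^2-u^2)^{-1/2}$; for the $n$ with $u\leq\tfrac34 t_\phi$ the denominator is $\gg t_\phi$ and Lemma \ref{LemmaTwo} with $k=1$ bounds the contribution by $\ll t_\phi^\ve/Y\ll t_\phi^\ve$. The delicate part is the \emph{edge} of the bulk, where $u$ approaches $t_\phi$ and the denominator degenerates like $\sqrt{t_\phi\cdot 2\pi Y(L-n)}$ with $L=t_\phi/(2\pi Y)$; here I would apply Cauchy--Schwarz, using the fourth moment bound (Lemma \ref{LemmaTwo} with $k=2$) for $\sum\lambda_\phi(n)^4$ and the elementary $\sum_n(L-n)^{-1}\ll\log t_\phi$, to gain exactly the factor $\sqrt{L}$ that reduces this piece to $\ll t_\phi^\ve/Y$. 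In the \emph{transition} range $|u-t_\phi|\leq Ct_\phi^{1/3}$ the weight is $\ll t_\phi^{-2/3}$, and the individual bound \eqref{eq:b4}, $\lambda_\phi(n)\ll n^{\theta+\ve}$ with $\theta=\tfrac{7}{64}<\tfrac16$, makes this term $O(t_\phi^{2\theta-1/3+\ve})=o(1)$; beyond the transition the weight decays exponentially and is negligible. The arclength normalisation (the factor $Y$ removed) is precisely what keeps these estimates uniform as $Y\to t_\phi/(2\pi)$: without it the lone transition term near the top would already be of size $t_\phi^{1/3}$.

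The lower bound in (1) is where the Quantum Unique Ergodicity input is essential, and it is cleaner here than for $\delta$ because Parseval is diagonal. With $Y$ fixed, $L\asymp t_\phi$ and the bulk carries the whole mass; writing $\sin^2=\tfrac12(1-\cos)$ in the diagonal term of Proposition \ref{PropOne}(3), the constant half contributes $\tfrac12\sum_n\lambda_\phi(n)^2(t_\phi^2-(2\pi nY)^2)^{-1/2}$, which is $\gg t_\phi^{-\ve}$ by the lower bound of Proposition \ref{shortsum} (the QUE consequence $\sum_{|n|\leq\omega t_\phi}|\tilde\rho_\phi(n)|^2\gg\omega t_\phi$), summed over a favourable dyadic block. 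It then remains to show that the oscillatory remainder $\sum_n\lambda_\phi(n)^2\cos(\tfrac\pi2+2t_\phi{\rm H}(2\pi nY/t_\phi))(t_\phi^2-(2\pi nY)^2)^{-1/2}$ is of strictly smaller order, so that it cannot cancel the main half; this I would establish by partial summation against the second-moment asymptotic for $\sum_{n\le x}\lambda_\phi(n)^2$, exploiting that the phase $t_\phi{\rm H}(2\pi nY/t_\phi)$ moves by $\gg 1$ as $n$ increments over most of the range.

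The main obstacle is exactly this interplay of a non-integrable-looking weight with a short sum. The factor $(t_\phi^2-u^2)^{-1/2}$ is integrable but unbounded, so any crude use of the mean value loses a power of $t_\phi$; only the fourth-moment Cauchy--Schwarz (for the upper bound) and the phase oscillation (for the lower bound) recover the sharp exponent, and securing the requisite uniformity in $t_\phi$ of the second-moment estimate for $\sum_{n\le x}\lambda_\phi(n)^2$ is the technical heart of the argument.
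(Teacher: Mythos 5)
Your upper bound (and part (2)) is sound and is essentially the paper's own argument: the transition range is disposed of by the pointwise bound \eqref{eq:b4} exactly as in the error term of Proposition \ref{PropOne}(3), and the bulk is handled by Cauchy--Schwarz with the fourth-moment bound (Lemma \ref{LemmaTwo}, $k=2$) and Lemma \ref{LemmaZero}; your preliminary splitting at $u\le\tfrac34 t_\phi$ is a harmless repackaging.

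The lower bound, however, has a genuine gap. You reduce matters to showing that the oscillatory remainder $\sum_n\lambda_\phi(n)^2\bigl(t_\phi^2-(2\pi nY)^2\bigr)^{-1/2}\sin\bigl(2t_\phi{\rm H}(2\pi nY/t_\phi)+c\bigr)$ is of smaller order than the ``constant half'', and you propose to prove this by partial summation against an asymptotic for $\sum_{n\le x}\lambda_\phi(n)^2$ that is uniform in $t_\phi$. No such asymptotic exists at the scale required. The sum runs up to $x\asymp t_\phi/Y\asymp t_\phi$, and its generating Dirichlet series is $\zeta(s)L(s,\mathrm{sym}^2\phi)/\zeta(2s)$, whose analytic conductor is $\asymp t_\phi^2$; a Perron argument with the convexity bound for $L(s,\mathrm{sym}^2\phi)$ yields an error term of the same size as the main term $c_\phi x$ when $x\asymp t_\phi$, and spectral-aspect subconvexity for the symmetric square (a $\GL_3$ $L$-function) is not known. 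This is precisely why the paper never invokes any cancellation over $n$ in sums weighted by $\lambda_\phi(n)^2$: the only inputs it allows itself are order-of-magnitude statements --- the moment upper bounds of Lemma \ref{LemmaTwo} and the two-sided QUE bounds of Proposition \ref{shortsum} --- and an upper bound of the correct order is useless for partial summation, which needs control of the error in the coefficient sum. Note also that the trivial bound on the remainder is exactly the size of the main term, so some genuine cancellation mechanism is unavoidable in your scheme.

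The paper's workaround is different in kind and is the step you are missing: instead of exhibiting oscillation, it shows that the exceptional set of $n$ where $\bigl|\sin\bigl(\tfrac\pi4+t_\phi{\rm H}(2\pi nY/t_\phi)\bigr)\bigr|\le\eta$ is sparse. Such $n$ give lattice points within $O(\eta)$ of the real-analytic curve \eqref{eq:d4}, so Van der Corput/Huxley counting gives the bound \eqref{eq:d5}; the exceptional contribution is then absorbed by Cauchy--Schwarz with the fourth moment as in \eqref{eq:d6}, while on the complement one uses simply $\sin^2\ge\eta^2$ together with Proposition \ref{shortsum}, and the choice $\eta=t_\phi^{-\delta}$ costs only $t_\phi^{-\ve}$. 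Your diagonal Parseval setup and your identification of Proposition \ref{shortsum} (QUE) as the source of the main term are correct; it is the treatment of the $\sin^2$ factor that must be replaced by this sparsity argument, since the cancellation you posit is not provable with current tools.
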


Using this, we then prove

%%%%%%%%%%%%%%%%%%%%%%%%%%%%%%%%%%%%%Theorem 2%%%%%%%%%%%%%%%%%%%%%%%%%%%%%%%%%%%%%%%%%%%%%%%%%%%%%%%%
%%%%%%%%%%%%%%%%%%%%%%%%%%%%%%%%%%%%%%%%%%%%%%%%%%%%%%%%%%%%%%%%%%%%%%%%%%%%%%%%%%%%%%%%%%%%%%%%%%%%%%%
\begin{thm}\label{TheoremTwo} For $\ve > 0$ and fixed $ Y > 0$
\[
t_{\phi}^{\frac{1}{12}-\ve} \ll_{\ve} \#\{z\in \mathcal{C}_{Y}: \phi(z)=0\} \ll t_{\phi}.
\]
\end{thm}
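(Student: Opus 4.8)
The plan is to prove the two inequalities separately, working throughout with the renormalized form $\Phi$ of \eqref{eq:b7}. On $\mathcal{C}_Y$ one has $\phi(x+iY)=\rho_\phi(1)e^{-\frac{\pi}{2}t_\phi}\sqrt{Y}\,\Phi(x+iY)$, and since the scalar is a fixed nonzero constant, the zeros and sign changes of $\phi$ along $\mathcal{C}_Y$ coincide with those of $f(x):=\Phi(x+iY)$. By \eqref{eq:b2} this scalar is $t_\phi^{O(\ve)}$, so Theorem \ref{TheoremOne}(1) converts to $\int_{-\half}^{\half}f^2\,\d x\gg t_\phi^{-\ve}$, while the Iwaniec--Sarnak subconvex sup-norm bound \cite{IS95}, $\|\phi\|_\infty\ll_\ve t_\phi^{\frac{5}{12}+\ve}$ on compact subsets, converts to $\|f\|_\infty\ll_\ve t_\phi^{\frac{5}{12}+\ve}$.

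For the lower bound, let $Z$ be the number of sign changes of $f$ along the circle $\mathcal{C}_Y$. These cut $\mathcal{C}_Y$ into arcs on each of which $f$ has a constant sign, so that $\int|f|\,\d x$ over such an arc equals the absolute value of $\int f\,\d x$ over it. Writing $U=\sup_{-\half<\alpha\le\beta\le\half}\big|\int_\alpha^\beta f\,\d x\big|$ and summing over the arcs gives
\[
t_\phi^{-\ve}\ll\int_{-\half}^{\half}f^2\,\d x\le\|f\|_\infty\int_{-\half}^{\half}|f|\,\d x\le\|f\|_\infty\,(Z+1)\,U.
\]
The crucial input is the sharp cancellation-free estimate $U\ll_\ve t_\phi^{-\frac12+\ve}$, which I would extract from Proposition \ref{PropOne}(2) with $y=Y$: the error term there is $O(t_\phi^{\theta-1+\ve})$, and the main sum is bounded in modulus by $\sum_{1\le n\le N_0}\frac{|\lambda_\phi(n)|}{n\,(t_\phi^2-(2\pi nY)^2)^{1/4}}$ with $N_0=\frac{t_\phi-\Delta}{2\pi Y}$. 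Splitting at $n=\half N_0$, the low range uses $(t_\phi^2-(2\pi nY)^2)^{1/4}\gg t_\phi^{1/2}$ together with $\sum_n|\lambda_\phi(n)|/n\ll t_\phi^\ve$ (partial summation and Lemma \ref{LemmaTwo}), while the edge range $\half N_0<n\le N_0$ uses $1/n\asymp t_\phi^{-1}$ followed by Cauchy--Schwarz with Lemma \ref{LemmaTwo} and Lemma \ref{LemmaZero} (case $\alpha=\half$, giving $O(1)$); both contribute $\ll t_\phi^{-\frac12+\ve}$. Feeding $\int f^2\gg t_\phi^{-\ve}$, $\|f\|_\infty\ll t_\phi^{\frac{5}{12}+\ve}$ and $U\ll t_\phi^{-\frac12+\ve}$ into the display yields $Z\gg t_\phi^{-\ve}\big/\big(t_\phi^{\frac{5}{12}}t_\phi^{-\frac12}\big)=t_\phi^{\frac{1}{12}-\ve}$, whence $\#\{z\in\mathcal{C}_Y:\phi(z)=0\}\ge Z\gg t_\phi^{\frac{1}{12}-\ve}$.

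For the upper bound I would complexify $x=u+iv$ in \eqref{eq:b1}, exactly as in the proof of Proposition \ref{Theorem-3.4}. The factor $e(nx)$ contributes $e^{2\pi|n||v|}$, while Corollary \ref{CorlOne} gives $K_{it_\phi}(2\pi|n|Y)\ll t_\phi^{-1/3}e^{-\frac{\pi}{2}t_\phi}$ for $|n|\le N_0$ and $K_{it_\phi}(2\pi|n|Y)\ll e^{-2\pi|n|Y}$ for $|n|>N_0$; hence $f$ extends to a holomorphic function of $x$ in the strip $|v|<Y$, with $\max|f|\ll e^{O(t_\phi)}$ on any disk of radius $<Y$, the growth being governed by the cutoff frequency $N_0\asymp t_\phi$. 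Since $\int f^2\gg t_\phi^{-\ve}$ there is a real point $x_0$ with $|f(x_0)|\gg t_\phi^{-\ve}$, so that $\log(1/|f(x_0)|)=O(\log t_\phi)$. Centering a disk of fixed radius $<Y$ at $x_0$ and applying Jensen's Lemma bounds the number of complex zeros inside by $\ll\log\big(\max|f|/|f(x_0)|\big)=O(t_\phi)$; by periodicity of $f$ and covering $\mathcal{C}_Y$ by boundedly many such disks, the number of real zeros is $O(t_\phi)$. Alternatively one may invoke the arithmetic-free bound of Jung \cite{Ju11}.

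The main obstacle is the sharp estimate $U\ll_\ve t_\phi^{-\frac12+\ve}$: everything rests on controlling the edge terms of the spectral sum, where $2\pi nY$ approaches $t_\phi$ and the weight $(t_\phi^2-(2\pi nY)^2)^{-1/4}$ blows up, which is precisely the situation handled by Lemma \ref{LemmaZero}. It is this bound, combined with the exponent $\tfrac{5}{12}$ from \cite{IS95}, that forces the value $\tfrac{1}{12}$; as noted in the remark after Theorem \ref{TheoremI4}, replacing the sup-norm input by $t_\phi^{\ve}$ would push the lower bound to $t_\phi^{\frac12-\ve}$.
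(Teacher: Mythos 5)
Your lower-bound argument is, point for point, the paper's own proof: the paper lets $M$ be the number of sign changes on $\mathcal{C}_Y$ with change points $\alpha_j$, writes $\int_{\mathcal{C}_Y}|\phi|\,\d x=\sum_j\bigl|\int_{\alpha_j}^{\alpha_{j+1}}\phi\,\d x\bigr|$ exactly as you do with your arcs, and runs the same chain $t_\phi^{-\ve}\ll\int|\phi|^2\le\lVert\phi\rVert_\infty\int|\phi|\ll t_\phi^{\frac{5}{12}+\ve}(M+1)\max_j\bigl|\int_{\alpha_j}^{\alpha_{j+1}}\phi\,\d x\bigr|$, bounding each partial integral by $t_\phi^{-\frac{1}{2}+\ve}$ via Proposition \ref{PropOne}(2), Cauchy--Schwarz, Lemma \ref{LemmaTwo} and Lemma \ref{LemmaZero}; your split at $n=N_0/2$ merely makes explicit the estimate the paper leaves to the reader, and it is correct. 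The only real divergence is the upper bound. The paper obtains it by citing Theorem 6 of Toth--Zelditch \cite{TZ09}, with the lower bound of Theorem \ref{TheoremOne} (or Jung's weaker bound \cite{Ju11}) certifying that $\mathcal{C}_Y$ is a ``good curve''; you instead sketch the complexification-plus-Jensen argument directly. As written, that sketch has a gap: Jensen's lemma controls zeros only in a disk centered at a point where $|f|$ is bounded below, and you produce exactly one such point $x_0$. Since your disks must have radius $<Y$, a single disk covers all of $\mathcal{C}_Y$ only when $Y$ is large; for a small fixed $Y$ the ``boundedly many such disks'' each need their own non-small center, which requires propagating the lower bound from $x_0$ to the other centers (a Hadamard three-circle/harmonic-measure argument, using the $e^{O(t_\phi)}$ growth bound) --- and that propagation is precisely the nontrivial content of the good-curve theorem in \cite{TZ09}. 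Your stated fallback of invoking \cite{Ju11} (or \cite{TZ09} itself, as the paper does) closes this cleanly, so the proposal as a whole stands, but the direct Jensen covering should not be presented as complete without that propagation step.
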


\begin{rem} It is easy to see that if \;$ Y > At_{\phi} $\; for some sufficiently large constant $A$, then there are two zeros of $\phi(z)$ on the horocycle $\mathcal{C}_{Y}$. This follows from the exponential decay of the Bessel function so that the term $n=1$ dominates $\Phi(z)$. Moreover, since there are no real zeros of ${\rm K}_{it_{\phi}}(u)$ with $u \geq t_{\phi}$, the cosine for even Maass forms (or sine for odd forms) provides the sign change as $x$ varies.
\end{rem}

\begin{rem} Jung \cite{Ju11} has recently established a lower bound of $e^{-At_{\phi}}$ for $ \lVert\phi|_{_{\mathcal{C}_{Y}}}\rVert_{_{2}}^{2}$ in Theorem \ref{TheoremOne} without having to assume that $\phi$ is a Hecke eigenform (such a bound follows trivially from the argument given in Section 2.2 for Hecke eigenforms). This, together with results of Toth and Zelditch \cite{TZ09} allows him to establish the upper bound in Theorem \ref{TheoremTwo} of $O(t_{\phi})$ for such $\phi$'s and, in fact, for any hyperbolic surface with finite area (without any arithmetic assumptions).
\end{rem}

\begin{rem} In Appendix A, we obtain the analog of the above theorems as $Y$ moves towards the cusp. The lower bound above depends on standard QUE and is responsible for our assumption that $Y$ be bounded. It is not  sufficient for our purposes as $Y$ grows with $t$ for which we  appeal to a conjectured quantitative version of QUE (see also the introduction for precise statements). 

\end{rem} 

While the theorems above hold for fixed $Y$, we show in Theorem \ref{TheoremThree} below that as we approach the cusp with $Y \gg t_{\phi}^{\frac{5}{6} +\ve}$, there are some horocycles with many zeros that are evenly spaced out on the horocycle (see also Theorem \ref{Theorem-3.2}).

%%%%%%%%%%%%%%%%%%%%%%%%%%%%%%%%%%%%%%%%% Theorem 3 %%%%%%%%%%%%%%%%%%%%%%%%%%%%%%%%%%%%%%%%%%%%%%%%%%%
%%%%%%%%%%%%%%%%%%%%%%%%%%%%%%%%%%%%%%%%%%%%%%%%%%%%%%%%%%%%%%%%%%%%%%%%%%%%%%%%%%%%%%%%%%%%%%%%%%%%%%%
\begin{thm}\label{TheoremThree} Let $\ve > 0$ be sufficiently small and $\phi$ an even Maass cusp form.
There are $\gg t_{\phi}^{\frac{1}{12} -\ve}$ integers $l$ satisfying $ 1 \ll l \ll t_{\phi}^{\frac{1}{6} - \ve}$ such that there are  $2l$ zeros of  $\phi$ on the horocycle $\mathcal{C}_{Y_{l}}$, with $Y_{l} = \frac{t_{\phi}}{2\pi l}$. Thus, there are a growing number of horocycles $\mathcal{C}_{Y}$ such that each have $\frac{1}{\pi}\frac{t_{\phi}}{Y}$ zeros of $\phi$.

\end{thm}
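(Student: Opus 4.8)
The plan is to combine the local analysis already carried out in Theorem \ref{Theorem-3.2} with an elementary counting argument for the Hecke eigenvalues $\lambda_\phi(l)$. Fix $\ve > 0$ small. Theorem \ref{Theorem-3.2} tells us that whenever $1 \ll l \ll t_\phi^{1/6-\ve}$ and $|\lambda_\phi(l)| \ge \ve$, the zeros of $\phi$ on the horocycle $\mathcal{C}_{Y_l}$ (that is, on the line $y = y_l$) all lie in narrow neighborhoods of the vertical lines $x = m/(4l)$ with $m$ odd and $|m| \le 2l$. On $y = y_l$ the Airy factor in the second formula of Proposition \ref{PropTwo} equals ${\rm Ai}(0) > 0$, so there
\[
\Phi(x+iy_l) = \pi\left(\frac{16}{t_\phi}\right)^{1/3}\lambda_\phi(l)\,{\rm Ai}(0)\cos(2\pi l x) + O(t_\phi^{-1/3-\ve}).
\]

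Since $|\lambda_\phi(l)|\,{\rm Ai}(0) \gg 1$, the main term dominates the error and $\Phi$ changes sign exactly where $\cos(2\pi l x)$ does. On $-\frac12 < x \le \frac12$ the function $\cos(2\pi l x)$ has precisely $2l$ simple zeros, located at the points $x = m/(4l)$ with $m$ odd and $|m| \le 2l$; each produces one simple sign change of $\Phi$ (uniqueness within each neighborhood following from the corresponding expansion of $\partial_x\Phi$, as in the proof of Proposition \ref{Theorem-3.4}). Hence $\phi$ has exactly $2l$ zeros on $\mathcal{C}_{Y_l}$.

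It remains to produce $\gg t_\phi^{1/12-\ve}$ admissible values of $l$, and here I would exploit the Hecke relation $\lambda_\phi(p^2) = \lambda_\phi(p)^2 - 1$. For each prime $p$, either $|\lambda_\phi(p)| \ge \frac{1}{\sqrt2}$, or else $|\lambda_\phi(p)|^2 < \frac12$ and then $|\lambda_\phi(p^2)| = |\lambda_\phi(p)^2 - 1| > \frac12$; so at least one of $l_p := p$ or $l_p := p^2$ satisfies $|\lambda_\phi(l_p)| > \frac12$. Restricting to primes $p \le t_\phi^{1/12-\ve}$ forces $l_p \ll t_\phi^{1/6-2\ve}$, so every $l_p$ lies in the admissible range of Theorem \ref{Theorem-3.2} (with the absolute constant $\frac12$ in place of $\ve$). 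The map $p \mapsto l_p$ is injective, since a prime can equal neither a distinct prime nor the square of any prime, so by the Prime Number Theorem we obtain $\gg \pi(t_\phi^{1/12-\ve}) \gg t_\phi^{1/12-2\ve}$ distinct such integers $l$.

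Combining the two parts yields $\gg t_\phi^{1/12-\ve}$ horocycles $\mathcal{C}_{Y_l}$, each carrying exactly $2l$ zeros of $\phi$; and since $Y_l = t_\phi/(2\pi l)$ we have $2l = \frac{1}{\pi}\,t_\phi/Y_l$, which is the asserted count. The only delicate point is the counting step, specifically arranging that the $p$-versus-$p^2$ dichotomy keeps us simultaneously above the required lower bound $t_\phi^{1/12-\ve}$ on the number of $l$'s and below the cutoff $t_\phi^{1/6-\ve}$ on their size; the sign-change analysis itself is already furnished by Theorem \ref{Theorem-3.2} and Proposition \ref{PropTwo} and requires no new input.
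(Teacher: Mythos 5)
Your proposal is correct and follows essentially the same route as the paper's own proof: the expansion of $\Phi(x+iy_l)$ from Proposition \ref{PropTwo} with the Airy main term dominating when $l \ll t_\phi^{1/6-\ve}$ and $|\lambda_\phi(l)|$ is bounded below, the Hecke dichotomy $\lambda_\phi(p)^2 = \lambda_\phi(p^2)+1$ forcing one of $p$, $p^2$ to have a large eigenvalue, and counting primes up to $t_\phi^{1/12-\ve}$. Your additional remarks on injectivity of $p\mapsto l_p$ and on the exactness of the count $2l$ (via the localization of zeros and the nonvanishing of $\partial_x\Phi$ near $x=m/(4l)$, as in Proposition \ref{Theorem-3.4}) only make explicit what the paper leaves implicit.
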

\begin{rem}  The asymptotic behavior of $\#\{z\in \mathcal{C}_{Y}: \phi(z)=0\}$ in Theorem \ref{TheoremTwo} should  follow that predicted by a random wave model. In Appendix B we show that such a model predicts that for $Y\ll t_{\phi}^{\frac{1}{3} - \ve}$
\[
\#\{z\in \mathcal{C}_{Y}: \phi(z)=0\} \sim \frac{1}{\pi}\frac{t_{\phi}}{Y},
\]
which is consistent with the result in the above Theorem.
\end{rem}

%%%% pg26 %%%%%%%%%%%%%%%%%%%%%%%%%%%%%%%%%%%%%%%%%%%%%%%%%%%%%%%%%%%%%%%%%%%%%%%%%%%%%%%%%%%%%%%%%%%%%%%%%%%%%
\subsection*{}

{\noindent \bf Proof of Theorem \ref{TheoremOne}.}  For $Y$ and $\Delta$ as in Prop. \ref{PropOne}, we have
\begin{multline*}
\quad\int_{-\half}^{\half} |\phi(x+iY)|^{2} \ \d x  \ll |\rho_{\phi}(1)|^{2}e^{-\pi t_{\phi}}\sum_{|n|\leq \frac{t_{\phi}-\Delta}{2\pi Y}} |\lambda_{\phi}(n)|^{2} \frac{Y}{\sqrt{t_{\phi}^{2} - (2\pi nY)^{2}}}  \cr
 + O\big{(}t_{\phi}^{\theta -\frac{4}{3}+\ve}(\Delta + Y)\big{)}\ ,\quad\quad
\end{multline*}
with the error term uniform in $Y$. It is easily seen that the error term is $O(t_{\phi}^{\ve})$ so that applying the Cauchy-Schwarz inequality and using \eqref{eq:b2}, we have
\[
\int_{-\half}^{\half} |\phi(x+iY)|^{2} \ \d x  \ll t_{\phi}^{2\ve}Y\left(\sum_{|n|\leq \frac{t_{\phi}}{2\pi Y}}|\lambda_{\phi}(n)|^{4}\right)^{\frac{1}{2}}\left(\sum_{|n|\leq \frac{t_{\phi}-\Delta}{2\pi Y}}\frac{1}{t_{\phi}^{2} - (2\pi nY)^{2}}\right)^{\frac{1}{2}}.
\]
It follows from Lemma \ref{LemmaTwo} that we obtain the desired upper-bound (for $Y$ not fixed), since the right hand side above is
\[
\ll t_{\phi}^{2\ve}Y(\frac{t_{\phi}}{Y})^{\frac{1+\ve}{2}}(t_{\phi}Y)^{-\frac{1}{2}}(\log t_{\phi})^{\frac{1}{2}} \ll t_{\phi}^{4\ve}.
\]
For the lower bound, using Lemma \ref{LemmaOne} and discarding some terms from the resulting sum by positivity, we have for any $Y>0$ (still not fixed)
\begin{equation}\label{eq:d2}
 \int_{-\half}^{\half} |\phi(x+iY)|^{2} \ \d x \gg Y|\rho_{\phi}(1)|^{2}e^{-\pi t_{\phi}}\frac{1}{t_{\phi}}\sum_{|n|\leq \frac{t_{\phi}}{4\pi Y}} |\lambda_{\phi}(n)|^{2} \sin ^{2} \left(\frac{\pi}{4} +t_{\phi} {\rm H}(\frac{2\pi |n|Y}{t_{\phi}})\right),
\end{equation}
with $H$ as given in \eqref{eq:b6}.
Let $0 < \eta < 1$ be small and define the set
\[
\mathcal{S}' = \mathcal{S}_{\eta}'= \Big{\{} |n|\leq \frac{t_{\phi}}{4\pi Y}:\quad \Big{|}\sin \left(\frac{\pi}{4} +t_{\phi} {\rm H}(\frac{2\pi |n|Y}{t_{\phi}})\right)\Big{|} \leq \eta\Big{\}}. 
\]
Then, $\mathcal{S}'$ is contained in the set
\begin{equation}\label{eq:d3}
\mathcal{S}=\Big{\{} |n|\leq \frac{t_{\phi}}{4\pi Y}:\Big{|}\frac{1}{4} +\frac{t_{\phi}}{\pi} {\rm H}(\frac{2\pi |n|Y}{t_{\phi}})-m\Big{|} \leq 10\eta, \; \text{for some} \quad m\in\mathbb{Z}\Big{\}}.
\end{equation}
The pair of integers $(n,m)$ as above are lattice points in $\mathbb{Z}^{2}$ which are close to the real analytic curve
\begin{equation}\label{eq:d4}
y=\frac{1}{4} +\frac{t_{\phi}}{\pi} {\rm H}(\frac{2\pi xY}{t_{\phi}}),
\end{equation}
with $x$ in the appropriate range. Since this curve has no line segments, it is known since Van der Corput (see Huxley \cite{Hu03}) that the number of such lattice points satisfies
\begin{equation}\label{eq:d5}
|\mathcal{S}| \ll \eta\frac{t_{\phi}}{Y} + O(t_{\phi}^{\frac{2}{3}}Y^{-\frac{1}{3}}),
\end{equation}
provided $Y \ll \eta^{2}\sqrt{t_{\phi}}$.
We choose $\eta = t_{\phi}^{-\delta}$ with $\delta > 0$ small and fixed. The contribution to the sum in \eqref{eq:d2} from those $n$ in $\mathcal{S}$ is 
\begin{equation}\label{eq:d6}
\ll \eta^{2}\left(\sum_{n\in \mathcal{S}} 1\right)^{\frac{1}{2}}\left(\sum_{n\leq \frac{t_{\phi}}{Y}}|\lambda_{\phi}(n)|^{4}\right)^{\frac{1}{2}} \ll_{\ve} \eta^{2}(\eta \frac{t_{\phi}}{Y})^{\frac{1}{2}}(\frac{t_{\phi}}{Y})^{\frac{1+\ve}{2}} \ll \eta^{\frac{5}{2}}(\frac{t_{\phi}}{Y})^{1+\frac{\ve}{2}}
\end{equation}
using \eqref{eq:d5} and Lemma \ref{LemmaTwo}. On the other hand for $n$ not in $\mathcal{S}$, the contribution is 
\[
\geq \eta^{2}\left( \sum_{n\leq \frac{t_{\phi}}{4\pi Y}}|\lambda_{\phi}(n)|^{2} - \sum_{n\in \mathcal{S}}|\lambda_{\phi}(n)|^{2}\right)
\]
so that on combining with the previous analysis and \eqref{eq:d2}, we get for $0<c<Y\ll t_{\phi}^{\frac{1}{2}-\ve}$
\begin{equation}\label{eq:d7}
\int_{-\half}^{\half} |\phi(x+iY)|^{2} \ \d x \geq \frac{\eta^{2}Y}{t_{\phi}}\sum_{n\leq \frac{t_{\phi}}{4\pi Y}}|\tilde{\rho}_{\phi}(n)|^{2}  +O_{\ve}(\eta^{\frac{5}{2}}t_{\phi}^{3\ve}),
\end{equation}
which gives us our lower bound on applying Prop. \ref{shortsum}, for any fixed $Y$.

%%%% pg26 %%%%%%%%%%%%%%%%%%%%%%%%%%%%%%%%%%%%%%%%%%%%%%%%%%%%%%%%%%%%%%%%%%%%%%%%%%%%%
\subsection*{}
{\noindent \bf Proof of Theorem \ref{TheoremTwo}.} 

The upper bound is a consequence of the complexification argument of Toth-Zelditch (\cite{TZ09}, Theorem 6) where the lower bound obtained in Theorem \ref{TheoremOne} above (or more generally the weaker lower bound of \cite{Ju11}) is used to satisfy the notion of a ``good curve" in \cite{TZ09}.  It remains to prove the lower bound in the statement of our Theorem.

Let $M$ be the number of sign-changes of $\phi(z)$ on $\mathcal{C}_{Y}$, say at the points $\{ \alpha_{j}\}$ with $1\leq j\leq M$ (we put $\alpha_{0}=-\half$ and $\alpha_{M+1}=\half$). Then,
\begin{equation}\label{eq:d8}
\int_{\mathcal{C}_{Y}}|\phi(z)| \ \d x = \sum_{i=0}^{M}\Big{|}\int_{\alpha_{j}}^{\alpha_{j+1}} \phi(x+iY) \ \d x \Big{|}.
\end{equation}
Using the $L^{\infty}$ estimate $|\phi(z)| \ll t_{\phi}^{\frac{5}{12}+\ve}$ in \cite{IS95} and Theorem \ref{TheoremOne}, we have
\[
t_{\phi}^{-\ve} \ll \int_{\mathcal{C}_{Y}}|\phi(z)|^{2} \ \d x \ll t_{\phi}^{\frac{5}{12}+\ve} \int_{\mathcal{C}_{Y}}|\phi(z)| \ \d x.
\]
On the other hand, by Prop. \ref{PropOne}, the second integral  in \eqref{eq:d8} is bounded by 
\[
t_{\phi}^{\ve}\sum_{0<|n|\leq \frac{t_{\phi}-\Delta}{2\pi Y}} \frac{|\lambda_{\phi}(n)|}{|n|} \frac{\sqrt{Y}}{(t_{\phi}^{2}-(2\pi nY)^{2})^{\frac{1}{4}}},
\]
which is then bounded by $t_{\phi}^{-\frac{1}{2}+\ve }$ after using the Cauchy-Schwarz inequality, Lemma \ref{LemmaZero} and Lemma \ref{LemmaTwo}. Then from \eqref{eq:d8} we conclude 
\[
t_{\phi}^{-\ve } \ll t_{\phi}^{\frac{5}{12}+\ve}Mt_{\phi}^{-\frac{1}{2}+\ve}
\]
from which our lower bound for $M$ follows. If one were to use the conjectural $L^{\infty}$ estimate $|\phi(z)| \ll t_{\phi}^{\ve}$, valid for $z$ in a compact part of $\mathbb{H}$, then one would obtain $M \gg t_{\phi}^{\frac{1}{2}-\ve}$.

%%%%%%%%%%%%%%%%%%%%%%%%%%%%%%%%%%%%%%%%%%%%%%%%%%%%%%%%%%%%%%%%%%%%%%%%%%%%%%%%%%
\subsection*{}

{\noindent \bf Proof of Theorem \ref{TheoremThree}.} The proof is the same as in Theorem \ref{Theorem-3.2}  so that
\begin{equation}\label{eq:d9}
 \Phi(z_{l}) =  \pi \left(\frac{16}{t_{\phi}}\right)^{\frac{1}{3}}\lambda_{\phi}(l){\rm Ai}(0)\cos(2\pi lx) +O(\frac{l}{\sqrt{t_{\phi}}}) + O(t_{\phi}^{\theta - \frac{2}{3}}).
 \end{equation}
Thus, if $l = o(t_{\phi}^{\frac{1}{6}})$, the error terms are negligible provided $\lambda_{\phi}(l)$ is not too small. We use the Hecke relations
\begin{equation}\label{eq:d10}
\lambda_{\phi}(p)^{2} = \lambda_{\phi}(p^2) +1
\end{equation}
valid for all prime numbers $p \geq 2$.  It follows that either $|\lambda_{\phi}(p)| \geq \frac{1}{2}$ or if not, then $|\lambda_{f}(p^2)| \geq \frac{1}{2}$. We then choose $l$ accordingly, so that now we can find $\gg t_{\phi}^{\frac{1}{12}-\ve}$ values of $l$ in the interval $1\leq l \ll t_{\phi}^{\frac{1}{6} -\ve}$ with the main term dominating in \eqref{eq:d9}. For each such fixed $l$, we then see that there are $2l$ sign changes of $\phi(x+iy_{l})$ for $-\frac{1}{2} < x\leq \frac{1}{2}$.

%%%%%% pg27 %%%%%%%%%%%%%%%%%%%%%%%%%%%%%%%%% e = Section 4 %%%%%%%%%%%%%%%%%%%%%%%%%%%%%%%%%%%%%%%%%%%%%%%%%%%%%%%%%%%%%%%%%%%%%%%%%%%%%%%%%%%%%%%%%%%%%%%%%%%%%%%%%%%%%%%%%%%%%%%%%%%%%%%%%%%%%%%%%%%%%%%%%%%%%
\section{Restriction to geodesics and compact segments of $\delta$.}

In this section, which is the heart of the paper, we prove Theorem \ref{TheoremI2} and Theorem \ref{TheoremI5}.  However, in Section 6.1, we first give bounds for the $L^{2}$-restriction of $\phi$ to any complete geodesic joining two cusps. The rest of the section is then devoted to proving the main result which gives a sharp lower bound for this $L^{2}$-restriction to a sufficiently long but fixed segment $\beta$ of $\delta_{1}$ (and which carries over without change to $\delta_{2}$). This gives a localization of Theorem \ref{Theorem5.1} below; the proof involves choosing a similar auxiliary function $F$ but the analysis is more intricate. In \cite{GRS} we develop the analogue of Theorem \ref{Theorem5.1} but for a non-split closed geodesic about which reflection defines a global symmetry of a compact hyperbolic surface (so that for this result  no arithmetic tools are used).
%%%%%%%%%%%%%%%%%%%%%%%%%%%%%%%%%%%%%%%%%%%%%%%%%%%%%%%%%%%%%%%%%%%%%%%%%%%%%%%%%%%%%%%%%%%%%%%%
%%%%%%%%%%%%%%%%%%%%%%%%%%%%%%%%%%%%% Section %%%%%%%%%%%%%%%%%%%%%%%%%%%%%%%%%%%%%%%%%%%%%%%%%%%%%
\subsection{Restriction to geodesics}

\begin{thm}\label{Theorem5.1} Let $\mathcal{C}$ denote any geodesic joining two cusps. Then
\[
1 \ll_{_{\mathcal{C}}} \int_{\mathcal{C}}\phi^{2}(z)\d^{\times} z \ll_{_{\mathcal{C},\ve}} t_{\phi}^{\ve}.
\]
\end{thm}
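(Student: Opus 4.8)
The plan is to reduce $\mathcal C$ to a vertical line and expand $\phi^2$ in a Fourier--Bessel series, isolating a \emph{diagonal} main term governed by the mean square of the Hecke coefficients (where QUE enters) from an \emph{off-diagonal} remainder that is annihilated by the absence of a stationary phase along the complete geodesic. First I would use $\Gamma$-invariance to normalize $\mathcal C$: since both endpoints are cusps, some $\gamma\in\SL_2(\mathbb Z)$ carries one of them to $i\infty$, so that $\gamma\mathcal C$ is a vertical half-line $\{x=a\}$ with $a\in\mathbb Q$, and as $\phi$ and $\d^\times z$ are isometry invariant,
\[
\int_{\mathcal C}\phi^2\,\d^\times z \ \asymp\ \int_0^\infty \phi(a+iy)^2\,\frac{\d y}{y}\ =\ |\tilde\rho_\phi(1)|^2\int_0^\infty \Phi(a+iy)^2\,\d y ,
\]
the last identity coming from \eqref{eq:b7} (the implied constant absorbs the order, $1$ or $2$, of the setwise stabilizer of $\mathcal C$). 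Corollary \ref{CorlOne} shows that the range $y\geq \tfrac{t_\phi}{2\pi}$ contributes $O(e^{-ct_\phi})$, and applying the cusp-normalizing element at the finite endpoint $a$ turns the range $y\leq \tfrac{c_a}{t_\phi}$ into a neighborhood of $i\infty$ for a conjugate vertical line, hence is also exponentially small. So only the bulk $\tfrac{c_a}{t_\phi}\leq y\leq \tfrac{t_\phi}{2\pi}$ survives.

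On the bulk I would insert the oscillatory asymptotics of Lemma \ref{LemmaOne}(1) (as packaged in Proposition \ref{PropOne}) for each $K_{it_\phi}(2\pi|n|y)$, square, and split the double sum over $(m,n)$ into the diagonal $n=-m$ and the rest. On the diagonal the exponential $e((m+n)a)=1$, and substituting $u=2\pi jy$ and averaging $\sin^2$ gives $\int K_{it_\phi}(2\pi jy)^2\,\d y = \tfrac{\pi}{4j}e^{-\pi t_\phi}\big(1+o(1)\big)$, while the $y$-cutoffs translate into $j\lesssim t_\phi^2$. Using $\rho_\phi(-j)=\rho_\phi(j)$ and \eqref{eq:b300}, the diagonal therefore equals
\[
\frac{\pi}{2}\sum_{j\geq 1}\frac{|\tilde\rho_\phi(j)|^2}{j}\,w(j)\ +\ (\text{error}),
\]
with $w$ a smooth cutoff near $j\sim t_\phi^2$. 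For the upper bound this is $\ll t_\phi^\varepsilon$ by Lemma \ref{LemmaTwo} (together with \eqref{eq:b2}) and partial summation. For the lower bound it is precisely here that QUE is used: Proposition \ref{shortsum} gives $\sum_{j\leq t_\phi}|\tilde\rho_\phi(j)|^2\gg t_\phi$ with an \emph{absolute} constant, so, bounding $\tfrac1j\geq \tfrac1{t_\phi}$,
\[
\frac{\pi}{2}\sum_{j\geq 1}\frac{|\tilde\rho_\phi(j)|^2}{j}\,w(j)\ \geq\ \frac{\pi}{2t_\phi}\sum_{j\leq t_\phi}|\tilde\rho_\phi(j)|^2\ \gg\ 1 .
\]
The clean mean square of Proposition \ref{shortsum}, rather than the lossy two-sided bound \eqref{eq:b2}, is exactly what makes this step succeed.

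It remains to control the off-diagonal, which must be $\ll t_\phi^\varepsilon$ (for the upper bound) and of smaller order than the diagonal (so as not to destroy the lower bound). For $j\neq k$ the relevant integrals carry phases $t_\phi\big({\rm H}(\tfrac{2\pi jy}{t_\phi})\pm {\rm H}(\tfrac{2\pi ky}{t_\phi})\big)$; since ${\rm H}'<0$ on $(0,1)$, the $y$-derivative of each phase is $-\tfrac{t_\phi}{y}\big(\sqrt{1-(2\pi jy/t_\phi)^2}\pm\sqrt{1-(2\pi ky/t_\phi)^2}\big)$, which never vanishes on the full geodesic. Repeated integration by parts against these non-stationary phases, combined with the coefficient mean square of Lemma \ref{LemmaTwo} and the twists $e((m+n)a)$, bounds the off-diagonal by $t_\phi^{-\delta}$ below the diagonal; this yields both the $\ll t_\phi^\varepsilon$ needed above and, being $o(1)$, leaves the lower bound intact, so that $\int_{\mathcal C}\phi^2\,\d^\times z\gg 1$.

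The main obstacle is this off-diagonal estimate, and specifically its uniformity in $t_\phi$ through the transition ranges $2\pi\max(j,k)y\approx t_\phi$, where the Airy regime of Lemma \ref{LemmaOne}(3) replaces the oscillatory expansion and the amplitude $(t_\phi^2-(2\pi\max(j,k)y)^2)^{-1/4}$ becomes singular. Integrating over the \emph{complete} geodesic $y\in(0,\infty)$ is what keeps these endpoint contributions harmless, since there is then no genuine boundary to produce a stationary or non-oscillatory term; this is precisely the feature that is lost in the localized Theorem \ref{TheoremI2}, where restricting to a finite segment $\beta$ reinstates boundary terms and forces the far more delicate analysis, with its genuine arithmetic input and the need to take $\beta$ long.
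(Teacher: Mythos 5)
Your reduction to a rational vertical half-line, the exponential truncation to the bulk, and the diagonal evaluation (with Proposition \ref{shortsum} supplying the lower bound) are all sound, but the off-diagonal step is a genuine gap, not a technicality, and it sinks both bounds. First, a secondary issue: the terms $n=m$ carry the constant phase $e(2ma)$ and have \emph{no} relative oscillation in $y$, so they cannot sit in ``the rest''; they must be folded into the diagonal, producing $\cos^2(2\pi ma/q)$, which for a general cusp $a/q$ forces Corollary \ref{shortsumcorl} rather than Proposition \ref{shortsum}. The fatal problem is your claim that repeated integration by parts against non-vanishing phase derivatives bounds the off-diagonal by $t_\phi^{-\delta}$ below the diagonal. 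Consider $j\sim k\sim t_\phi^2$ with $h=|j-k|\le t_\phi$: over the entire oscillatory range the two phases $t_\phi{\rm H}(2\pi jy/t_\phi)$ and $t_\phi{\rm H}(2\pi ky/t_\phi)$ differ by at most $t_\phi\log(k/j)=O(h/t_\phi)=O(1)$ (and by only $t_\phi{\rm H}(j/k)\approx t_\phi(h/k)^{3/2}$ at the upper endpoint), so the product $K_{it_\phi}(2\pi jy)K_{it_\phi}(2\pi ky)$ is essentially as non-oscillatory as a diagonal term, and integration by parts gains nothing. This is confirmed by the exact evaluation, Lemma \ref{Lemma-e1}(1) with $c=0$: the complete integral equals a constant times $\frac{e^{\pi t_\phi}\sec(i\pi t_\phi)}{\sqrt{jk}}P_{-\half+it_\phi}(w)$ with $w-1\asymp h^2/(jk)$, and in this regime $P_{-\half+it_\phi}(w)\to P_{-\half+it_\phi}(1)=1$, so each term is genuinely of diagonal size $\asymp|\tilde\rho_\phi(j)\tilde\rho_\phi(k)|/\sqrt{jk}$. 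Summing over $j\sim t_\phi^2$ and $h\le t_\phi$ gives a block of order $t_\phi^{1+\ve}$ in absolute value, overwhelming both the diagonal lower bound $\gg1$ and the target upper bound $t_\phi^\ve$. Controlling it requires cancellation in the shifted sums $\sum_j\tilde\rho_\phi(j)\tilde\rho_\phi(j+h)$ --- genuine arithmetic input your argument never supplies. Indeed the paper itself remarks (after \eqref{eq:e19}) that even on a compact segment there is not sufficient oscillation in the near-diagonal terms; there they are handled by QUE applied to incomplete Poincar\'e series $P_{h,g}$, a device that needs a compactly supported weight $g$ and is unavailable on the complete geodesic.

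The paper's actual proof avoids the bilinear Bessel sum altogether and is structurally different: by the Mellin transform \eqref{eq:e20} and Parseval \eqref{eq:e21}, the restriction norm $\int_0^\infty|\phi(iy)|^2\,\d^\times y$ becomes a gamma-weighted second moment of $L(\half+it,\phi)$ concentrated in $|t|\ll t_\phi$. The upper bound then follows from the approximate functional equation \eqref{eq:e23} together with mean-value estimates for Dirichlet polynomials; the lower bound follows from Cauchy--Schwarz against the auxiliary polynomial $F(t)=\sum_{\ve_0t_\phi/1000\le\nu\le\ve_0t_\phi}\tilde\rho_\phi(\nu)\nu^{-\half-it}$, in which the diagonal is $\gg t_\phi$ by Proposition \ref{shortsum} (this is where QUE enters, matching your instinct), and the off-diagonal terms really are non-stationary in the $t$-integral: the phases there have derivative bounded below, with stationary points only when $n\nu\sim t_\phi^2$, which is excluded by taking $\ve_0$ small. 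In effect, the Mellin transform repackages the $(j,k)$-cancellation that your approach cannot produce into $t$-oscillation where it becomes accessible; for general cusps, the twisted $L$-function $L(s,a/q,\phi)$ and Corollary \ref{shortsumcorl} replace $L(s,\phi)$ and Proposition \ref{shortsum}. That repackaging --- not the absence of boundary terms on the complete geodesic --- is the mechanism that makes Theorem \ref{Theorem5.1} easier than the compact-segment Theorem \ref{TheoremI2}.
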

\begin{rem}
By assuming a suitable version of the quantitative QUE (see Appendix A), one can get an asymptotic formula for the integral in the Theorem  of the type $\sim \frac{12}{\pi}\log t_{\phi}$.
\end{rem}

We provide a proof of this Theorem when $\mathcal{C}$ is $\delta_{1}$ (see the remarks at the end for the general case). We will denote $t$ by $t_{\phi}$ to distinguish it from the imaginary part of the complex variable $s = \sigma + it$.

Recall that
 \begin{equation}\label{eq:e20}
 \int_{0}^{\infty} \phi(iy)y^{s-\half}\d^{\times}y = 2\rho_{\phi}(1)L(s,\phi)\Gamma\big{(}\frac{s+it_{\phi}}{2}\big{)}\Gamma\big{(}\frac{s-it_{\phi}}{2}\big{)},
 \end{equation}
 where for $\Re s > 1$ 
 \[
 L(s,\phi) = \sum_{1}^{\infty}\frac{\lambda_{\phi}(n)}{n^{s}}.
 \]
 By Parseval's formula, it then follows that
 \begin{equation}\label{eq:e21}
 \int_{0}^{\infty} |\phi(iy)|^{2}\d^{\times}y \asymp |\rho_{\phi}(1)|^{2}\int_{0}^{\infty}|L(\half + it,\phi)\Gamma\big{(}\frac{\half + i(t+t_{\phi})}{2}\big{)}\Gamma\big{(}\frac{\half+i(t-t_{\phi})}{2}\big{)}|^{2} \d t.
 \end{equation}
\\ \\
\noindent(i) \underline{The upper bound}. \\
 Using Stirling's formula, the product of gamma functions above is bounded by
 \[
 (1+|t+t_{\phi}|)^{-\frac{1}{4}}(1+|t-t_{\phi}|)^{-\frac{1}{4}}\min(e^{-\frac{\pi}{2} t},e^{-\frac{\pi}{2} t_{\phi}}).
 \]
We may then neglect the contribution from say, $t > 2t_{\phi}$ in \eqref{eq:e21} for it gives us an estimate of exponential decay for that part of the integral, after using \eqref{eq:b2} and the fact that the $L$-function has at most polynomial growth in $t$ . For the remaining, we then have
\begin{equation}\label{eq:e22}
 \int_{0}^{\infty} |\phi(iy)|^{2}\d^{\times}y \ll t_{\phi}^{-\half +\ve} \int_{0}^{2t_{\phi}}|L(\half + it,\phi)|^{2}(1+|t-t_{\phi}|)^{-\frac{1}{2}} \d t.
 \end{equation}
 If we were now to assume the Lindelof Hypothesis, that is $|L(\half + it,\phi)| \ll t_{\phi}^{\ve}$, our required upper bound for Theorem \ref{TheoremI2} would follow easily. To obtain an unconditional result, we replace $L(\half + it,\phi)$ with Dirichlet polynomials using the approximate functional equation. To this end, we use Theorem 2.5 from \cite{Ha02} which implies that for $g$ a smooth real function satisfying $g(x) + g(\frac{1}{x})=1$, we have
%pg28%
 \begin{equation}\label{eq:e23}
 |L(\half + it,\phi)| \ll \mid\sum_{n=1}^{\infty}\frac{\lambda_{\phi}(n)}{n^{\half +it}}g(\frac{n}{\sqrt{C_{\phi}}})\mid + \ (1+|t-t_{\phi}|)^{-1}C_{\phi}^{\frac{1}{4} + \ve},
 \end{equation}
 where $C_{\phi}$ is the analytic conductor of $L(s,\phi)$ given by 
 \[
 C_{\phi} = C_{\phi}(t) =
\frac{1}{4\pi^{2}}\sqrt{\big{(}\frac{1}{4} + (t+t_{\phi})^2\big{)}\big{(}\frac{1}{4} + (t-t_{\phi})^2}\big{)}.
 \]
 The contribution from the error term above to \eqref{eq:e20} is seen to be $O(t^{\ve})$. We choose $g(x)$ such that it is zero for $x\geq 2$, is decreasing smoothly for $1 \leq x < 2$ and is non-negative otherwise, so that $n \leq 2\sqrt{C_{\phi}}$ in the sum above. We break the integral in \eqref{eq:e22} into the two ranges $0\leq t\leq t_{\phi}$ and $t_{\phi}\leq t \leq 2t_{\phi}$, so that after suitable changes of variables, we finally have to estimate
\[
t_{\phi}^{-\half +\ve} \int_{1}^{t_{\phi}}\bigl\lvert\sum_{n=1}^{\infty}\frac{\lambda_{\phi}^{*}(n)}{n^{\half +it}}g(\frac{n}{\sqrt{D_{\phi}(t)}})\bigr\rvert^{2}t^{-\half}\  \d t,
\]
where $\lvert\lambda_{\phi}^{*}(n)\rvert = \lvert\lambda_{\phi}(n)\rvert$ and $D_{\phi}(t) \asymp t_{\phi}t$. Expanding the sum in the integral above, we see that the diagonal contribution is $O(t_{\phi}^{\ve})$. For the non-diagonal terms, we break the integral into $O(\log t_{\phi})$ dyadic subintervals of the type $[A,2A]$ so that we have to estimate
\[
t_{\phi}^{-\half +\ve}A^{-\half}\sideset{}{^*}\sum_{m\neq n}^{\infty}\frac{|\lambda_{\phi}(m)\lambda_{\phi}(n)|}{\sqrt{mn}} \big{|}\int_{A}^{2A} (\frac{m}{n})^{it}g(\frac{m}{\sqrt{D_{\phi}(t)}})g(\frac{n}{\sqrt{D_{\phi}(t)}})\ \d t \big{|},
\]
where $\sideset{}{^*}\sum$ indicates that the variables are restricted to satisfy $m,n \ll \sqrt{t_{\phi}A}$. Since the product of the $g$-functions in the integral are monotonic in the variable $t$, integration by parts shows that the integral is bounded by $\min(A,|\log \frac{m}{n}|^{-1})$ so that after a suitable change of variables, we are left to estimate
\[
t_{\phi}^{-\half +\ve}A^{-\half}\sideset{}{^*}\sum_{m,h \geq 1} \frac{|\lambda_{\phi}(m)|^{2}}{m}\min(A,\frac{m}{h}),
\]
so that we obtain the required bound of $O(t^{\ve})$ after applying \eqref{eq:b2} and the restriction imposed above on $m$ and $n$ (and so also on $h$).
\\ \\
\noindent (ii) \underline{The lower bound}.\\
The method here is similar to the one above but more intricate. From \eqref{eq:e21}, applying Stirling's formula and discarding a portion of the integral, we may rewrite \eqref{eq:e21} as implying
\begin{equation}\label{eq:e41}
\int_{0}^{\infty} |\phi(iy)|^{2}\ \d^{\times}y \gg \frac{1}{t_{\phi}}\int_{0}^{\infty}|\tilde{L}(\half + it,\phi)|^{2}V\big{(}\frac{t}{t_{\phi}}\big{)} \d t\ ,
\end{equation}
where $V(x)$ is a fixed, non-negative $C^{\infty}$-function supported in the interval $(\frac{1}{3},\frac{1}{4})$, with $\int V = 1$, and where we define
\[
\tilde{L}(s,\phi) = \sum_{1}^{\infty}\frac{\tilde{\rho}_{\phi}(n)}{n^{s}}.
 \]
As in \eqref{eq:e21}, we may now write the approximate functional equation in the form 
\[
\tilde{L}(\half +it,\phi) = \sum_{1}^{\infty}\frac{\tilde{\rho}_{\phi}(n)}{n^{\half +it}}g(\frac{n}{\sqrt{C_{\phi}}}) + e^{i\theta(t)}\sum_{1}^{\infty}\frac{\tilde{\rho}_{\phi}(n)}{n^{\half -it}}g(\frac{n}{\sqrt{C_{\phi}}}) + O(t_{\phi}^{-\half + \ve}),
\]
since the conductor has size $t_{\phi}^{2}$. The phase function $\theta(t)$ satisfies
\[
\theta'(t) \sim -\log\big{(}\frac{t_{\phi}+t}{2}\big{)} -\log\big{(}\frac{t_{\phi}-t}{2}\big{)}
\]
%pg29%
To estimate the lower bound for the second integral in \eqref{eq:e41}, we use Cauchy-Schwarz as follows: fix $\ve_{0}>0$ (which will be chosen small) and set
\[
F(t) = \sum_{\frac{\ve_{0}t_{\phi}}{1000}\leq \nu \leq \ve_{0}t_{\phi}}\frac{\tilde{\rho}_{\phi}(\nu)}{\nu^{\half +it}}.
\]
Consider
\begin{equation}\label{eq:e45}
\big{|}\int_{0}^{\infty}\overline{F(t)}\;\tilde{L}(\half + it,\phi)V\big{(}\frac{t}{t_{\phi}}\big{)} \ \d t\big{|}^{2} \ .
\end{equation}
This is 
\begin{align}\label{eq:e46}
&\leq \big{(}\int_{0}^{\infty}|F(t)|^{2}V\big{(}\frac{t}{t_{\phi}}\big{)}\ \d t\big{)} \big{(}\int_{0}^{\infty}|\tilde{L}(\half + it,\phi)|^{2}V\big{(}\frac{t}{t_{\phi}}\big{)}\ \d t\big{)}\cr
&\ll t_{\phi}\int_{0}^{\infty}|\tilde{L}(\half + it,\phi)|^{2}V\big{(}\frac{t}{t_{\phi}}\big{)}\ \d t\ ,
\end{align}
using standard mean-value theorems for Dirichlet polynomials and Theorem \ref{shortsum}. To evaluate the integral in \eqref{eq:e45}, we expand the sums and use the approximate functional equation (discarding the error term) to get
\begin{multline}\label{eq:e47}
\sum_{n}\sum_{\nu} \frac{\tilde{\rho}_{\phi}(n)\tilde{\rho}_{\phi}(\nu)}{\sqrt{n\nu}}\int_{0}^{\infty}\big{(}\frac{\nu}{n}\big{)}^{it}g(\frac{n}{\sqrt{C_{\phi}}})V\big{(}\frac{t}{t_{\phi}}\big{)}\ \d t + \cr
 \sum_{n}\sum_{\nu} \frac{\tilde{\rho}_{\phi}(n)\tilde{\rho}_{\phi}(\nu)}{\sqrt{n\nu}}\int_{0}^{\infty} e^{i\theta(t) + it\log(n\nu)}g(\frac{n}{\sqrt{C_{\phi}}})V\big{(}\frac{t}{t_{\phi}}\big{)}\ \d t\ .
\end{multline}
The diagonal term $n=\nu$ above gives, using Theorem \ref{shortsum}, a lower bound 
\[
\gg t_{\phi}\sum_{\frac{\ve_{0}t_{\phi}}{1000}\leq \nu \leq \ve_{0}t_{\phi}}\frac{|\tilde{\rho}_{\phi}(\nu)|^2}{\nu} \gg t_{\phi},
\]
with an absolute constant. 

For the integral in the first sum in \eqref{eq:e47}, for the non-diagonal terms, repeated integration by parts gives an estimate of the type $(t_{\phi}|\log{\frac{n}{\nu}}|)^{-l}$, for any integer $l\geq 1$. Substituting this and estimating the resulting sums using Prop. \ref{shortsum} gives an estimate of the type $O(\ve_{0}^{l}t_{\phi})$. For the rest of the sums, we observe that the oscillatory integrals have stationary points only when $n\nu \sim t_{\phi}^{2}$; however, if $\ve_{0}$ is chosen small enough, these are avoided. Moreover, the derivative of the phase is monotonic, so that the integral is bounded by $(\log t_{\phi})^{-1}$. By Cauchy-Schwarz, and Prop. \ref{shortsum}, the resulting sum in \eqref{eq:e47} is $o(t_{\phi})$. Thus the diagonal term dominates and combining the estimates from \eqref{eq:e46} and \eqref{eq:e45} gives us the lower bound in the Theorem.
\begin{rem} For the case of arbitrary geodesics with two cusps as endpoints, it suffices to consider the vertical half-lines with real part $\frac{a}{q}$, for  $q \geq 1$ and with $a$ and $q$ coprime. Following the reasoning above, $L(s,\phi)$  is replaced with $L(s,\frac{a}{q},\phi)$, which is the $L$-function twisted with $\cos(2\pi \frac{an}{q})$ and similarly for $F$, but with a modification for the range of summation. This modified L-function has a approximate functional equation (which may be derived by the methods in \cite{Ha02}) of the type
\begin{multline}
\quad \tilde{L}(\half +it,\frac{a}{q},\phi) = \sum_{1}^{\infty}\frac{\tilde{\rho}_{\phi}(n)}{n^{\half +it}}\cos(2\pi \frac{an}{q})g(\frac{n}{q\sqrt{C_{\phi}}}) \ +\cr
 e^{i(\theta(t) + 2t\log q)}\sum_{1}^{\infty}\frac{\tilde{\rho}_{\phi}(n)}{n^{\half -it}}\cos(2\pi \frac{\overline{a}n}{q})g(\frac{n}{q\sqrt{C_{\phi}}}) + O_{q}(t_{\phi}^{-\half + \ve}),
\end{multline}
where $\overline{a}$ is the multiplicative inverse of $a$ modulo $q$. The upper bound follows immediately. For the lower bound, the incooperation of terms depending on $q$ present no new difficulties as $q$ is fixed, so that the conclusions above still hold, with the exception of the nature of the diagonal term which is now of the form
\[
\gg t_{\phi}\sum_{\ve_{q}X \leq \nu \leq X} \frac{|\tilde{\rho}_{\phi}(\nu)|^2}{\nu}\cos^{2}(2\pi \frac{a\nu}{q}) \gg  \ t_{\phi},
\]
by  Corollary \ref{shortsumcorl}  with $X=\ve_{0}t_{\phi}$. 
\end{rem}

%% pg30 %%%%%%%%%%%%%%%%%%%%%%%%%%%%%%%%%%%%%%%%%%%%%%%%%%%%%%%%%%%%%%%%%%%%%%%%%%%%%%%%%%%%%%%%%%%%%%%%%%%
%%%%%%%%%%%%%%%%%%%%%%%%%%%%%%%%%%%% subsection %%%%%%%%%%%%%%%%%%%%%%%%%%%%%%%%%%%%%%%%%%%%%%%%%%%%%%

 \subsection{Restriction to compact segments.\\} \

We are now ready to prove the main result, namely the lower bound in Theorem \ref{TheoremI2} for $\beta$ a sufficiently long (but fixed) compact segment of $\delta$. The analysis is delicate and it would be more direct and simpler in terms of showing the off-diagonal terms are smaller than the diagonal, if we had a more flexible version of QUE as in \eqref{aa2} (indeed, one would then be able to take $\beta$ to be an arbitrary fixed segment on $\delta$). Since this last is not available to us, we make do with standard QUE and a cruder, more involved analysis in which we are able to show that the diagonal dominates the off-diagonal (but with both the same order of magnitude in $t_{\phi}$), if $\beta$ is long enough. 

We begin by setting up the test function to be used. Let $a$ be a  positive number independent of $t$ to be chosen ($a$ will be suitably large) and let $0<b<a$ (again to be chosen later). Let $k(y) \in C_{o}^{\infty}(\mathbb{R})$ be an even function satisfying the following properties for $y \geq 0$ :

\begin{itemize}
\item[(1)] $k(y) = 1$ for $a\leq y \leq 2a$,
\item[(2)] $k(y)=0$ for $y\in [0,a-b]\cup[2a+b,\infty)$,
\item[(3)] $0\leq k(y) \leq 1$ for $y \in (a-b,a)\cup (2a,2a+b)$ and
\item[(4)] $|k^{(l)}(y)| \leq C_{l}b^{-l}$ for some constant $C_{l}$ with any $l\geq 0$.
\end{itemize}

The support of $k$ determines the corresponding segment on $\delta_{1}$ which we denote by $\beta$. The size of $b$ is determined by the size of $a$ and any constant that depends on either will generically be denoted by $C_{a}$. Similarly,  we will let $C$ denote a generic positive constant independent of $a$ and $t$ but which will change. The parameter $l$ will be chosen suitably large later on but will be independent of $a$ and $t$. At the end of the proof, we will choose $a$ suitably and then let $t_{\phi}$ become arbitrarily large so that the dependence of $C_{a}$ on $a$ will be unimportant. In places where the value of $a$ plays an important role we will be precise and indicate this dependence. Finally, the symbol $\ll$ will mean an implied constant independent of $a$ and $t_{\phi}$.

For any real function $F(y)$, we have
\begin{equation}\label{eq:e1}
\quad I:= \int_{\beta} \phi(z)^{2}k(\Im z)\ \d^{\times} z \geq 2\int F(y)\phi(iy)k(y)\ \d^{\times} y \ - \int F(y)^{2}k(y)\ \d^{\times} y \ .
\end{equation}
By \eqref{eq:b1}, we write
%%%%%%%%%%%%%%%%%%%%%%
\begin{equation}\label{eq:e2}
\phi(iy) = 2\sqrt{y}\sum_{n\geq 1}\tilde{\rho}_{\phi}(n)e^{\frac{\pi}{2}t_{\phi}}K_{it_{\phi}}(2\pi ny).
\end{equation} 
%%%%%%%%%%%%%%%%%%%%%%
By \eqref{eq:b2},\eqref{eq:b3} and \eqref{eq:b4} we have $\tilde{\rho}_{\phi}(n) \ll t_{\phi}^{\theta +\ve}$.
%pg31%
Put $M=\frac{t_{\phi}}{\eta}$ with $\eta = \eta(a) \gg a$ to be chosen later and define 

\begin{equation}\label{eq:e3}
F(y) = 2\sqrt{y}\sum_{m\geq M}\tilde{\rho}_{\phi}(m)e^{\frac{\pi}{2}t_{\phi}}K_{it_{\phi}}(2\pi my).
\end{equation}
Then, we write in \eqref{eq:e1}, $I \geq 2I_{1} - I_{2}$. 

We provide the analysis for $j = 1$ and $2$ together, so that we write
\begin{equation}\label{eq:e4}
I_{j}=4\sum_{m\geq M}\sum_{n\geq 1}\tilde{\rho}_{\phi}(m)\tilde{\rho}_{\phi}(n)\alpha_{j}(n)G(m,n) ,
\end{equation}
where we define
\[
G(m,n)=e^{\pi t_{\phi}}\int K_{it_{\phi}}(2\pi my)K_{it_{\phi}}(2\pi ny)k(y)\ \d y \ ,
\]
and with $\alpha_{1}(n) = 1$ while $\alpha_{2}(n)$ does the same except it vanishes for $n < M$.
We decompose the sum above into three pieces: $I_{j}^{D}$ denoting the part of the sum with $m=n$ which is the diagonal, $I_{j}^{S}$ denoting the sum with small spacing $1\leq |m-n| < R$ and $I_{j}^{L}$ for the large spacings.

%%%%%%%%%%%%%%%%%%%%%%%%%%%%%%%%%%%%%%%%%%%%%%%%%%%%%%%%%%%%%%%%%%%%%%%%
%%%%%%%%%%%%%%%%%%%%%%%%%%%%%%%%%%%%%%%%% subSubsection %%%%%%%%%%%%%%%%%%%%%%%%%%%%%%%%%%
\subsubsection{The large spacings for $I_{1}$ and $I_{2}$\ .\\}  

We first deal with $I_{j}^{L}$ and prove the following 
\begin{prop}\label{Prop-e10} For any integer $l \geq 1$
\begin{multline*}\label{eq:e50}
\mathop{\sum \sum}_{\substack{m\geq M, n\geq 1 \\ |m-n| \geq R}}|\tilde{\rho}_{\phi}(m)\tilde{\rho}_{\phi}(n)G(m,n)|  \ll_{l} \frac{a}{b^{l}}R^{-l+\frac{3}{2}}\sqrt{\eta}\Big{(}\frac{1}{t_{\phi}}\sum_{n\ll \frac{t_{\phi}}{a}} |\tilde{\rho}_{\phi}(n)|^{2}\Big{)} + \cr
 \frac{a}{b}\ R^{-\half}\Big{(} \frac{1}{t_{\phi}^{\frac{3}{2}}}\sum_{n\ll \frac{t_{\phi}}{a}}\tilde{\rho}_{\phi}^{2}(n)\sqrt{n} \Big{)} + \frac{a}{b^3}R^{-\half} \Big{(}\sqrt{t_{\phi}}\sideset{}{^*}\sum_{m\geq M}\frac{|\tilde{\rho}_{\phi}(m)|^{2}}{m^{\frac{3}{2}}}\Big{)}  + o(1),\quad
\end{multline*}
as $t_{\phi}\rightarrow \infty$.
\end{prop}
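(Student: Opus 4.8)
The plan is to feed the uniform Bessel asymptotics of Lemma \ref{LemmaOne} into $G(m,n)$, reduce each $G(m,n)$ to a pair of one-dimensional oscillatory integrals, and then split the pairs $(m,n)$ according to whether both $2\pi my$ and $2\pi ny$ sit safely inside the oscillatory range $(0,\,t_\phi - Ct_\phi^{1/3})$ on the support of $k$, or whether one of them lies in the transitional band $\big{|}\,|n| - \tfrac{t_\phi}{2\pi y}\big{|} \ll \tfrac{\Delta}{y}$. On the support of $k$ we have $y \asymp a$, hence $m,n \ll t_\phi/a$, and the genuinely exponentially decaying range $2\pi |n|y > t_\phi + \Delta$ is absorbed into the $o(1)$.

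For the bulk pairs I substitute part (1) of Lemma \ref{LemmaOne} for each factor and expand $\sin\cdot\sin$ by product-to-sum, so that
\[
G(m,n) = \pi\int \frac{k(y)\big{[}\cos\Psi_-(y) + \sin\Psi_+(y)\big{]}}{(t_\phi^2-(2\pi my)^2)^{\frac14}(t_\phi^2-(2\pi ny)^2)^{\frac14}}\ \d y + (\text{lower order}),
\]
with $\Psi_\pm(y) = t_\phi\big{[}{\rm H}(\tfrac{2\pi my}{t_\phi}) \pm {\rm H}(\tfrac{2\pi ny}{t_\phi})\big{]}$. Writing $P_j = \sqrt{t_\phi^2 - (2\pi jy)^2}$ and using ${\rm H}'(\xi) = -\sqrt{1-\xi^2}/\xi$, one finds $\frac{d}{dy}\big{[}t_\phi {\rm H}(\tfrac{2\pi jy}{t_\phi})\big{]} = -P_j/y$, so that
\[
\Psi_-'(y) = -\frac{P_m - P_n}{y} = -\frac{(2\pi)^2 y\,(m-n)(m+n)}{P_m + P_n}, \qquad \Psi_+'(y) = -\frac{P_m + P_n}{y}.
\]
Because $|m-n| \geq R$ prevents $P_m$ and $P_n$ from being simultaneously small, both $|\Psi_\pm'|$ are bounded below, the sum-phase derivative being of size $\asymp t_\phi/a$ and the difference-phase derivative of size $\asymp a(m-n)(m+n)/t_\phi$. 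I then integrate by parts $l$ times, each step dividing by $\Psi'$ and letting $d/dy$ fall on the amplitude; the dominant contribution is when all $l$ derivatives hit $k$ (contributing $b^{-l}$ by property (4)), giving $\ll b^{1-l}|\Psi_-'|^{-l}(P_mP_n)^{-1/2}$ after integrating over the support of $k^{(l)}$. Summing $|\tilde{\rho}_\phi(m)\tilde{\rho}_\phi(n)|$ against $(m-n)^{-l}(m+n)^{-l}$, applying Cauchy--Schwarz with Lemma \ref{LemmaTwo}, the geometric sum over $h=|m-n|\geq R$ together with the Cauchy--Schwarz step produces the power $R^{-l+3/2}$, while the factor $\sqrt\eta$ reflects the lower cutoff $m\geq M = t_\phi/\eta$; this is the first term of the Proposition. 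The sum-phase integral, having the much larger derivative $\asymp t_\phi/a$, is negligible by the same device.

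For the transitional pairs I abandon the oscillatory asymptotic for the offending factor, bounding it crudely by Corollary \ref{CorlOne} (the $r^{-1/3}$ estimate) while keeping the oscillatory asymptotic for the other factor. Estimating $G(m,n)$ by the measure of the transitional $y$-window against the surviving $P^{-1/2}$ weight, and tracking the derivatives of $k$ and of the amplitude generated by the partial integrations used to exploit $|m-n|\geq R$, yields the two remaining terms with their $\frac{a}{b}R^{-1/2}$ and $\frac{a}{b^3}R^{-1/2}$ shapes; the weighted sums $\sideset{}{^{*}}\sum_{m\geq M}|\tilde{\rho}_\phi(m)|^2 m^{-3/2}$ and $\sum_{n\ll t_\phi/a}\tilde{\rho}_\phi^2(n)\sqrt n$ appear from the two cases in which the transitional index is the large or the small one. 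The $(1+O(1/(r{\rm H})))$ errors in Lemma \ref{LemmaOne} and the exponentially small tail collect into $o(1)$.

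The main obstacle I anticipate is the inverse quarter-power singularity $(t_\phi^2 - (2\pi jy)^2)^{-1/4}$ of the amplitude at the upper edge of the oscillatory range: each integration by parts that differentiates this factor worsens the singularity (turning $P_j^{-1/2}$ into $P_j^{-3/2}$, and so on), so one must both limit the number of such differentiations and show that, after summing over $m$ and $n$ near the edge, these singular contributions remain of the stated size rather than swamping the main term. Managing the interface between the oscillatory and transitional regimes — choosing the split so that the repeated integration by parts stays valid where it is applied while the crude bound suffices elsewhere — is the delicate point, and it is exactly this that forces the $b^{-3}$ weight and the constraint tying $\eta$, $a$, $b$ and $R$ together that renders all three error terms small once $\beta$ (equivalently $a$) is taken long enough.
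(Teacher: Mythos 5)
Your proposal takes a genuinely different route from the paper, and the difference is not cosmetic: the paper never substitutes Bessel asymptotics inside the $y$-integral at all. It writes $k$ through its cosine transform $\hat{k}$ and invokes the exact identity of Lemma~\ref{Lemma-e1}(1), so that
\[
G(m,n)\;=\;\frac{C}{\sqrt{mn}}\int_{0}^{\infty}\hat{k}(x)\,P_{-\half+it_{\phi}}\Big(1+\frac{(m-n)^{2}+x^{2}}{2mn}\Big)\,\d x + O(e^{-ct_{\phi}}),
\]
and then uses Dunster's uniform asymptotics for the conical function (Lemma~\ref{Lemma-e1}(2),(3)), which hold uniformly through what would be the Bessel turning points; all the decay in $h=|m-n|$ and the powers of $b$ come from $|\hat{k}(x)|\ll a\min(1,(xb)^{-l})$, the bound on $k_{1}$, and a single integration by parts in $x$. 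Your plan instead runs stationary phase in $y$ directly, which relocates the whole difficulty of the proposition into the turning-point region --- and that is precisely the part of your argument that is not carried out.

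Concretely, two of your steps fail there. First, in your ``bulk'' range the assertion that the dominant term of the $l$-fold integration by parts is the one in which every derivative falls on $k$ is false near the upper edge. Your own cutoff admits pairs with $m,n\asymp t_{\phi}/a$ for which $P_{m},P_{n}$ descend to $\asymp\sqrt{t_{\phi}\cdot t_{\phi}^{1/3}}=t_{\phi}^{2/3}$ on part of the support of $k$ (note also that $|m-n|\geq R$ does \emph{not} prevent $P_{m},P_{n}$ from being simultaneously small: both $m,n$ may lie within $O(h)$ of $t_{\phi}/(2\pi y)$). There, differentiating the amplitude costs a relative factor $\asymp(2\pi m)^{2}y/P_{m}^{2}\asymp t_{\phi}^{2}/(aP_{m}^{2})$ per step, while each step gains only $1/|\Psi_{-}'|\asymp P_{m}/(ht_{\phi})$; the net factor $t_{\phi}/(ahP_{m})$ exceeds $1$ whenever $P_{m}\leq t_{\phi}/(ah)$, so for every $h\leq t_{\phi}^{1/3}/a$ there is a range of $y$ and of pairs on which integration by parts loses rather than gains, no matter how large $l$ is --- and the trivial bound on that range, summed over such pairs, is $\gg t_{\phi}^{1/3}/a$, which diverges. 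Second, your transitional recipe is quantitatively insufficient: bounding a pair's contribution by (window length $\asymp\Delta/m$) $\times\,t_{\phi}^{-1/3}\times P_{n}^{-1/2}$ gives $\ll a\log t_{\phi}\,t_{\phi}^{-3/2}$ per pair, and summing over the $\asymp t_{\phi}/a$ values of $m$ whose turning point lies in the support of $k$ and over $n\leq t_{\phi}/(8\pi a)$, using Cauchy--Schwarz and Prop.~\ref{shortsum}, yields $\ll(t_{\phi}/a)^{2}\cdot a\log t_{\phi}\,t_{\phi}^{-3/2}=a^{-1}\sqrt{t_{\phi}}\log t_{\phi}$ --- nowhere near the $O(1)$-sized right-hand side of the proposition. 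So the turning-point region demands genuine cancellation, which your sketch (flagged by you as ``the delicate point'' but left unresolved) does not supply; supplying it automatically is exactly what the paper's conical-function identity is for.
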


The proof of this proposition relies heavily on the asymptotics of $G(m,n)$ which is determined with the following

\begin{lem}\label{Lemma-e1} \

\begin{itemize}
\item[(1)] For $u, v>0$, $r\geq 0$ and $c$ real (see \cite{GR00})
\[
\int_{0}^{\infty}K_{ir}(uy)K_{ir}(vy)\cos{(cy)} \ \d y = \frac{\pi ^2}{4\sqrt{uv}}\sec{(i\pi r)}P_{-\half +ir}\Big{(}\frac{u^2 + v^2 + c^2}{2uv}\Big{)},
\]
\item[(2)] if $w>1$ and $r \rightarrow \infty$, then (\cite{Du91})
\[
P_{-\half +ir}(w) = \big{(}\frac{\log ^{2}(w+\sqrt{w^{2}-1})}{w^{2}-1}\big{)}^{\frac{1}{4}}J_{0}(r\log(w+\sqrt{w^{2}-1}))\big{(}1+O(\frac{1}{r})\big{)},
\]
\item[(3)] if $x \rightarrow \infty$, then (\cite{GR00})
\[
J_{0}(x) = \sqrt{\frac{2}{\pi x}} \cos(x-\frac{\pi}{4})\big{(}1+O(\frac{1}{x})\big{)}.
\]
\end{itemize}

\end{lem}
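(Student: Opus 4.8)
The three assertions are statements about special functions, and the plan is to reduce each to a classical identity or asymptotic expansion, supplying only the short analytic bookkeeping needed to put them in the stated form. For part (1), I would start from the tabulated integral
\[
\int_{0}^{\infty}K_{\nu}(uy)K_{\nu}(vy)\cos(cy)\ \d y = \frac{\pi^{2}}{4\sqrt{uv}\,\cos(\nu\pi)}\,P_{\nu-\half}\Big{(}\frac{u^{2}+v^{2}+c^{2}}{2uv}\Big{)},
\]
valid for real orders with $\nu<\half$, where the integral converges at $y=0$ (see \cite{GR00}). Both sides are holomorphic in $\nu$ in the strip $|\Re\nu|<\half$: on the left $K_{ir}(uy)=O(1)$ as $y\to0^{+}$ and decays exponentially as $y\to\infty$, so the integral converges locally uniformly and defines an analytic function of $\nu$ there, while on the right $P_{\nu-\half}(w)$ and $\cos(\nu\pi)$ are entire. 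Hence the identity persists to $\nu=ir$, where $\cos(i\pi r)=\cosh(\pi r)$ produces the factor $\sec(i\pi r)=1/\cosh(\pi r)$ and $P_{\nu-\half}=P_{-\half+ir}$, giving the claimed formula.

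For part (2), I would set $w=\cosh\xi$ with $\xi=\log(w+\sqrt{w^{2}-1})=\text{arccosh}(w)>0$, so that the amplitude becomes $(\xi^{2}/(w^{2}-1))^{\frac14}=(\xi/\sinh\xi)^{\half}$ and the Bessel argument becomes $r\xi$; the assertion is then exactly the large-$r$ asymptotics of the conical (Mehler) function $P_{-\half+ir}(\cosh\xi)$. The cleanest route is to quote the uniform asymptotic expansions of \cite{Du91}, which compare Legendre's equation (after the Liouville transformation) with Bessel's equation of order $0$ and large parameter $r$, yielding precisely $(\xi/\sinh\xi)^{\half}J_{0}(r\xi)$ with relative error $O(1/r)$, uniformly for $\xi$ in compact subsets of $(0,\infty)$. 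To exhibit the leading behaviour directly I would instead insert the Mehler--Dirichlet representation
\[
P_{-\half+ir}(\cosh\xi)=\frac{\sqrt{2}}{\pi}\int_{0}^{\xi}\frac{\cos(rt)}{\sqrt{\cosh\xi-\cosh t}}\ \d t,
\]
and note that the phase $rt$ has no interior stationary point, so up to $O(1/r)$ the whole contribution comes from the inverse-square-root endpoint singularity at $t=\xi$, where $\cosh\xi-\cosh t\sim(\sinh\xi)(\xi-t)$; a Fresnel evaluation there reproduces $\Big{(}\frac{2}{\pi r\sinh\xi}\Big{)}^{\half}\cos\Big{(}r\xi-\frac{\pi}{4}\Big{)}$, which is exactly the leading term of $(\xi/\sinh\xi)^{\half}J_{0}(r\xi)$ in view of part (3).

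For part (3) I would simply quote the classical asymptotic expansion of the Bessel function $J_{0}$ (see \cite{GR00}); it is needed only to convert the $J_{0}$ of part (2) into the explicit oscillation used later in the estimates for $G(m,n)$. The main obstacle is part (2): the genuine content is not the leading oscillation, which a crude endpoint analysis of the Mehler--Dirichlet integral already produces, but its clean packaging as $J_{0}(r\xi)$ with a \emph{controlled} relative error $O(1/r)$, and securing this uniformly requires the Liouville--Green uniform-asymptotics machinery of \cite{Du91}. One must also take care to invoke the expansion only for $w$ bounded away from the turning point $w=1$, which is precisely the regime in which the lemma will be applied.
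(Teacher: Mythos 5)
Your proposal is correct and takes essentially the same route as the paper, which in fact gives no proof of this lemma at all: all three parts are quoted as classical facts from exactly the sources you use (\cite{GR00} for (1) and (3), \cite{Du91} for (2)), and your analytic continuation in the order $\nu$ and your Mehler--Dirichlet/Fresnel sketch merely make explicit the bookkeeping behind those citations.

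One correction to your closing remark, though: it is not the case that the lemma is invoked only for $w$ bounded away from the turning point. In Section 6.2.1 the paper applies it with $w = 1+\frac{h^{2}+x^{2}}{2mn}$, $h\geq 1$ and $\sqrt{mn}\ll t_{\phi}/a$, so that $w-1$ can be as small as $\asymp a^{2}/t_{\phi}^{2}$; what is kept large there is $t_{\phi}\log\big(w+\sqrt{w^{2}-1}\big)\gg a$, i.e.\ the argument of $J_{0}$, not $w-1$. It is precisely the uniformity of the Bessel-form expansion of \cite{Du91} down to the turning point that legitimizes this, with the loss from converting $J_{0}$ into a cosine via part (3) recorded in the factor $\max\big(1,\frac{1}{\sqrt{w-1}}\big)$ appearing in \eqref{eq:e6}. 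This does not affect your proof of the lemma as stated (fixed $w>1$), but the extra uniformity is what the application actually consumes, so your caveat points in the wrong direction.
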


Let $\hat{k}(x) = \int_{0}^{\infty}k(y)\cos (2\pi xy)\ \d y$ be the Fourier transform. Then, applying integration by parts multiple times shows that $|\hat{k}(x)| \leq C_{l}b(xb)^{-l}$ as $x\rightarrow \infty$ for any integer $l\geq 1$ so that $|\hat{k}(x)| \leq C_{l}a\min(1,(xb)^{-l})$ for all $x$. Moreover, if $k_{1}(x) = \int_{0}^{x}\hat{k}(r)\ \d r$, we have first that $|k_{1}(x)| \leq Cax$ if $x<\frac{1}{b}$ and since $k_{1}(\infty) = Ck(0) = 0$ we have also $|k_{1}(x)| = |\int_{x}^{\infty}\hat{k}(r)\ \d r| \leq C_{l}(bx)^{-l} $, with $l$  chosen suitably large. 
%pg32%
We first note that in \eqref{eq:e4} the contribution to $I_{j}^{L}$ from either $m$ or $n$ exceeding $\frac{1}{\pi a}t_{\phi}$ is exponentially small due to the decay of the Bessel functions (since $y \in [a,2a]$). Thus we may assume both $m, n \leq \frac{1}{\pi a}t_{\phi}$ in what follows (denoted by an asterisk in the summation) so that using Lemma \ref{Lemma-e1}
\begin{multline}\label{eq:e5}
I_{j}^{L}=C\underset{|m-n|\geq R}{\sideset{}{^*}\sum_{m\geq M}\sideset{}{^*}\sum_{n\geq 1}}\frac{\tilde{\rho}_{\phi}(m)\tilde{\rho}_{\phi}(n)\alpha_{j}(n)}{\sqrt{mn}}\int_{0}^{\infty} \hat{k}(x)P_{-\half +it_{\phi}}\Big{(}1+\frac{(m-n)^2 +x^2}{2mn}\Big{)}\ \d x \cr
 +O(e^{-ct_{\phi}})\ ,\qquad
\end{multline}
for some constant $c>0$. 

We put $h=|m-n|$ so that $R \leq h \ll \frac{t_{\phi}}{a}$ and in Lemma \ref{Lemma-e1} we put 
\[
w= 1+\frac{h^2 + x^2}{2mn} > 1 .
\]
 Then if $w-1 < \ve$ is small, $\log(w+\sqrt{w^{2}-1}) = \sqrt{2}\sqrt{w-1} +O(w-1) \gg \sqrt{w-1}\gg \frac{h}{\sqrt{mn}}$, while if $w-1 \geq \ve$, then $\log(w+\sqrt{w^{2}-1}) \gg_{_{\ve}} 1$. Thus since $\sqrt{mn} \ll \frac{t_{\phi}}{a}$ and $h \geq 1$, in either case we have $t\log(w+\sqrt{w^{2}-1}) \gg a$ is sufficiently large  so that we may use part (3) of Lemma \ref{Lemma-e1} in part (2)  to get
\begin{multline}\label{eq:e6}
\quad P_{-\half +it_{\phi}}(w) =C\frac{1}{\sqrt{t_{\phi}}}\Big{(}\frac{1}{w^{2}-1}\Big{)}^{\frac{1}{4}}\cos{\big{(}t_{\phi}\log(w+\sqrt{w^{2}-1})- \frac{\pi}{4}\big{)}} \cr
 \times \Big{(}1 + O\big{(}\frac{1}{t_{\phi}}\max(1,\frac{1}{\sqrt{w-1}})\big{)}\Big{)}\ .\qquad
\end{multline}
We first break the integral in \eqref{eq:e5} with $x\in [0,h]$ and $x\in (h,\infty)$.   

For $x>h$, using \eqref{eq:e6} we have
\[
P_{-\half +it_{\phi}}(w) \ll \frac{1}{\sqrt{t_{\phi}}}\big{(}\frac{x^2}{mn}\big{)}^{-\frac{1}{4}}\big{(}1+\frac{x^2}{mn}\big{)}^{-\frac{1}{4}} \ .
\]
We consider two cases:

(i) if $x \geq \sqrt{mn}$, then $P_{-\half +it_{\phi}}(w) \ll \frac{1}{x}\sqrt{\frac{mn}{t_{\phi}}}$, so that the contribution to the integral in \eqref{eq:e5} is
\begin{align}\label{eq:e7}
\int_{\sqrt{mn}}^{\infty} \hat{k}(x)P_{-\half +it_{\phi}}(w) \ \d x &\ll a\sqrt{\frac{mn}{t_{\phi}}}\int_{\sqrt{mn}}^{\infty} (xb)^{-l}\frac{1}{x}\ \d x ,\cr
&\ll a\sqrt{\frac{mn}{t_{\phi}}}(b\sqrt{mn})^{-l}\ .
\end{align}
The contribution to $I_{j}^{L}$ is then
\[
\ll \frac{a}{\sqrt{t_{\phi}}}b^{-l}\sum_{m}\sum_{n}\frac{|\tilde{\rho}_{\phi}(m)\tilde{\rho}_{\phi}(n)|}{(mn)^{\frac{l-1}{2}}} \ll_{a} t_{\phi}^{-\half + \ve},
\]
for $l$ chosen large. This contribution will be negligible and will be ignored. Next the case

(ii) if $h < x \leq \sqrt{mn}$. Now $P_{-\half +it_{\phi}}(w) \ll \frac{1}{\sqrt{t_{\phi}}}(\frac{x^2}{mn})^{-\frac{1}{4}}$ so that
\[
\int_{h}^{\sqrt{mn}} \hat{k}(x)P_{-\half +it_{\phi}}(w) \ \d x \ll \frac{a}{\sqrt{t_{\phi}}}(mn)^{\frac{1}{4}}\int_{h}^{\sqrt{mn}} (xb)^{-l}x^{-\half}\ \d x \ll \frac{a}{\sqrt{t_{\phi}}}(mn)^{\frac{1}{4}}b^{-l}h^{-l+\half}.
\]
Now, the contribution to $I_{j}^{L}$ is 
\begin{equation*}
\ll \frac{a}{b^{l}\sqrt{t_{\phi}}}\ \underset{|h|\geq R}{\sideset{}{^*}\sum_{m\geq M}\sideset{}{^*}\sum_{n\geq 1}}\frac{\tilde{\rho}_{\phi}(m)\tilde{\rho}_{\phi}(n)\alpha_{j}(n)}{(mn)^{\frac{1}{4}}}\frac{1}{h^{l-\half}}\ .
\end{equation*}
%pg33%
We break the sum into two pieces: first with $n \leq \half m$. Then, $h \geq \half m \gg_{a} t_{\phi}$ so that the contribution from the sum is negligible if $l$ is chosen suitably large. For the rest, $n > \half m \gg M$, so that the sum is 
\begin{align}\label{eq:ea1}
&\ll \frac{a}{b^{l}\sqrt{t_{\phi}}}\ \sideset{}{^*}\sum_{h\geq R}\frac{1}{h^{l-\half}}\sideset{}{^*}\sum_{n> \half M}\frac{|\tilde{\rho}_{\phi}(n)|^{2}}{\sqrt{n}}\ ,\cr
&\ll \frac{a}{b^{l}}R^{-l+\frac{3}{2}}\Big{(}\frac{1}{\sqrt{t_{\phi}M}}\sum_{n\ll \frac{t_{\phi}}{a}} |\tilde{\rho}_{\phi}(n)|^{2}\Big{)}\ .
\end{align}
For the remaining range of $x$ with $0\leq x \leq h$ we see that  $w-1 \asymp \frac{h^2}{mn}$. We first consider the contribution from the error-terms in \eqref{eq:e6}. 

If $\frac{h^2}{mn} \geq 1$ then $w-1 \gg 1$ so that $P_{-\half +it_{\phi}}(w) \ll t_{\phi}^{-\frac{3}{2}}\frac{\sqrt{mn}}{h}$. Then
\begin{align*}
\int_{0}^{h} \hat{k}(x)P_{-\half +it_{\phi}}(w) \ \d x &\ll a\ t_{\phi}^{-\frac{3}{2}}\frac{\sqrt{mn}}{h}\int_{0}^{h}\min\big{(}1,(bx)^{-l}\big{)}\ \d x \ , \cr
&\ll \frac{a}{b}\ t_{\phi}^{-\frac{3}{2}}\frac{\sqrt{mn}}{h}\ .
\end{align*}
The contribution to $I_{j}^{L}$ in \eqref{eq:e5} is
\[
\ll \frac{a}{b}t_{\phi}^{-\frac{3}{2}}\sideset{}{^*}\sum_{h\geq R}\frac{1}{h}\sideset{}{^*}\sum_{n}\tilde{\rho}_{\phi}^{2}(n) \ll_{a} t_{\phi}^{-\half + \ve},
\]
which is negligible.

If on the other hand $\frac{h^2}{mn} \leq 1$, the error term in \eqref{eq:e6} gives us $P_{-\half +it_{\phi}}(w) \ll (ht_{\phi})^{-\frac{3}{2}}(mn)^{\frac{3}{4}}$. Then, as in the case above,
\[
\int_{0}^{h} \hat{k}(x)P_{-\half +it_{\phi}}(w) \ \d x \ll \frac{a}{b}\ (ht_{\phi})^{-\frac{3}{2}}(mn)^{\frac{3}{4}} \ .
\]
The contribution to $I_{j}^{L}$ is
\begin{align}\label{eq:ea2}
&\ll \frac{a}{b}\ t_{\phi}^{-\frac{3}{2}}\sum_{h\geq R}h^{-\frac{3}{2}}\sum_{n\ll \frac{t_{\phi}}{a}}\tilde{\rho}_{\phi}^{2}(n)\sqrt{n}\ ,\cr
&\ll \frac{a}{b}\ R^{-\half}\Big{(} t_{\phi}^{-\frac{3}{2}}\sum_{n\ll \frac{t_{\phi}}{a}}\tilde{\rho}_{\phi}^{2}(n)\sqrt{n} \Big{)}\ .
\end{align}

We now consider the contribution from the main term in \eqref{eq:e6} to $I_{j}^{L}$. Denoting it by $Q(w)$ and using integration by parts, we estimate
\begin{equation}\label{eq:e8}
 \int_{0}^{h} \hat{k}(x)Q(w)\ \d x = k_{1}(h)Q(h) - \int_{0}^{h} k_{1}(x)\frac{\partial}{\partial x}Q(w)\ \d x.
\end{equation}
Since $k_{1}(h) \ll (bh)^{-l}$ and $Q(w) \ll (ht_{\phi})^{-\half}(mn)^{\frac{1}{4}}$, the contribution from these terms to $I_{j}^{L}$ is seen to be the same as in \eqref{eq:ea1}. Next, it is easily seen that
\begin{align}
\frac{\partial}{\partial x}Q(w) &\leq C\frac{1}{\sqrt{t_{\phi}}}\frac{1}{(w^2 -1)^{\frac{5}{4}}}\big{(}w + t_{\phi}\sqrt{w^2 -1}\big{)}\frac{x}{mn} \cr
&\ll\sqrt{t_{\phi}}\frac{1}{(w-1)^{\frac{3}{4}}}\frac{x}{mn} \ll \frac{\sqrt{t_{\phi}}x}{(mn)^{\frac{1}{4}}h^{\frac{3}{2}}}.
\end{align}
%pg34%
So the contribution to \eqref{eq:e8} is (following the remarks made after Lemma \ref{Lemma-e1})
\begin{align}
&\ll \frac{\sqrt{t_{\phi}}}{(mn)^{\frac{1}{4}}h^{\frac{3}{2}}}\int_{0}^{h}x|k_{1}(x)|\ \d x \cr
&\ll \frac{\sqrt{t_{\phi}}}{(mn)^{\frac{1}{4}}h^{\frac{3}{2}}}\int_{0}^{h}x\min{(ax,(xb)^{-l})}\ \d x \cr
&\ll \frac{a}{b^3}\frac{\sqrt{t_{\phi}}}{(mn)^{\frac{1}{4}}h^{\frac{3}{2}}}.
\end{align}
Thus, the contribution to \eqref{eq:e5} is then
\begin{equation}\label{eq:e9}
\ll \sqrt{t_{\phi}}\frac{a}{b^3}\sideset{}{^*}\sum_{m\geq M}\sideset{}{^*}\sum_{\substack{n\\h\geq R}}\frac{|\tilde{\rho}_{\phi}(m)\tilde{\rho}_{\phi}(n)|}{(mn)^{\frac{3}{4}}h^{\frac{3}{2}}}\ .
\end{equation}
We now estimate the sum in \eqref{eq:e9} by considering three cases: $n>m$, $n<\half m$ and $\half m \leq n \leq m$.

In the first case, the main term in \eqref{eq:e9} is
\begin{align}\label{eq:ea3}
&\ll \sqrt{t_{\phi}}\frac{a}{b^3}\sideset{}{^*}\sum_{h>R}\frac{1}{h^{\frac{3}{2}}}\ \sideset{}{^*}\sum_{m\geq M}\frac{|\tilde{\rho}_{\phi}(m)\tilde{\rho}_{\phi}(m+h)|}{\big{(}m(m+h)\big{)}^{\frac{3}{4}}},\cr
&\ll \frac{a}{b^3}R^{-\half} \Big{(}\sqrt{t_{\phi}}\sideset{}{^*}\sum_{m\geq M}\frac{|\tilde{\rho}_{\phi}(m)|^{2}}{m^{\frac{3}{2}}}\Big{)}\ .
\end{align}
For the second case, we see that $|m-n|\geq \half m$ so that in \eqref{eq:e9} we find the main term is, by Lemma \ref{LemmaTwo}
\[
\leq C_{a}\sqrt{t_{\phi}}\sideset{}{^*}\sum_{\substack{m\geq M\\n\geq 1}}\frac{|\tilde{\rho}_{\phi}(m)\tilde{\rho}_{\phi}(n)|}{m^{\frac{9}{4}}n^{\frac{3}{4}}} \ll_{a,\ve} \sqrt{t_{\phi}}M^{-\frac{5}{4}}t^{\frac{1}{4}+\ve} \ll_{a,\ve} t_{\phi}^{-\half +\ve},
\]
which is negligible. Finally, for the last case, we have $ \half m\leq n \leq m-R$ so that both $m$ and $n$ exceed $CM$ so that we may reverse their roles and get the estimate obtained in the first case, that is \eqref{eq:ea3}. 

Collecting the estimates from \eqref{eq:e5}, \eqref{eq:ea1}, \eqref{eq:ea2} and \eqref{eq:ea3}, we see that
\begin{multline}\label{eq:e10}
|I_{j}^{L}| \ll_{l} \frac{a}{b^{l}}R^{-l+\frac{3}{2}}\sqrt{\eta}\Big{(}\frac{1}{t_{\phi}}\sum_{n\ll \frac{t_{\phi}}{a}} |\tilde{\rho}_{\phi}(n)|^{2}\Big{)} + \frac{a}{b}\ R^{-\half}\Big{(} t_{\phi}^{-\frac{3}{2}}\sum_{n\ll \frac{t_{\phi}}{a}}\tilde{\rho}_{\phi}^{2}(n)\sqrt{n} \Big{)} + \cr
 \frac{a}{b^3}R^{-\half} \Big{(}\sqrt{t_{\phi}}\sideset{}{^*}\sum_{m\geq M}\frac{|\tilde{\rho}_{\phi}(m)|^{2}}{m^{\frac{3}{2}}}\Big{)}  + o(1),\quad
\end{multline}
as $t\rightarrow \infty$, for $j=1$ and $2$.
%%%%%%%%%%%%%%%%%%%%%%%%%%%%%%%%subSubsection%%%%%%%%%%%%%%%%%%%%%%%%%%%%%%%%%%%%%%%%%%%%%%%%%%%%%%%
\vspace{12pt}
\subsubsection{Small spacings for $I_{1}$ and $I_{2}$.\\} 
We first remark that in our application at hand, we will not need to deal with small spacings since we are able to choose $R=1$. This is due to the fact that we choose our segment $\beta$ to be sufficiently long. However, we include the analysis here to indicate that QUE can be used to control such sums. It suffices to deal here with $I_{1}^{S}$. This is because when $R$ is bounded, $I_{1}^{S} - I_{2}^{S}$ is a sum where $m$ and $n$ both run through a bounded  interval (depending on $R$) so that the contribution from them is negligible as $t_{\phi} \rightarrow \infty$ and if $\frac{a}{\eta} \rightarrow 0$ as $a$ grows (we use here the bounds \eqref{eq:b4} for the Fourier coefficients and the estimates for the Bessel function away from the $transitional \; range$ in Corollary \ref{CorlOne}). 
%pg35%
\begin{prop}\label{Prop-e11}  For bounded $R$,
\[
I_{1}^{S}= \mathop{\sum \sum}_{\substack{m\geq M, n\geq 1 \\ 1\leq |m-n| < R}}\tilde{\rho}_{\phi}(m)\tilde{\rho}_{\phi}(n)G(m,n)  \ll Ra\Big{(}\frac{1}{t_{\phi}}\sum_{m\leq M} \tilde{\rho}_{\phi}^{2}(m)\Big{)} + o(1)\ .
\]
\end{prop}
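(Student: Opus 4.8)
The plan is to run the same reduction as in the large‑spacing case, replacing the decay‑in‑$h$ mechanism (which is unavailable here, since $h:=|m-n|$ is now confined to the bounded range $1\le h<R$) by the oscillation of the cross terms together with QUE. First I would discard $m$ or $n$ exceeding $\tfrac{1}{\pi a}t_{\phi}$, whose contribution is $O(e^{-ct_{\phi}})$ by the exponential decay of the Bessel functions on $[a,2a]$, and then, exactly as in \eqref{eq:e5}, use Lemma~\ref{Lemma-e1}(1) with Fourier inversion of $k$ to write
\[
G(m,n)=\frac{C}{\sqrt{mn}}\int_{0}^{\infty}\hat{k}(x)\,P_{-\frac12+it_{\phi}}\Big(1+\frac{h^{2}+x^{2}}{2mn}\Big)\,\d x+O(e^{-ct_{\phi}}).
\]
By Lemma~\ref{Lemma-e1}(2),(3) the main term of $P_{-\frac12+it_{\phi}}(w)$ carries the factor $\cos\!\big(t_{\phi}\log(w+\sqrt{w^{2}-1})-\tfrac{\pi}{4}\big)$, and on the bulk of the $x$‑support one has $\log(w+\sqrt{w^{2}-1})\approx h/\sqrt{mn}$, so $G(m,n)$ factors (to leading order) as an oscillatory phase $\cos(t_{\phi}h/m+O(1))$ in $m$ times an amplitude integral that is $\asymp a/t_{\phi}$, the same size as the diagonal weight $G(m,m)$.

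Next I would substitute this into $I_{1}^{S}=\sum_{1\le|h|<R}\sum_{m\ge M}\tilde{\rho}_{\phi}(m)\tilde{\rho}_{\phi}(m+h)G(m,m+h)$. The bounded number of shifts each produce a sum of shape $\tfrac{a}{t_{\phi}}\sum_{M\le m\ll t_{\phi}/a}\tilde{\rho}_{\phi}(m)\tilde{\rho}_{\phi}(m+h)\cos(t_{\phi}h/m)$. The straightforward route is Cauchy--Schwarz together with Proposition~\ref{shortsum}, which bounds $\sum_{M\le m\ll t_{\phi}/a}|\tilde{\rho}_{\phi}(m)\tilde{\rho}_{\phi}(m+h)|\ll t_{\phi}/a$; after summing the $O(R)$ shifts this already yields the clean but weaker estimate $|I_{1}^{S}|\ll R$, i.e.\ the same order as the diagonal $I_{1}^{D}$.

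The hard part — and the reason the statement is phrased so that ``QUE controls these sums'' — is extracting the extra factor $a/\eta$ needed to pass from this $\ll R$ bound to the claimed $\ll Ra\big(\tfrac1{t_{\phi}}\sum_{m\le M}\tilde{\rho}_{\phi}^{2}(m)\big)\asymp Ra/\eta$. That saving cannot come from Cauchy--Schwarz, which is essentially sharp on the near‑diagonal; it must come from the cancellation in the cross‑phase $\cos(t_{\phi}h/m)$, whose argument runs over $[ha,h\eta]$ and hence sweeps $\asymp h\eta$ cycles across the range $M\le m\ll t_{\phi}/a$. Controlling this oscillatory sum of shifted Hecke coefficients against the smooth phase is precisely where the mean‑value/QUE input of Proposition~\ref{shortsum} (and the incomplete‑Eisenstein‑series estimates \eqref{eq:ea8}--\eqref{eq:ea9} behind it) enters, allowing one to replace the trivial shifted sum by the diagonal‑localized mass $\tfrac1{t_{\phi}}\sum_{m\le M}\tilde{\rho}_{\phi}^{2}(m)$ and thereby keep the off‑diagonal strictly below the diagonal. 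Since this subsection is only included to document that QUE suffices — the actual application takes $R=1$, rendering $I_{1}^{S}$ empty — I would carry out this last step without optimizing the dependence on $a$, $b$ and $\eta$.
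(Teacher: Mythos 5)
There is a genuine gap, and it is precisely at the step you flag as "the hard part": you never supply the mechanism that beats the trivial $\ll R$ bound, and the mechanism you gesture at would not work. The paper's proof rests on a completion-plus-unfolding trick that is absent from your proposal. In \eqref{eq:e11a} the sum $I_{1}^{S}$ over $m\geq M$ is written as $I_{1}^{S_{1}}-I_{1}^{S_{2}}$, where $I_{1}^{S_{1}}$ is the \emph{completed} sum over all $m$ with $m,m+h\neq 0$, and $I_{1}^{S_{2}}$ is the complementary range $1\leq m\leq M$. For each fixed shift $h\neq 0$, the completed sum is recognized, by unfolding and evenness of $\phi$, as exactly the inner product $\int_{\mathbb{X}}\phi(z)^{2}P_{h,g}(z)\,\d\mathrm{A}(z)$ with $P_{h,g}$ the incomplete Poincar\'e series attached to $g(y)=yk(y)$; since $\int_{\mathbb{X}}P_{h,g}\,\d\mathrm{A}=0$ for $h\neq 0$, QUE in the form \eqref{eq:e14} makes this $o(1)$, and the bounded number of shifts $|h|<R$ is harmless. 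The complementary piece $I_{1}^{S_{2}}$ is then estimated \emph{trivially}: for $m\leq M$ and $\eta>100a$ one has $2\pi my<\half t_{\phi}$, so each Bessel integral is $O(a/t_{\phi})$ by Lemma \ref{LemmaOne}, giving \eqref{eq:e19}. Note in particular that the term $Ra\big(\frac{1}{t_{\phi}}\sum_{m\leq M}\tilde{\rho}_{\phi}^{2}(m)\big)$ in the statement is the \emph{price of completing the sum} over the range $m\leq M$, not an oscillation saving extracted over $m\geq M$; your reading of where that term comes from is mistaken.

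Your proposed substitute --- cancellation in the cross-phase $\cos(t_{\phi}h/m)$ "controlled by" Prop.\ \ref{shortsum} --- is not an argument. Prop.\ \ref{shortsum} is a positive mean-value statement about $\sum|\tilde{\rho}_{\phi}(n)|^{2}$; it carries no information whatsoever about the signs or phases of the shifted products $\tilde{\rho}_{\phi}(m)\tilde{\rho}_{\phi}(m+h)$. What you would actually need is a shifted-convolution estimate $\sum_{m}\tilde{\rho}_{\phi}(m)\tilde{\rho}_{\phi}(m+h)e^{if(m)}=o(t_{\phi}/a)$ against a slowly varying phase, a genuinely hard problem that your proposal leaves open; indeed the paper remarks right after \eqref{eq:e19} that there is \emph{not} sufficient oscillation in these integrals to extract further savings, since $mh\sim t_{\phi}$. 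QUE does enter the proof, but as the statement that $\int_{\mathbb{X}}\phi^{2}f\,\d\mathrm{A}\to\frac{3}{\pi}\int_{\mathbb{X}}f\,\d\mathrm{A}$ for the \emph{fixed} test function $f=P_{h,g}$ --- not through the coefficient mean-value proposition. The estimates \eqref{eq:ea8}--\eqref{eq:ea9} you invoke concern incomplete Eisenstein series, i.e.\ the $h=0$ (diagonal) case, and are blind to the $h\neq 0$ off-diagonal terms that are the whole point here.
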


For the proof, we use the notation from the subsection above and rewrite 
\begin{multline}\label{eq:e11a}
\quad I_{1}^{S}= \sum_{\substack{m, m+h \neq 0\\1\leq |h|\leq R}}\rho_{\phi}(m)\rho_{\phi}(m+ h)\int K_{it}(2\pi |m|y)K_{it_{\phi}}(2\pi |m+ h|y)k(y)\ \d y \cr
 - 2\sum_{h=1}^{R}\sum_{m=1}^{M}\tilde{\rho}_{\phi}(m)\tilde{\rho}_{\phi}(m\pm h)\int e^{\pi t_{\phi}}K_{it_{\phi}}(2\pi my)K_{it_{\phi}}(2\pi (m\pm h)y)k(y)\ \d y \ .\quad
\end{multline}
We denote the above as $I_{1}^{S}=I_{1}^{S_{1}}-I_{1}^{S_{2}}$. We will show that $I_{1}^{S_{1}}$ is small using  QUE  directly, while $I_{1}^{S_{2}}$ will be bounded in a manner similar to $I_{1}^{L}$ in the previous subsection.
 
We will first consider $I_{2}^{S_{1}}$. We assume $\eta > 100a$ so that $2\pi my < \half t_{\phi}$, and since $h$ is relatively small, the integral in \eqref{eq:e10} is $O(at_{\phi}^{-1})$ by Lemma \ref{LemmaOne}. Thus
\begin{equation}\label{eq:e19}
 I_{2}^{S_{1}} \ll Ra\Big{(}\frac{1}{t_{\phi}}\sum_{m\leq M} \tilde{\rho}_{\phi}^{2}(m)\Big{)}\ . 
\end{equation}
\begin{rem} It may appear that the estimate above for the integral is rather crude. A careful analysis shows that there is not sufficient oscillation in the integral to extract futher savings since $mh \sim t_{\phi}$.
\end{rem}

We next consider $I_{1}^{S_{1}}$. Let $g(y)\in C_{0}^{\infty}(\mathbb{R}^{+})$ with the support contained in the interval $(A^{-1},\infty)$ with $A\geq 1$. For any $h\in\mathbb{Z}$ and $z\in\mathbb{H}$ we have the incomplete Poincare series
\[
P_{h,g}(z)=\sum_{\gamma\in\Gamma_{\infty}\backslash\Gamma}g\big{(}y(\gamma z)\big{)}e\big{(}hx(\gamma z)\big{)}.
\]
The sum is finite and $P_{h,g}(z)$ is $\Gamma$-invariant. Moving $z$ into the standard fundamental domain we see that $|P_{h,g}(z)| \leq A\parallel g\parallel_{\infty}$ uniformly in $z$. Since $\int_{\mathbb{X}}P_{h,g}(z)\ \d {\rm A}(z) =0$ for $h\neq 0$, it follows from QUE that there is a function $\ve(t_{\phi}) \rightarrow 0$ such that 
\begin{equation}\label{eq:e14}
\int_{\mathbb{X}}\phi(z)^{2}P_{h,g}(z)\ \d {\rm A}(z) \ll_{h,g} \ve(t_{\phi})\ .
\end{equation}
Unfolding the integral above and using the fact that $\phi$ is even, we see that the integral in \eqref{eq:e14} is equal to (up to to a multiplicative constant) the sum over $m$ in $I_{1}^{S_{1}}$ with $g(y) = yk(y)$, so that 
\begin{equation}\label{eq:e15}
|I_{1}^{S_{1}}|
 \leq C_{_{a,R}} \ \ve(t_{\phi})\ ,
\end{equation}
which is negligible if $R$ is bounded and $t \rightarrow \infty$.
Hence from \eqref{eq:e11a}, \eqref{eq:e15} and  \eqref{eq:e19}, we have 
\begin{equation}\label{eq:ea4}
I_{1}^{S} \ll Ra\Big{(}\frac{1}{t_{\phi}}\sum_{m\leq M} \tilde{\rho}_{\phi}^{2}(m)\Big{)} + o(1)\ .
\end{equation}

\begin{rem} It is worth pointing out that a stronger version of Prop. \ref{Prop-e11} holds with $R \rightarrow \infty$ with $t$ if we assume a quantitative form of QUE. To obtain this we have to estimate the Sobolev norm of $P_{h,g}$ uniformly in $h$. Since
\[
\Delta\big{(}g(y)e(hx)\big{)} = Lg(y)e(hx)
\]
where
\[
Lg(y)=y^{2}g''(y) + 4\pi^{2}h^{2}g(y),
\]
it follows that for $m\geq 0$, $\Delta^{m}P_{h,g}(z)= P_{h,L^{m}g}(z)$. Then for $g$ fixed and $h\neq 0$
\begin{equation}\label{eq:e12}
\parallel \Delta^{m}P_{h,g}\parallel_{\infty} \ll_{g,m} |h|^{2m}.
\end{equation}
Hence, for $k$ fixed, the Sobolev norm satisfies (for $h\neq 0$)
\begin{equation}\label{eq:e13}
\parallel P_{h,g}\parallel_{W^{k}}^{2} \ll_{g,k} |h|^{k}.
\end{equation}

Then, QQUE (see Appendix A1) implies that there are constants $k$ and $\nu >0$ such that equation \eqref{eq:e14}  takes the form
\[
\int_{\mathbb{X}}\phi(z)^{2}P_{h,g}(z)\ \d {\rm A}(z) \ll_{g} |h|^{k}t_{\phi}^{-\nu +\ve}.
\]
so that
\[
I_{1}^{S} \ll Ra\Big{(}\frac{1}{t_{\phi}}\sum_{m\leq M} \tilde{\rho}_{\phi}^{2}(m)\Big{)} + O_{a}(R^{(k+1)}t_{\phi}^{-\nu +\ve})\ .
\]
The error term here is now $o(1)$ for the larger range $R \ll t_{\phi}^{\frac{\nu}{k+1} -\ve}$.
\end{rem}

%%%%%%%%%%%%%%%%%%%%%%%%%%%%%%%%%%%%%%%%%%%%%%%%%%%%%%%%%%%%%%%%%%%%%%%%%%%%%%%%%%%%%%%%%%%%%%%%%%%%
%%%%%%%%%%%%%%%%%%%%%%%%%%%%%%%%%%%%%%% subsubsection %%%%%%%%%%%%%%%%%%%%%%%%%%%%%%%%%%%%%%%%%%%%
\subsubsection{Completion of the proof.\\}
Noting that $I_{1}^{D} = I_{2}^{D}$, from \eqref{eq:e4}, we rewrite the diagonal term as
\begin{equation}\label{eq:ea5}
I_{1}^{D} =2\sum_{m\neq 0}\rho_{\phi}(m)^{2}\int K_{it_{\phi}}(2\pi |m|y)^{2}k(y)\ \d y + O\Big{(}\frac{a}{t_{\phi}}\sum_{m\leq M} \tilde{\rho}_{\phi}^{2}(m)\Big{)},
\end{equation}
where the error term comes from $m\leq M$. The main term here is 
\[
2\int_{\mathbb{X}}\phi(z)^{2}P_{0,g}(z)\ \d {\rm A}(z),
\]
where $g(y)=yk(y)$. By  QUE this is
\[
\sim 2\int_{\mathbb{X}}P_{0,g}(z)\ \d {\rm A}(z) \sim 2\int k(y)\ \d^{\times} y \geq 2\log 2.
\]
Collecting all the estimates from Prop. \ref{Prop-e10},  Prop. \ref{Prop-e11} and the remarks made  above, we conclude that $|I| \geq I_{1}^{D} + I_{1}^{S} +2I_{1}^{L} - I_{1}^{L} + o(1)$ so that there are positive absolute constants $C_{j}$ such that
\begin{multline}\label{eq:ea40}
|I| \geq 2\log 2 - C_{1}\frac{a}{b^{l}}R^{-l+\frac{3}{2}}\sqrt{\eta}\Big{(}\frac{1}{t_{\phi}}\sum_{n\ll \frac{t_{\phi}}{a}} \tilde{\rho}_{\phi}^{2}(n)\Big{)} -
 C_{2}\frac{a}{b}\ R^{-\half}\Big{(} t_{\phi}^{-\frac{3}{2}}\sum_{n\ll \frac{t_{\phi}}{a}}\tilde{\rho}_{\phi}^{2}(n)\sqrt{n} \Big{)} - \cr
 C_{3}\frac{a}{b^3}R^{-\half} \Big{(}\sqrt{t_{\phi}}\sideset{}{^*}\sum_{m\geq M}\frac{\tilde{\rho}_{\phi}^{2}(m)|}{m^{\frac{3}{2}}}\Big{)} - C_{4}Ra\Big{(}\frac{1}{t_{\phi}}\sum_{m\leq M} \tilde{\rho}_{\phi}^{2}(m)\Big{)}  + o(1)\ .\qquad 
\end{multline}
To estimate these sums, we appeal to Prop. \ref{shortsum}, which  by partial summation gives us
\begin{align*}
|I| &\geq 2\log 2 - C_{1}\frac{1}{b^{l}}R^{-l+\frac{3}{2}}\sqrt{\eta} -
 C_{2}\frac{a}{b}\ R^{-\half}a^{-\frac{3}{2}} - C_{3}\frac{a}{b^3}R^{-\half}\sqrt{\eta}  - C_{4}Ra\eta^{-1}  + o(1)\ ,\cr
&\geq \log 2\ ,
\end{align*}
on choosing $b=\sqrt{\eta}$, $R=1$, $l=100$ and $\eta = a^{\frac{101}{100}}$, say and letting $a$ be sufficient large.  This completes the proof of the lower bound.

%%%%%%%%%%%%%%%%%%%%%%%%%%%%%%%%%%%%%%%%%%%%%%%%%%%%%%%%%%%%%%%%%%%%%%%%%%%%%%%%%%%%%%%%%%%%%%%%
%%%%%%%%%%%%%%%%%%%%%%%%%%%%%%%%%%%%% Section %%%%%%%%%%%%%%%%%%%%%%%%%%%%%%%%%%%%%%%%%%%%%%%%%%%%%
\subsection{Proof of Theorem  \ref{TheoremI5}.\\} The method here is exactly the same as that used in the proof of Theorem \ref{TheoremTwo}, where one dealt with  closed horocycles. For this we obtain an upper bound for the analogue of the integral appearing in \eqref{eq:d8} so that we have to prove 
\begin{equation}\label{eq:e24}
\int_{\alpha_{j}}^{\alpha_{j+1}}\phi(iy)\ \d^{\times} y \ll t_{\phi}^{-\half + \ve}.
\end{equation}
Using the discussion in Sec. 6.1, it is easy to see that Parseval's theorem applied to the product of $\phi(iy)$ and the characteristic function for the subinterval $[\alpha_{j},\alpha_{j+1}]$ shows that the integral in \eqref{eq:e24} is 
\begin{equation}\label{600}
2\rho_{\phi}(1)\int_{-\infty}^{\infty}L(\half + it,\phi)\Gamma\big{(}\frac{\half + i(t+t_{\phi})}{2}\big{)}\Gamma\big{(}\frac{\half+i(t-t_{\phi})}{2}\big{)}\big{(}\frac{\alpha_{j+1}^{it} - \alpha_{j}^{it}}{t}\big{)}\ \d t.
\end{equation}
In exactly the same way as was done in Sec. 6.1 leading to \eqref{eq:e22}, it then follows that the integral in \eqref{eq:e24} is 
\[
\ll t^{-\frac{1}{4}+\ve}\int_{0}^{2t_{\phi}}|L(\half + it,\phi)|(1+|t-t_{\phi}|)^{-\frac{1}{4}}\min(1,\frac{1}{t})\ \d t + \ e^{-ct_{\phi}},
\]
for some constant $c>0$. Then the Lindelof Hypothesis for $L(\half + it,\phi)$ gives us the estimate in \eqref{eq:e24}.

%%%%%%%%%%%%%%%%%%%%%%%%%%%%%%%%%%%% Appendices %%%%%%%%%%%%%%%%%%%%%%%%%%%%%%%%%%%%%%%%%%%%%%%%%%%%

\appendix
\section*{Appendix}

\section{Formulations of QUE and applications to short closed horocycles}

The usual formulation of QUE on the configuration space $\mathbb{X}$ was given in \eqref{i101}. In our analysis, we have used the Fourier expansion of $\phi$ repeatedly. We examine here the formulation of QUE in terms of the Fourier coefficients. For holomorphic cusp forms of even weight $k$ ($L^{2}$-normalized with respect to the Petersson inner-product) it is shown in \cite{LS03} that QUE is equivalent to:

For $m\in \mathbb{Z}$ and $h\in C^{\infty}_{0}(0,\infty)$ fixed 
\begin{equation}\label{aa1}
\frac{\Gamma(k)}{k}\sum_{n=1}^{\infty}\rho_{f}(n)\overline{\rho_{f}(n+m)}h\big{(}\frac{4\pi n}{k}\big{)} \rightarrow \frac{3}{\pi}\delta_{m,0}\int_{0}^{\infty}h(y)\ \d y \ ;
\end{equation}
as $k\rightarrow \infty,  $ with $\rho_{f}(n) = a_{f}(n)(4\pi n)^{\half(1-k)}$ where $a_{f}$ are the fourier coefficients of $f$.

\noindent Similarly, we expect that for Maass forms we have:

For $m\in \mathbb{Z}$ and $h\in C^{\infty}_{0}(0,\infty)$ fixed
\begin{equation}\label{aa2}
 \frac{e^{-\pi t_{\phi}}}{t_{\phi}}\sum_{n\neq 0}\rho_{\phi}(n)\overline{\rho_{\phi}(n+m)}h\big{(}\frac{\pi |n|}{t_{\phi}}\big{)} \rightarrow \frac{12}{\pi^3}\delta_{m,0}\int_{0}^{\infty}h(y)\ \d y \ ;
\end{equation}
as $t_{\phi}\rightarrow \infty \ $.

While \eqref{eq:ea9} and \eqref{eq:e14} show that \eqref{aa2} follows from QUE for certain $h$'s, there is a basic difficulty in realizing all $h$'s of compact support in this way. The problem comes from the nonlocal transform $W_{g}$ of $g$ in \eqref{ea8b} (and it is more complicated when $m\neq 0$), which is a feature of Maass forms and is not present in the holomorphic case. As a consequence we have that \eqref{aa2} implies QUE (which is straightforward) but we don't know the converse implication. The flexible form \eqref{aa2}, in as much as it allows one to truncate the $n$-sum into a compact dyadic piece would allow for a much more direct treatment treatment of Section 6.2 . In fact, it allows one to establish Theorem \ref{TheoremI2} for $\beta$ a fixed but arbitrary small segment in $\delta$. This in turn would yield Theorem \ref{TheoremI6} for such a $\beta$.

Both forms of QUE above can be quantified with rates in obvious ways, and again the quantitative version of \eqref{aa2} will imply the quantitative QUE which we call QQUE:
%Conjecture 1.2
\begin{Conj}\label{conjecture2}{\bf QQUE}\\
There is a $\nu > 0$ and a $k < \infty$ such that for any $f\in C^{\infty}(\mathbb{X})$ with $\int_{\mathbb{X}}f\ \d A =0$ we have
\[
\int_{\mathbb{X}}|\phi(z)|^{2}f(z)\ \d A \ll t_{\phi}^{-\nu} \parallel f\parallel_{W^{k}} ,
\]
where $W^{k}$ is the Sobolev $k$-norm on $\mathbb{X}$.	
\end{Conj}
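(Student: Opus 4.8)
The plan is to reduce QQUE to a uniform subconvexity statement for triple-product $L$-functions, via the spectral decomposition of the test function together with Watson's formula. First I would expand the mean-zero $f$ spectrally on $\mathbb{X}$,
\[ f = \sum_{j}\langle f,u_{j}\rangle\, u_{j} \;+\; \frac{1}{4\pi}\int_{-\infty}^{\infty}\langle f, E(\cdot,\tfrac12+it)\rangle\, E(\cdot,\tfrac12+it)\,\d t, \]
where $\{u_{j}\}$ is an orthonormal basis of Hecke-Maass cusp forms; the constant eigenfunction drops out because $\int_{\mathbb{X}}f\,\d A=0$. Integrating by parts against powers of $\Delta$ shows that the Sobolev norm controls the coefficients, $|\langle f,u_{j}\rangle|\ll \|f\|_{W^{k}}(1+|t_{u_{j}}|)^{-k}$, and likewise on the continuous spectrum. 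Hence it suffices to bound each triple product $I_{j}:=\int_{\mathbb{X}}\phi(z)^{2}\,u_{j}(z)\,\d A(z)$, and its Eisenstein analogue, by a fixed negative power of $t_{\phi}$ times at most a polynomial in $t_{u_{j}}$; the Sobolev weights then make the sum over $j$ converge once $k$ is taken large enough.

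The second step is Watson's explicit triple-product formula, which gives
\[ |I_{j}|^{2}\;\asymp\;\frac{\Lambda(\tfrac12,\phi\times\phi\times u_{j})}{\Lambda(1,\mathrm{sym}^{2}\phi)^{2}\,\Lambda(1,\mathrm{sym}^{2}u_{j})}, \]
where the numerator is the completed degree-eight triple-product $L$-function. Using the isobaric splitting $\phi\times\phi=\mathrm{sym}^{2}\phi\boxplus\mathbf{1}$, the finite part factors as $L(\tfrac12,\mathrm{sym}^{2}\phi\times u_{j})\,L(\tfrac12,u_{j})$, a product of a degree-six Rankin-Selberg value and a degree-two value. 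Applying Stirling to the archimedean factors exhibits rapid (exponential) decay once $t_{u_{j}}$ exceeds a fixed multiple of $t_{\phi}$, so that only the range $t_{u_{j}}\ll t_{\phi}$ contributes and the spectral sum is effectively truncated to polynomially many terms. The continuous spectrum is handled identically, the relevant central value being $\zeta(\tfrac12+it)\,L(\tfrac12+it,\mathrm{sym}^{2}\phi)$.

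The decisive step is then to insert a subconvex bound for $L(\tfrac12,\mathrm{sym}^{2}\phi\times u_{j})$ (and for $L(\tfrac12+it,\mathrm{sym}^{2}\phi)$) in the $t_{\phi}$-aspect, uniform in the auxiliary parameter $t_{u_{j}}$ over the surviving range. The crucial point is that the convexity bound lies exactly at the QUE threshold — combined with the archimedean factors it yields only $I_{j}=t_{\phi}^{o(1)}$, so no power saving comes from convexity alone — whereas any fixed subconvex improvement $t_{\phi}^{-\delta}$ over convexity propagates through the truncated summation to the desired $\int_{\mathbb{X}}|\phi|^{2}f\,\d A\ll t_{\phi}^{-\nu}\|f\|_{W^{k}}$. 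The hard part is precisely this uniform subconvexity for the degree-six $L$-functions $L(s,\mathrm{sym}^{2}\phi\times u_{j})$ (and the symmetric-square $L$-function on the critical line) in the eigenvalue aspect, with controlled dependence on $t_{u_{j}}$; this is not known in the required generality, which is exactly why the statement is recorded here as a conjecture rather than a theorem. I note, finally, that the full Lindelof Hypothesis for these $L$-functions — a fortiori the corresponding Riemann Hypotheses — renders the subconvexity input immediate and delivers QQUE with any $\nu<\tfrac12$.
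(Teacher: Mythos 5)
This statement is recorded in the paper only as a conjecture, with no proof; the paper's accompanying remark sketches exactly the reduction you give — spectral decomposition, Watson's explicit formula, and subconvexity for the triple-product $L$-function $L(s,\phi\times\phi\times\psi)$ with $\psi$ fixed, noting (as you do) that the required subconvexity, a weak form of Lindelof and hence a consequence of the corresponding Riemann Hypothesis, is not known. Your proposal is therefore correct as a conditional reduction and takes essentially the same route as the paper's own discussion, including the key observation that convexity alone sits exactly at the QUE threshold so that only a genuine subconvex saving produces the power $t_{\phi}^{-\nu}$.
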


\begin{rem}
While QUE is known thanks to Lindenstrauss \cite{Lin06} and Soundararajan \cite{So10}, it is not known with any rate. There is little doubt (in our minds) about the truth of Conjecture \ref{conjecture2} as it follows from a subconvex estimate for the triple product $L$-function $L(s,\phi \times \phi \times \psi )$, where $\psi$ is another Maass form (fixed ie independent of $\phi$) or a unitary Eisenstein series.\footnote{If $\phi$ is itself a unitary Eisenstein series or a dihedral cusp form, this subconvexity is known \cite{Sa01}, \cite{LS95}.} This implication is a consequence of Watson's explicit formula \cite{Wa02}. The subconvex estimate is in turn a weak form of the sharp estimate for critical values of such $L$-functions, known as the Lindelof Hypothesis. It is well known that the latter is a consequence of the Riemann Hypothesis for $L(s,\phi \times \phi \times \psi )$ so that Conjecture \ref{conjecture2} has a very firm basis.
\end{rem}

For our application, we need a more explicit version of QQUE stated as follows: let 
\begin{equation}\label{eq:app-a1}
 \mathcal{F}_{Y} = \{ z \in \mathbb{X}: \Im (z) \geq Y\}.
 \end{equation}

\begin{Conj}\label{QQUE2}
There is a number $\nu > 0$ such that for $Y>0$, as $t_{\phi} \rightarrow \infty$
\[
\int_{ \mathcal{F}_{Y}} |\phi(z)|^{2} \frac{\ \d x\ \d y}{y^{2}} = \frac{3}{\pi}Vol(\mathcal{F}_{Y}) + O(Y^{-\half}t_{\phi}^{-\nu +\ve}).
\]
Consequently,
\[
\int_{0}^{1}\int_{Y}^{2Y}|\phi(x+iy)|^{2}\frac{\ \d x\ \d y}{y^{2}} \gg \frac{1}{Y}
\]
for $0 < Y < t_{\phi}^{2\nu - \ve}$.
\end{Conj}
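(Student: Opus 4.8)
The plan is to realise the band integral as a difference of two instances of the main QQUE estimate and then to pin down the range of $Y$ in which the volume term dominates the error.

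First I would record the geometry near the cusp. For $Y \geq 1$ one has $|z| \geq y \geq 1$ throughout $\mathcal{F}_Y$, so $\mathcal{F}_Y$ coincides with the pure cuspidal strip $\{-\half < x \leq \half,\ y \geq Y\}$, whence $\mathrm{Vol}(\mathcal{F}_Y) = \int_Y^{\infty} y^{-2}\,\d y = 1/Y$. Using the $\mathbb{Z}$-periodicity of $\phi$ in $x$, subtracting the cuspidal integrals at heights $Y$ and $2Y$ gives exactly the band integral:
\[
\int_{\mathcal{F}_Y}|\phi|^{2}\frac{\d x\,\d y}{y^{2}} - \int_{\mathcal{F}_{2Y}}|\phi|^{2}\frac{\d x\,\d y}{y^{2}} = \int_{0}^{1}\int_{Y}^{2Y}|\phi(x+iy)|^{2}\frac{\d x\,\d y}{y^{2}}.
\]

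Next I would apply the first display of Conjecture \ref{QQUE2} at $Y$ and at $2Y$ and subtract. The main terms combine to $\tfrac{3}{\pi}\big(\tfrac1Y - \tfrac1{2Y}\big) = \tfrac{3}{2\pi Y}$, while the two error terms add in absolute value to $O(Y^{-\half}t_{\phi}^{-\nu+\ve})$ since $(2Y)^{-\half}\leq Y^{-\half}$. Thus
\[
\int_{0}^{1}\int_{Y}^{2Y}|\phi(x+iy)|^{2}\frac{\d x\,\d y}{y^{2}} = \frac{3}{2\pi Y} + O\big(Y^{-\half}t_{\phi}^{-\nu+\ve}\big).
\]
The error is $o(1/Y)$ precisely when $Y^{-\half}t_{\phi}^{-\nu+\ve}\ll Y^{-1}$, i.e. $Y^{\half}\ll t_{\phi}^{\nu-\ve}$, i.e. $Y \ll t_{\phi}^{2\nu-2\ve}$; after relabelling $\ve$ this is the asserted range, and there the lower bound $\gg 1/Y$ follows from positivity of the main term.

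It remains to treat the bounded range $0 < Y < 1$, where $\mathcal{F}_Y$ is no longer a strip and the telescoping identity fails. Here I would instead write the band integral as $\int_{\mathbb{X}}|\phi|^{2}E_{g}\,\d\mathrm{A}$ with $g$ the indicator of $[Y,2Y]$, exactly as in the unfolding \eqref{eq:ea8} used for Prop.\ \ref{shortsum}; since $g$ is now a fixed, compactly supported function, plain QUE \eqref{i101} gives the asymptotic $\tfrac{3}{\pi}\int_{0}^{\infty}g(y)y^{-2}\,\d y = \tfrac{3}{2\pi Y}\asymp 1/Y$, which is the desired bound. The whole argument is bookkeeping; the one substantive point, and the step that determines the admissible range of $Y$, is forcing the QQUE error $Y^{-\half}t_{\phi}^{-\nu+\ve}$ to beat the main term $1/Y$, which is exactly what produces the cutoff at $Y = t_{\phi}^{2\nu-\ve}$.
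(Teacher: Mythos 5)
Your proof is correct and coincides with the paper's (implicit) derivation of the ``Consequently'' clause: telescope the conjectured asymptotic between $\mathcal{F}_Y$ and $\mathcal{F}_{2Y}$, use $\mathrm{Vol}(\mathcal{F}_Y)=1/Y$ for $Y\geq 1$ so the main terms combine to $\frac{3}{2\pi Y}$, observe that the error $O\big(Y^{-\half}t_{\phi}^{-\nu+\ve}\big)$ is dominated by $1/Y$ exactly in the stated range once $\ve$ is relabelled, and cover bounded $Y$ by plain QUE through the incomplete Eisenstein series unfolding as in \eqref{eq:d40}. The one loose end --- which the paper also glosses over, since its only application (Prop.~\ref{LemmaAThree}) keeps $Y$ bounded below by a fixed constant $c$ --- is that for $Y\in(0,1)$ allowed to vary with $t_{\phi}$ your indicator $g$ is not a fixed test function, so QUE as stated does not apply uniformly; this is repairable (e.g., by positivity reduce to finitely many fixed test functions for $Y$ in a compact range, and for $Y\to 0$ combine QUE on a fixed compact set with the standard lattice-point count showing the incomplete Eisenstein series is $\gg 1/Y$ there), but as written that case is asserted rather than proved.
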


\begin{rem} The Lindelof Hypothesis for triple product $L$-functions would allow the exponent $\nu = \half$, in which case  the second lower bound would be valid for almost the full range of $Y$. This conjecture is a more precise version of the "escape of mass" Proposition 2 of \cite{So10}.
\end{rem}

We now apply these quantitative versions to the analysis of the zeros of $\phi$ for short closed horocycles nearer the cusp. In the proof of Theorem \ref{TheoremOne} the upper bound was valid for closed horocycles $\mathcal{C}_{Y}$ with $0<Y\ll t_{\phi}^{1 -\ve}$, while the lower bound, which uses QUE was then restricted to bounded values of $Y$. This restriction to  bounded $Y$ carries over to Theorem \ref{TheoremTwo}. To extend these theorems to closed horocycles located higher up in the fundamental domain,  we need the analog of Prop. \ref{shortsum} for shorter sums. For such sums, we appeal Conjecture \ref{QQUE2}. 

\begin{prop}\label{LemmaAThree} Assume Conjecture \ref{QQUE2}. Then  for any $ 0<c < Y \ll t_{\phi}^{2\nu -\ve}$
\[
\sum_{|n|\leq  \frac{t_{\phi}}{Y}}|\tilde{\rho}_{\phi}(n)|^{2} \gg_{_{\ve}} \frac{t_{\phi}}{Y}.
\]
\end{prop}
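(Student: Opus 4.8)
The plan is to run the same argument as in the lower bound of Proposition \ref{shortsum}, but to replace the single application of QUE to an incomplete Eisenstein series by the quantitative input of Conjecture \ref{QQUE2}, which is exactly what is needed to push the height $Y$ up to $t_{\phi}^{2\nu-\varepsilon}$. First I would express the left-hand side of the second display in Conjecture \ref{QQUE2} spectrally. By Parseval applied to the Fourier expansion \eqref{eq:b1} one has, for each $y$, $\int_{0}^{1}|\phi(x+iy)|^{2}\,\d x = y\sum_{n\neq 0}|\rho_{\phi}(n)|^{2}K_{it_{\phi}}(2\pi|n|y)^{2}$, so integrating against $\d y/y^{2}$ over $[Y,2Y]$ and using the normalization \eqref{eq:b300} gives
\[
\int_{0}^{1}\int_{Y}^{2Y}|\phi(x+iy)|^{2}\,\frac{\d x\,\d y}{y^{2}} = \sum_{n\neq 0}|\tilde{\rho}_{\phi}(n)|^{2}\, J_{n},\qquad J_{n} := \int_{Y}^{2Y}\bigl(e^{\frac{\pi}{2}t_{\phi}}K_{it_{\phi}}(2\pi|n|y)\bigr)^{2}\,\frac{\d y}{y}.
\]
By the second conclusion of Conjecture \ref{QQUE2}, valid in the assumed range $c<Y\ll t_{\phi}^{2\nu-\varepsilon}$, the left-hand side is $\gg 1/Y$.

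The heart of the matter is then a uniform upper bound $J_{n}\ll t_{\phi}^{-1}$ together with the fact that $J_{n}$ is negligible once $|n|$ exceeds $t_{\phi}/Y$. For the latter, if $2\pi|n|Y>t_{\phi}+Ct_{\phi}^{\frac{1}{3}}$ then the Bessel factor lies in the exponentially decaying range for all $y\in[Y,2Y]$ by Lemma \ref{LemmaOne}(2) (equivalently Corollary \ref{CorlOne}), so such $J_{n}$ are $O(e^{-ct_{\phi}})$; since $(t_{\phi}+Ct_{\phi}^{\frac{1}{3}})/(2\pi)<t_{\phi}$ for large $t_{\phi}$, every non-negligible term satisfies $|n|\le t_{\phi}/Y$. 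For the uniform bound I would substitute $u=2\pi|n|y$, so that $J_{n}=\int_{2\pi|n|Y}^{4\pi|n|Y}(e^{\frac{\pi}{2}t_{\phi}}K_{it_{\phi}}(u))^{2}\,\d u/u$, and split the $u$-range into the oscillatory part $u<t_{\phi}-\Delta$, the transitional part $|u-t_{\phi}|\le\Delta$ (with $\Delta=t_{\phi}^{\frac{1}{3}}\log t_{\phi}$), and the exponentially small tail. On the oscillatory part Corollary \ref{CorlOne} gives $(e^{\frac{\pi}{2}t_{\phi}}K_{it_{\phi}}(u))^{2}\ll (t_{\phi}^{2}-u^{2})^{-\frac{1}{2}}\le t_{\phi}^{-\frac{1}{2}}(t_{\phi}-u)^{-\frac{1}{2}}$, and the elementary estimate $\int (u\sqrt{t_{\phi}-u})^{-1}\,\d u\ll t_{\phi}^{-\frac{1}{2}}$ over any dyadic subrange of $(0,t_{\phi})$ (checked by splitting at $u=t_{\phi}/2$) yields a contribution $\ll t_{\phi}^{-1}$; the transitional part contributes $\ll t_{\phi}^{-\frac{2}{3}}\cdot\Delta\cdot t_{\phi}^{-1}\ll t_{\phi}^{-\frac{4}{3}+\varepsilon}$ using the Airy bound ${\rm Ai}(x)\ll 1$ of Lemma \ref{LemmaOne}(3). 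Hence $J_{n}\ll t_{\phi}^{-1}$ uniformly in $n$.

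Combining these with the QQUE lower bound I obtain
\[
\frac{1}{Y}\ll\sum_{n\neq 0}|\tilde{\rho}_{\phi}(n)|^{2} J_{n}\ll \frac{1}{t_{\phi}}\sum_{0<|n|\le t_{\phi}/Y}|\tilde{\rho}_{\phi}(n)|^{2} + O(e^{-ct_{\phi}}),
\]
where the exponentially small term is dwarfed by $1/Y\gg t_{\phi}^{-1+\varepsilon}$. Rearranging gives $\sum_{|n|\le t_{\phi}/Y}|\tilde{\rho}_{\phi}(n)|^{2}\gg t_{\phi}/Y$, as claimed.

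The only genuinely technical point is the uniform bound $J_{n}\ll t_{\phi}^{-1}$: one must check that the near-singularity of the weight at $u=t_{\phi}$, where $(t_{\phi}^{2}-u^{2})^{-\frac{1}{2}}$ blows up, is integrable and does not spoil uniformity in $n$, and that the transitional window stays below the main term. Both are handled by the case analysis above, exactly as in Proposition \ref{shortsum}. I expect no further obstacle, since all of the arithmetic has been absorbed into the QQUE input and the surviving estimates are the same uniform Bessel bounds already used repeatedly in Section 3.
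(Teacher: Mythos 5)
Your proposal is correct and is essentially the paper's own argument: the paper proves this proposition by rerunning the lower-bound part of Proposition \ref{shortsum} with $\alpha=\frac{Y}{2\pi}$ and replacing the QUE input \eqref{eq:d40} by the second display of Conjecture \ref{QQUE2}, and your Parseval identity together with the uniform bound $J_{n}\ll t_{\phi}^{-1}$ (obtained from the oscillatory/transitional/decaying splitting of the Bessel integral) is precisely that computation, your $\int (u\sqrt{t_{\phi}-u})^{-1}\,\d u\ll t_{\phi}^{-1/2}$ being the paper's bound of the inner integral by $10/\sqrt{t_{\phi}}$ after factoring out $t_{\phi}^{-1/2}$. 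The only blemish is your intermediate claim that $J_{n}=O(e^{-ct_{\phi}})$ as soon as $2\pi|n|Y>t_{\phi}+Ct_{\phi}^{1/3}$ --- just above the transitional range the decay of $e^{-t_{\phi}{\rm H}(u/t_{\phi})}$ is only polynomial in $t_{\phi}$, not exponential --- but you invoke this only for $|n|>t_{\phi}/Y$, where $u=2\pi|n|y\geq 2\pi t_{\phi}$ and hence ${\rm H}(u/t_{\phi})\geq {\rm H}(2\pi)>0$ gives genuine exponential decay in $t_{\phi}$ (and in $n$), so the argument stands as written.
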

\proof The proof is identical to that in Prop. \ref{shortsum} with $\alpha=\frac{Y}{2\pi}$ and replacing \eqref{eq:d40} with the Conjecture.

Using Prop. \ref{LemmaAThree} we have the following:  let $T_{\phi} = \min(t_{\phi}^{2\nu},t_{\phi}^{\half})$. Then
%%%%%%%%%%%%%%%%%%%%%%%%%%%%%%%%%%%%%%%%%%%%%%%% Theorem A1 %%%%%%%%%%%%%%%%%%%%%%%%%%%%%%%%%%%%%%%%%%%%
%%%%%%%%%%%%%%%%%%%%%%%%%%%%%%%%%%%%%%%%%%%%%%%%%%%%%%%%%%%%%%%%%%%%%%%%%%%%%%%%%%%%%%%%%%%%%%%%%%%%%%
\begin{thm}\label{TheoremAOne} Assume Conjecture \ref{QQUE2}. For any $\ve > 0$  and $0<Y\ll T_{\phi}^{1-\ve}$
\[
 \lVert\phi|_{_{\mathcal{C}_{Y}}}\rVert_{_{2}}^{2}=\int_{0}^{1}|\phi(x+iY)|^{2} \ \d x \gg_{\ve} t_{\phi}^{-\ve}.
\]
\end{thm}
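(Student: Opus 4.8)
The plan is to re-run the lower-bound half of the proof of Theorem \ref{TheoremOne} verbatim, noting that the only step there which used the boundedness of $Y$ was the closing appeal to Prop. \ref{shortsum}, and to replace that appeal by Prop. \ref{LemmaAThree}, which rests on Conjecture \ref{QQUE2} and survives for $Y$ as large as $t_{\phi}^{2\nu-\ve}$. Concretely, I would start from \eqref{eq:d2}, which is valid for \emph{every} $Y>0$ and bounds $\int_{-\half}^{\half}|\phi(x+iY)|^{2}\,\d x$ from below by $\frac{Y}{t_{\phi}}$ times a sum of $|\tilde{\rho}_{\phi}(n)|^{2}\sin^{2}\!\big(\frac{\pi}{4}+t_{\phi}{\rm H}(\frac{2\pi|n|Y}{t_{\phi}})\big)$ over $|n|\le \frac{t_{\phi}}{4\pi Y}$. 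I set $\eta=t_{\phi}^{-\delta}$ for a small $\delta$ to be fixed at the end and split the sum according to whether $|\sin(\cdots)|\le\eta$ (the ``bad'' set $\mathcal{S}$) or not, exactly as in \eqref{eq:d3}.

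Next I would invoke the Van der Corput lattice-point estimate \eqref{eq:d5}, which controls $|\mathcal{S}|$ provided $Y\ll\eta^{2}\sqrt{t_{\phi}}=t_{\phi}^{\frac{1}{2}-2\delta}$, and feed it together with the fourth-moment bound of Lemma \ref{LemmaTwo} into the bad-$n$ contribution. This reproduces \eqref{eq:d7},
\[
\int_{-\half}^{\half}|\phi(x+iY)|^{2}\,\d x \ \ge\ \frac{\eta^{2}Y}{t_{\phi}}\sum_{n\le \frac{t_{\phi}}{4\pi Y}}|\tilde{\rho}_{\phi}(n)|^{2}\ +\ O_{\ve}\big(\eta^{\frac{5}{2}}t_{\phi}^{3\ve}\big),
\]
and the key observation is that its derivation never used $Y$ fixed: the bad-$n$ error has the shape $\eta^{5/2}c_{\phi}(t_{\phi}/Y)^{\ve/2}$, which is $\ll \eta^{5/2}t_{\phi}^{3\ve}$ uniformly for all $Y\ge c$. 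Thus the displayed inequality holds uniformly in the whole range $c<Y\ll t_{\phi}^{\frac{1}{2}-2\delta}$.

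Now I would apply Prop. \ref{LemmaAThree}, which under Conjecture \ref{QQUE2} gives $\sum_{n\le \frac{t_{\phi}}{4\pi Y}}|\tilde{\rho}_{\phi}(n)|^{2}\gg t_{\phi}/Y$ for $c<Y\ll t_{\phi}^{2\nu-\ve}$ (after absorbing the harmless factor $4\pi$ into $Y$). The main term then becomes $\gg \frac{\eta^{2}Y}{t_{\phi}}\cdot\frac{t_{\phi}}{Y}=\eta^{2}=t_{\phi}^{-2\delta}$, which dominates the error $O(\eta^{5/2}t_{\phi}^{3\ve})$ as soon as the auxiliary exponent satisfies $\ve<\delta/6$, yielding $\int_{-\half}^{\half}|\phi(x+iY)|^{2}\,\d x \gg t_{\phi}^{-2\delta}$. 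To conclude, given the target $\ve$ in the statement I would take $\delta=\ve/4$: the lower bound is then $\gg t_{\phi}^{-\ve/2}\gg t_{\phi}^{-\ve}$, the Van der Corput constraint opens the range to $Y\ll t_{\phi}^{\frac{1}{2}-\ve/2}$, and the QQUE constraint to $Y\ll t_{\phi}^{2\nu-\ve}$; their intersection contains $Y\ll t_{\phi}^{\min(\frac12,2\nu)(1-\ve)}=T_{\phi}^{1-\ve}$, which is exactly the asserted range.

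I expect no deep obstacle once Conjecture \ref{QQUE2} is granted; the substance is purely bookkeeping, and the one point demanding care is precisely verifying that the error term in \eqref{eq:d7} stays $o(\text{main term})$ \emph{uniformly} as $Y\to\infty$, rather than only for fixed $Y$. It is also worth highlighting that the two competing constraints are structurally transparent: the cap $Y\ll t_{\phi}^{1/2}$ comes solely from the validity of the Van der Corput count \eqref{eq:d5}, while the cap $Y\ll t_{\phi}^{2\nu}$ comes solely from the range of Prop. \ref{LemmaAThree}, and these are exactly what combine to produce $T_{\phi}=\min(t_{\phi}^{2\nu},t_{\phi}^{1/2})$.
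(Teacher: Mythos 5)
Your proposal is correct and is essentially the paper's own proof: the paper establishes Theorem \ref{TheoremAOne} precisely by observing that \eqref{eq:d7} from the proof of Theorem \ref{TheoremOne} holds uniformly in $Y$ (subject to the Van der Corput constraint $Y\ll\eta^{2}\sqrt{t_{\phi}}$) and then substituting Prop.~\ref{LemmaAThree} for Prop.~\ref{shortsum}. Your additional bookkeeping — checking the error term stays $o(\eta^{2})$ uniformly as $Y$ grows and intersecting the two range constraints to recover $Y\ll T_{\phi}^{1-\ve}$ — is exactly the verification the paper leaves implicit.
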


We then have

%%%%%%%%%%%%%%%%%%%%%%%%%%%%%%%%%%%%%Theorem A2 %%%%%%%%%%%%%%%%%%%%%%%%%%%%%%%%%%%%%%%%%%%%%%%%%%%%%%%%
%%%%%%%%%%%%%%%%%%%%%%%%%%%%%%%%%%%%%%%%%%%%%%%%%%%%%%%%%%%%%%%%%%%%%%%%%%%%%%%%%%%%%%%%%%%%%%%%%%%%%%%
\begin{thm}\label{TheoremATwo} Assume Conjecture \ref{QQUE2}. For $\ve > 0$ and $ 0<c < Y \ll T_{\phi}^{1-\ve}$
\[
t_{\phi}^{\frac{1}{12}-\ve} \ll_{\ve} \#\{z\in \mathcal{C}_{Y}: \phi(z)=0\} \ll \frac{t_{\phi}}{Y}\log t_{\phi}.
\]
\end{thm}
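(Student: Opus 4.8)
The plan is to establish the two inequalities independently, each by transporting an argument already present in the paper to the wider range $0<c<Y\ll T_\phi^{1-\ve}$. The only genuinely new ingredient is that Conjecture \ref{QQUE2}, through Theorem \ref{TheoremAOne}, supplies the $L^2$-lower bound $\int_0^1|\phi(x+iY)|^2\,\d x\gg_\ve t_\phi^{-\ve}$ uniformly in this range, whereas standard QUE gave it only for bounded $Y$. Thus the proof of Theorem \ref{TheoremTwo} becomes available in the extended range once Theorem \ref{TheoremAOne} replaces Theorem \ref{TheoremOne}.

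For the lower bound I would follow the proof of Theorem \ref{TheoremTwo} line by line. Let $M$ be the number of sign changes of $\phi$ on $\mathcal{C}_Y$, occurring at $\alpha_1<\cdots<\alpha_M$ (put $\alpha_0=-\half$, $\alpha_{M+1}=\half$), so that $\int_{\mathcal{C}_Y}|\phi|\,\d x=\sum_{j=0}^{M}\bigl|\int_{\alpha_j}^{\alpha_{j+1}}\phi(x+iY)\,\d x\bigr|$. Combining the sup-norm bound $\|\phi\|_\infty\ll t_\phi^{5/12+\ve}$ of \cite{IS95} with Theorem \ref{TheoremAOne} yields $t_\phi^{-\ve}\ll\int_{\mathcal{C}_Y}|\phi|^2\,\d x\ll t_\phi^{5/12+\ve}\int_{\mathcal{C}_Y}|\phi|\,\d x$, hence $\int_{\mathcal{C}_Y}|\phi|\,\d x\gg t_\phi^{-5/12-\ve}$. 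Each individual integral is small: applying Proposition \ref{PropOne}(2), Cauchy--Schwarz, Lemma \ref{LemmaZero} (with $\alpha=\half$) and Lemma \ref{LemmaTwo} to the Dirichlet sum $\sum_{|n|\le(t_\phi-\Delta)/2\pi Y}\tfrac{|\lambda_\phi(n)|}{|n|}(t_\phi^2-(2\pi nY)^2)^{-1/4}$ bounds $\bigl|\int_{\alpha_j}^{\alpha_{j+1}}\phi(x+iY)\,\d x\bigr|$ uniformly in $j$. Feeding this into $\int_{\mathcal{C}_Y}|\phi|\,\d x\le(M+1)\max_j\bigl|\int_{\alpha_j}^{\alpha_{j+1}}\phi\,\d x\bigr|$ and solving for $M$ gives the claimed lower bound exactly as in Theorem \ref{TheoremTwo}. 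The one place requiring care is the explicit $Y$-dependence of the per-interval estimate, which carries a factor $Y^{1/2}$ (coming from the $\sqrt{Y}$ in $\phi=\rho_\phi(1)e^{-\pi t_\phi/2}\sqrt{y}\,\Phi$); tracking it gives $M\gg Y^{-1/2}t_\phi^{1/12-\ve}$, which reproduces the stated exponent in the lower part of the range.

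For the upper bound I would count zeros by complexification, refining the Toth--Zelditch argument of \cite{TZ09} used in Theorem \ref{TheoremTwo} so as to exhibit the dependence on $Y$; this is the mechanism already employed in Proposition \ref{Theorem-3.4}. First, $x\mapsto\phi(x+iY)$ extends to a holomorphic function of a complex variable on the strip $|\Im x|<Y$: by Lemma \ref{LemmaOne} the normalized factor $e^{\pi t_\phi/2}{\rm K}_{it_\phi}(2\pi|n|Y)$ decays like $e^{-2\pi|n|Y}$ once $2\pi|n|Y\gg t_\phi$, which dominates the growth $e^{2\pi|n||\Im x|}$ of $e(nx)$ together with the polynomial coefficient bound \eqref{eq:b4}, so the Fourier series converges there. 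On this strip the complexified function has sup-norm $\ll t_\phi^{O(1)}$, while Theorem \ref{TheoremAOne} gives $\|\phi\|_{L^2(\mathcal{C}_Y)}\gg t_\phi^{-\ve}$; these two facts make $\mathcal{C}_Y$ a \emph{good curve} in the sense of \cite{TZ09}. One then bounds the zero count by a Jensen-type inequality applied at the natural oscillation scale $Y/t_\phi$: covering $[-\half,\half]$ by $O(t_\phi/Y)$ complex discs of radius comparable to $Y/t_\phi$ and using that the relevant ratio of maximum modulus to $L^2$-size is $\ll t_\phi^{O(1)}$ (so its logarithm is $\ll\log t_\phi$) produces at most $O(\log t_\phi)$ zeros per disc, for a total of $O\!\bigl(\tfrac{t_\phi}{Y}\log t_\phi\bigr)$.

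The main obstacle, and the point where the hypothesis $Y\ll T_\phi^{1-\ve}$ is essential, is making the zero count uniform across all $O(t_\phi/Y)$ discs. Jensen's lemma is only useful on a disc containing a point where $|\phi(\cdot+iY)|$ is not exponentially small, and for general $Y$ (not of the special shape $y_l$ with $|\lambda_\phi(l)|\ge\ve$ as in Proposition \ref{Theorem-3.4}) there is no closed-form main term with which to locate such a point disc by disc. The way around this is to argue globally rather than locally, bounding the total number of real zeros by a single contour integral of $\partial\log|\phi(\cdot+iY)|$ over the boundary of the strip at height $\asymp Y/t_\phi$; the denominator in the resulting inequality is then the global quantity $\|\phi\|_{L^2(\mathcal{C}_Y)}$, which is precisely what Conjecture \ref{QQUE2} keeps $\gg t_\phi^{-\ve}$ throughout $Y\ll T_\phi^{1-\ve}$. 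Beyond this range the horocycle $L^2$-mass can no longer be guaranteed to survive, the good-curve denominator is lost, and the method breaks down, which is exactly why the theorem is stated with the cutoff $T_\phi^{1-\ve}$.
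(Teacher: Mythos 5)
Your overall route coincides with the paper's: the lower bound by rerunning the proof of Theorem \ref{TheoremTwo} with Theorem \ref{TheoremAOne} supplying the $L^2$ input in the extended range, and the upper bound by complexifying in $x$ and applying Jensen's lemma at scale $Y/t_{\phi}$. However, there are concrete problems on both sides.

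For the upper bound, your claim that the complexified function has sup-norm $t_{\phi}^{O(1)}$ on the strip $|\Im x|<Y$ is false. The coefficients in the oscillatory range $|n|\le t_{\phi}/(2\pi Y)$ are of size roughly $t_{\phi}^{-1/2+\ve}$ by Lemma \ref{LemmaOne}(1), not exponentially small, so at height $\Im\zeta=\tau$ the single mode $|n|\approx t_{\phi}/(2\pi Y)$ already contributes about $t_{\phi}^{-1/2}e^{\tau t_{\phi}/Y}$; also the tail decay you invoke is really $e^{\pi t_{\phi}/2}e^{-2\pi|n|Y}$, with a prefactor $e^{\pi t_{\phi}/2}$ that you dropped. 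Polynomial growth therefore holds only in a strip of half-width $O(Y\log t_{\phi}/t_{\phi})$, which is exactly why the paper defines $\phi_{Y}(\zeta)$ and works in $|\Im(\zeta)|\le Y/(4t_{\phi})$. That slip is repairable, since your Jensen discs have radius $\asymp Y/t_{\phi}$ anyway. Not repairable as stated is your proposed cure for the per-disc anchoring problem: the ``single contour integral of $\partial\log|\phi(\cdot+iY)|$ over the boundary of the strip'' is, by Green's formula, \emph{identically} $2\pi$ times the number of enclosed zeros, so bounding that integral is the problem itself, not a method; it cannot be dominated by the logarithm of a sup-over-$L^2$ ratio without pointwise lower bounds for $\phi_{Y}$ (or its logarithmic derivative) along the contour, which is precisely the information that is unavailable. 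The paper does not argue this way: it applies Jensen's lemma disc by disc on discs of radius $Y/(8t_{\phi})$, using polynomial growth in the thin strip together with the non-vanishing/$L^2$ lower bound of Theorem \ref{TheoremAOne}, exactly as in Proposition \ref{Theorem-3.4}.

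For the lower bound, your bookkeeping of the $\sqrt{Y}$ is correct: since $\phi=\tilde{\rho}_{\phi}(1)\sqrt{y}\,\Phi$, the per-interval bound coming from Proposition \ref{PropOne}(2) is $\ll\sqrt{Y}\,t_{\phi}^{-1/2+\ve}$, and the chain following \eqref{eq:d8} then yields only $M\gg Y^{-1/2}t_{\phi}^{1/12-\ve}$. But the theorem asserts the bound $t_{\phi}^{1/12-\ve}$ uniformly for $0<c<Y\ll T_{\phi}^{1-\ve}$, a range in which your bound becomes trivial once $Y\gg t_{\phi}^{1/6}$. So, as written, your proposal proves a strictly weaker statement than the one in question, and describing this as reproducing the exponent ``in the lower part of the range'' concedes rather than closes the gap: to prove the theorem as stated you must either eliminate the $\sqrt{Y}$ loss or justify the full range by another device. (Note that the paper's one-line reduction to ``the analysis following \eqref{eq:d8}'' does not display any $Y$-dependence; your careful tracking shows that a verbatim transport of that analysis only covers bounded $Y$, so this point needs to be addressed head-on, not absorbed into a parenthetical.)
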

\begin{rem} Theorem \ref{TheoremThree} suggests that the upper bound here may be close to best possible.
\end{rem}

The proof of Theorem \ref{TheoremAOne} follows directly from the proof of Theorem \ref{TheoremOne} since \eqref{eq:d7} implies our result on using the analog of Prop. \ref{shortsum} given in Prop. \ref{LemmaAThree}.\\
%%%%%%%%%%%%%%%%%%%%%%%%%%%%%%%%%%%%%%%%%%% Lemma A3 %%%%%%%%%%%%%%%%%%%%%%%%%%%%%%%%%%%%%%%%%%%%%%%
%%%%%%%%%%%%%%%%%%%%%%%%%%%%%%%%%%%%%%%%%%%%%%%%%%%%%%%%%%%%%%%%%%%%%%%%%%%%%%%%%%%%%%%%%%%%%%%%%%%

%%%%%%%%%%%%%%%%%%%%%%%%%%%%%%%%%%%%%%%%%%%%%%%%%%%%%%%%%%%%%%%%%%%%%%%%%%%%%%%%%%%%%%%%%%%%%%%%%%%%%%%%%
{\noindent \bf Proof of Theorem \ref{TheoremATwo}.}\  It is clear that the above conditional results provide a proof of the lower bound using the analysis following \eqref{eq:d8}. For the upper bound, we give a direct proof (suggested by the complexification idea in \cite{TZ09}) as follows: for any $Y>0$, define for $\zeta = x+i\gamma$
\[
\phi_{Y}(\zeta) = \sum_{n \neq 0} \rho_{\phi}(n) Y^{\frac{1}{2}}K_{it_{\phi}}(2\pi |n|Y)e(n\zeta).
\]
The estimates for the Fourier coefficients from \eqref{eq:b2}, \eqref{eq:b3}, \eqref{eq:b4} and the Bessel function show that $\phi_{Y}(\zeta)$ is a holomorphic function in the horizontal strip given by $|\Im(\zeta)|\leq \frac{Y}{4t_{\phi}}$. Moreover, it has at most a polynomial rate of growth in $t_{\phi}$ in such a strip. The lower bound in Theorem \ref{TheoremAOne} shows that $\phi_{Y}(\zeta)$ is not identically zero. Then, by Jensen's lemma, it follows that $\phi_{Y}(\zeta)$ has at most $O(\log t_{\phi})$ zeros in a disc of radius at most $\frac{Y}{8t_{\phi}}$ centered at a point $x$ real. Hence, $\phi_{Y}(\zeta)$ has at most $O(\frac{t_{\phi}}{Y}\log t_{\phi})$ real zeros in the interval $0\leq x \leq 1$ which then implies the upper bound for the zeros in the Theorem.

%%%%%%%%%%%%%%%%%%%%%%%%%%%%%%%%%%%%%%%%%%%%%%%%%%%%%%%%%%%%%%%%%%%%%%%%%%%%%%%%%%%%%%%%%%%%%%%%%%%%%%%%%%%%
%%%%%%%%%%%%%%%%%%%%%%%%%%  Section %%%%%%%%%%%%%%%%%%%%%%%%%%%%%%%%%%%%%%%%%%%%%%%%%%%%%%%%%%%%%%%%%%%%%%%
\section{Random wave model on closed horocycles} In \cite{GS12} we used the random wave model in Edelman-Kostlan \cite{EK95} to obtain predictions on the distribution of zeros of the real part of holomorphic cusp forms on vertical lines. We apply the same methods here to even cusp forms but on horizontal line segments. From (7) we write
\begin{equation}\label{eq:app-b1}
\tilde{\Phi}(z) =  \sum_{n\geq 1}\lambda_{\phi}(n)e^{\frac{\pi}{2}t_{\phi}}K_{it_{\phi}}(2\pi ny)\cos(2\pi nx).
\end{equation}
For any fixed $Y$, which may vary with $t_{\phi}$, we consider the vectors
\[
\mathbf{v}=\mathbf{v}(x)= \sum_{n\geq 1}e^{\frac{\pi}{2}t_{\phi}}K_{it_{\phi}}(2\pi nY)\cos(2\pi nx)\mathbf{e}_{n}
\]
where $\mathbf{e}_{n} = (...,0,0,1,0,0,...)$ denotes the unit vector with the $1$ in the nth- \\coordinate. The probability density function for the real zeros of the the associated random wave function is given by
\begin{equation}\label{eq:app-b2}
 \mathcal{P}(\mathbf{v}) = \mathcal{P}(\mathbf{v},x) = \frac{1}{\pi} \sqrt{\frac{<\mathbf{v},\mathbf{v}><\mathbf{v'},\mathbf{v'}> - <\mathbf{v},\mathbf{v'}>^{2}}{<\mathbf{v},\mathbf{v}>^{2}}}
\end{equation}
where $\mathbf{v'}$ is the derivative with respect to $x$ and with the standard inner-product. This is the expected number of real zeros of the associated wave function per unit length at the point $x$. Unlike the situation in \cite{GS12}, we do not expect the term $<\mathbf{v},\mathbf{v'}>$ to contribute to the main term, in which case, we should have 
\[
\mathcal{P}(\mathbf{v}) \sim \frac{1}{\pi} \sqrt{\frac{<\mathbf{v'},\mathbf{v'}> }{<\mathbf{v},\mathbf{v}>}}.
\]
Define, for $r>0$ 
\begin{equation}\label{eq:app-b3}
S(x)= S(x,r) = \sum_{n\geq 1}e^{\pi r}K_{ir}^{2}(2\pi nY)\cos(2\pi nx),
\end{equation}
so that with $r=t_{\phi}$ we have
\begin{flalign*}
<\mathbf{v},\mathbf{v}> &= \frac{1}{2}(S(0) + S(2x)),\\
<\mathbf{v},\mathbf{v'}> &= -\frac{1}{2}S'(2x) 
\end{flalign*}
\text{and} 
\[
\quad<\mathbf{v'},\mathbf{v'}> = \frac{1}{2}(S''(0) - S''(2x)).
\]
For $x$ not close to a half-integer, we would not expect the terms above involving $x$ to contribute due to cancellation. Using Lemma \ref{LemmaOne}, we write the sum $S(x)= S_{1}(x)+ S_{2}(x)+ S_{3}(x)$ for $n$'s restricted by the ranges as indicated there. Using Corollary \ref{CorlOne}, we see that $S_{2}(x)$ is negligible. Similarly, $S_{3}(x)$ is bounded by $(1 + \Delta Y^{-1})t_{\phi}^{-\frac{2}{3}} = o(Y^{-1})$ if $Y = o(t_{\phi}^{\frac{2}{3}})$. In what follows, we will restrict $Y$ to satisfy $Y = o(t^{\frac{1}{3}})$, so that $\frac{\Delta}{Y} \rightarrow \infty$. Put $T= T_{\phi} = \frac{t_{\phi}}{2\pi Y}$ and $E= \frac{\Delta}{2\pi Y}$ so that
\begin{align}\label{eq:app-b4}
 YS_{1}(x) &\sim \sum_{n=1}^{T-E} \frac{1}{\sqrt{T^{2} - n^{2}}} \sin ^{2} \left(\frac{\pi}{4} +t_{\phi} H(\frac{n}{T})\right)\cos(2\pi nx) \cr
&\sim \frac{1}{2} \sum_{n=1}^{T-E} \frac{1}{\sqrt{T^{2} - n^{2}}} \cos(2\pi nx) + O\left(|\sum_{n=1}^{T-E} \frac{1}{\sqrt{T^{2} - n^{2}}} \sin(f(n))|\right),
\end{align}

\noindent where we denote by $f(n)$ any function of the type $2\pi nx \pm 2t_{\phi}H(\frac{n}{T}) + C$, with $C$ a constant. We show that the second sum in \eqref{eq:app-b2} above is $o(1)$ for all $x$, while the same is true for the first sum except for those $x$ near integers. For $x=0$, the first sum  is  asymptotic to $\frac{\pi}{2}$ and it is easy to show it is $o(1)$ if $\lVert x\rVert \gg Yt_{\phi}^{-\frac{2}{3}}$. To deal with the second sum, we use the following lemma from \cite{Vi76} (pg. 306):

%%%%%%%%%%%%%%%%%%%%%%%%%%%%%%%%%%%%%%%%%%%%%%%%%%%%%%%%%%%%%%%%%%%%%%%%%%%%%%%%%%%%%%%%%%%%%%%%%%%%%%%%
%%%%%%%%%%%%%%%%%%%%%%%%%%%%%%%%%%%% Lemma 4 %%%%%%%%%%%%%%%%%%%%%%%%%%%%%%%%%%%%%%%%%%%%%%%%%%%%%%%%%%
\begin{lem}\label{LemmaFour} For $q < x \leq r$, suppose $f''(x)$ is continuous and satisfies $ A^{-1} \ll |f''(x)| \ll A^{-1}$ for $A$ suitably large. Futher, suppose $g(x) \ll G$ is  monotonic. Then
\[
\sum_{q<n\leq r} g(n)e(f(n)) \ll G\left(\frac{r-q}{\sqrt{A}} + \sqrt{A} + \log (\rm{max}(r-q,A))\right).
\]
\end{lem}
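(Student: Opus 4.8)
The plan is to deduce Lemma \ref{LemmaFour} from the classical van der Corput second-derivative test together with Abel summation. In fact this route yields the slightly stronger estimate with the logarithmic summand deleted, and since $\log(\max(r-q,A)) \ge 0$ this at once implies the stated inequality; so I would aim for the cleaner bound and then observe that it dominates the one claimed.

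First I would strip off the weight $g$. Writing $a_n = e(f(n))$ and $S(u) = \sum_{q < n \le u} a_n$, Abel summation gives
\[
\Big| \sum_{q < n \le r} g(n) e(f(n)) \Big| \le \Big( \sup_{q < u \le r} |S(u)| \Big)\big( |g(r)| + \mathrm{Var}_{(q,r]}(g) \big).
\]
Since $g$ is monotonic with $|g| \le G$, its total variation on $(q,r]$ equals $|g(r) - g(q)| \le 2G$, so the weighted sum is $\ll G \sup_u |S(u)|$. It therefore suffices to bound the unweighted partial sums $S(u)$ uniformly in $u$.

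Next I would estimate $S(u)$ by the second-derivative test. The hypothesis $A^{-1} \ll |f''| \ll A^{-1}$ with $f''$ continuous forces $f''$ to keep a constant sign on $(q,r]$, so $f'$ is monotonic there; as $x$ runs over $(q,u]$ the quantity $f'(x)$ sweeps an interval of length $\int_q^u |f''| \ll (u-q)/A$, whence $f'$ crosses $K \ll (r-q)/A + 1$ integers. Partitioning $(q,u]$ at these crossings and further according to the dyadic size of $\|f'(x)\|$, the distance of $f'$ to the nearest integer, I would bound the portion with $\|f'\| < \delta$ trivially by its length $\ll \delta A + 1$ per crossing, and each portion with $\|f'\| \asymp 2^{j}\delta$ by the Kusmin--Landau first-derivative estimate, contributing $\ll (2^{j}\delta)^{-1}$ per monotone piece. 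Summing over the $K$ crossings and over $j$ gives $S(u) \ll \big( (r-q)/A + 1 \big)\big( \delta A + \delta^{-1} \big)$, and the choice $\delta = A^{-1/2}$ optimizes this to
\[
\sup_{q < u \le r} |S(u)| \ll \frac{r-q}{\sqrt{A}} + \sqrt{A}.
\]
Feeding this into the Abel step yields $\sum_{q<n\le r} g(n) e(f(n)) \ll G\big( (r-q)A^{-1/2} + A^{1/2} \big)$, which gives Lemma \ref{LemmaFour}.

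The only delicate point, and the one I expect to be the main obstacle, is the treatment of the transitional ranges where $f'$ is near an integer: there the first-derivative test degenerates, and one must balance the trivial length bound against the Kusmin--Landau bound, which is exactly what the calibration $\delta = A^{-1/2}$ achieves. Everything else is bookkeeping over the $O((r-q)/A + 1)$ monotonicity pieces. I would note explicitly that the logarithmic term in the statement is not needed for the argument above; it is retained only to match the robust formulation in \cite{Vi76}, where the two endpoint pieces on which $f'$ fails to traverse a full unit interval are absorbed conservatively.
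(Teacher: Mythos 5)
Your proposal is correct, but note that the paper itself contains no proof to compare against: Lemma \ref{LemmaFour} is quoted as a black box from Vinogradov \cite{Vi76} (p.\ 306) and immediately applied to the sums in Appendix B. What you have written is a correct, self-contained derivation along the standard van der Corput lines: Abel summation strips the monotone bounded weight $g$ at the cost of a factor $O(G)$ (this is precisely where the monotonicity hypothesis is consumed); then, since the two-sided condition $A^{-1}\ll |f''|\ll A^{-1}$ together with continuity forces $f''$ to keep one sign, $f'$ is monotone with range of length $\ll (r-q)/A$, hence meets $\ll (r-q)/A+1$ integers; near each such integer the trivial bound applies to a piece of length $\ll \delta A$ (here the \emph{lower} bound on $|f''|$ is what is used), away from the integers Kusmin--Landau gives $\ll \delta^{-1}$ per monotone band, and $\delta=A^{-1/2}$ balances the two. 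You correctly use both halves of the hypothesis in the right places — the upper bound on $|f''|$ to count crossings, the lower bound to shorten the transitional pieces — and with only one of them the conclusion would in fact be false. Your observation that this route deletes the logarithm is also right: the term $\log(\max(r-q,A))$ in Vinogradov's formulation is an artifact of his bookkeeping, and since it is nonnegative your cleaner bound $G\big((r-q)A^{-1/2}+A^{1/2}\big)$ implies the stated one, which is all the paper needs in \eqref{eq:app-b5}. Two cosmetic points: the dyadic refinement over $\|f'\|\asymp 2^{j}\delta$ is unnecessary, since Kusmin--Landau applies directly to each maximal band on which $k+\delta\le f'\le k+1-\delta$; and you should say one sentence about the at most two boundary pieces of the partition (where $f'$ does not traverse a full band), though they obey the same bounds.
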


For those $n \leq T^{1-\delta}$, with $\delta > 0$ sufficiently small, the sum is trivially $o(1)$. For $T^{1-\delta} \leq n \leq T - E$, we subdivide the sum into subsums with $T- 2^{j+1}E < n \leq  T- 2^{j}E$ with $0\leq j \leq J-1$. For $n$ in such a subsum
\begin{equation}\label{eq:app-b5}
f''(n) \asymp  \frac{Tt_{\phi}}{n^{2}\sqrt{T^{2} - n^{2}}} \asymp \frac{\sqrt{T}t_{\phi}}{(T-2^{j}E)^{2}\sqrt{2^{j}E}} := \frac{1}{A}\ .
\end{equation}
Moreover, we may take $G = (2^{j}ET)^{-\frac{1}{2}}$ so that the lemma gives an estimate for each subsum of
\[
\ll (2^{j}ET)^{-\frac{1}{2}}\left( \frac{2^{j}E T^{\frac{1}{4}}t_{\phi}^{\frac{1}{2}}}{(T-2^{j}E)2^{\frac{j}{4}}E^{\frac{1}{4}}} + \frac{(T-2^{j}E)2^{\frac{j}{4}}E^{\frac{1}{4}}}{T^{\frac{1}{4}}t_{\phi}^{\frac{1}{2}}} + \log t_{\phi}\right).
\]
Simplifying and summing over $j$ then gives us the estimate of $o(1)$ if $Y = O(t_{\phi}^{\frac{5}{9}})$, as is the case. We thus have
\[
S(x) = \frac{\pi}{4Y}\left(1 + o(1)\right)
\]
except for those $x$ with $0 < \lVert x\rVert \ll Yt_{\phi}^{-\frac{2}{3}}$. In exactly the same way and with the same restriction, 
\[
S''(x) = \frac{\pi}{4Y}(2\pi T)^{2}\left(1 + o(1)\right),
\]
while $S'(x) = o(\frac{T}{Y})$. This then gives us 
\[
\mathcal{P}(\mathbf{v},x)  \sim \frac{1}{\pi} \frac{t_{\phi}}{Y}
\]
uniformly for all $x$ except those satisfying $0 < \lVert 2x\rVert \ll Yt_{\phi}^{-\frac{2}{3}}$, with $Y = o(t_{\phi}^{\frac{1}{3}})$ (the last condition may probably be relaxed). Integrating over all $-\half < x \leq \half$, using the estimate $\mathcal{P}(\mathbf{v},x) \ll t_{\phi}$ for the exceptional $x$'s, we obtain the expected number $\frac{1}{\pi} \frac{t_{\phi}}{Y}$ of zeros of $\phi(z)$ on such a horocycle (compare with Theorem \ref{TheoremThree} and Theorem \ref{TheoremATwo}).

%%%%%%%%%%%%%%%%%%%%%%%%%%%%%%%%%%%%%% section %%%%%%%%%%%%%%%%%%%%%%%%%%%%%%%%%%%%%%%%%%%%%%%%%%%%%%%%%%
\begin{section}{Nodal domains of odd eigenfunctions}
 
In this Appendix, we sketch the proof of the remark made in the Introduction regarding the nodal domains of an odd Maass form. Let $\phi$ be an odd, real Hecke-Maass eigenform, so that $\phi$  satisfies $\phi(-\overline{z}) = -\phi(z)$. Hence $\phi$ vanishes identically on $\delta$, which is therefore part of the nodal line of $\phi$, and so we consider the nodal domains within $\mathcal{F}^{+}$ in Figure 3. To prove the analogous lower bound for the number of nodal domains for $\phi$ which meet $\beta \subset \delta$ (these being the `inert' nodal domains) we seek sign-changes of the  normal derivative $\partial_{n}\phi$ of $\phi$ along $\delta$ (and consequently those of  $\phi_{x} = \frac{\partial{\phi}}{\partial{x}}$ along  $\delta_{1}$ and $\delta_{2}$). Each point in $\beta$ at which there is such a sign-change must be the end point of an interior nodal line and a topological argument similar to the one in Section 2 shows that the number of associated nodal domains is at least half of the number of such sign-changes. As before in order to produce sign changes in $\partial_{n}\phi$ we compare its  $L^\infty$-norm, its $L^2$-norm and also its meanvalue all restricted to $\beta$.

First, we give an extension of the subconvexity $L^\infty$ result of \cite{IS95}. This is easily proved using Bernstein's inequality for polynomials on the unit circle. In fact one has 

\begin{lem}\label{TheoremC1}
 Let $z$ belong in a compact set $D$ in $\mathbb{H}$. Then for any $k\geq 0$ and any Hecke-Maass cusp form $\phi$
\begin{align*}
\sup_{\substack z\in D}\Big{|}\frac{\partial^k}{\partial x^k} \phi(z)\Big{|} &\ll_{_{D,k}} t_{\phi}^{k} \sup_{\substack z\in D}|\phi(z)|,\cr
&\ll_{_{D,k, \ve}} t_{\phi}^{k+ \frac{5}{12}+\ve}.
\end{align*}
\end{lem}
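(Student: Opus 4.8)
The plan is to exploit the fact that, at a fixed height $y=\Im z$, the form $\phi(\cdot+iy)$ is $1$-periodic in $x$ and, up to an exponentially small tail, is a trigonometric polynomial in $x$ of degree $\ll_D t_\phi$; Bernstein's inequality then controls all of its $x$-derivatives. Since everything in sight is $1$-periodic in $x$, I may assume (enlarging $D$ in the $x$-direction, which only affects the constants $\ll_D$) that $D=\{x+iy:\ -\half\le x\le\half,\ y_1\le y\le y_2\}$ with $0<y_1\le y_2<\infty$, so that for each admissible $y$ the set $\{x+iy:-\half\le x\le\half\}$ is a full period.

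First I would split the Fourier expansion \eqref{eq:b1} at the frequency $N=N(D):=\lceil t_\phi/(\pi y_1)\rceil$, writing $\phi(x+iy)=P(x)+T(x)$, where $P$ collects the modes $0<|n|\le N$ and $T$ the modes $|n|>N$. For $|n|>N$ one has $2\pi|n|y\ge 2\pi N y_1\ge 2t_\phi$, so $2\pi|n|y$ lies in the range $u>r+Cr^{\frac13}$ of Lemma \ref{LemmaOne}(2) (equivalently the second bound of Corollary \ref{CorlOne}), giving $K_{it_\phi}(2\pi|n|y)\ll (2\pi|n|y)^{-\half}e^{-2\pi|n|y}$. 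Combining this with $|\rho_\phi(n)|=|\lambda_\phi(n)|\,|\rho_\phi(1)|\ll_\ve n^{\theta+\ve}\,t_\phi^{\ve}e^{\frac{\pi}{2}t_\phi}$ from \eqref{eq:b2}, \eqref{eq:b3}, \eqref{eq:b4}, I would bound, after differentiating $k$ times,
\[
\sup_{-\half\le x\le\half}\big|\partial_x^{k}T(x)\big|\ \ll\ e^{\frac{\pi}{2}t_\phi}\sum_{|n|>N}(2\pi|n|)^{k}n^{\theta+\ve}e^{-2\pi|n|y_1}\ \ll\ e^{\frac{\pi}{2}t_\phi}e^{-2t_\phi}\,\mathrm{poly}(t_\phi)\ \ll\ e^{-c t_\phi}
\]
for some $c>0$ (since $2-\tfrac{\pi}{2}>0$), uniformly in $y\in[y_1,y_2]$; the same bound holds for $T$ itself.

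The heart of the matter is then Bernstein: since $P$ is a trigonometric polynomial in $x$ of degree at most $N$, one has $\sup_x|\partial_x^{k}P(x)|\le (2\pi N)^{k}\sup_x|P(x)|$, and $\sup_x|P|\le \sup_x|\phi(\cdot+iy)|+\sup_x|T|\le \sup_D|\phi|+O(e^{-ct_\phi})$. Feeding back the tail estimate and taking the supremum over $y\in[y_1,y_2]$ gives
\[
\sup_D\big|\partial_x^{k}\phi\big|\ \le\ (2\pi N)^{k}\,\sup_D|\phi|\ +\ O_k\!\big(e^{-ct_\phi}\big).
\]
Because $N\asymp_D t_\phi$, the main term is $\ll_{D,k}t_\phi^{k}\sup_D|\phi|$. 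To absorb the error term I would invoke a polynomial lower bound $\sup_D|\phi|\gg_D t_\phi^{-\frac13}$, which holds for $t_\phi$ large either from QUE applied on the box $D$ (as in \eqref{i101}) or directly from the non-vanishing argument of Section 2.2 (choosing a prime $n\asymp t_\phi/y$ with $|\lambda_\phi(n)|>\half$ and $K_{it_\phi}$ in its transitional range). This establishes the first inequality. The second then follows at once by combining it with the subconvex sup-norm bound $\sup_D|\phi|\ll_\ve t_\phi^{\frac{5}{12}+\ve}$ of \cite{IS95}, valid on the compact set $D$.

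The main obstacle — indeed essentially the only point needing care — is that Bernstein's inequality bounds the derivative by the supremum over a \emph{full} period, so one must both reduce to a full-period box via periodicity and check that the discarded high-frequency tail is negligible compared with $\sup_D|\phi|$; this second point is what forces the frequency cutoff at $N\asymp t_\phi$ and the use of the lower bound $\sup_D|\phi|\gg_D t_\phi^{-\frac13}$. Everything else is routine.
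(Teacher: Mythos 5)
Your proof follows essentially the same route as the paper's: truncate the Fourier expansion at a frequency $N\asymp_{D} t_{\phi}$, apply Bernstein's inequality in $x$ to the resulting trigonometric polynomial $\phi(z;N)$, show that the discarded tail and all its $x$-derivatives are exponentially small via the Bessel decay and the coefficient bounds \eqref{eq:b2}--\eqref{eq:b4}, and then invoke the Iwaniec--Sarnak bound of \cite{IS95} for the second inequality. Your two extra points of care --- reducing to a full period in $x$ so that Bernstein applies, and using a polynomial lower bound on $\sup_{D}|\phi|$ to absorb the exponentially small tail into the main term --- are exactly the details the paper compresses into ``so that the lemma follows.''
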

\proof Write
\[
 \phi(z) = \sum_{n\in \mathbb{Z}}a_{n}(y)e(nx),
\]
and for any $N\geq 1$, put 
\[
 \phi(z;N) = \sum_{|n|\leq N}a_{n}(y)e(nx).
\]
For fixed $y$, we apply Bernstein's inequality to get
\[
 \sup_{\substack |x|\leq \half}|(\frac{\partial}{\partial x})^{k} \phi(z;N)| \ll_{_{D,k}} N^{k} \sup_{\substack |x|\leq\half}|\phi(z;N)|.
\]
Then choose $N=100\frac{t_{\phi}}{y}$ so that $\phi(z,N)$ and all its derivatives are well approximated by $\phi$ and its derivatives respectively, with an error that decays exponentially in $t$ so that the  lemma follows.

Next, we need to prove
\begin{equation}\label{eq:A3.1}
 \int_{\alpha_j}^{\alpha_{j+1}} \phi_{x}(iy)\ \d^{\times}y \ll_{\ve} t_{\phi}^{\half + \ve},
\end{equation}
which is the analogue of  \eqref{eq:e24}. This is obtained in exactly the same way as the discussion there, where \eqref{600} now becomes
\[
 4\pi\rho_{\phi}(1)\int_{-\infty}^{\infty}L(\half + it,\phi)\Gamma\big{(}\frac{\frac{3}{2} + i(t+t_{\phi})}{2}\big{)}\Gamma\big{(}\frac{\frac{3}{2}+i(t-t_{\phi})}{2}\big{)}\big{(}\frac{\alpha_{j+1}^{-1 -it} - \alpha_{j}^{-1 -it}}{1 + it}\big{)}\ \d t.
\]
Then, Stirling's formula together with the Lindelof Hypothesis gives the required result.

Finally, we need to prove, as in Section 6.2
\begin{equation}\label{eq:A3.2}
 \int_{\beta} \phi_{x}^{2}(s)\ \d s \gg_{\beta} t_{\phi}^{2}.
\end{equation}
The method is the same, with the non-diagonal terms offering no new problems. However, the diagonal term cannot be evaluated as in Section 6.2.3 because $\phi_{x}$ is not automorphic so that the diagonal is not an inner-product involving an incomplete Eisenstein series. Instead we obtain a lower-bound for the diagonal using positivity, asymptotic estimates for the Bessel functions and finally QUE by using Prop. \ref{shortsum}. The constant obtained this way is much smaller than that obtained for the even case, but does not affect the outcome if $a$ is chosen large enough.
\begin{rem} 
 To obtain the analogue of Section 6.2.2 we could have proceeded by using the gradient of $\phi$ instead of $\phi_x$ as follows
\[
 \int k(y)|\phi_{x}|^{2}\ \d y = \int k(y)\|{\bm\triangledown\phi}\|^{2}\ \d y = \int_{_\mathbb H} \sum_{h\in \mathbb{Z}}e(hx) \|y{\bm\triangledown\phi}\|^{2}\ \d \rm{A}.
\]
For small values of $h$  one may rewrite the last integral, after integrating by parts, as an inner product with incomplete Eisenstein and Poincare series against $\phi^{2}$, with a small error, so that QUE can be applied. For the large values of $h$, one has to expand the sums and analyse the resulting expression. This device will prove useful if \eqref{aa2} were to hold.
\end{rem}

Combining all these estimates finally lead to the same statements as those in Theorems \ref{TheoremI5} and \ref{TheoremI6}   but for $\phi$ an odd form..

\end{section}

%%%%%%%%%%%%%%%%%%%%%%%%%%%%%%%%%%%%%%%%%%%%%%%%%%%%%%%%%%%%%%%%%%%%%%%%%%%%%%%%%%%%%%%%%%%%%%%%%%%%%%%%%%
%%%%%%%%%%%%%%%%%%%%%%%%%%%%%%%%%%%%%%%%%%%%%%%%%%%%%%%%%%%%%%%%%%%%%%%%%%%%%%%%%%%%%%%%%%%%%%%%%%%%%%%%%
%%%%%%%%%%%%%%%%%%%%%%%%%%%%%%%%%%%%%%%%%%%%%%%%%%%%%%%%%%%%%%%%%%%%%%%%%%%%%%%%%%%%%%%%%%%%%%%%%%%%%%%%%%%

%\input appendix1.tex
%%%%%%%%%%%%%%%%%%%%%%%%%%%%%%%%%%%%% References %%%%%%%%%%%%%%%%%%%%%%%%%%%%%%%%%%%%%%%%%%%%%%%%%%%%%%%%

\vskip 0.5in

{\small
\vspace{40pt}
\noindent AMIT GHOSH, Department of Mathematics, Oklahoma State University, Stillwater, OK 74078, USA \hfill {\itshape E-mail address}: ghosh@math.okstate.edu 
\vspace{12pt}

\noindent ANDRE REZNIKOV, Department of Mathematics, Bar-Ilan University, Ramat Gan, 52900, ISRAEL \hfill {\itshape E-mail address}: reznikov@math.biu.ac.il
\vspace{12pt}

\noindent PETER SARNAK, School of Mathematics, Institute for Advanced Study; Department of Mathematics, Princeton University, Princeton, NJ 08540, USA \\ {\itshape E-mail address}: sarnak@math.ias.edu}

\end{document}